\documentclass[letterpaper,12pt,oneside,onecolumn,reqno]{amsart}

\usepackage{amsmath, amssymb, amsaddr}
\usepackage{bm}
\usepackage{comment}
\usepackage{hyperref}
\usepackage{enumerate}
\usepackage[left=1in,top=1in,right=1in,bottom=1in]{geometry}
\usepackage{graphicx}
\usepackage[usenames,dvipsnames]{xcolor}
\usepackage[all]{xy}
\usepackage[T1]{fontenc}

\pagenumbering{arabic}
\pagestyle{plain}
\setcounter{tocdepth}{1}


\newcommand{\A}{\mathcal{A}}
\newcommand{\B}{{\mathcal{B}}}
\newcommand{\C}{\mathcal{C}}
\renewcommand{\d}{\mathrm{d}}
\newcommand{\D}{\mathcal{D}}
\newcommand{\e}{\mathrm{e}}
\newcommand{\E}{\mathbb{E}}

\renewcommand{\H}{H}

\newcommand{\I}{\mathcal{I}}

\newcommand{\M}{\mathcal{M}}

\newcommand{\PP}{{\boldsymbol{\mathcal{P}}}}

\newcommand{\R}{\mathbb{R}}

\newcommand{\W}{\mathcal{W}}

\newcommand{\X}{\vec{X}}

\newcommand{\Y}{\vec{Y}}

\newcommand{\mockalph}[1]{}

\newcommand{\oo}{\infty}

\renewcommand{\bar}[1]{\overline{#1}}
\renewcommand{\hat}[1]{\widehat{#1}}
\renewcommand{\tilde}[1]{\widetilde{#1}}
\renewcommand{\vec}[1]{\mathbf{#1}}
\newcommand{\cov}{\operatorname{cov}}


\newcommand{\quasienvironmentality}{{minimal selectivity}}

\newcommand{\tot}{{\operatorname{tot}}}

\newcommand{\Id}{\operatorname{Id}}

\newcommand{\Lonelim}{\mbox{$L^1$--$\operatorname{lim}$}}
\newcommand{\Ltwolim}{\mbox{$L^2$--$\operatorname{lim}$}}

\newcommand{\Tr}{\operatorname{Tr}}

\newcommand{\var}{\operatorname{var}}

\newcommand{\NS}{{\operatorname{NS}}}
\newcommand{\EC}{{\operatorname{EC}}}
\newcommand{\KS}{{\operatorname{KS}}}
\newcommand{\dis}{{\operatorname{dis}}}
\newcommand{\mix}{{\operatorname{mix}}}

\newcommand{\symdiff}{\,\triangle\,}
\newcommand{\opleft}{{\operatorname{left}}}
\newcommand{\opright}{{\operatorname{right}}}
\newcommand{\opin}{{\operatorname{in}}}
\newcommand{\opout}{{\operatorname{out}}}

\numberwithin{equation}{section}

\theoremstyle{definition}
\newtheorem{thm}{Theorem}[section]

\newtheorem{cor}[thm]{Corollary}
\newtheorem{lem}[thm]{Lemma}
\newtheorem{sublem}[thm]{Sublemma}
\newtheorem{pro}[thm]{Proposition}
\newtheorem{conj}[thm]{Conjecture}
\newtheorem{defn}[thm]{Definition}

\newtheorem{rem}[thm]{Remark}
\newtheorem{exa}[thm]{Example}


 \title{The Mathematics of Evolution: The Price Equation, Natural Selection, and Environmental Change}

\author[T. LaGatta]{Tom LaGatta}
\email{tlagatta@gmail.com}

\date{\today}
\keywords{evolution, Price equation, natural selection, environmental change, entropy, population dynamics}
\subjclass[2010]{92D15}

\begin{document}

\begin{abstract} 


George Price introduced his famous equation to study selective and environmental effects in discrete populations. We extend Price's evolutionary framework to the measure-theoretic and quantum cases, showing that all evolutionary processes decompose into selective and environmental components. 
We also extend Fisher's fundamental theorem, showing that selective change of relative fitness equals the variance of relative fitness. 

To further quantify selective and environmental effects, we introduce selective and environmental entropy functionals. Selective entropy is non-positive, representing biological negentropy, and environmental entropy is non-negative, representing physical entropy. The selective entropy vanishes if and only if the selective change operator vanishes, and environmental entropy vanishes if and only if the environmental change operator vanishes. The environmental entropy further decomposes into dispersion and mixing entropies, which in general are not realized by change operators.


We prove four novel Laws of Natural Selection, showing that selection consistently acts in a manner to increase selection, but which can be disrupted by environmental change. Our methodology is to apply convex analysis to variance and entropy functionals and their selective changes, a technique which applies to both theoretical models and empirical data. These laws are inspired by but distinct from the classical Thermodynamic Laws. 

Our Zeroth Law is a refinement of Fisher's theorem, showing that variance of relative fitness is bounded below by $1/p_* - 1$, for $p_*$ the proportion of the child-bearing population. This inequality is saturated in the case of ``life and death'' selective-equilibrium populations, and otherwise is a strict inequality for non-equilibrium populations. 
Our First Law shows that selective acceleration of relative fitness is also bounded by a non-negative quantity, which is optimized for the same selective-equilibrium populations. This is a non-conservative, selective version of the Thermodynamic First Law. 
These results show that natural selection speeds up natural selection, regardless of biological, physical, or mathematical domain.

Our Second Law shows that the selective change of selective entropy and its selective acceleration are similarly bounded by non-positive constants, and these inequalities are saturated in the selective equilibrium case. This is a formal, rigorous version of the Thermodynamic Second Law, specialized to the case of selective entropy always growing under natural selection. 

We also introduce a class of environmental-equilibrium processes, where dispersion and mixing effects are perfectly balanced. Our Third Law shows that for environmental-equilibrium processes, selective change of environmental entropy vanishes, and for non-equilibrium processes, it may vary in a certain open window around zero. The environmental-equilibrium case corresponds to ``zero temperature'' processes, and thus this is a selective version of the Third Law of Thermodynamics: environmental entropy is constant under selection only when environmental temperature is at absolute zero.



\end{abstract}

\maketitle
\tableofcontents

\part{The Price Equation and Its Consequences} \label{part_price}

\section{Introduction}







George Price introduced his famous equation \cite[(4)]{price1970selection} to analyze evolutionary processes acting on discrete populations. The Price equation states that any change decomposes into ``natural selection'' and ``environmental change''. We extend Price's equation to the general measure-theoretic and quantum cases, decomposing evolutionary transition kernels into selective and environmental components. 
The general effect of natural selection is to grow and scale population sizes, and environmental change is to redistribute those populations. 

We introduce novel 
entropy functionals to further quantify the amount of selection and environmental change of a process. We define the selective entropy as the Kullback-Leibler divergence of relative fitness, modeling biological effects of growth and variation of fitness. This selective entropy (or ``negentropy'') is non-positive, and vanishes for purely environmental processes. We define environmental entropy as the one-step Kolmogorov-Sinai entropy of a process, modeling physical effects of dispersion and mixing of populations. Environmental entropy is non-negative, and vanishes for purely environmental processes.

Fisher's theorem states that selective change of relative fitness (the selective velocity) is the variance of relative fitness. We present four Laws of Natural Selection extending Fisher's theorem, showing that selective functionals tend to be monotone under the act of selection. Thus ``selection begets selection'' in the absence of environmental effects, with selective effects amplifying over time, and environmental effects either disrupting or amplifying selection. 

We apply an optimization perspective to the Price equation, and prove four novel Laws of Natural Selection. Our Zeroth Law (Proposition \ref{pro_zerothlaw}) extends Fisher's theorem, providing a lower bound for selective velocity of $1/p_* - 1$, where $p_*$ is the proportion of child-bearing population, which is strengthened by Corollary \ref{cor_strongzerothlaw}. These bounds are saturated when the process is in ``selective equilibrium'': the Darwinian life-or-death processes where fitness is either zero or non-zero. Thus any two processes in selective-equilibrium with same $p_*$ have the same variance of relative fitness, in analogy with the Thermodynamic Zeroth Law. 

Our First Law (Theorem \ref{thm_firstlaw}) shows that the selective change of relative variance (the selective acceleration) is non-negative, with a lower bound given by variance times second-moment of fitness. This bound is saturated again in selective equilibrium. Consequently, selective non-equilibrium processes tend to ``speed up'' and become more selective over multiple iterations, though this selective acceleration can be disrupted or amplified by environmental effects. 

Our Second Law (Theorem \ref{thm_secondlaw}) shows that the selective change of selective entropy is non-positive, amplifying selective effects since selective entropy is itself non-positive. The bounds are saturated in selective equilibrium, meaning that selective non-equilibrium processes become more selectively entropic over time. This is a selective form of the Second Law of Thermodynamics: selective entropy is monotonic under the act of selection. 

We identify a case called environmental equilibrium, corresponding to processes whose dispersive and mixing effects are perfectly balanced. Our Third Laws (Theorem \ref{thm_weakthirdlaw}, \ref{thm_thirdlaw}) show that for environmental-equilibrium processes, selective change of environmental entropy vanishes, and otherwise it may may fluctuate in a certain open window around zero. This models selective and environmental interactions. Environmental equilibrium corresponds to ``zero temperature'' processes, or systems at absolute zero. Thus this result is an selective version of the Third Law of Thermodynamics: environmental entropy is constant under selection only when environmental temperature is at absolute zero.

We derive similar results in the case of quantum processes, showing that the evolutionary framework covers both classical and quantum systems. We also prove a version of the Price equation in the case of open processes.


\subsection{Literature Review} \label{sect_litreview} 

The statistician Ronald A. Fisher 
was the first to introduce a quantitative theory of selection. Fisher's ``fundamental theorem'' stated ambiguously that ``the rate of increase in fitness of any organism at any time is equal to its genetic variance in fitness at that time'' \cite[p.~35]{fisher1930genetical}, \cite[p.~37]{fisher1958genetical}. Fisher claimed his theorem was a biological version of the Second Law of Thermodynamics, in that ``natural selection requires a `reservoir' of additive genetic variance'' \cite{plutynski2006fisher}. However, an exact quantitative reading of Fisher's statement eluded the biology community until the work of Price in the 1970s. 
Price \cite{price1970selection} recognized that Fisher's statement could be quantified as population covariance against fitness. This enabled him to convert Fisher's regression statistics \cite{fisher1930genetical} into a discrete probabilistic framework, substituting populations for probability. Robertson \cite{robertson1966mathematical} independently identified the covariance formula in his work on dairy farming.

Price stated his eponymous equation as \cite[(4)]{price1970selection}, decomposing arbitrary change into two terms and nothing else, which later \cite{price1972fisher} he would describe as selective change and environmental change. Fisher's fundamental theorem follows as an immediate consequence: the selective change of relative fitness is equal to its variance. In Price's view, ``The main cause of misunderstanding about the theorem is that everyone has supposed that Fisher was talking about the total change rather than just the fraction of this due to natural selection'' \cite[p.~130]{price1972fisher}. In \cite{price1971extension}, Price provides a formal extension of the Hardy-Weinberg mating law using his framework. In the posthumous \cite{price1995nature}, Price argues for a general theory of selection and frames what some of its properties might be. In \cite{price1972extension}, Price further extended the covariance-selection mathematics to the multi-level and continuous-time cases.

Price hoped to build a theory of altruism \cite{harman2011price}. He worked with Maynard Smith \cite{price1972fishersmith,smith1973logic} on evolutionary stable strategies, and he communicated with Hamilton \cite{hamilton1996narrow1} who used the covariance-selection mathematics to build a unified theory of kin selection \cite{hamilton1970selfish} and group selection \cite{hamilton1975innate} using the Price equation.


Price's work lingered in the annals of evolutionary biology as an interesting sidenote, enabling many authors to apply the general theory to their mathematical models of interest (cf.~e.g., \cite{hamilton1970selfish, lewontin1974genetic, crow1976rate,grafen1985geometric,michod2000darwinian,loreau2001partitioning,fox2006using}). The research program of Frank has centered the Price equation at forefront of evolutionary theory \cite{frank1985hierarchical,frank1986hierarchical,frank1995george,frank1997price,frank2009natural,frank2018price}. Frank \cite{frank1986dispersal,frank1987demography,frank1992kin} shows how to apply the Price equation to evolutionary-stable strategies, by taking a variational derivative of the Price equation in a manifold of parametrized model constraints. 
This has enabled Frank to show that natural selection maximizes Fisher information \cite{frank2012natural}, that the Price equation is equivalent to d'Alembert's principle in physics \cite{frank2015d}, and that the Price equation is equivalent to the statistical equation of model of Nicholson et al. \cite{nicholson2020time,frank2020fundamental}. 

Page and Nowak \cite{page2002unifying} show that the Price equation in continuous settings is equivalent to the Replicator-Mutator and Lotka-Volterra equations. Most common biological mechanisms (e.g., genetic, epigenetic, behavioral, and symbolic) can be expressed in a unified manner using the Price equation; see \cite{helantera2010price} and \cite{lehtonen2020fifty}. Rice \cite{rice2008stochastic,rice2020universal} describes stochastic evolutionary processes with the multi-level Price equation. Week et al. \cite{week2021white} present a stochastic partial differential equation version of the Price equation for a Gaussian allelic model of mutation, approximating large populations of discrete individuals by diffusion limits. 




Nowak \& Highfield \cite{nowak2011supercooperators} criticize the universal applicability of the Price equation as a mere tautology. Frank \cite{frank2009natural,frank2012natural} counters that this a strength of the Price equation. To better understand total change of a process, we transform it into selective and environmental parts, calculate and reason about each separately, then use the Price equation to combine insights additively, e.g., by summing equalities or inequalities. We cannot expect a general mathematical theorem to have explanatory power in and of itself, but we can use it as a vessel for interpreting empirical data and conclusions about real-world phenomena. 


Grafen and Batty et al. \cite{grafen2000developments,grafen2007formal,batty2014foundations,grafen2015biological} have built an topological-analytical framework for working with Price's equation and Fisher's fundamental theorem, based on measure theory with common topological assumptions (in particular, Borel measurability). An early paper of Grafen \cite{grafen2002first} considered the case of arbitrary measurable populations, related by an integral kernel $w_i(\d i) = \frac{1}{N} \int w(i,i') \mu'(\d i')$ with unspecified regularity assumptions, and only considered the selective change, not the full Price equation.

Kerr and Godfrey-Smith \cite{kerr2009generalization} relax the assumption that all children be accounted for by parents, and prove an extended Price equation with a third term for those orphaned children. Brown and Field \cite{brown2021extended} recognize that this has novel interpretations around migration and mixed asexual/sexual reproduction. 

Luque et al. \cite{luque2021mirror,baravalle2021towards} argue for the Price equation at the center of a general theory of evolution, including cultural evolutionary theory. Aguilar and Ak{\c{c}}ay \cite{aguilar2018gene} use the multi-level Price equation to analyze processes in terms of genetic  and cultural factors. Reiskind et al. \cite{reiskind2021nothing} use the Price equation to describe the selective change of trait and allele frequencies across generations. 

\subsection{Justification of Abstraction and Biological Examples} 
\label{sect_justificationofabstraction}

Before we present the abstract framework for the Price equation, we share a brief justification for why this level of abstraction is useful, and discuss some biological implications. Historically, Price's discrete framework has helped scientists analyze simple populations of differentiated individuals, and revealed new biological insights. In the study of more complex systems like continuous, hierarchical, and stochastic populations, researchers have introduced alternative versions of the Price equation, as described in Section \ref{sect_litreview}.


While these models are each useful in their specific domains, approximations become difficult when dealing with complex multi-scale systems, especially those with very small and very large scales. For example, metacognition arises from competing evolutionary time scales, and has resisted a quantitative modeling via the Price equation \cite{kuchling2022metacognition}. Measure theory provides an effective way to integrate different models of the Price equation into a coherent whole, as illustrated by Grafen \cite[Sect 2.4]{grafen2002first}:
\begin{quote}
The first reason to be general is to show that the optimization link with natural selection is not just a coincidence in a special case, but a fundamental fact about a class of selection processes. Furthermore, the formal Darwinism unification project aims to provide a technical representation of the commonsense, informal, arguments first proposed by Darwin (1859), and accepted by generations of biologists since. The formal argument should work in the same way for finite and infinite populations; for haploid populations, diploid populations and mixtures; for one-, two- and multi-locus traits; and for cases with and without environmental stochasticity, with finite or infinite sets of possible environments. Darwin did not take these cases separately, and neither should we. It is worth noting that, although the apparatus is complex, the argument is simple, reflecting the persuasive nature of the original verbal argument.

Another advantage of generality is that the theoretical developments here can be viewed as ``meta-models'', that is, as models of models. The aim is to show that a wide class of existing population genetic models admit of an optimization interpretation, and to show how to construct the corresponding optimization model. This purpose is fulfilled in proportion to the generality of the model.

Finally, the model is not yet general enough. A general argument provides a better source for further development than a special case. For example, inclusive fitness and ESS theory could be incorporated with careful extensions of the model, and ideally both would be incorporated simultaneously.
\end{quote}

Our abstract framework takes Grafen's next steps of generalization. In Part 1, we describe inclusive fitness via an evolutionary transition mapping \eqref{def_process}, categorizing and quantifying all measurable parent-child relationships in rearing. We show that natural selection is given by absolutely continuous scalings of measures, and environmental change by Markov chains, familiar tools to applied mathematicians. 
Our Zeroth Law bounds the selective velocity, and the First Law bounds the selective acceleration. These laws quantify how quickly selection speeds up the process of selection.

In Parts 2 and 3, we introduce an optimization theory based on new entropy functionals, quantifying the degree of selective and environmental change of a process. These entropies satisfy universal quantitative law: our Second Law further quantifies how selective change drives selection, and our Third Law shows how environmental change amplifies or counters selection. 
We also extend these laws into the quantum realm, which has implications for selection in quantum biology \cite{lambert2013quantum,cao2020quantum}. The primary method we use throughout our analysis is Jensen's inequality, as applied to convex and concave entropy functionals and their changes. 

The equilibrium cases for the entropy optimization inequalities correspond to evolutionarily stable strategies. These are characterized by the saturation condition for Jensen's inequality, meaning that we do not have to compute partial differential equations to solve for the variational principle. With additional specifications on the model, these equilibrium cases can be analyzed using calculus and methods from evolutionary game theory. Thus the entropy functionals provides a measure of model fit to empirical data, where real-world populations can be analyzed and approximated relative to their nearest equilibrium neighbors. 

We summarize some biological examples where the abstract framework can be used, extending techniques currently available in the literature. The abstract framework applies universally both to concrete models and empirical data, but it cannot provide biological insights in the absence of models or data. 
Nonetheless in applications, the Laws of Natural Selection will manifest as constraints on observed selective and environmental growth. 

Population niches which are stable over generations can be modeled locally by processes in selective or environmental equilibrium. In the absence of environmental effects, populations engage in pure selective growth, with non-equilibrium populations obtaining faster rates of selection. When combined with environmental effects, populations can interact to optimize their selective growth via dispersal and mixing, with non-equilibrium populations having a bigger impact for or against selection. 



\begin{exa}[Biological Examples] \label{exa_justificationofabstraction} \textbf{ }
\begin{enumerate}
    \item Differentiated individuals on a smooth spacetime, 
    such as predator-prey models in continuous geographical ecosystems \cite{zhou2013dynamics} or general evolutionary games \cite{friedman2016evolutionary}. Differential calculus and differential games \cite{isaacs1999differential} can be used to study dynamics in these environments, especially for loss/gain functions that populations are optimizing against. The Price framework extends the discrete Page-Nowak dynamic framework \cite{page2002unifying} to the case of populations with very small allele differences, as well as arbitrarily large, multi-scale populations. 
    
    
    \item Non-differentiated entities on a continuous spacetime, such as plants, fungi, and molds, as illustrated by Fox \cite{fox2006using} to study biodiversity loss in a partitioned spatial environment. 
    The abstract framework provides a way to integrated microscopic and macroscopic flora into a single model, as described as shapes extended over a continuous spacetime, organized by genotypic and phenotypic properties. The Price equation describes selective growth of these flora, and the environmental change from dispersion and mixing after spora leave the originating parent. This provides a quantitative framework for the qualitative work of Hamilton and Lenton \cite{hamilton1998spora}, who showed how microbes of the atmosphere (spora) use dispersion and mixing to drive selective growth in their populations,
    and Lenton and Oijen \cite{lenton2002gaia}, who provided an simple probabilistic model for discrete daisy populations. 
    
    \item Stochastically-varying populations, incorporating empirical position, stochastic fluctuations, random strategies, and uncertain states of nature into one distribution describing the system \cite{freund2000stochastic,grafen2002first, taitelbaum2020population}. This is because statistical models are parametrized distributions \cite{mccullagh2002statistical}, and models can be integrated using copulas, as is done in the ecological literature \cite{charpentier2007estimation,ghosh2020copulas}. The abstract framework extends Rice's stochastic Price equation \cite{rice2008stochastic}, enabling the multi-level Price equation to simultaneously describe stochasticity and selection at multiple scales.
    
    \item Hierarchical bioinformatics like protein folding, which combines microscopic genetic codes in amino-acid sequences, mesoscopic configurations of protein as atom configurations, and macroscopic effects arising from protein interactions  \cite[Supplementary Material]{jumper2021highly}. Rice \cite{rice2020universal} shows how to use the stochastic Price equation to analyze bioinformatic codes like genetic sequences, and Reiskind et al. \cite{reiskind2021nothing} use the Price equation to predict selective changes of genetic frequencies. The abstract framework allows us to integrate Rice and Reiskind et al.'s coding theory with Fox's shape-based analysis \cite{fox2006using} for a more complete model of protein folding. Here, entities consist simultaneously of strands of DNA along with folded protein configurations in 3-dimensional space. Transition mappings consist of substitutions of DNA bases, snippings of DNA strands, and reconfigurations and interactions of proteins. 
    
    
    
    \item Approximations of large-population systems by continuous models and hydrodynamic limits, where increasing sequences of finite populations are embedded in a uniform topological space where limits are defined, and transition mappings satisfy partial differential equations \cite{demasi2006mathematical}. Hydrodynamic limits have been historically applied in physics \cite{rezakhanlou1991hydrodynamic}, economics \cite{scalas2006application}, and political science \cite{dehaan2008dynamics}, and more recently have been used in neuroscience to describe large systems of interacting neurons \cite{demasi2015hydrodynamic}, and in crowd dynamics to understand behaviors of herds \cite{banda2020recent}. Week et al. \cite[(10)]{week2021white} present a limiting Price equation to approximate large populations with purely selective growth, which holds under sufficiently strong regularity conditions as the population size goes to infinity. The abstract framework is robust enough to handle infinite population sizes as the limit of large populations, and we present a continuous-time Price equation \eqref{eqn_timevaryingprice} which generalizes that of Week et al. Equilibrium conditions can be analyzed by taking partial derivatives of model parameters, as with the comparative-statics method in political science \cite{little2015elections}. In large-but-finite models, decision-making can be analyzed using Poisson games \cite{smith2017group}.
    
    \item Computer vision, where entities are represented at a microscopic scale as shaded pixels, while simultaneously organized as macroscopic shapes and objects \cite{samet1989hierarchical,van2020blending}. This can be represented hierarchically, where population individuals combine pixelated images and collections of features on those images. Transition functions can include changes in the image size and coloration, as well as the addition, subtraction, and merging of object structures. The abstract framework provides a practical way to integrate high-dimensional empirical data of ecosystems with classic evolutionary models that use the discrete Price equation. Nowozin \cite{nowozin2014optimal} analyzes optimal decision-making problems in computer vision, by leveraging Rice's stochastic Price equation to approximate ratios of random variables \cite{rice2008stochastic}. 
    
    \item The Price equation provides an alternate approach to thermodynamics, as illustrated in recent work in the evolutionary biology and statistical physics literature. Nicholson et al. \cite[(1)]{nicholson2020time} and Frank and Bruggeman \cite{frank2020fundamental} identify the discrete Price equation as the equation of motion for systems with finitely-many energy states. They interpret the Price equation as a stochastic First Law of Thermodynamics, decomposing motion into ``flux of heat'' (selective change) and ``flux of work'' (environmental change). Nicholson et al. \cite[(13)]{nicholson2020time} and Frank \cite[\S 6.4,\,12]{frank2018price} explore inequalities for the rate of Shannon entropy production, representing stochastic versions of the Second Law of Thermodynamics. These methods directly extend to the general situation via the abstract framework.
    
    Our Laws of Natural Selection are related to the Laws of Thermodynamics in subtle ways, which should be further explored by future researchers. When fitness and population are used to model energy and mass, then our First Law describes a non-conservative version of the First Law of Thermodynamics. Our Second Law shows that selective entropy is monotone under selective change, relating to the monotonicity of the Second Law of Thermodynamics. Our Third Law provides extremes for environmental change, and therefore environmental equilibrium corresponds to a ``zero temperature'' case. 
\end{enumerate}
\end{exa}



\subsection{Review of Discrete Price Equation} \label{sect_discretepriceequation}


We recall Price's discrete work \cite{price1970selection,price1972fisher} and express it in modern probability formalism. We summarize the general measure-theoretic and quantum frameworks in Section \ref{sect_summary}, and present in detail in Sections \ref{sect_price} and \ref{sect_quantum}.

Let $I = (i_1, \cdots, i_K)$ be a finite set, and let $\mu = (N_1, \cdots, N_K)$ and $\mu' = (N'_1, \cdots, N'_K)$ be two finite measures on $I$, representing separate populations of interest (i.e., $N_k = \mu(\{i_k\})$ and $N'_k = \mu'(\{i'_k\})$). Write the total population sizes $\mu(I) = N$ and $\mu'(I) = N'$. Let $X = (X_1, \cdots, X_K)$ be a measurable function (an ``observable''), with average values $\bar X := \E[X] := \frac{1}{N} \sum_k X_k N_k$ and $\bar X' := \E'[X] := \frac{1}{N'} \sum_k X_k N_k'$. 

Price \cite{price1970selection} introduced the average change operator as the difference of the average values:
\begin{equation}
    \Delta(\bar X) := \bar X' - \bar X = \E'[X] - \E[X] = \sum_k X_k(N_k' - N_k).
\end{equation}




Price took as given that the populations $\mu$ and $\mu'$ be somehow related. We formalize this with the concept of an evolutionary process, representing a full accounting of the child population $\mu'$ in terms of the parent population $\mu$. For the discrete setting, we express an evolutionary process as an arbitrary transition kernel $w(i,i')$, satisfying 
\begin{equation} \label{eqn_discretedisintegration}
    \mu'(i') = \sum_k w(i_k,i') N_k.
\end{equation}
i.e., the second population can be decomposed as a weighted sum against the first population. This is a discrete disintegration equation, in the sense of regular conditional probabilities \cite{leao2004regular,bogachev2007measure,lagatta2013continuous}. The kernel $w$ represents an \emph{evolutionary process}, where $w(i,i')$ is the contribution of type $i$ toward child $i'$, and equation \eqref{eqn_discretedisintegration} ensures that all children are accounted for in terms of parents. For example, in sexually-reproductive populations, every child has two biological parents, so $w(i,i') = 1/2$ for each parent $i$ of $i'$. In asexually-reproductive populations, every child $i'$ has a unique parent $i$, so $w(i,i') = 1$.


Write the total fitness function $W_k := W(i_k) := \sum_{k'} w(i_k, i_{k'})$, i.e., the total contribution to all children $i_{k'}$ from parent $i_k$. Let $\bar W := \frac{N'}{N}$ be the population ratio, satisfying $\bar W = \E[W] = \sum_k W_k N_k$. Define the relative fitness $U_k := U(i_k) := \frac{W(i_k)}{\bar W}$. 

Price defined the selective change of $X$ as the covariance against relative fitness:
\begin{equation}
    \partial_\NS(X) := \cov(X,U) := \E[(X-\bar X)(U - 1)] = \frac{1}{N} \sum_k (X_k - \bar X) (U_k - 1) N_k.
\end{equation}


Define the local average $\langle X \rangle_w(i_k) := \frac{1}{W(i_k)} \sum_{k'} X(i_{k'}) w(i_k,i_{k'}) N_k$, i.e., the average of $X$ across all children of $i_k$, normalized by fitness. Using this, we can express the process definition \eqref{eqn_discretedisintegration} in terms of a tower property:
\begin{equation}
    \E'[X] = \E[U \langle X \rangle_w].
\end{equation}
i.e., the expected future value is given by taking the scaled population average of the local average weighted by relative fitness. Define the local change $\Delta_w(X)(i) := \langle X \rangle_w(i) - X(i)$ as the difference between the local average and the original value of $X$. 

Price defined the environmental change of $X$ as the average local change, weighted by $U$:
\begin{equation}
    \partial_\EC(X) := \E[\Delta_w(X) U] = \E[(\langle X \rangle_w - X)U] = \E'[X] - \E[UX]. 
\end{equation}

The Price equation states that the average change is the sum of selective change and environmental change, with no additional components:

\begin{equation} \label{eqn_discreteprice}
    \Delta(\bar X) = \partial_\NS(X) + \partial_\EC(X) = \cov(X,U) + \E[\Delta_w(X) U].
\end{equation}

The proof of the discrete Price equation is simple given the definitions and tower property:
\begin{eqnarray}
    \Delta(\bar X) &=& - \E[X] + \E'[X] = \left( \E[UX] - \E[X] \right) + \left( \E[U\langle X \rangle_w] - \E[UX] \right) \nonumber \\
    &=& \cov(X,U) + \E[\Delta_w(X) U]. \label{eqn_discretepriceproof}
\end{eqnarray}

Fisher's fundamental theorem \cite{price1972fisher} states that the selective change of relative fitness is equal to its variance: 
\begin{equation}
    \partial_\NS(U) = \cov(U,U) = \var(U). 
\end{equation}



\subsection{Article Summary} \label{sect_summary}

We now 
summarize our results and contributions.

\subsubsection*{Part 1 (Price Equation)} 


In Section \ref{sect_price}, we introduce the abstract framework for the Price equation. We represent populations by finite measures $\mu$ and $\mu'$ on some (possibly distinct) measurable spaces $I$ and $I'$, and evolutionary processes $w : \mu \mapsto \mu'$ as a measurable linear map of those measures, i.e., as any measurably-varying family of measures $w_i$ satisfying the disintegration equation
\begin{equation} \label{def_process_intro}
    \mu'(B) = \int_I w_i(B) \mu(\d i).
\end{equation}
This includes Price's discrete framework as a special case, while allowing for infinitary changes and evolution of the state space. This includes the biological settings of Sections \ref{sect_litreview} and \ref{sect_justificationofabstraction} as special cases.

In Section \ref{sect_price}, we consider evolutionary processes $w : \mu \mapsto \mu'$ satisfying the disintegration equation \eqref{eqn_disintegration}, transforming one population measure $\mu$ of size $N = \mu(I)$ to another measure $\mu'$ of size $N' = \mu'(I')$ via a transition mapping $w_i$. Write expectations on functionals by dividing by population sizes: $\E[X] := \frac{1}{N} \int_I X(i) \mu(\d i)$ and $\E'[X'] := \frac{1}{N'} \int_{I'} Y(i') \mu'(\d i')$.

Define the average change $\Delta(\bar X, \bar Y) := \E'[Y] - \E[X]$. We state and prove the general form of the Price equation (Theorem \ref{thm_price}): 
\begin{equation}
    \Delta(\bar X, \bar Y) = \partial_\NS(X) + \partial_\EC(X,Y),
\end{equation} 
for the selective and environmental changes:
\begin{equation}
    \partial_\NS(X) := \cov(X,U) \quad \mathrm{and} \quad \partial_\EC(X,Y) := \E\!\left[ \left( \langle Y \rangle_w - X \right) U \right],
\end{equation}
where $\cov(X,U) = \E[X(U-1)] = \int_I X(i) \big(U(i) -1\big) \mu(\d i)$ and $\langle Y \rangle_w(i) := \frac{1}{W(i)} \int_{I'} Y(i') w_i(\d i')$. 

The general form of Fisher's fundamental theorem (Theorem \ref{thm_fisher}) follows as a trivial consequence: $\partial_\NS(U) = \cov(U,U) = \var(U)$. If $w$ and $w'$ are composable processes with relative fitnesses $U$ and $U'$, then since both have unit mean ($\E[U] = 1 = \E'[U']$), the Price equation implies that the environmental change of relative fitness is non-positive:
\begin{equation}
    \E[(\langle U' \rangle_w - U)U] = \partial_\EC(U,U') = -\partial_\NS(U) = -\var(U) \le \frac{1}{p_*} - 1 \le 0,
\end{equation}
for the childbearing population proportion $p_* := \frac{1}{N} \mu(W>0)$

In Section \ref{sect_zerothlaw}, we introduce selective equilibrium as the case of life-or-death processes where $U$ takes exactly two values ($0$ and $\frac{1}{p_*}$). We prove the ``Weak Zeroth Law of Natural Selection'' (Proposition \ref{pro_zerothlaw}):
\begin{equation} \label{ineq_zerothlaw}
    \partial_\NS(U) = \var(U) \ge 1 - \frac{1}{p_*} \ge 0.
\end{equation}
The first inequality is saturated when $w$ is in selective equilibrium. The second inequality is saturated when $w$ is purely environmental: there is no selection or growth, and the process is just a Markov chain. Accordingly, natural selection by itself never reduces average relative fitness, though environmental change might change or eliminate it entirely.

In Section \ref{sect_pure}, we present ``pure'' processes, which are fully described by either purely selective or purely environmental change. We prove the Price representation theorem (Theorem \ref{thm_pricerepresentation}), which states that every evolutionary process factors into a purely selective process followed by a purely environmental one, i.e., $w = w_\EC \circ w_\NS$. 

In Section \ref{sect_selectivechangeofvariance_and_firstlaw}, we analyze changes of the variance using Jensen's inequality. 
By Fisher's theorem, the selective change of variance is the ``selective acceleration'' of relative fitness:
\begin{equation} 
    \partial_\NS^2(U) := \partial_\NS \var(U) := \cov(U^2-1,U) = \cov(U^2,U). 
\end{equation}

We prove the ``First Law of Natural Selection'' (Theorem \ref{thm_firstlaw}), showing that selective acceleration is non-negative:
\begin{equation} \label{ineq_firstlaw_intro}
    \partial_\NS^2(U) = \partial_\NS \var(U) \ge \var(U) \left( 1 + \var(U) \right) \ge 0,
\end{equation}
Just as with the Zeroth Law \eqref{ineq_zerothlaw}, the first inequality of \eqref{ineq_firstlaw_intro} is saturated when $w$ is in selective equilibrium, and the second inequality is saturated when $w$ is purely environmental. 


In Section \ref{sect_environmentalchangevariance}, we provide a lower bound on the environmental change of variance:
\begin{eqnarray}
    \partial_\EC(\var(U), \var'(U')) &=& \E[\Delta_w(U^2,(U')^2) U] \nonumber \\
    &\ge& \frac{1}{\E[U^3]} \E\!\left[ U^2 \left( \langle U' \rangle_w - U \right) \right] \E\!\left[ U^2 \left( \langle U' \rangle_w + U \right) \right], \label{ineq_environmentalchangeofvariance_lowerbound_intro}
\end{eqnarray}
which is saturated when the processes are ``strongly stationary'', i.e., $U'(i') = U(i)$ for $w_i$-almost every $i'$ and $\mu$-almost every $i$. 
The change of variance is given by the Price equation: 
\begin{eqnarray} \label{eqn_firstlaw_priceequation}
    \var'(U') - \var(U) &=& \partial_\NS \var(U) + \partial_\EC(\var(U), \var'(U')) \nonumber \\
    &\ge& \var(U) \left( 1 + \var(U) \right) + \tfrac{1}{\E[U^3]} \E\!\left[ U^2 \left( \langle U' \rangle_w - U \right) \right] \E\!\left[ U^2 \left( \langle U' \rangle_w + U \right) \right]. \qquad \quad
\end{eqnarray}

In Section \ref{sect_multilevelprice}, we prove a general version of Price's multi-level equation \cite{price1972extension}, which he used to describe group selection. In Section \ref{sect_timevaryingpriceequation}, we prove a smooth Price equation, extending Price's continuous-time equation \cite{price1972extension}, describing change on smooth spaces. 


In Section \ref{sect_quantum}, we present quantum versions of the Price equation, extending to the case of non-commutative observables. We define a unique fitness observable, which is used for distinct left and right quantum Price equations. The degree of non-commutativity measures the quantumness of the process. We prove quantum versions of the Zeroth and First Laws. 

In Section \ref{sect_kgs}, we present a version of the Kerr-Godfrey-Smith equation for open processes, as well as open quantum processes. The presence of orphaned children adds a third term to the Price equation: the covariance against the proportion of orphaned children, or equivalently, the negative covariance against the proportion of parented children.

\subsubsection*{Part 2 (Selective Entropy)}





We introduce the selective entropy to quantitatively measure the effects of selection in a process. In Section \ref{sect_entropyNS}, we define selective entropy as the Kullback-Leibler divergence (relative entropy) of the relative fitness function:
\begin{equation} \label{def_intro_selectiveentropy}
    S_\NS := \E[-U \log U] \le 0.
\end{equation}
Selective entropy is non-positive, with saturation exactly when $w$ is purely environmental. Selective entropy is the amount of information generated by selection across an evolutionary process, and can be thought of as the ``negentropy'' of Schr\"odinger \cite{schrodinger1944life}. The negated exponential $\exp(-S_\NS) \ge 1$ represents the amount of ``selective diversity'' in a population, in accordance with the contemporary literature on entropy and diversity \cite{leinster2021entropy}. That is, the more values that the relative fitness $U$ takes, the higher the diversity $\exp(-S_\NS)$ will be.

In Theorem \ref{thm_stronggibbsNS}, we prove the strong Gibbs bounds
\begin{equation} \label{ineq_stronggibbs_intro}
    -\log\big( 1 + \var(U) \big) \le S_\NS \le \log p_* \le 0,
\end{equation}
where $p_* := \mu(U>0)/N$ is the proportion of childbearing individuals. The non-trivial inequalities of \eqref{ineq_stronggibbs_intro} are saturated when $w$ is in selective equilibrium (i.e., $U = 0$ or $1/p_*$ almost surely). In which case, we have $-\log\big( 1 + \var(U) \big) = S_\NS = \log p_*$, or equivalently $\var(U) = \e^{-S_\NS}-1 = \frac{1}{p_*}$. This implies a strong version of the Zeroth Law (Corollary \ref{cor_strongzerothlaw}):
\begin{equation}
    \partial_\NS(U) = \var(U) \ge \e^{-S_\NS} - 1 \ge \frac{1}{p_*} - 1,
\end{equation}
with saturation in the selective equilibrium case. 




In Section \ref{sect_NSNS}, we prove the ``Second Law of Natural Selection'' (Theorem \ref{thm_secondlaw}), which states that selective entropy can never increase solely under the effect of selection:
\begin{equation} \label{ineq_secondlaw}
\partial_\NS S_\NS \le -\var(U) \log(1 + \var(U)) \le \var(U) S_\NS \le \left(\e^{-S_\NS} - 1 \right) S_\NS \le - \left( \frac{1}{p_*} - 1 \right) \log \frac{1}{p_*} \le 0.
\end{equation}
These inequalities all vanish when $w$ is purely environmental. The non-trivial inequalities are saturated exactly when $w$ is in selective equilibrium. Consequently, selective-equilibrium processes minimize selective effects, whereas non-equilibrium processes evolves selectively at a faster rate. Thus the effects of selection tend to compound exponentially over time. 

We prove a selective speed limit (Theorem \ref{thm_selectivespeedlimit}), providing a bound for how fast selection can compound. We also provide bounds on the selective acceleration (Theorem \ref{thm_upperboundforselectiveacceleration}). 







In Section \ref{sect_environmentalchangeofselectiveentropy}, we bound the environmental change of selective entropy (Theorem \ref{thm_strongupperboundECNS}):
\begin{equation} \label{ineq_entropyNSEC_upperbound}
    \partial_\EC(S_\NS,S_\NS') \le \E[U^2] + \log \E[U^3].
\end{equation}
This is saturated in the strongly stationary case ($U' = U$ jointly a.s.).  

Thus using the Price equation and combining \eqref{ineq_secondlaw} and \eqref{ineq_entropyNSEC_upperbound}, we bound total change in selective entropy solely in terms of elementary functionals of the original process: 
\begin{eqnarray} \label{ineq_controlselectivechange}
    S_\NS' - S_\NS 
    &=& \partial_\NS S_\NS + \partial_\EC(S_\NS,S_\NS') \nonumber \\
    &\le& -\var(U) \log(1 + \var(U)) + \log \E[U^2] + \log \E[U^3].
\end{eqnarray}

In Section \ref{sect_multilevel_selentropy}, we state the multi-level change of selective entropy, and prove a corresponding Multi-Level Second Law (Theorem \ref{thm_multilevel_secondlaw}).

In Section \ref{sect_quantum_selentropy}, we define selective entropy using the spectral theorem and the relative-fitness operator. 
We prove a Quantum Second Law of Natural Selection, namely that the quantum selective change of quantum selective entropy is non-positive. 




\subsubsection*{Part 3 (Environmental Entropy)}

We introduce the environmental entropy to characterize the degree of environmental change in a process. 
In Section \ref{sect_environmentalentropy}, we prove basic properties about environmental entropy. Write $U_{A,B}(i) := \frac{1}{N} 1_A(i) w_i(B)$ for each $i$, and and write $\bar U_{A,B} := \E[U_{A,B}] = \frac{1}{N'} \int_A w_i(B) \mu(\d i)$. We define environmental entropy as follows:
\begin{equation} \label{def_intro_entropy_EC}
    S_\EC := \sup_{\A,\B} \sum_{A\in\A,B\in\B} \left( - \E[U_{A,B}] \log \E[U_{A,B}] \right) \ge 0.
\end{equation}
where the supremum is over all countable, measurable partitions of $I$ and $I'$, and the sum is over partition sets. This is a one-step version of Kolmogorov-Sinai entropy. The classical KS entropy can be recovered by iterating a process indefinitely, and taking the supremum across all partition refinements over all iterates (Definition \ref{def_KSentropy}). The exponential $\exp(S_\EC) \ge 1$ represents the amount of ``environmental diversity'' in a population, i.e., the more distinct values of $U_{A,B}$ there, the greater $\exp(S_\EC)$ is. 


We prove a general version of Sinai's theorem (Theorem \ref{thm_sinai}) showing that the supremum in \eqref{def_intro_entropy_EC} must be realized at a ``generating joint partition'' $(\A_*,\B_*)$. This allows us to define the change of environmental entropy by evaluating the change at joint partition sets. 




The total entropy is the sum of the selective and environmental entropies, and can be positive or negative depending on the contribution of physical and biological forces:
\begin{eqnarray}
    S_\tot := S_\NS + S_\EC 
    &=& \E[-U \log U] + \sup_{\A,\B} \sum_{A\in\A,B\in\B} \left( - \E[U_{A,B}] \log \E[U_{A,B}] \right). 
\end{eqnarray}



In Section \ref{sect_dispersionmixing}, we decompose environmental entropy into dispersive and mixing entropy functionals. We define the \emph{dispersive entropy} as
\begin{equation} \label{def_intro_entropy_DS}
    S_\dis := \sup_{\A,\B} \sum_{A\in\A,B\in\B} \E\!\left[-U_{A,B} \log \frac{U_{A,B}}{U}\right] \ge 0,
\end{equation}
which measures the ``inefficiency'', ``splitting'', ``stretching'', or ``clonal replication'' of a system. 
We introduce \emph{mixing entropy} as
\begin{equation} \label{def_intro_entropy_MX}
    S_\mix := \sup_{\A,\B} \sum_{A\in\A,B\in\B} \bar U_{A,B} \E\!\left[\frac{U_{A,B}}{\bar U_{A,B} U} \log \frac{U_{A,B}}{\bar U_{A,B} U} \right] \ge 0
\end{equation}
where $\bar U_{A,B} := \E[U_{A,B}]$ is the averaged local relative fitness. Mixing entropy measures the ``inconsistency'', ``combining'', ``folding'', or  ``sexual reproduction'' of a system. The quantities $\exp(S_\dis) \ge 1$ and $\exp(S_\mix) \ge 1$ represent the dispersive and mixing diversities of a population, respectively. The higher $\exp(S_\dis)$ and $\exp(S_\mix)$ are, the more ways the populations disperse and mix, respectively. 

We extend Sinai's theorem (Theorem \ref{thm_sinaithm_dispersionmixing}) to show that the dispersive and mixing entropies are maximized exactly at a generating joint partition. Consequently, environmental entropy decomposes as the sum of dispersion and mixing entropies: 
\begin{equation} \label{eqn_keyidentity_KS}
    S_\EC = S_\dis + S_\mix. 
\end{equation}

In Section \ref{sect_bernoulliexamples}, we present examples based on classical Bernoulli random variables. The dispersive Bernoulli process sends one input to two outputs, and is inefficient and consistent ($S_\dis > 0$, $S_\mix = 0$). The mixing Bernoulli process sends two inputs to one output, and is efficient and inconsistent ($S_\dis=0$, $S_\mix>0$).


In Section \ref{sect_ecri}, we show that the dispersive and mixing entropies characterize obstructions to invertibility. 
The Efficiency Theorem (Theorem \ref{thm_efficiency}) shows that a purely environmental process is left-invertible if and only if it is purely mixing ($S_\dis=0$). The Consistency Theorem (Theorem \ref{thm_consistency}) shows that it is right-invertible if and only if it is purely dispersive $(S_\mix=0)$. This implies the Reversibility Theorem (Theorem \ref{thm_reversibility}): a purely environmental process is invertible if and only if environmental entropy vanishes $(S_\EC=S_\dis+S_\mix=0$). Equivalently, the Irreversibility Theorem (Theorem \ref{thm_irreversibility}) shows a purely environmental process is not invertible if and only if it exhibits dispersive or mixing effects (or both). This implies a weak form of Dollo's law of irreversibility (Corollary \ref{cor_dollo}): a full process is invertible if and only if it is purely childbearing ($p_*=1$) and environmentally reversible ($S_\EC=0$). 

In Section \ref{sect_environmentalequilibrium}, we introduce environmental equilibrium and prove bounds on dispersion and mixing entropies. We also present examples of equilibrium and non-equilibrium processes.

In Section \ref{sect_thirdlaw}, we analyze the change of the environmental entropy. The ``Weak Third Law of Natural Selection'' (Theorem \ref{thm_weakthirdlaw}) shows that environmental-equilibrium processes are characterized by vanishing selective change of environmental, dispersion, and mixing entropies. 
The ``Strong Third Law'' (Theorem \ref{thm_thirdlaw}) provides quantitative bounds on selective changes for non-equilibrium processes, and these bounds collapse in the equilibrium case. 


In Section \ref{sect_environmentalchangeofenvironmentalentropy}, we state the Price equation for the environmental entropy. In Section \ref{sect_multilevel_enventropy}, we state the multilevel Price equation for environmental entropy. In Section \ref{sect_quantum_enventropy}, we define the quantum environmental entropy. 
In Section \ref{sect_conclusion}, we conclude the article. 

We hope that this work adds clarity to the mathematical biology and physics literatures, and provides a formal grounding for a unified theory of evolution and thermodynamics in the future. McCullagh reminds us that ``mathematics knows nothing about anything except mathematics, so mathematics must be instructed in the facts of rural life'' \cite[p.~1304]{mccullagh2002statistical}. We call on other scientists to use this abstract framework in the spirit of Price and Hamilton, gleaning new insights to altruistically help populations of the world.

\subsection*{Acknowledgements} T.L. gives particular thanks to Elliot Aguilar, who first introduced him to the Price equation and encouraged him to put it on a more general foundation.

T.L. also thanks Erin Beckman, Michael Betancourt, Tyler Bryson, Miguel Carri\'on \`Alvarez, David Cesarini, Dorian Goldman, Brendan Fong, George Hagstrom, Bryan  , Joseph Hirsh, Taylor Kessinger, Angela Linneman, Kellen Olszewski, Benjamin Pittman-Polletta, Javier Rodr\'{i}guez Laguna, Lisa Rogers, Leila Vaez-Azizi, Brad Weir, and Janek Wehr for helpful discussions on the Price equation.


T.L. was supported by NSF PIRE Grant No. OISE-07-30136 while at the Courant Institute (NYU) in 2010-2013, and is grateful to Adam Brandenberger, Bruce Bueno de Mesquita, Samantha Kappagoda, Bud Mishra, David Mordecai, Charles Newman, Alastair Smith, Daniel Stein, and Lai-Sang Young for mentorship during those years. 

T.L. finalized the work during free time while at Splunk, and is now at Google. T.L. certifies that there is no actual or potential conflict of interest in relation to this article.


\section{The General Price Equation and Fisher's Fundamental Theorem} \label{sect_price}

In this section, we introduce the evolutionary process framework, and prove general versions of Price's equation and Fisher's fundamental theorem. We model population states by (finite) measures, and we model processes by transition mappings between states. 

Formally, let $(I,\I)$ be a measurable space (a set $I$ and a $\sigma$-algebra $\I$), representing a ``type space'' for describing a population. A point $i \in I$ represents a discrete, individual ``type'', and a measurable subset $A \in \I$ represents a more complex type or group of types. A population state (or ``data'') is modeled by a measure $\mu$ on $I$, where the value $\mu(A)$ represents the number of individuals of type $A$. The total population size is given by $N := \mu(I)$. A population variable (or observable) is a measurable function $X : I \to \R$, and the integral $\mu[X] := \int_I X(i) \, \mu(\d i)$ represents the aggregate sum of the variable across the population. The average value is given by normalizing by population size: $\bar X := \E[X] := \tfrac{1}{N} \mu[X]$. 

Let $\mu'$ be another population state of interest, defined on a (possibly different) measurable space $(I',\I')$. This framework supports both the cases of distinct and overlapping type spaces. If there is overlap, we write $I_\cap := I \cap I'$. Most authors including Price consider the case $I=I'$, but we separate the initial and final spaces for clarity and generality. Let $N' := \mu'(I')$ be the total population size, and for any measurable function $Y$ on $I'$, we define $\mu'[Y] := \int_I Y(i') \, \mu'(\d i')$ and $\bar Y := \E'[Y] := \tfrac{1}{N'} \mu'[Y]$. 

There are multiple ways to compare the population states $\mu$ and $\mu'$, even if there is no overlap of types. Fisher \cite{fisher1930genetical} recognized that the key quantity is the \emph{selective coefficient}, defined as the ratio of population sizes: $\bar W := \frac{N'}{N}$. 

For any measurable $X$ and $Y$, define the average change as the difference of averages: 
\begin{equation} \label{def_avgchange}
\Delta(\bar X, \bar Y) := \bar Y - \bar X = \E'[Y] - \E[X].
\end{equation}
When $Y = X$, we write $\Delta(\bar X) := \Delta(\bar X,\bar X)$. 

We define an \emph{evolutionary process} as a complete accounting of the second population state in terms of the first, and we write $w : \mu \mapsto \mu'$ for this transition mapping. We formalize this as a disintegration  \cite{leao2004regular,bogachev2007measure, lagatta2013continuous}, applied to the case of finite measures. 

\begin{defn}[Evolutionary Process] \label{def_process}
We say that a measure-valued function $w : i \mapsto w_i$ is a (regular) \emph{evolutionary process} if is a disintegration mapping $\mu$ to $\mu'$, i.e.,
\begin{enumerate}
    \item For all $B \in \I'$, $i \mapsto w_i(B)$ is a measurable function of $i \in I$.
    \item For all measurable $B \subseteq I'$, the disintegration equation holds:
        \begin{equation} \label{eqn_disintegration}
            \mu'(B) = \int_I w_i(B) \, \mu(\d i).
        \end{equation}
\end{enumerate}
\end{defn}

The disintegration equation \eqref{eqn_disintegration} is equivalent to the following, for any integrable $Y$:
\begin{equation} \label{eqn_disintegration_observable}
    \int_{I'} Y(i') \mu'(\d i') = \int_I \int_{I'} Y(i') w_i(\d i') \mu(\d i). 
\end{equation}



It is convenient to treat $\mu$ as the ``parent'' population and $\mu'$ as the ``child'' population, with $w_i$ representing the distribution of children for parent $i$.
We define the \emph{fitness function} to be the total number of children:
\begin{equation}
    W(i) := w_i(I').
\end{equation}
The fitness function is itself a measurable observable, and the average fitness equals the selective coefficient, i.e., the relative population sizes: $\E[W] = \bar W = \tfrac{N'}{N}$. To see this, compute $\E[W] = \tfrac{1}{N} \int_I W(i) \mu(\d i) = \tfrac{1}{N} \int_I w_i(I') \mu(\d i) = \tfrac{1}{N} \mu'(I') = \tfrac{N'}{N}$. We write $U(i) := W(i) / \bar W$ for the relative fitness function.

\begin{rem}
The definition of an evolutionary process is a purely phenomological assumption. We make no causal, correlative or dynamical assumptions of the populations, nor do we make any assumptions on evolution away from the states $\mu$ and $\mu'$. We merely begin with the assumption that there is \emph{some} accounting function $w$, and we examine the deductive consequences of this assumption. This can be helpful in empirical analysis, to validate or falsify the assumption of a process $w$ characterizing the relationship between two states $\mu$ and $\mu'$ (e.g., a genealogy or other causal relationship), but the possibility of other processes should not be overlooked. 

\end{rem}

We say that two processes $w : \mu \mapsto \mu'$ and $w' : \mu' \mapsto \mu''$ are \emph{composable} when they share the same intermediate process. In that case, we write
\begin{equation}
    (w' \circ w)_i(C) := \int_{I''} \int_{I'} w'_{i'}(C) w_i(\d i'),
\end{equation}
for any measurable $C$ and any $i \in I$.

We say that a process $w : \mu \mapsto \mu'$ is \emph{generally reversible} if it is measurably invertible, i.e., there exists another evolutionary process $w^{-1} : \mu' \mapsto \mu$ such that the compositions are the identity processes (i.e., $w^{-1} \circ w = 1_\mu$ and $w \circ w^{-1} = 1_{\mu'}$). We characterize the class of environmentally reversible processes in Section \ref{sect_ecri} via vanishing environmental entropy functionals.

\subsection{Natural Selection}

To state and prove the Price equation, we decompose the process $w$ into selective and environmental components. Price \cite{price1970selection} (echoing Fisher \cite{fisher1930genetical} before him) recognized that selective change can be described as population covariance against relative fitness $U := W/\bar W$. 

Formally, let $\cov$ denote the population covariance for the probability measure $\mu/N$. i.e., if $X_1$ and $X_2$ are two measurable functions on $I$, then:
\begin{eqnarray} 
\cov(X_1, X_2) &:=& \E[(X_1-\bar X_1)(X_2-\bar X_2)] \label{def_cov} \\
&=& \frac{1}{N} \int \left( X_1(i) - \frac{1}{N} \int X(i_1) \mu(\d i_1) \right) \left( X_2(i) - \frac{1}{N} \int X_2(i_2) \mu(\d i_2) \right) \mu(\d i). \nonumber
\end{eqnarray}

We say that an evolutionary process $w$ is ``finite-mean'' if the fitness is finite mean ($\E[W] = \bar W < \oo$, and ``finite-variance'' if the fitness is finite variance ($\var(W) := \cov(W,W) < \oo$). Since the populations are finite, finite-variance implies finite-mean. The class of finite-variance processes is the class for which the selective change is well defined:
\begin{equation} \label{def_naturalselection}
\partial_{\NS}(X) := \cov\!\big( X, U \big) := \E\!\big[ \big(X-\E[X]\big) \big(U - 1\big) \big] = \E\!\big[ X \big( U - 1 \big) \big].
\end{equation}
where the simplification follows from elementary algebra. To see this, compute $\cov(X, \tfrac{W}{\bar W}) = \E\!\big[ \big(X-\E[X]\big) \big(\tfrac{W}{\bar W} - 1\big) \big] = \E[X\tfrac{W}{\bar W}] - \E[X] \E[1] - \E[X] \E[\tfrac{W}{\bar W}] + \E[X] \E[1] = \E[X\tfrac{W}{\bar W}] - \E[X]$, since $\E[\tfrac{W}{\bar W}] = 1$. Price identified this as one half of the total change, formulated in the Price equation \eqref{eqn_price}. 


Fisher's form of his fundamental theorem follows as a trivial consequence of this definition:
\begin{equation} \label{eqn_fisherthmclassical}
    \partial_\NS(U) = \cov(U,U) = \var(U).
\end{equation}



The selective change encodes the correlative relationship between a trait and fitness. No causal claim is made: high values of the trait could cause high fitness, or high fitness could cause high values of the trait, or some third factor could be a cause of high values of both. The causal network between various traits and fitness is complex, and the effects of these pathways is a major subject in modern biology. See \cite{gregory2009understanding} for a recent introduction to natural selection. Price's views on selection can be found in \cite{price1995nature}.

\begin{rem}[Classical Regression Statistics]
Natural selection represents an ``internal'' covariance, where positive correlations between observed traits $X$ and relative fitness $U = \frac{W}{\bar W}$ are ``recorded'' by the population. Fisher \cite{fisher1930genetical} abstracted away the recording details, and summarized the relationships with population statistics. Write the standard deviations $\sigma_X := \sqrt{\var(X)}$ and $\sigma_W := \sqrt{\var(W)}$, regression coefficients $\beta_{X,W} := \cov(X,W)/\var(W)$, and correlation coefficients $\rho_{X,W} := \cov(X,W) / (\sigma_X \sigma_W)$. The selective change equals: 
\begin{equation} \label{def_popstats} %
\partial_{\NS}(X) = \frac{\cov(X,W)}{\bar W} = \frac{\beta_{X,W} \var(W)}{\bar W} = \frac{\rho_{X,W} \sigma_X \sigma_W}{\bar W},
\end{equation}
\end{rem}


\subsection{Environmental Change}

Price \cite{price1970selection} introduced the environmental change component to fully capture the effects of non-selective change. We present a formalism for working with environmental change, based on conditional expectations from probability. 

First, we introduce the ``local average'' operator, defined by integrating over the child population, and normalizing by fitness:
\begin{equation} \label{def_localavg}
\langle Y \rangle_w(i) := \frac{1}{W(i)} \int Y(i') \, w_i(\d i').
\end{equation}
For each $Y$, $i \mapsto \langle Y \rangle_w(i)$ is a measurable observable. For each $i$, $Y \mapsto \langle Y \rangle_w(i)$ is a probability expectation operator. When the context is clear, we drop the subscript $w$ and write $\langle Y \rangle := \langle Y \rangle_w$. The fundamental relation is the tower property,
\begin{equation} \label{eqn_towerproperty}
    \E'[Y] = \E[U \langle Y \rangle_w],
\end{equation}
adapting the tower property of conditional expectations to variable-size measures. To prove \eqref{eqn_towerproperty}, compute 
\begin{equation}
    \E'[Y] = \frac{1}{N'} \int_{I'} Y(i') \mu'(\d i') = \frac{1}{N'} \int_I \int_{I'} Y(i') w_i(\d i') \mu(\d i) = \frac{1}{N} \int_I \frac{W(i)}{\bar W} \langle Y \rangle_w(i) \mu(\d i) = \E[U \langle Y \rangle_w].
\end{equation} This allows us to compare $\E'[Y] = \E[U \langle Y\rangle_w]$ and $\E[X]$ on a common measure space $(I,\mu)$. 


We use this to define the ``local change'' operator, by subtracting the original value of $X$ from the local average:
\begin{equation} \label{def_localchange}
\Delta_w(X,Y)(i) := \langle Y \rangle_w(i) - X(i) = \frac{1}{W(i)} \int_I Y(i') \, w_i(\d i') - X(i).
\end{equation}
The local change $\Delta_w(X,Y)(i)$ is a function of $i$ (and depends on $w$), whereas the average change $\Delta(\bar X, \bar Y)$ is a single value (and does not depend on $w$). The local change measures the difference of average value $\langle Y \rangle_w(i)$ from the original value $X(i)$. If $Y = X$, we write $\Delta_w(X) := \Delta_w(X,X)$.

We define the environmental change by weighting the local change by relative fitness, and averaging across the parent population:
\begin{equation} \label{def_environmentalchange}
\partial_{\EC}(X,Y) := \E\big[\Delta_w(X,Y) U \big] = \E[(\langle Y \rangle_w - X) U].
\end{equation}
The environmental change is the second half of the Price equation \eqref{eqn_price}. 


Identity \eqref{eqn_towerproperty} lets us rewrite the environmental change as follows:
\begin{equation}
    \partial_\EC(X,Y) = \E[U \langle Y \rangle_w - UX] = \E'[Y] - \E[UX]. 
\end{equation}

\subsection{Price Equation}

The Price equation follows as an immediate consequence. This extends the discrete Price equation to the case of general finite measures (with no topological constraints), as well as separate functions $X$ and $Y$. 

\begin{thm}[General Price Equation] \label{thm_price}
Let $w$ be a finite-variance process. If $X$ and $Y$ are measurable functions on $I$ and $I'$, respectively, then the Price equation holds:
\begin{equation} \label{eqn_price}
\Delta(\bar X, \bar Y) = \partial_{\NS}(X) + \partial_{\EC}(X,Y) = \cov(X,U) + \E[ \Delta_w(X,Y) U].
\end{equation}
\end{thm}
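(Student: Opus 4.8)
The plan is to prove this by direct computation: I would add the two component operators $\partial_\NS(X)$ and $\partial_\EC(X,Y)$ and watch the mixed term $\E[XU]$ telescope away, leaving precisely $\E'[Y] - \E[X] = \Delta(\bar X, \bar Y)$. The entire content is algebraic cancellation once both operators are written in a form living on the common parent space $(I, \mu/N)$; the one genuinely structural ingredient is the tower property \eqref{eqn_towerproperty}, which is already established in the excerpt and which is exactly what lets me push the child-space integral $\E'[Y]$ back onto the parent space.

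First I would invoke the simplified form of the selective change, namely $\cov(X,U) = \E[X(U-1)] = \E[XU] - \E[X]$, which holds because $\E[U] = 1$; this identity is recorded immediately after \eqref{def_naturalselection}. Next I would rewrite the environmental change using the tower property: since $\E'[Y] = \E[U\langle Y\rangle_w]$, definition \eqref{def_environmentalchange} gives $\partial_\EC(X,Y) = \E[U\langle Y\rangle_w] - \E[UX] = \E'[Y] - \E[UX]$. Adding the two expressions yields
\begin{equation}
\partial_\NS(X) + \partial_\EC(X,Y) = \big(\E[XU] - \E[X]\big) + \big(\E'[Y] - \E[UX]\big) = \E'[Y] - \E[X],
\end{equation}
where $\E[XU]$ and $\E[UX]$ cancel, and the right-hand side is $\Delta(\bar X, \bar Y)$ by \eqref{def_avgchange}. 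This completes the identity.

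The main thing to be careful about is not a deep obstacle but a matter of integrability bookkeeping: I must ensure every expectation appearing above is finite so that the rearrangements and cancellations are legitimate. This is where the finite-variance hypothesis on $w$ enters — finite variance of $W$ (hence of $U$) together with the standing assumption that $X$ and $Y$ are integrable against $\mu$ and $\mu'$ guarantees that $\E[XU]$, $\E[X]$, and $\E'[Y] = \E[U\langle Y\rangle_w]$ are all well defined and finite, so the subtraction $\E[XU] - \E[UX] = 0$ is valid rather than an $\infty - \infty$ indeterminacy. I would state this integrability check explicitly, citing \eqref{eqn_disintegration_observable} to justify that $\E'[Y] = \E[U\langle Y\rangle_w]$ and that the interchange of integration implicit in the tower property is permitted. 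Beyond that verification, the argument is purely the two-line cancellation above.
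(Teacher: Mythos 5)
Your proposal is correct and follows essentially the same add-and-subtract-$\E[XU]$ telescoping argument the paper uses, relying on the identical two ingredients ($\cov(X,U)=\E[XU]-\E[X]$ and the tower property $\E'[Y]=\E[U\langle Y\rangle_w]$). The extra integrability bookkeeping you flag is a reasonable addition but does not change the substance of the argument.
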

\begin{proof}
The proof is similar to \eqref{eqn_discretepriceproof}. Using the definition \eqref{def_naturalselection} and the tower property \eqref{eqn_towerproperty}, we have:
\begin{eqnarray}
    \Delta(\bar X, \bar Y) &=& - \E[X] + \E'[Y] = \left( \E[UX] - \E[X] \right) + \left( \E[U \langle Y \rangle_w] - \E[UX] \right) \nonumber \\
    &=& \cov(X,U) + \E[\Delta_w(X,Y) U].
\end{eqnarray}
\end{proof}



It can be useful to write various aggregate forms of the Price equation, rather than averaged. We state this as the following corollary. The aggregate Price equation includes similar terms to \eqref{eqn_price} with relative fitness replaced by absolute fitness, plus an additional term. 

\begin{cor}[Aggregate Price Equation] \label{cor_aggregateprice}
Let $w$ be a finite-variance process. If $X$ and $Y$ are measurable functions on $I$ and $I'$, respectively, then the aggregate Price equation holds:
\begin{eqnarray}
    \int_{I'} Y \mu' - \int_I X \mu &=& N' \E'[Y] - N \E[X] \nonumber \\
    &=& N' \partial_\NS(X) + N' \partial_\EC(X,Y) + (N'-N) \E[X] \nonumber \\
    &=& N \cov(X,W) + N \E\!\left[\Delta_w(X,Y) W\right] + (N'-N) \E[X] \label{eqn_aggregateprice1} \\
    &=& \int_I \left( X(W-\bar W) + \Delta_w(X,Y) W + (\bar W-1) X \right) \mu \label{eqn_aggregateprice2} \\
    &=& \int_I \left( X(W-1) + \Delta_w(X,Y) W  \right) \mu. \label{eqn_aggregateprice3}
\end{eqnarray}
\end{cor}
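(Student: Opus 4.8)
The plan is to obtain each successive line of \eqref{eqn_aggregateprice1}--\eqref{eqn_aggregateprice3} from the previous one by elementary algebra, taking the general Price equation (Theorem \ref{thm_price}) as the only nontrivial input. The two conversions that do all the work are the passage between relative and absolute fitness, $U = W/\bar W$, and the identity $\bar W = N'/N$, equivalently $N' - N = N(\bar W - 1)$. The opening equality $\int_{I'} Y \mu' - \int_I X \mu = N'\E'[Y] - N\E[X]$ is just the definition of the normalized averages, since $\int_{I'} Y \mu' = N' \E'[Y]$ and $\int_I X \mu = N\E[X]$.

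Next I would invoke Theorem \ref{thm_price} in the form $\E'[Y] = \E[X] + \partial_\NS(X) + \partial_\EC(X,Y)$, substitute it into $N'\E'[Y] - N\E[X]$, and collect the $\E[X]$ terms to get $N'\partial_\NS(X) + N'\partial_\EC(X,Y) + (N'-N)\E[X]$. To finish \eqref{eqn_aggregateprice1}, I would use that $\bar W$ is a constant to pull it out: $\partial_\NS(X) = \cov(X, W/\bar W) = \cov(X,W)/\bar W$ and $\partial_\EC(X,Y) = \E[\Delta_w(X,Y)W]/\bar W$; multiplying by $N' = N\bar W$ then cancels the $\bar W$ in each denominator, producing $N\cov(X,W)$ and $N\E[\Delta_w(X,Y)W]$.

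To reach \eqref{eqn_aggregateprice2} I would rewrite each averaged quantity as an integral against $\mu$: from $\E[W] = \bar W$ we get $\cov(X,W) = \E[X(W-\bar W)]$, so $N\cov(X,W) = \int_I X(W-\bar W)\,\mu$; likewise $N\E[\Delta_w(X,Y)W] = \int_I \Delta_w(X,Y)W\,\mu$; and $(N'-N)\E[X] = N(\bar W-1)\E[X] = \int_I (\bar W - 1)X\,\mu$. Finally, \eqref{eqn_aggregateprice3} follows by combining the first and last integrands via $X(W-\bar W) + (\bar W - 1)X = X(W-1)$. The computation presents no real obstacle; the only thing to watch is the bookkeeping of normalization constants — specifically that $N' = N\bar W$ is what makes the relative-to-absolute fitness conversion clean, and that $\E[W - \bar W] = 0$ is what eliminates the $\bar X$ contribution when expanding the covariance.
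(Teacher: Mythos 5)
Your proof is correct and follows exactly the route the paper intends: the corollary is stated without proof as an immediate algebraic consequence of Theorem \ref{thm_price}, using precisely the conversions $U = W/\bar W$, $N' = N\bar W$, and $\E[W-\bar W]=0$ that you identify. All five equalities check out as you present them.
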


We can analyze the evolution of population measures using the Price equation. 

\begin{cor}[Evolution of Population Measures]
Let $w$ be finite variance. Then for any measurable $A \subseteq I$ and $B \subseteq I'$, 
\begin{eqnarray}
    \E'[1_B] - \E[1_A] &=& \cov(1_A, U) + \E[\Delta_w(1_A,1_B) U] \\
    \mu'(B) - \mu(A) &=& N \cov(1_A, W) + N \E[\Delta_w(1_A,1_B) W] + (N'-N) \E[1_A].
\end{eqnarray}
\end{cor}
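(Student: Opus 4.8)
The plan is to recognize this corollary as a direct specialization of the Price equation (Theorem~\ref{thm_price}) and the aggregate Price equation (Corollary~\ref{cor_aggregateprice}) to indicator observables. First I would set $X := 1_A$ and $Y := 1_B$. Because $A \in \I$ and $B \in \I'$, these are bounded measurable functions, hence valid observables for the finite-variance process $w$; in particular every covariance, local average, and local change appearing below is automatically finite, so no integrability issue arises.

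For the first identity, I would substitute $X = 1_A$ and $Y = 1_B$ into \eqref{eqn_price}. By the definition \eqref{def_avgchange} of average change, the left-hand side becomes $\E'[1_B] - \E[1_A]$; by the definitions \eqref{def_naturalselection} and \eqref{def_environmentalchange} of selective and environmental change, the right-hand side becomes $\cov(1_A, U) + \E[\Delta_w(1_A, 1_B) U]$. This reproduces the first line verbatim.

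For the second identity, I would substitute the same observables into the aggregate form \eqref{eqn_aggregateprice1}. The only bookkeeping is evaluating the two boundary integrals: since $\int_{I'} 1_B \, \mu' = \mu'(B)$ and $\int_I 1_A \, \mu = \mu(A)$, the left-hand side collapses to $\mu'(B) - \mu(A)$, while the three terms on the right specialize directly to $N \cov(1_A, W)$, $N\,\E[\Delta_w(1_A, 1_B) W]$, and $(N'-N)\E[1_A]$. There is no genuine obstacle: the content lies entirely in the two preceding results, and the only point worth flagging is that indicator functions are admissible observables, which holds because their boundedness gives finite variance.
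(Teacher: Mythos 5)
Your proposal is correct and matches the paper's proof exactly: the paper likewise obtains both identities by substituting $X = 1_A$ and $Y = 1_B$ into the general and aggregate Price equations. Your extra remark on indicator functions being bounded (hence admissible) observables is a harmless and reasonable addition.
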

\begin{proof}
Apply the standard and aggregate Price equations with 
$X = 1_A$ and $Y = 1_B$. 
\end{proof}

\begin{rem}[Locally Finite Case]
If $\mu$ and $\mu'$ are locally-finite measures satisfying a disintegration equation \eqref{eqn_disintegration}, then the non-covariant aggregate Price equation \eqref{eqn_aggregateprice3} still holds. This can be verified directly: $\int_{I'} Y \mu' - \int_I X \mu = \int_I \left( X(W-1) + (\langle Y \rangle_w - X) W  \right) \mu$, for $XW \in L^1(\mu)$. Thus much of this article can be adapted to the locally-finite case.
\end{rem}


\subsection{Fisher's Fundamental Theorem} \label{sect_fisherthm}

Fisher's fundamental theorem \eqref{eqn_fisherthmclassical} states that selective change of relative fitness is equal to the variance of relative fitness:
\begin{equation}
    \partial_\NS(U) = \cov(U,U) = \var(U). 
\end{equation}
The aggregate version states that selective change of fitness is proportional to its variance:
\begin{equation}
    \partial_\NS(W) = \cov(W,U) = \frac{\var(W)}{\bar W}.
\end{equation}
The general version incorporates the environmental change to analyze the change of the fitness functions across time. 



Consider three measures $\mu$, $\mu'$ and $\mu''$ on measurable spaces $I$, $I'$ and $I''$, with population sizes $N$, $N'$ and $N''$, respectively. Consider two composable processes $w : \mu \mapsto \mu'$ and $w' : \mu' \mapsto \mu''$. Define the fitness functions $W(i) := w_i(I')$ and $W(i') := w'_{i'}(I'')$, with selective coefficients $\bar W := N' / N$ and $\bar W' := N''/N'$. Define the relative fitness functions $U := W/\bar W$ and $U' := W'/\bar W'$. 

By construction, $U$ and $U'$ both have unit mean ($\E[U] = 1 = \E'[U'])$. When we apply the Price equation, the left side vanishes and so environmental change equals negative variance.


\begin{thm}[Generalized Fisher's Fundamental Theorem] \label{thm_fisher}
Let $w$ and $w'$ be composable processes, with $w$ finite-variance. Then: 
\begin{equation} \label{eqn_fisherthm}
    0 = \Delta(\bar U,\bar U') = \partial_{\NS}(U) + \partial_{\EC}(U,U') = \var(U) + \E[ \Delta_w(U,U') U].
\end{equation}
Equivalently,
\begin{equation}
    \E[ \Delta_w(U,U') U] = - \var(U).
\end{equation}

\end{thm}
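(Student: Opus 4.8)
The plan is to apply the general Price equation (Theorem \ref{thm_price}) to the pair of relative-fitness observables, taking $X = U$ on $I$ and $Y = U'$ on $I'$, and to observe that the left-hand side collapses to zero. The entire content of the theorem then reduces to a one-line rearrangement: the average change vanishes, the selective term is $\var(U)$ by the elementary Fisher identity \eqref{eqn_fisherthmclassical}, and what remains must be the environmental term, which is $\E[\Delta_w(U,U')U]$ by definition \eqref{def_environmentalchange}.

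First I would verify the unit-mean property that makes the left side vanish. That $\E[U] = 1$ is immediate, since $\E[U] = \E[W]/\bar W = \bar W/\bar W = 1$. The analogous fact $\E'[U'] = 1$ requires the disintegration equation for the second process: integrating $W'(i') = w'_{i'}(I'')$ against $\mu'$ gives $\int_{I'} W'(i') \, \mu'(\d i') = \mu''(I'') = N''$, so that $\E'[U'] = \frac{1}{N'} \cdot \frac{N''}{\bar W'} = 1$ because $\bar W' = N''/N'$. Hence $\Delta(\bar U, \bar U') = \E'[U'] - \E[U] = 0$.

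With this in hand, the Price equation reads $0 = \partial_\NS(U) + \partial_\EC(U,U')$, and substituting $\partial_\NS(U) = \cov(U,U) = \var(U)$ yields $\E[\Delta_w(U,U')U] = \partial_\EC(U,U') = -\var(U)$, which is the claimed equivalent form. Alternatively, I could bypass the Price equation and compute the environmental term directly: by the tower property \eqref{eqn_towerproperty}, $\E[U\langle U'\rangle_w] = \E'[U'] = 1$, while $\E[U^2] = \var(U) + 1$, so $\partial_\EC(U,U') = \E[U\langle U'\rangle_w] - \E[U^2] = -\var(U)$. This serves as a useful cross-check.

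There is no serious obstacle here; the result is essentially a corollary of the Price equation together with the observation $\E[U] = \E'[U'] = 1$. The only point demanding genuine care is integrability: to invoke Theorem \ref{thm_price} and the tower property for $Y = U'$ I must know that $\langle U'\rangle_w$ is $\mu$-integrable against $U$. This is guaranteed by the finite-variance hypothesis on $w$ together with the finiteness $\E'[U'] = 1 < \oo$, since the tower property identifies $\E[U\langle U'\rangle_w]$ with $\E'[U'] = 1$; no further moment assumption on $w'$ is needed.
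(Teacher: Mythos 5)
Your proposal is correct and follows essentially the same route as the paper: the paper's proof is the one-line observation that the result follows from the Price equation with $X := U$ and $Y := U'$, relying on the unit-mean facts $\E[U] = 1 = \E'[U']$ established just before the theorem. Your additional verification of $\E'[U'] = 1$ via the disintegration equation, the tower-property cross-check, and the integrability remark are all sound elaborations of the same argument.
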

\begin{proof}
This follows from the Price equation, setting $X := U$ and $Y := U'$. 
\end{proof}

When we apply this to the full fitness functions, we have:
\begin{equation}
    \bar W' - \bar W = \partial_\NS(W) + \partial_\EC(W,W') = \frac{\var(W)}{\bar W} + \frac{\E[\Delta_w(W,W') W]}{\bar W}.
\end{equation}

\section{Selective Equilibrium and the Zeroth Law of Natural Selection} \label{sect_zerothlaw}



We introduce selective equilibrium to understand the extreme case of ``minimally selective'' processes. Selective equilibrium is the extreme case where all selection is due to life and death and no other population variance. The Weak Zeroth Law (Proposition \ref{pro_zerothlaw}) states that variance is minimized in the case of selective equilibrium. In Section \ref{sect_entropyNS}, we state and prove a Strong Zeroth Law, improving upon the lower bound using selective entropy.


\begin{defn}[Selective Equilibrium]
We say that a process $w$ is ``in selective equilibrium'' if $W$ takes exactly two values: $0$ and a single positive value $\bar U_* := 1/p_*$, where $p_* := \frac{1}{N} \mu(U>0)$ is the proportion of childbearing population. 
\end{defn}


Define the childbearing population $\mu_*(A) := \mu(A \cap \{U>0\})$ and expectation operator $\E_*[X] := \frac{1}{p_*} \E[1_{U>0} X]$. The measures $\mu_*$ and $U \mu$ are mutually absolutely continuous. i.e., $\mu_*(A) = 0$ if and only if $(U \mu)(A) := \int_A U(i) \mu(\d i) = 0$.


\begin{pro}[Weak Zeroth Law of Natural Selection] \label{pro_zerothlaw} 
Let $w$ be a finite-variance process. Then:
\begin{equation} \label{ineq_stronglowerboundforvariance}
\partial_{\NS}(U) = \var(U) \ge \frac{1}{p_*} - 1,
\end{equation}
with saturation exactly when $w$ is in selective equilibrium (in which case, $\var(U) = \frac{1}{p_*} - 1$).
\end{pro}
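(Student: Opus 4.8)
The plan is to reduce the inequality to a single application of Jensen's inequality (equivalently Cauchy--Schwarz) against the childbearing expectation operator $\E_*$ defined just above the statement. Since $\E[U]=1$, Fisher's theorem gives $\partial_\NS(U) = \var(U) = \E[U^2] - 1$, so the claim $\var(U) \ge \tfrac{1}{p_*}-1$ is equivalent to the second-moment bound $\E[U^2] \ge \tfrac{1}{p_*}$. Finite-variance guarantees $\E[U^2] < \oo$, so every quantity below is well defined.

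Next I would pass to the childbearing population. Because $U = 0$ on the complement of $\{U>0\}$, we have $U = 1_{U>0}\,U$ identically, and therefore $\E[U] = p_*\,\E_*[U]$ and $\E[U^2] = p_*\,\E_*[U^2]$, where $\E_*[X] = \tfrac{1}{p_*}\E[1_{U>0}X]$ is a genuine probability expectation (namely integration against $\mu_*/(p_* N)$). In particular $\E_*[U] = \tfrac{1}{p_*}\E[U] = \tfrac{1}{p_*}$.

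The key step is Jensen's inequality applied to the strictly convex function $x \mapsto x^2$ under the probability measure associated to $\E_*$: this yields $\E_*[U^2] \ge (\E_*[U])^2 = \tfrac{1}{p_*^2}$. Multiplying through by $p_*$ recovers $\E[U^2] = p_*\,\E_*[U^2] \ge \tfrac{1}{p_*}$, and hence $\var(U) \ge \tfrac{1}{p_*}-1$ as claimed. I expect no serious obstacle in this chain; the one point requiring care is the reduction $\E[U^2] = p_*\,\E_*[U^2]$ and the verification that $\E_*$ is a probability expectation, so that Jensen applies cleanly.

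Finally, for the saturation claim I would invoke the equality case of Jensen for the strictly convex function $x^2$: equality forces $U$ to be constant $\mu_*$-almost everywhere, and that constant must equal $\E_*[U] = \tfrac{1}{p_*}$. Thus equality holds exactly when $U$ takes only the two values $0$ and $\tfrac{1}{p_*}$, which is precisely selective equilibrium; substituting back gives $\var(U) = \tfrac{1}{p_*}-1$. The main subtlety here is matching the strict-convexity equality condition to the two-value characterization in the definition of selective equilibrium, and confirming that the positive value is \emph{forced} to be $1/p_*$ by the unit-mean constraint $\E[U]=1$.
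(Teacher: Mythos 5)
Your proposal is correct and is essentially the paper's own argument: both reduce to Jensen's inequality for the childbearing probability expectation $\E_*$, and your inequality $\E_*[U^2] \ge \E_*[U]^2$ is algebraically identical to the paper's $\E_*[(U-1)^2] \ge (\E_*[U]-1)^2$ after expanding the square. The saturation analysis (equality in Jensen forces $U$ constant $\mu_*$-a.s.\ with value $\E_*[U] = 1/p_*$, i.e.\ selective equilibrium) also matches the paper's.
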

\begin{proof}
We write the variance as:
\begin{equation}
\var(U) = \E[(U-1)^2] = (1-p_*) + p_* \E_*[(U-1)^2].
\end{equation}
We now apply Jensen's inequality to the second term, since $\E_*$ is a probability expectation and $(x-1)^2$ is convex, and we rearrange:
\begin{eqnarray}
\var(U) &\ge& (1-p_*) + p_* (\E_*[U] - 1)^2 = (1-p_*) + p_* \left(\frac{1}{p_*} - 1\right)^2 \nonumber \\
&=& (1-p_*) + (1 - p_*) (\frac{1}{p_*} - 1) = \frac{1}{p_*} - 1,
\end{eqnarray}
since $\E_*[U] = \frac{1}{p_*} \E[U] = \frac{1}{p_*}$. Saturation of this inequality occurs exactly when $U$ is constant $\mu_*$-almost surely, i.e., the selective-equilibrium case.
\end{proof}

Using the general version of Fisher's theorem \eqref{eqn_fisherthm}, this implies an upper bound on the environmental change of relative fitness. 

\begin{cor}
Let $w$ and $w'$ be composable processes, with $w$ finite-variance. Then:
\begin{equation}
    \partial_{\EC}(U,U') = \E[ \Delta_w(U,U') U] = -\var(U) \le - \left(\frac{1}{p_*} - 1\right),
\end{equation}
with saturation when $w$ is in selective equilibrium.
\end{cor}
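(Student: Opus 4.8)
The plan is to obtain this corollary by chaining together the two results that immediately precede it: the Generalized Fisher's Fundamental Theorem (Theorem~\ref{thm_fisher}) supplies the exact value of $\partial_{\EC}(U,U')$, and the Weak Zeroth Law (Proposition~\ref{pro_zerothlaw}) supplies the inequality. Since the statement is a direct consequence of these established facts, there is no genuine obstacle here; the only points requiring care are tracking the saturation condition and confirming that the finite-variance hypothesis covers well-definedness of every term.

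First I would apply Theorem~\ref{thm_fisher} to the composable pair $(w,w')$. Because both relative fitnesses have unit mean ($\E[U] = 1 = \E'[U']$), the average change $\Delta(\bar U, \bar U')$ vanishes, and the Price equation collapses to $0 = \var(U) + \E[\Delta_w(U,U')U]$. Rearranging gives the central equality
\begin{equation}
    \partial_{\EC}(U,U') = \E[\Delta_w(U,U')U] = -\var(U).
\end{equation}
This step uses only the finite-variance hypothesis on $w$, which guarantees that $\var(U)$ is finite and hence that each term above is well defined.

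Next I would invoke Proposition~\ref{pro_zerothlaw}, which gives the lower bound $\var(U) \ge \frac{1}{p_*} - 1$ for any finite-variance process. Negating this inequality and substituting into the equality above yields
\begin{equation}
    \partial_{\EC}(U,U') = -\var(U) \le -\left(\frac{1}{p_*} - 1\right),
\end{equation}
which is the claimed bound.

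Finally, the saturation analysis transfers verbatim. The inequality above is saturated precisely when the bound $\var(U) \ge \frac{1}{p_*}-1$ of Proposition~\ref{pro_zerothlaw} is saturated, and the latter occurs exactly when $w$ is in selective equilibrium (equivalently, when $U$ is constant $\mu_*$-almost surely). Since the equality coming from Fisher's theorem holds unconditionally, no additional constraint is introduced, so the environmental-change bound is saturated under the same selective-equilibrium condition.
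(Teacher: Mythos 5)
Your proposal is correct and matches the paper's intended argument exactly: the paper states this corollary immediately after remarking that it follows from the generalized Fisher's theorem \eqref{eqn_fisherthm} combined with the Weak Zeroth Law (Proposition \ref{pro_zerothlaw}), which is precisely the chain of equality-then-negated-inequality you carry out. Your handling of the saturation condition (inherited solely from the Zeroth Law, since the Fisher equality is unconditional) is also the right reading.
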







\section{``Pure'' Processes and the Price Representation Theorem} \label{sect_pure}

We consider the extreme cases of purely selective and purely environmental processes. We show that purely selective processes correspond to absolutely continuous measures, and purely environmental processes correspond to Markov chains. We then prove a novel representation theorem (Theorem \ref{thm_pricerepresentation}), showing that every evolutionary process can be written as the composition of a purely selective process followed by a purely environmental process. 

\begin{defn}[Pure Processes] \textbf{ }
\begin{enumerate}
\item (Purely Selective) Consider measures $\mu$ and $\mu'$ defined on the same space. We say that a process $w$ is \emph{purely selective} if the average change of an observable is fully described by selective change: $\Delta(\bar X, \bar X) = \partial_{\NS}(X) = \cov(X,U)$. Equivalently, environmental change vanishes ($\partial_{\EC}(X,X) = \E[\Delta_w(X,X) U] = 0$.) 

\item (Purely Environmental) Consider measures $\mu$ and $\mu'$ on (possibly different) spaces $I$ and $I'$. We say that $w$ is \emph{purely environmental} if its average change between observable $X$ and $Y$ is fully described by environmental change: $\Delta(\bar X, \bar Y) = \partial_{\EC}(X,Y) = \E[\Delta_w(X,Y) U]$. Equivalently, selective change vanishes ($\partial_\NS(X) = \cov(X,U) = 0$).
\end{enumerate}
\end{defn}

\begin{thm}[Characterization of Pure Processes] \label{thm_pureprocesses}
\textbf{ }
\begin{enumerate}
    \item (Purely Selective iff Absolute Continuity) Consider measures $\mu$ and $\mu$' on the same space. A process $w : \mu \mapsto \mu'$ is purely selective if and only if $\mu'$ is absolutely continuous to $\mu$ ($\mu' \ll \mu$) with Radon-Nikodym density equal to fitness $(\frac{\d \mu'}{\d \mu} = W$ a.s.). 
    
    

    
    \item (Purely Environmental iff Markov Chain) Consider measures on possibly distinct spaces $I$ and $I'$. A process $w : \mu \mapsto \mu'$ is purely environmental if and only if the fitness function $W$ and relative fitness $U$ are almost surely constant (with $W = \bar W$ and $U = 1$ a.s.). In this case, $w$ is a Markov chain with transition kernel $w_i(\d i')$, with uniform scaling by $\bar W$. 
\end{enumerate}

\end{thm}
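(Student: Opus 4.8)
The plan is to reduce both equivalences to the elementary principle that a linear functional vanishing against every bounded test function forces an almost-everywhere identity, after using the tower property \eqref{eqn_towerproperty} to rewrite the two change operators as clean integrals.

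For Part 1 (purely selective iff $\mu' = W\mu$), I would start from the equivalent condition $\partial_{\EC}(X,X) = 0$ for all $X$. Since $I = I'$, the tower property gives $\E[U\langle X\rangle_w] = \E'[X]$, so
$$\partial_{\EC}(X,X) = \E[U\langle X\rangle_w] - \E[UX] = \E'[X] - \E[UX].$$
Expanding, $\E'[X] = \tfrac{1}{N'}\int_I X\,\d\mu'$ and, using $N\bar W = N'$, $\E[UX] = \tfrac{1}{N\bar W}\int_I WX\,\d\mu = \tfrac{1}{N'}\int_I X\,\d(W\mu)$, whence $\partial_{\EC}(X,X) = \tfrac{1}{N'}\big(\mu'[X] - (W\mu)[X]\big)$. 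Purely selective means this vanishes for every bounded measurable $X$; testing against $X = 1_A$ over all measurable $A$ yields $\mu'(A) = \int_A W\,\d\mu$, i.e.\ $\mu' = W\mu$, which is precisely $\mu'\ll\mu$ with $\d\mu'/\d\mu = W$ a.s. The converse is immediate on reversing the computation.

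For Part 2 (purely environmental iff Markov chain), I would use $\partial_\NS(X) = \cov(X,U) = \E[X(U-1)]$ and ask when it vanishes for all $X$. The ``if'' direction is trivial once $U = 1$ a.s. For ``only if'', testing against the bounded observable $X = \operatorname{sgn}(U-1)$ gives $\E[|U-1|] = 0$, forcing $U = 1$ $\mu$-a.s., hence $W = \bar W U = \bar W$ a.s. To read off the Markov structure, note $w_i(I') = W(i) = \bar W$ a.s., so $P_i := \bar W^{-1} w_i$ satisfies $P_i(I') = 1$ a.s.; thus $P$ is a probability transition kernel, $w = \bar W\, P$ is a uniformly scaled Markov chain, and the disintegration equation becomes $\mu'(B) = \bar W\int_I P_i(B)\,\mu(\d i)$.

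All computations are routine, so the only real care is almost-sure bookkeeping. In Part 1 the local average $\langle X\rangle_w$ is formally $0/0$ on $\{W=0\}$, but it enters only through the product $U\langle X\rangle_w = \bar W^{-1}\int_{I'} X\,w_i(\d i')$, which is well-defined everywhere (the factor $U$ cancels the $1/W$), and the tower property replaces it by $\E'[X]$; the fact that $\mu'(\{W=0\}) = 0$ then emerges as a consequence of $\mu' = W\mu$ rather than being assumed. The single structural step in each part is the passage from ``the functional vanishes against all test functions'' to the measure/function identity, handled by indicators in Part 1 and by $X = \operatorname{sgn}(U-1)$ in Part 2.
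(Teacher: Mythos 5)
Your argument is correct and follows the same basic strategy as the paper: rewrite each change operator via the tower property and reduce the vanishing condition to an almost-everywhere identity by testing against a suitable class of observables. There are two small differences worth noting. For the converse of Part 1, the paper constructs a specific purely selective process $w_i(\d\tilde i) := W(\tilde i)\,\delta_i(\d\tilde i)$ realizing $\mu' = W\mu$, whereas you verify directly that the \emph{given} process $w$ has $\partial_\EC(X,X) = \tfrac{1}{N'}\bigl(\mu'[X] - (W\mu)[X]\bigr) = 0$ once $\mu' = W\mu$; your version addresses the stated biconditional more squarely (the paper's accompanying remark acknowledges that a pair $\mu'\ll\mu$ may also be related by non-selective processes). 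For Part 2, the paper deduces $U=1$ a.s.\ from $\cov(X,U)=0$ for all $X$ via a Hilbert-space argument (effectively taking $X=U$ to get $\var(U)=0$), while your choice $X=\operatorname{sgn}(U-1)$ gives $\E[|U-1|]=0$ and only requires $U\in L^1$, which is automatic from $\E[U]=1$; this is marginally more elementary. Your observation that $U\langle X\rangle_w = \bar W^{-1}\int_{I'}X\,w_i(\d i')$ is well-defined even on $\{W=0\}$ is a correct piece of bookkeeping that the paper leaves implicit.
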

\begin{proof}
Proof of (1). Suppose that $w$ is purely selective, so $0 = \partial_{\EC}(X,X) = \E[ \Delta_w(X) U ]$ for each $X$. Thus $\E'[X] = \E[XU]$ for every $X$. Hence $\mu'/N'$ (resp. $\mu'$) is absolutely continuous with respect to $\mu/N$ (resp. $\mu$), with density $U$ (resp. $W$). 


We show that absolutely continuous measures define a purely selective process. Suppose that $\mu' \ll \mu$ with Radon-Nikodym density $W := \tfrac{\d \mu'}{\d \mu}$. Define the purely selective $w : \mu \mapsto \mu'$ by weighting with the density function $W$, i.e., $w_i(A) := W(i) \delta_i(A)$, where $\delta_i(A)$ is the Dirac point-mass concentrated on $i$. i.e., $\delta_i(A) = 1$ if $i \in A$, and $=0$ if $i \notin A$. Then $\mu'(A) = \int_A W(i) \mu(\d i) = \int_{I'} w_i(A) \mu(\d i)$, proving (1).


Proof of (2). Suppose $w$ is purely environmental, so $0 = \cov(X, U) = \E[X(U - 1)]$ for all $X$. Since this holds for all $X$, we must have that $U = 1$ a.s. (hence $W = \bar W$). This is a standard functional argument. Let $H(\mu) = \{ X : \var(X) < \oo \}$ denote the Hilbert space of finite-variance observables, equipped with the covariance inner product. Since $H$ is closed, if $\cov(X,U) = 0$ for all $X$, then $\var(U) = 0$, hence $U$ is constant a.s. and equal to $\E[U] = 1$. Consequently, $W = \bar W$ a.s. Conversely, suppose $W$ is a.s. constant (with $W = \bar W$). Then $\partial_{\NS}(X) = \cov(X, \tfrac{W}{\bar W}) = 0$ since $W = \bar W$ almost everywhere. Thus $w$ is purely environmental.

If $w : \mu \mapsto \mu'$ is a Markov process between two probability distributions with kernel $w_i(\d i')$, then it describes a purely environmental process since $W(i) := w_i(I') = 1$ is the conditional probability of $I'$ given $i$. Conversely, if $w : \mu \mapsto \mu'$ is purely environmental, then $N' = N$ and $w_i(I') = 1$, so $w$ is a regular conditional probability hence a one-step Markov chain.  
\end{proof}

\begin{rem}
If one measure is absolutely continuous with respect to another ($\mu' \ll \mu$), there exists a unique purely selective process $w^\NS : \mu \to \mu'$, given by the Radon-Nikodym derivative $W := \frac{\d \mu'}{\d \mu}(i)$, but there can also exist purely environmental or general processes between these measures. For example, suppose that $\mu$ assigns mass $1/2$ to each of $\{0\}$ and $\{1\}$, and $\mu'$ assigns mass $1$ to $\{0\}$. Then $\mu' \ll \mu$ with $W^\NS(0) := \frac{\d \mu'}{\d \mu}(0) = 2$ and $W^\NS(1) := \frac{\d \mu'}{\d \mu}(1) = 0$. There also exists a Markov chain $w : \mu \mapsto \mu'$ with $w_0(0) = 1$, $w_1(0) = 1$, and $W(0) = 1 = W(1)$.
\end{rem}


The Price equation is equivalent to the following representation theorem, decomposing any process as a selective process $w_\NS$ followed by an environmental one $w_\EC$. 



\begin{thm}[Price Representation Theorem] \label{thm_pricerepresentation}
Let $w$ be a finite-variance process with fitness $W$. Define the purely selective process $w_\NS : \mu \mapsto W \mu$ and the purely environmental process $w_\EC : W \mu \mapsto \mu'$ as follows:
\begin{equation}
    w_{\NS,i}(\d \tilde i) := W(\tilde i) \delta_i(\d \tilde i) \qquad \mathrm{and} \qquad w_{\EC,\tilde i}(\d i') := \frac{w_{\tilde i}(\d i)}{W(\tilde i)},
\end{equation}
where $\delta_i$ is the Dirac point-mass measure on $I$. Then 
\begin{equation}
w = w_\EC \circ w_\NS.
\end{equation}

The selective changes of $w$ and $w_\NS$ are equal:
\begin{equation}
    \partial_{w,\NS}(X) = \cov(X,U) = \partial_{w_\NS,\NS}(X);
\end{equation}
and the environmental changes of $w$ and $w_\EC$ are equal:
\begin{equation}
\partial_{w,\EC}(X,Y) = \E[\Delta_w(X,Y) U] = \tilde \E[\Delta_{w_\EC}(X,Y)] = \partial_{w_\EC, \EC}(X,Y),
\end{equation}
where $\tilde \E[Y] := \E[U Y]$.
\end{thm}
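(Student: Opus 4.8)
The plan is to verify the three assertions—the factorization $w = w_\EC \circ w_\NS$, the equality of selective changes, and the equality of environmental changes—by direct computation, with the whole argument turning on the cancellation of the fitness factor $W$ between the two factor processes. First I would establish the factorization. Using the composition formula $(w_\EC \circ w_\NS)_i(C) = \int_I w_{\EC,\tilde i}(C)\, w_{\NS,i}(\d\tilde i)$ together with the point-mass form $w_{\NS,i}(\d\tilde i) = W(\tilde i)\,\delta_i(\d\tilde i)$, the integral against $\delta_i$ collapses to the value at $\tilde i = i$ weighted by $W(i)$, giving $(w_\EC \circ w_\NS)_i(C) = W(i)\, w_{\EC,i}(C) = W(i)\cdot\frac{w_i(C)}{W(i)} = w_i(C)$, which is exactly $w$.

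Next I would record the fitness bookkeeping for the two factors, since everything downstream depends only on their relative fitnesses and parent measures. For $w_\NS : \mu \mapsto W\mu$, the fitness is $W_{w_\NS}(i) = w_{\NS,i}(I) = W(i)$, and the intermediate size is $\tilde N := (W\mu)(I) = N\bar W = N'$; hence the selective coefficient is $\bar W_{w_\NS} = \tilde N/N = \bar W$ and the relative fitness is $U_{w_\NS} = W/\bar W = U$, identical to that of $w$. For $w_\EC : W\mu \mapsto \mu'$, the fitness is $W_{w_\EC}(\tilde i) = w_{\EC,\tilde i}(I') = w_{\tilde i}(I')/W(\tilde i) = 1$ and the selective coefficient is $N'/\tilde N = 1$, so $w_\EC$ is purely environmental (a Markov chain, as asserted in the statement) with relative fitness $U_{w_\EC} = 1$.

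The two change equalities then follow quickly. Since $\partial_\NS$ depends only on the parent measure $\mu$ and the relative fitness, and $w_\NS$ shares both $\mu$ and $U$ with $w$, we obtain $\partial_{w_\NS,\NS}(X) = \cov(X,U) = \partial_{w,\NS}(X)$ at once. For the environmental change I would first identify the intermediate expectation operator: by the change of measure $\tilde\mu = W\mu$ together with $N\bar W = N'$, one has $\tilde\E[Z] = \frac{1}{\tilde N}\int_I Z\,W\,\d\mu = \frac{1}{N}\int_I Z\,U\,\d\mu = \E[UZ]$, confirming the claimed identity $\tilde\E[Y] = \E[UY]$. Because $W_{w_\EC} = 1$, the local average of $w_\EC$ equals that of $w$, namely $\langle Y\rangle_{w_\EC}(\tilde i) = \int_{I'} Y(i')\, w_{\EC,\tilde i}(\d i') = \frac{1}{W(\tilde i)}\int_{I'} Y(i')\, w_{\tilde i}(\d i') = \langle Y\rangle_w(\tilde i)$, so the local changes agree, $\Delta_{w_\EC}(X,Y) = \Delta_w(X,Y)$. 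Using $U_{w_\EC} = 1$ then yields $\partial_{w_\EC,\EC}(X,Y) = \tilde\E[\Delta_{w_\EC}(X,Y)] = \E[U\,\Delta_w(X,Y)] = \partial_{w,\EC}(X,Y)$.

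The one genuine subtlety—which I expect to be the main obstacle—is the behavior on the zero-fitness set $\{W = 0\}$, where $w_{\EC,\tilde i}$ is defined through the indeterminate ratio $w_{\tilde i}/W(\tilde i)$ with $w_{\tilde i}(I') = W(\tilde i) = 0$. I would dispose of this by observing that the intermediate measure assigns this set no mass, $\tilde\mu(\{W = 0\}) = \int_{\{W=0\}} W\,\d\mu = 0$, and that the point-mass $w_{\NS,i}$ likewise places weight $W(i) = 0$ there; hence $w_{\EC,\tilde i}$ may be set to any fixed measure on $\{W = 0\}$ without altering the composition or any of the integrals above. Once this null set is handled cleanly, every identity holds verbatim, and no step requires more than the disintegration equation, the tower property, and the cancellation of $W$.
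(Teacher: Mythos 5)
Your proposal is correct and follows essentially the same route as the paper's proof: the Dirac collapse for the factorization, the observation that $w_\NS$ shares $\mu$ and $U$ with $w$, and the identification $\tilde\E[\cdot]=\E[U\,\cdot]$ together with $\langle Y\rangle_{w_\EC}=\langle Y\rangle_w$ for the environmental change. Your explicit treatment of the null set $\{W=0\}$, where $w_{\EC,\tilde i}$ is formally indeterminate, is a point the paper passes over silently and is a welcome addition of rigor.
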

\begin{proof}
Using Theorem \ref{thm_pureprocesses}, we have that $w_\NS$ is purely selective,  since it is absolutely continuous to $\mu$ with density equal to the fitness function; and $w_\EC$ is purely environmental, since $w_{\EC,i}(I') = \frac{w_i(I')}{W(i)} = 1$ for all $i$. We compute:
\begin{eqnarray}
(w_\EC \circ w_\NS)_i(\d i') &=& \int w_{\EC,\tilde i}(\d i') \, w_{\NS,i}(\d \tilde i) = \int \frac{w_i(\d i')}{W(\tilde i)} W(\tilde i) \, \delta_i(\d \tilde i) \nonumber \\
&=& \frac{w_i(\d i')}{W(i)} W(i) = w_i(\d i'),
\end{eqnarray}
where $\delta_i(\d \tilde i)$ is the Dirac delta distribution on $I$. i.e., $\delta_i(A) = 1$ if $i \in A$ and $=0$ if $i \notin A$. Clearly, the selective changes are equal, since both $w$ and $w_\NS$ have fitness function $W$ on population $\mu$. 

Define the intermediate population $\tilde \mu := W \mu$ on $I$ (i.e., $\tilde \mu(A) := \int_A W(i) \mu(\d i)$), and intermediate expectation $\tilde \E[Y] := \E[U Y]$. Note that $\tilde \mu$ has population size $N'$. We compute the adaptive local change
\begin{eqnarray}
\Delta_{w_\EC}(X,Y)(\tilde i) &:=& \langle X' \rangle_{w_\EC}(\tilde i) - X(\tilde i) = \int \frac{X'}{W(\tilde i)} w_{\tilde i}(\d i') - X(\tilde i) \nonumber \\
&=& \langle X' \rangle_w(\tilde i) - X(\tilde i) = \Delta_w(X,Y)(\tilde i)
\end{eqnarray}
Averaging with $\tilde \E$, we have:
\begin{eqnarray}
\partial_{w_\EC, \EC}(X,Y) &=& \tilde \E[\Delta_{w_\EC}(X,Y)] \nonumber \\
&=& \E[\Delta_\EC(X,Y) U] = \partial_{\EC}(X,Y).
\end{eqnarray}
\end{proof}

\begin{rem}[Reversibility]
We say that $w$ is selectively reversible when $w_\NS$ is invertible. This holds if and only if $p_* = 1$, in which case $w^{-1}_\NS$ is given by multiplication by the reciprocal fitness $\frac{1}{W}$. We may always recover the childbearing population by multiplying by $\frac{1}{W}$: $\mu_* = \frac{1}{W} \tilde \mu$. We say that $w$ is environmentally reversible when $w_\EC$ is invertible, and analyze that case in Section \ref{sect_ecri}.
\end{rem}

As an immediate consequence, we can decompose any composition $w' \circ w$ as a single selective piece followed by a purely environmental one. If $X$ is $I$-measurable and $Y$ is $\mu'$-integrable, define the composable product $(Y \circ X)(i) := \langle Y \rangle_w(i) X(i)$. 

\begin{cor}
Let $w : \mu \mapsto \mu'$ and $w' : \mu' \mapsto \mu''$ be two composable processes. Define the composed fitness function
\begin{equation}
    W^{(2)}(i) := (W' \circ W)(i) := \int_{I'} W'(i') w_i(\d i') = \langle W' \rangle_w(i) W(i),
\end{equation}
and define the purely selective process by multiplying by $W^{(2)}$:
\begin{equation}
    (w' \circ w)_{\NS,i}(A) := W^{(2)}(i) \delta_i(A),
\end{equation}
where $\delta_i$ is the Dirac delta function on $i$. Define the purely environmental process
\begin{equation}
    (w' \circ w)_{\EC,i}(C) := \frac{(w' \circ w)_i(C)}{(W' \circ W)(i)} = \frac{1}{W^{(2)}(i)} \int_{I'} w'_{i'}(C) w_i(\d i').
\end{equation}
for measurable $C \subseteq I''$. Then the composed process equals:
\begin{equation}
    w' \circ w = (w' \circ w)_\EC \circ (w' \circ w)_\NS.
\end{equation}
The composed process has fitness function $W^{(2)} = W' \circ W = \langle W' \rangle_w W$, and relative fitness function $U^{(2)} := U' \circ U := \langle U' \rangle_w U = W' \circ W / (\bar W' \bar W)$. 
\end{cor}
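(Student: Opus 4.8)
The plan is to recognize that the composite $w' \circ w$ is itself an evolutionary process from $\mu$ to $\mu''$, and then to apply the Price Representation Theorem (Theorem \ref{thm_pricerepresentation}) verbatim to it. Once we know that $w' \circ w$ is a finite-variance process disintegrating $\mu$ to $\mu''$ with fitness function $W^{(2)}$, the theorem automatically produces a factorization into a purely selective process (multiplication by $W^{(2)}$) followed by a purely environmental one (normalization by $W^{(2)}$), and these are exactly the kernels $(w' \circ w)_\NS$ and $(w' \circ w)_\EC$ named in the statement. So the corollary is essentially a specialization of the representation theorem, and the only genuine work is identifying the composite fitness and the composite relative fitness.

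First I would verify that $w' \circ w$ disintegrates $\mu$ to $\mu''$: for measurable $C \subseteq I''$, integrating the composed kernel against $\mu$ and applying the two disintegration equations \eqref{eqn_disintegration} in turn yields $\int_I (w' \circ w)_i(C)\,\mu(\d i) = \mu''(C)$. Setting $C = I''$ then computes the fitness,
\[
W^{(2)}(i) := (w' \circ w)_i(I'') = \int_{I'} w'_{i'}(I'')\, w_i(\d i') = \int_{I'} W'(i')\, w_i(\d i') = \langle W' \rangle_w(i)\, W(i),
\]
where I used $W'(i') = w'_{i'}(I'')$ and the definition \eqref{def_localavg} of the local average. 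This is precisely the claimed $W^{(2)} = W' \circ W = \langle W' \rangle_w W$.

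Then I would apply Theorem \ref{thm_pricerepresentation} to $w' \circ w$ with fitness $W^{(2)}$, giving the factorization $(w' \circ w) = (w' \circ w)_\EC \circ (w' \circ w)_\NS$; a direct check, inserting the Dirac mass from $(w' \circ w)_{\NS,i}(\d \tilde i) = W^{(2)}(\tilde i)\,\delta_i(\d \tilde i)$ into the composition and cancelling the two factors of $W^{(2)}$, confirms the equality exactly as in the proof of the representation theorem. For the relative fitness I would compute the averaged composite fitness, $\E[W^{(2)}] = \tfrac{1}{N}\int_I W^{(2)}(i)\,\mu(\d i) = \tfrac{1}{N}\mu''(I'') = \tfrac{N''}{N} = \bar W \bar W'$, so that
\[
U^{(2)} := \frac{W^{(2)}}{\bar W \bar W'} = \frac{\langle W' \rangle_w\, W}{\bar W'\, \bar W} = \langle U' \rangle_w\, U,
\]
since $\langle U' \rangle_w = \tfrac{1}{\bar W'}\langle W' \rangle_w$ and $U = W/\bar W$.

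The main obstacle is not the algebra but the regularity bookkeeping. Theorem \ref{thm_pricerepresentation} requires a finite-variance process, so I would need $\var(W^{(2)}) < \oo$; this is a condition on the composite and is not automatic from $w$ and $w'$ being individually finite-variance, so it should either be added to the hypotheses or derived from an integrability bound on $\langle W' \rangle_w$. A secondary point to treat carefully is that $(w' \circ w)_\EC$ is only defined where $W^{(2)} > 0$; on the set $\{W^{(2)} = 0\}$ the selective kernel assigns zero mass, so the composition is unaffected there, exactly as in the single-process case.
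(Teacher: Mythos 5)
Your proposal is correct and matches the paper's intent exactly: the paper offers no separate proof for this corollary, presenting it as an immediate consequence of the Price Representation Theorem (Theorem \ref{thm_pricerepresentation}) applied to the composite process, which is precisely what you do, together with the routine verification that $w'\circ w$ disintegrates $\mu$ to $\mu''$ with fitness $\langle W'\rangle_w W$ and mean fitness $\bar W\bar W'$. Your added caveats about finite variance of $W^{(2)}$ and the null set $\{W^{(2)}=0\}$ are reasonable bookkeeping the paper leaves implicit.
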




\subsection{Application to Matrix Theory}

The Price representation theorem admits a simple form in terms of matrices. Consider Price's context of discrete evolutionary processes, as in Section \ref{sect_discretepriceequation}. Finite discrete populations are encoded by finite-dimensional vectors $\mu,\mu'$, and an evolutionary process as a finite-dimensional matrix $w = (w(i,i'))$, with $\mu' = w\mu$. These vector spaces are equipped with the $L^1$ norm, so $N = |\mu| = \sum_{i=1}^K \mu_i$ and $N' = |\mu'| = \sum_{i'=1}^{K'} \mu'_{i'}$. The discrete Price representation theorem is equivalent to the matrix identity
\begin{equation} \label{eqn_matrixpricedecomposition}
    w = w_\EC w_\NS,
\end{equation} 
where $w_\NS$ is a diagonal matrix, and $w_\EC$ is a right stochastic (Markov transition) matrix. 



\section{Selective Change of Variance and the First Law of Natural Selection} \label{sect_selectivechangeofvariance_and_firstlaw}

We state a functional form of the Price equation, and we use this to analyze selective change of relative-fitness variance. We prove a First Law of Natural Selection, showing that $\partial_\NS \var(U) \ge \var(U) \left( 1 + \var(U) \right) \ge 0$, with saturation of the first inequality in the selective-equilibrium case.

\begin{cor}[Functional Price Equation] \label{cor_functionalprice}
Let $w : \mu \mapsto \mu'$ denote an evolutionary process. Let $\X$ and $\Y$ be vector-valued observables, and let $F(\X)$ and $G(\Y)$ be integrable functionals. Formally, we assume that $X : I \to V$ and $Y' : I' \to V'$ are Borel-measurable functions to topological vector spaces $V,V'$, and that $f : V \to \R$ and $g : V' \to \R$ are Borel-measurable real-valued functions. 
Define the selective functional change $\partial_\NS F(\X) := \cov(f(\X), U)$ and the environmental functional change $\partial_\EC(F(\X),G(\Y)) := \E[\Delta(f(\X),g(\Y)) U]$. The functional Price equation holds:
\begin{equation}
    \Delta(F(\X),G(\Y)) = \partial_\NS F(\X) + \partial_\EC(F(\X),G(\Y)) = \cov(f(\X), U) + \E[\Delta_w(f(\X),g(\Y)) U]. 
\end{equation}
\end{cor}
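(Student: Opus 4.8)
The plan is to observe that the \emph{functional} Price equation is the General Price Equation (Theorem~\ref{thm_price}) in disguise: the functionals $f(\X)$ and $g(\Y)$ are themselves ordinary scalar observables, so no new computation is required beyond a reduction. First I would set $\tilde X := f \circ X : I \to \R$ and $\tilde Y := g \circ Y : I' \to \R$, so that $f(\X) = \tilde X$ and $g(\Y) = \tilde Y$ in the paper's notation, and note that each of the four displayed quantities unwinds to an expression in $\tilde X, \tilde Y$ and $U$: by definition $\partial_\NS F(\X) = \cov(\tilde X, U)$, $\partial_\EC(F(\X),G(\Y)) = \E[\Delta_w(\tilde X,\tilde Y) U]$, and $\Delta(F(\X),G(\Y)) = \E'[\tilde Y] - \E[\tilde X] = \Delta(\bar{\tilde X}, \bar{\tilde Y})$.

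The first genuine step is to verify that $\tilde X$ and $\tilde Y$ satisfy the measurability hypotheses of Theorem~\ref{thm_price}. Since $X : I \to V$ is Borel-measurable (measurable from $\I$ to the Borel $\sigma$-algebra of $V$) and $f : V \to \R$ is Borel-measurable, the composition $\tilde X = f \circ X$ is measurable from $\I$ to the Borel sets of $\R$; the same argument applied to $g \circ Y$ shows $\tilde Y$ is $\I'$-measurable. This is the standard fact that a composition of Borel-measurable maps is Borel-measurable, and it is the only place where the topological-vector-space structure of $V$ and $V'$ plays any role.

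The remaining step is purely bookkeeping. Having checked that $\tilde X, \tilde Y$ are legitimate scalar observables, and using the hypothesis that $F(\X), G(\Y)$ are integrable (so that $\E[\tilde X]$, $\E'[\tilde Y]$, and the covariance against $U$ exist), I would apply Theorem~\ref{thm_price} verbatim with the substitutions $X \mapsto \tilde X$ and $Y \mapsto \tilde Y$. That theorem yields $\Delta(\bar{\tilde X}, \bar{\tilde Y}) = \cov(\tilde X, U) + \E[\Delta_w(\tilde X, \tilde Y) U]$, which upon re-substituting $\tilde X = f(\X)$ and $\tilde Y = g(\Y)$ is exactly the claimed functional Price equation.

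I expect no serious obstacle: the content of the corollary is conceptual rather than technical, namely the recognition that replacing a scalar observable by a measurable functional of a vector observable changes nothing in the derivation. The only point demanding care is the integrability and finite-variance bookkeeping needed to guarantee that $\cov(f(\X), U)$ is well-defined, which is absorbed into the ``integrable functionals'' hypothesis together with the finite-variance requirement inherited from Theorem~\ref{thm_price}.
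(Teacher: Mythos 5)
Your reduction is exactly the intended argument: the paper gives no separate proof of Corollary \ref{cor_functionalprice}, treating it as an immediate consequence of Theorem \ref{thm_price} applied to the scalar observables $f \circ X$ and $g \circ Y$, which is precisely what you do (including the measurability-of-compositions check and the integrability bookkeeping). Correct, and essentially the same approach as the paper.
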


\subsection{Functional Change of Variance}

We now analyze the change of variance across generations. Let $w : \mu \mapsto \mu'$ and $w' : \mu' \mapsto \mu''$ be composable processes, with fitness functions $W$ and $W'$ and relative fitnesses $U = W/\E[W]$ and $U' = W'/\E'[W]$. Write the two variance functionals as $\var(U) = \E[U^2 -1]$ and $\var'(U') = \E'[(U')^2 - 1]$. 
We write the difference of variances as follows: 
\begin{eqnarray}
    \Delta(\var(U)) &:=& \Delta(\var(U), \var'(U')) := \var'(U') - \var(U) = \E'[(U')^2] - \E[U^2]. 
\end{eqnarray}


Fisher's theorem states that selective velocity is given by relative-fitness variance: $\partial_\NS(U) = \var(U)$. We define the selective change of variance, or selective acceleration, by
\begin{equation} \label{eqn_fitnesschangeNS}
    \partial_\NS^2(U) := \partial_\NS \var(U) := \cov(U^2, U) = \E[U^2(U-1)],
\end{equation}
and environmental change of variance
\begin{equation} \label{eqn_fitnesschangeEC}
    \partial_\EC(\var(U),\var'(U')) := \E\!\left[\Delta_w(U^2, (U')^2) U\right] = \E\!\left[\left(\left\langle (U')^2 \right\rangle_w - U^2\right)U\right].
\end{equation}

The functional Price equation ensures that the change of variance decomposes as the sum of the selective and environmental changes:
\begin{equation} \label{eqn_fitnesschangeprice}
\var'(U') - \var(U)
= \partial_\NS \var(U) + \partial_\EC(\var(U),\var'(U')).
\end{equation}

The aggregate difference of variances follows from the vector form of the functional Price equation (since $\var(W) = \E[(W-\bar W)^2] = \E[f(X,\bar X)]$). Consequently: 
\begin{eqnarray}
    \var'(W') - \var(W) &=& \partial_\NS \var(W) + \partial_\EC(\var(W), \var'(W')) \nonumber \\
    &=& \cov((W-\bar W)^2, U) + \E[\Delta_w((W-\bar W)^2, (W'-\bar W')^2) U].
\end{eqnarray}


\subsection{First Law of Natural Selection}

Recall that the Zeroth Law (Proposition \ref{pro_zerothlaw}) states that $\partial_\NS(U) = \var(U) \ge 0$. This is a monotonically upward trend for relative-fitness under the effect selection, and shows that selection acts in the direction of never decreasing relative fitness, though the effect of the environment can be arbitrary. 

We strengthen this result, and show that there is a non-negative lower bound on the selective acceleration, compounding effects of selection upon itself. We prove weak and strong versions, saturated in the purely environmental and selective-equilibrium cases, respectively.





\begin{pro}[Weak First Law of Natural Selection] \label{pro_weakfirstlaw}
Let $w$ have finite third moment $\E[U^3] < \oo$. The selective change of relative fitness variance is non-negative:
\begin{equation}
 \partial_\NS^2(U) \ge \tfrac{1}{2} \var(U)^2 = \tfrac{1}{2} \partial_\NS(U)^2 \ge 0.
\end{equation}
Both inequalities are saturated exactly when $w$ is purely environmental (in which case $\partial_\NS^2(U) = \tfrac{1}{2} \var(U)^2 = \frac{1}{2} \partial_\NS(U)^2 = 0$), otherwise the inequalities are strict.
\end{pro}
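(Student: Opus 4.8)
The plan is to collapse the inequality to a single moment inequality for the relative fitness $U$ and then close it with one convexity (Jensen/Cauchy--Schwarz) estimate, in the spirit of the paper's stated methodology. First I would rewrite both sides in power moments of $U$. Since $\E[U]=1$, Fisher's theorem gives $\partial_\NS(U)=\var(U)=\E[U^2]-1$, and definition \eqref{eqn_fitnesschangeNS} gives $\partial_\NS^2(U)=\cov(U^2,U)=\E[U^3]-\E[U^2]$. Writing $m:=\E[U^2]=1+\var(U)$, the target becomes a statement purely about $\E[U^3]$ and $m$. Here the finiteness hypothesis $\E[U^3]<\oo$ is exactly what makes $\partial_\NS^2(U)$ well defined, and the fact that $W=w_i(I')\ge 0$ (so $U\ge 0$) is what licenses the convexity step to come.

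The key step is the moment inequality $\E[U^3]\ge(\E[U^2])^2$. I would prove it by tilting: because $\E[U]=1$, the functional $\tilde\E[\,\cdot\,]:=\E[U\,\cdot\,]$ is a probability expectation, under which $\E[U^3]=\tilde\E[U^2]$ and $\E[U^2]=\tilde\E[U]$; Jensen's inequality for the convex map $x\mapsto x^2$ then yields $\tilde\E[U^2]\ge(\tilde\E[U])^2$, i.e. $\E[U^3]\ge m^2$ (equivalently this is Cauchy--Schwarz, $\E[U^2]=\E[U^{3/2}U^{1/2}]\le \E[U^3]^{1/2}\E[U]^{1/2}$). Substituting,
\[
\partial_\NS^2(U)=\E[U^3]-m\ge m^2-m=m(m-1)=\var(U)\big(1+\var(U)\big)\ge \tfrac12\var(U)^2\ge 0,
\]
where the last nontrivial step merely discards the nonnegative quantity $\var(U)+\tfrac12\var(U)^2$. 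Note that this route in fact delivers the sharper bound $\var(U)(1+\var(U))$ of Theorem \ref{thm_firstlaw}; the weak constant $\tfrac12$ is only what survives after the final crude estimate.

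Finally I would settle saturation. Equality in the weak bound $\partial_\NS^2(U)\ge\tfrac12\var(U)^2$, combined with $\partial_\NS^2(U)\ge \var(U)+\var(U)^2$, forces $\var(U)+\tfrac12\var(U)^2=0$, hence $\var(U)=0$, i.e. $U=1$ almost surely, which by Theorem \ref{thm_pureprocesses}(2) is precisely the purely environmental case; conversely a purely environmental process has $U\equiv 1$ and all three quantities vanish. The same reasoning pins the second inequality $\tfrac12\var(U)^2\ge 0$ to the purely environmental case. The main obstacle is recognizing and cleanly justifying $\E[U^3]\ge(\E[U^2])^2$ together with its integrability bookkeeping; once that is in hand the remainder is algebra. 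One subtlety worth flagging is that the convexity step already saturates in selective equilibrium (where $U$ is two-valued, so $\tilde\mu$ concentrates on the single positive value), so the weak law's equality case is determined not by Jensen but by the final discard of $\var(U)+\tfrac12\var(U)^2$ — which is exactly why the weak saturation set (purely environmental) is strictly smaller than the saturation set of the first inequality in the strong law.
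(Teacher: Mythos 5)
Your proof is correct, but it is not the paper's proof of the Weak First Law; it is essentially the paper's proof of the Strong First Law (Theorem \ref{thm_firstlaw}) — the tilting $\tilde\E[X]:=\E[UX]$ plus Jensen giving $\E[U^3]\ge\E[U^2]^2$ — from which you then deduce the weak statement by discarding the nonnegative surplus $\var(U)+\tfrac12\var(U)^2$. The paper's own argument for Proposition \ref{pro_weakfirstlaw} is a standalone single Jensen step with a \emph{different} tilting: it writes $\partial_\NS\var(U)=\E[(U+1)(U-1)^2]=2\,\hat\E[(U-1)^2]$ for the probability expectation $\hat\E[X]:=\E[\tfrac12(U+1)X]$ and applies Jensen to the convex map $x\mapsto(x-1)^2$, obtaining $2(\hat\E[U]-1)^2=\tfrac12\var(U)^2$ directly. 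The two routes buy different things. The paper's tilting measure $\tfrac12(U+1)\mu$ is mutually absolutely continuous with $\mu$ (since $U+1\ge 1$), so its Jensen saturation condition is literally ``$U$ constant $\mu$-a.s.,'' i.e. the purely environmental case — the claimed equality set falls out of the convexity step itself, and the constant $\tfrac12$ appears natively. Your tilting by $U\mu$ yields the sharper bound $\var(U)\left(1+\var(U)\right)$, whose Jensen step saturates on the strictly larger class of selective-equilibrium processes, so the weak law's equality case must instead be recovered from the final discard ($\var(U)+\tfrac12\var(U)^2=0$ forcing $\var(U)=0$); you correctly identify and execute this, so there is no gap. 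The only structural difference worth flagging is that your argument makes the weak law a corollary of the strong one rather than an independent one-line computation; your integrability bookkeeping and the converse direction of the saturation claim are both fine.
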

\begin{proof}


The bound for $\partial_\NS \var(U)$ uses Jensen's inequality for the quadratic term:
\begin{eqnarray} 
    \partial_\NS \var(U) &=& 2 \tfrac{1}{2} \E\!\left[(U+1)(U-1)^2\right] \ge 2 \left( \tfrac{1}{2} \E[(U+1)U] - 1 \right)^2 \nonumber \\
    &=& 2 \left( \tfrac{1}{2} (\var(U) + 2) - 1 \right)^2 = \tfrac{1}{2} \var(U)^2, \label{eqn_variablecovariance_strong}
\end{eqnarray}
since $\tfrac{1}{2} \E[U+1]=1$ and $\E[(U+1)U] = \E[U^2] + \E[U] = \var(U) + 2$. This is saturated when $U+1$ is constant $\mu$-a.s., i.e., $U + 1 = \bar U + 1 = 2$, the purely environmental case.


\end{proof}


We strengthen this result by applying Jensen's inequality to the child-bearing population.

\begin{thm}[Strong First Law of Natural Selection] 
\label{thm_firstlaw}
Let $w$ have finite third moment $\E[U^3] < \oo$. Then:
\begin{eqnarray} \label{ineq_variancechangeNSbound}
\partial_\NS^2(U) 
= \partial_\NS \var(U) &\ge& \partial_\NS(U) \left( 1 + \partial_\NS(U) \right) \\
&=&\var(U) \left(1 + \var(U) \right) = \var(U) \E[U^2] \ge 0, 
\end{eqnarray}
with saturation of the first inequality exactly when $w$ is in selective equilibrium. In that case, $\partial_\NS \var(U) = \frac{1}{p_*} (\frac{1}{p_*} - 1) = \frac{1}{p_*^2} - \frac{1}{p_*}$.
\end{thm}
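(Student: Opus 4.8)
The plan is to establish the inequality $\partial_\NS^2(U) \ge \var(U)(1+\var(U))$ by applying Jensen's inequality to the child-bearing expectation operator $\E_*$, mirroring the strategy already used in the Weak Zeroth Law (Proposition \ref{pro_zerothlaw}). First I would expand the selective acceleration using its definition \eqref{eqn_fitnesschangeNS}, namely $\partial_\NS^2(U) = \E[U^2(U-1)] = \E[U^3] - \E[U^2]$. Since the integrand $U^2(U-1)$ vanishes wherever $U=0$, I would restrict attention to the childbearing population: writing $\E[U^3] = p_* \E_*[U^3]$ and $\E[U^2] = p_* \E_*[U^2]$, I can express the whole quantity in terms of the probability expectation $\E_*$. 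The key point, just as in the Zeroth Law, is that $\E_*[U] = \frac{1}{p_*}$ and more usefully $\E[U^2] = 1 + \var(U)$, so the target lower bound $\var(U)(1+\var(U)) = \var(U)\,\E[U^2]$ is naturally phrased through the second moment.

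Next I would set up the Jensen step. The natural move is to apply Jensen's inequality to $\E_*[U^3]$ using the convex function $x \mapsto x^3$ on the nonnegative reals, or more precisely to relate $\E_*[U^3]$ to $\E_*[U^2]$ and $\E_*[U]$ via a convexity or Cauchy–Schwarz argument. A clean route is to note that $\E[U^3] = \E[U^2 \cdot U]$ and apply the covariance inequality: since $U^2$ and $U$ are both increasing functions of $U$, they are positively correlated, giving $\E[U^2 \cdot U] \ge \E[U^2]\E[U] = \E[U^2]$. Combining this with $\partial_\NS^2(U) = \E[U^3] - \E[U^2]$ would need refinement, so I expect the sharper argument to route through $\E_*$: apply Jensen to the convex map on the childbearing population to obtain $\E_*[U^3] \ge (\E_*[U^2])^{3/2}$ or, more directly, bound $\E_*[U^2(U-1)]$ from below using that $U-1 \ge -1$ and the constraint $\E_*[U]=1/p_*$.

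The cleanest path, and the one I would commit to, is to write $\partial_\NS^2(U) = \cov(U^2,U)$ and apply the covariance form of Jensen, treating it as $\E[U^2(U-1)]$ with the conditional decomposition over $\{U>0\}$. Concretely, I would show $\E_*[U^2(U-1)] \ge (\E_*[U^2])(\E_*[U]-1)$ by a single application of Jensen or Chebyshev's sum inequality to the positively-associated pair $(U^2, U)$ under $\E_*$, then multiply through by $p_*$ and simplify using $\E_*[U]-1 = \frac{1}{p_*}-1$ and $p_*\E_*[U^2] = \E[U^2] = 1+\var(U)$. The factor $1 + \var(U) = \var(U)$-independent identity $\partial_\NS(U)=\var(U)$ then rewrites the right side as $\var(U)(1+\var(U))$, and the final inequality $\var(U)(1+\var(U)) \ge 0$ is immediate since $\var(U)\ge 0$.

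The main obstacle I anticipate is pinning down the exact equality (saturation) condition and verifying it coincides with selective equilibrium rather than merely the purely environmental case handled by the Weak First Law. Jensen's inequality saturates precisely when $U$ is $\E_*$-almost-surely constant, i.e. $U$ takes a single positive value on $\{U>0\}$; since $\E_*[U] = \frac{1}{p_*}$, that constant must be $\frac{1}{p_*}$, which is exactly the definition of selective equilibrium. I would then confirm the claimed equilibrium value by direct substitution: when $U \in \{0, \frac{1}{p_*}\}$ with $\mu(U>0)/N = p_*$, one computes $\E[U^3] = \frac{1}{p_*^2}$ and $\E[U^2] = \frac{1}{p_*}$, so $\partial_\NS^2(U) = \frac{1}{p_*^2} - \frac{1}{p_*} = \frac{1}{p_*}\big(\frac{1}{p_*}-1\big)$, matching the stated closed form and confirming the lower bound is attained.
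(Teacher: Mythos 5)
There is a genuine gap in the route you commit to. Your plan is to work under the childbearing expectation $\E_*$ and establish $\E_*[U^2(U-1)] \ge \E_*[U^2](\E_*[U]-1)$ by Chebyshev's sum inequality (or Jensen). That inequality is true, but after multiplying by $p_*$ it yields
\begin{equation*}
\partial_\NS^2(U) \;\ge\; p_*\,\E_*[U^2]\left(\E_*[U]-1\right) \;=\; \E[U^2]\left(\tfrac{1}{p_*}-1\right),
\end{equation*}
and your final step then "rewrites the right side as $\var(U)(1+\var(U))$." That rewriting silently replaces $\tfrac{1}{p_*}-1$ by $\var(U)$. By the Weak Zeroth Law these satisfy $\var(U) \ge \tfrac{1}{p_*}-1$ with equality \emph{only} in selective equilibrium, so for non-equilibrium processes your bound $\E[U^2](\tfrac{1}{p_*}-1)$ is strictly \emph{weaker} than the theorem's bound $\E[U^2]\var(U)$, and the claimed inequality does not follow. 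The same defect afflicts your alternative $\E_*[U^3]\ge(\E_*[U^2])^{3/2}$ route: it produces $\E[U^2]\bigl(\sqrt{\E[U^2]/p_*}-1\bigr)$, again dominated by $\E[U^2]\var(U)$ because $\E[U^2]\ge 1/p_*$. Your saturation analysis and the closed-form check in equilibrium are fine, but they attach to a weaker inequality.

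The paper's proof uses a different change of measure, and this is the missing idea: define the fitness-weighted (size-biased) probability expectation $\tilde\E[X] := \E[UX]$, which is a genuine probability expectation because $\E[U]=1$. Then $\partial_\NS\var(U) = \cov(U^2,U) = \E[U^3]-\E[U^2] = \tilde\E[U^2]-\E[U^2]$, and a single application of Jensen under $\tilde\E$ gives $\tilde\E[U^2] \ge \tilde\E[U]^2 = \E[U^2]^2$, hence $\partial_\NS\var(U) \ge \E[U^2](\E[U^2]-1) = \var(U)(1+\var(U))$ exactly as claimed. Saturation of Jensen under $\tilde\E$ means $U$ is constant $U\mu$-a.s., which is equivalent to constant $\mu_*$-a.s.\ by mutual absolute continuity of $U\mu$ and $\mu_*$ --- so the equilibrium characterization you wanted does come out, but only via the weighted measure $\tilde\E$, not via $\E_*$.
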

\begin{proof}


We change measure to the intermediate population and rewrite the covariance:
\begin{equation}
    \partial_\NS \var(U) = \cov(U^2,U) = \E[(U^2-\E[U^2])(U-1)] = \E[(U^2-\E[U^2]) U] = \tilde \E[U^2] - \E[U^2]. 
\end{equation}
where $\tilde \E[X] = \E[UX]$. Using Jensen's inequality, we have
\begin{equation}
    \partial_\NS \var(U) \ge \tilde \E[U]^2 - \E[U^2] = \E[U^2]^2 - \E[U^2] = \E[U^2] \left( \E[U^2] - 1 \right),
\end{equation}
since $\tilde \E[U] = \E[U^2] = 1 + \var(U)$, proving \eqref{ineq_variancechangeNSbound}.
This inequality is saturated exactly when $U$ is constant $U \mu$-a.s.. Since $U \mu$ is mutually absolutely continuous with $\mu_*$, saturation is equivalent to $U$ being constant $\mu_*$-a.s., i.e., $w$ is in selective equilibrium. 
\end{proof}

We use the same technique to analyze higher-order selective derivatives of relative fitness, and the exponential of relative fitness. The Higher-Order First Law shows that all these selective changes are non-negative, meaning that selection acts monotonically upon all scales of fitness.

\begin{pro}[Higher-Order First Law] \label{pro_higherorder_firstlaw}
Suppose that $\E[U^{n+1}]<\oo$ for $n \ge 1$. The higher-order selective changes are non-negative:
\begin{equation} \label{ineq_higherorder_firstlaw_moments}
    \partial_\NS^n(U) \ge \begin{cases} 
    \frac{1}{p_*^{n-1}} \left(\var(U) + 1 - p_*\right)^n\ge 0, & \mbox{$n$ even,} \\
    \frac{1}{p_*^n} \left(1 - p_*\right)^{n+1} \ge 0, & \mbox{$n$ odd,}
    \end{cases}
\end{equation}
with saturation of the left inequality when $w$ is in selective equilibrium. 

Suppose that $\E\!\left[\e^U\right] < \oo$. The selective change of the exponential $\e^U$ is non-negative:
\begin{equation} \label{ineq_higherorder_firstlaw_exponential}
    \partial_\NS\!\left( \e^U \right) = \cov\!\left(\e^U, U\right) \ge (1-p_*) \left(\e^{1/p_*} - 1 \right) \ge 0,
\end{equation}
with saturation of the first inequality when $w$ is in selective equilibrium.

\end{pro}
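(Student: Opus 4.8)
The plan is to handle both parts with one template: write each selective change as a covariance in $U$, collapse the resulting expectation onto the childbearing population via the operator $\E_*$ (exactly as in the Zeroth Law proof), and then apply Jensen's inequality using the single fact $\E_*[U] = \tfrac{1}{p_*}$. First I would record the covariance forms $\partial_\NS^n(U) = \cov(U^n, U) = \E[U^n(U-1)]$ and $\partial_\NS(\e^U) = \cov(\e^U, U) = \E[\e^U(U-1)]$, both using $\E[U]=1$. Next I would apply the childbearing split $\E[\phi(U)] = (1-p_*)\phi(0) + p_*\,\E_*[\phi(U)]$. For the moment functionals the boundary term vanishes, since $u^n(u-1)$ equals $0$ at $u=0$ for $n\ge 1$, giving $\partial_\NS^n(U) = p_*\,\E_*[U^n(U-1)]$. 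For the exponential the boundary term survives, since $\e^u(u-1)$ equals $-1$ at $u=0$, giving $\partial_\NS(\e^U) = -(1-p_*) + p_*\,\E_*[\e^U(U-1)]$.

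The exponential bound then falls out cleanly. The function $f(u) = \e^u(u-1)$ has $f''(u) = \e^u(u+1) \ge 0$ on $[0,\infty)$, hence is convex there, so Jensen against the probability expectation $\E_*$ yields $\E_*[f(U)] \ge f(\E_*[U]) = \e^{1/p_*}\big(\tfrac{1}{p_*}-1\big)$. Substituting into the split and combining the two terms collapses them exactly to $(1-p_*)\big(\e^{1/p_*}-1\big)$, which is the stated lower bound; the final inequality $\ge 0$ is immediate from $\e^{1/p_*}\ge 1$ and $p_*\le 1$. Equality in Jensen holds precisely when $U$ is $\E_*$-a.s.\ constant (equal to $1/p_*$), i.e.\ when $w$ is in selective equilibrium, which gives the saturation claim.

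The moment bounds follow the same template applied to $\E_*[U^n(U-1)]$ with $\E_*[U] = 1/p_*$, the parity of $n$ dictating which nonnegative convex comparison function to use. The main obstacle is that $g_n(u) := u^n(u-1)$ is \emph{not} globally convex: $g_n''(u) \ge 0$ only for $u \ge \tfrac{n-1}{n+1}$, so a naive Jensen step is not licensed near $u=0$. I would therefore either restrict to the convex part of the range, dominate $g_n$ from below by a genuinely convex object (for odd $n$ the even power $(u-1)^{n+1}$ is the natural candidate, for even $n$ one works with $g_n$ on its convex region), or bound successive moments against one another by a power-mean inequality. The delicate part is pinning down the exact parity-dependent constants and checking they reduce, in the two-valued equilibrium case, to the common value $\partial_\NS^n(U) = \tfrac{1}{p_*^{\,n-1}}\big(\tfrac{1}{p_*}-1\big) = \tfrac{1-p_*}{p_*^{\,n}}$; I would verify these constants against a two-point non-equilibrium example for $U$ before committing, since the placement of the powers of $p_*$ and the even/odd split is the error-prone step.
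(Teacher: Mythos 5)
Your treatment of the exponential bound is correct and complete, and it is essentially the paper's argument: split $\E$ into childless and childbearing parts, note the boundary term $-(1-p_*)$, and apply Jensen with $\E_*[U]=1/p_*$. Your packaging is in fact cleaner than the paper's: you apply Jensen once to the single convex function $f(u)=(u-1)\e^u$ (with $f''(u)=(u+1)\e^u\ge 0$), whereas the paper bounds $\E_*[U\e^U]$ and $\E_*[-\e^U]$ separately — and the second of those individual steps ($\E_*[-\e^U]\ge -\e^{\E_*[U]}$) points the wrong way for the convex $\e^u$, so it is only the combined convexity you use that actually licenses the bound. Both routes land on $(1-p_*)(\e^{1/p_*}-1)$ with saturation exactly at selective equilibrium.

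The moment bounds are where the gap is, and you correctly sense it but do not close it. Your starting identity $\partial_\NS^n(U)=\cov(U^n,U)=\E[U^n(U-1)]$ is not the one the paper works from: the paper's proof reads the higher-order selective changes as iterates producing powers of $(U-1)$, namely $\partial_\NS^n(U)=p_*\E_*[(U-1)^{n+1}]$ for $n$ odd and $p_*\E_*[U(U-1)^n]$ for $n$ even (already at $n=2$, $\E[U^2(U-1)]$ and $\E[U(U-1)^2]$ differ by $\var(U)$, so the two readings give different quantities and different constants). Under the paper's reading the odd case is a one-line Jensen on the globally convex even power $(x-1)^{n+1}$ against the probability expectation $\E_*$ — exactly the convex surrogate you floated as "the natural candidate" — and the even case is a Jensen on $x^n$ under a reweighted expectation, yielding $p_*(\E_*[U^2]-1)^n$. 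Your route via $g_n(u)=u^n(u-1)$ runs into the non-convexity near $u=0$ that you describe, and none of the three escape hatches you list is carried out; moreover, because you are bounding a different functional, even a successful Jensen argument on $g_n$ would not reproduce the stated parity-dependent constants $\tfrac{1}{p_*^{n-1}}(\var(U)+1-p_*)^n$ and $\tfrac{1}{p_*^{n}}(1-p_*)^{n+1}$. Your instinct to sanity-check the constants against a two-point example before committing is well placed: doing so would have revealed the mismatch between $\cov(U^n,U)$ and the quantities the stated bounds are actually bounds for.
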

\begin{proof}
In the even case, $\partial_\NS^n(U) = \partial_\NS^{n-1}(\var(U)) = p_* \E_*[U(U-1)^n] \ge p_* (\E_*[U^2]-1)^n = p_* (\frac{1}{p_*} \E[U^2] - 1)^n = \frac{1}{p_*^{n-1}} (\var(U) + 1 - p_*)^n$. In the odd case, $\partial_\NS^n(U) = \partial_\NS^{n-1}(\var(U) = p_* \E_*[(U-1)^{n+1}] \ge p_* (\E_*[U]-1)^{n+1} = \frac{1}{p_*^n} (1 - p_*)^{n+1}$. For \eqref{ineq_higherorder_firstlaw_exponential}, we compute:
\begin{eqnarray}
    \partial_\NS\!\left(\e^U\right)
    &=& \cov\!\left(\e^U, U \right) 
    = \E\!\left[(U-1) \e^U)\right] 
    = \E\!\left[U \e^U\right] - \E\!\left[\e^U\right] \nonumber \\
    &=& p_* \E_*[U \exp(U)] + p_* \E_*[-\exp(U)] - (1-p_*) \nonumber \\
    &\ge& p_* \E_*[U] \exp(\E_*[U]) - p_*  \exp(\E_*[U]) - 1 + p_* \nonumber \\
    &=& \e^{1/p_*} - p_* \e^{1/p_*} - 1 + p_* 
    = (1-p_*) \left( \e^{1/p_*} - 1 \right).
\end{eqnarray}
\end{proof}

\begin{cor}[Aggregate First Law]
Let $w$ have finite third moment. Then:
\begin{equation} \label{ineq_aggregatefirstlaw}
    \partial_\NS^2 W \ge \bar W \var(U) \left( 1 + \var(U) \right) = \frac{1}{\bar W^3} \var(W) \left( \bar W^2 + \var(W) \right).
\end{equation}
\end{cor}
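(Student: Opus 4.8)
The plan is to reduce the aggregate statement to the relative-fitness First Law (Theorem \ref{thm_firstlaw}) by exploiting the homogeneity of the selective operators under the rescaling $W = \bar W U$. The key observation is that, just as the aggregate Fisher theorem records $\partial_\NS(W) = \cov(W,U) = \bar W\,\var(U) = \bar W\,\partial_\NS(U)$, the same single factor of $\bar W$ propagates to the second-order operator. Thus I expect the identity $\partial_\NS^2 W = \bar W\,\partial_\NS^2(U) = \bar W\,\cov(U^2,U)$, which is exactly $\bar W$ times the quantity bounded in the relative-fitness First Law, so the aggregate bound will follow by a single nonnegative rescaling.

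Concretely, first I would record the scaling identities $W = \bar W U$ and $\var(W) = \bar W^2\,\var(U)$, valid because $\bar W$ is a constant. Next I would express the aggregate selective acceleration through its relative counterpart: since $U^2 = W^2/\bar W^2$ and $U = W/\bar W$, one has $\cov(U^2,U) = \bar W^{-3}\cov(W^2,W)$, confirming that $\partial_\NS^2 W = \bar W\,\partial_\NS^2(U) = \bar W^{-2}\cov(W^2,W)$. Then I would invoke Theorem \ref{thm_firstlaw}, which gives $\partial_\NS^2(U) \ge \var(U)\big(1 + \var(U)\big) \ge 0$, and multiply through by the nonnegative constant $\bar W$ to obtain $\partial_\NS^2 W \ge \bar W\,\var(U)\big(1 + \var(U)\big)$.

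Finally, the displayed equality is pure algebra: substituting $\var(U) = \var(W)/\bar W^2$ into $\bar W\,\var(U)\big(1 + \var(U)\big)$ and clearing denominators yields $\bar W^{-3}\,\var(W)\big(\bar W^2 + \var(W)\big)$. The main obstacle is entirely bookkeeping: one must verify that the aggregate operator contributes a \emph{single} factor $\bar W$ rather than $\bar W^2$, in agreement with the aggregate Fisher theorem, because $\var(W)$ already carries a factor $\bar W^2$; getting this power right is precisely what makes the two sides of the claimed equality coincide. Saturation of the inequality is then inherited directly from Theorem \ref{thm_firstlaw}, holding exactly in the selective-equilibrium case.
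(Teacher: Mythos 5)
Your proof is correct and follows essentially the same route as the paper: both reduce the aggregate statement to the scaling identity $\partial_\NS^2 W = \bar W\,\partial_\NS^2(U)$ and then invoke Theorem \ref{thm_firstlaw}, with the displayed equality coming from substituting $\var(W) = \bar W^2 \var(U)$. The only difference is cosmetic — the paper obtains the prefactor by computing $\cov\!\left(\tfrac{(W-\bar W)^2}{\bar W}, \tfrac{W}{\bar W}\right)$ while you scale $\cov(W^2,W)$ by $\bar W^{-3}$; your choice of representative functional is the one consistent with the definition $\partial_\NS^2(U) = \cov(U^2,U)$, so the single factor of $\bar W$ you were careful about is indeed correct.
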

\begin{proof}
Fisher's theorem states $\partial_\NS W = \frac{\var(W)}{\bar W}$. Thus aggregate selective acceleration equals:
\begin{equation}
    \partial_\NS^2(W) := \partial_\NS \partial_\NS W = \cov\!\left( \frac{(W-\bar W)^2}{\bar W}, \frac{W}{\bar W} \right) 
    = \bar W \cov((U-1)^2, U) = \bar W \partial_\NS^2(U).
\end{equation}
Inequality \eqref{ineq_aggregatefirstlaw} follows from the First Law.
\end{proof}

\section{Environmental Change of Variance} \label{sect_environmentalchangevariance}

We rearrange the environmental change of variance with intergenerational fitness ratios, then prove a lower bound, which is saturated in a certain stationarity case. 




\begin{defn}[Intergenerational Fitness Ratios]
Let $w$ and $w'$ be finite-mean. For $\mu$-a.e. $i$ and $w_i$-a.e. $i'$, define the intergenerational relative fitness as the ratio of relative fitnesses:
\begin{equation}
    R := R(i,i') := \frac{U'(i')}{U(i)},
\end{equation}
which need not be defined when $U(i) = 0$. Define the averaged intergenerational relative fitness as the average value of $R(i,i')$ across the children of $i$. That is, for $\mu$-a.e. $i$, define:
\begin{equation}
    \bar R_w := \bar R_w(i) := \langle R \rangle_w(i) = \frac{\langle U' \rangle_w(i)}{U(i)}.
\end{equation}  
\end{defn}


\begin{defn}[Stationarity of Joint Processes] \label{def_stationary}
Let $w$ and $w'$ be composable. Then:
\begin{enumerate}
    \item The pair $(w,w')$ is strongly stationary if $R(i,i') = 1$ for $\mu$-a.e. $i$ and $w_i$-a.e. $i'$, i.e., $U'(i') = U(i)$ for $w_i$-a.e. child of $i$. 
    \item The pair $(w,w')$ is weakly stationary if $\bar R_w(i) = 1$ for $\mu$-a.e. $i$, i.e., the average relative fitness among children of $i$ equals $U(i)$.
    \item The pair $(w,w')$ is locally homogeneous if $R$ is constant jointly-a.s.. That is, there exists a constant $\lambda$ so that for $\mu$-a.e. $i$ and $w_i$-a.e. $i'$, $R(i,i') = \lambda$ (so $U'(i') = \lambda U(i)$). 
    \item The pair $(w,w')$ is locally constant if $U'$ is $w_i$-a.s. constant for $\mu$-a.e. $i$. That is, for $\mu$-a.e. $i$, $U'(i') = \langle U' \rangle_w(i)$ for $w_i$-a.e. $i'$.
\end{enumerate}
\end{defn}

Strong stationarity is equivalent to being both weakly stationary and locally homogeneous. Proof: If $(w,w')$ is strongly stationary, then $R$ is constant jointly-a.s. and equal to $1$, so $\bar R_w$ is constant a.s. and equal to $1$. Conversely, if $R = \lambda$ jointly-a.s. and $\bar R_w = 1$ a.s., then $\bar R_w = \langle R \rangle_w = \langle \lambda \rangle_w = \lambda$ a.s.
We relate the joint stationarity conditions to marginal environmental conditions. 

\begin{lem} \label{lem_stationary}
\textbf{ }
\begin{enumerate}
    \item The joint process $(w,w')$ is weakly stationary if and only if $w$ is purely environmental and $\langle U' \rangle_w = 1$ $\mu$-a.s. 

    \item The joint process $(w,w')$ is strongly stationary if and only if both $w$ and $w'$ are purely environmental. 
\end{enumerate}
\end{lem}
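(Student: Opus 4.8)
The plan is to reduce both equivalences to the characterization of purely environmental processes from Theorem \ref{thm_pureprocesses} (namely, $w$ is purely environmental iff $U = 1$ $\mu$-a.s., and likewise $w'$ iff $U' = 1$ $\mu'$-a.s.), using two bridges: the tower property \eqref{eqn_towerproperty} to extract a moment identity, and the disintegration equation \eqref{eqn_disintegration} to pass between ``$\mu'$-almost everywhere'' statements and ``$w_i$-almost everywhere for $\mu$-a.e. $i$'' statements. Throughout I would exploit that both relative fitnesses have unit mean, $\E[U] = 1 = \E'[U']$.

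For part (1), the backward direction is immediate: if $w$ is purely environmental then $U = 1$ $\mu$-a.s., so $\bar R_w = \langle U' \rangle_w / U$ equals $1$ whenever $\langle U' \rangle_w = 1$. The forward direction is the substantive one. I would apply the tower property \eqref{eqn_towerproperty} with $Y = U'$ to obtain $\E[U \langle U' \rangle_w] = \E'[U'] = 1$. Weak stationarity asserts $\langle U' \rangle_w = U$ on $\{U>0\}$; since the prefactor $U$ annihilates the set $\{U=0\}$, substituting yields $\E[U^2] = 1$. Combined with $\E[U] = 1$ this forces $\var(U) = \E[U^2] - 1 = 0$, hence $U = 1$ $\mu$-a.s., so $w$ is purely environmental, and then $\langle U' \rangle_w = U = 1$ $\mu$-a.s. as required.

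For part (2), I would first observe that strong stationarity implies weak stationarity: applying $\langle \cdot \rangle_w$ to the identity $U'(i') = U(i)$, which is constant in $i'$ across the children of $i$, gives $\langle U' \rangle_w(i) = U(i)$, so part (1) already yields $U = 1$ $\mu$-a.s., i.e., $w$ purely environmental. Strong stationarity then reads $U'(i') = 1$ for $w_i$-a.e. $i'$ and $\mu$-a.e. $i$; applying the disintegration \eqref{eqn_disintegration} to $B = \{U' \neq 1\}$ gives $\mu'(B) = \int_I w_i(B)\,\mu(\d i) = 0$, so $U' = 1$ $\mu'$-a.s. and $w'$ is purely environmental. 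Conversely, if both are purely environmental then $U = 1$ $\mu$-a.s. and $U' = 1$ $\mu'$-a.s.; running the same disintegration identity in reverse converts $\mu'(\{U'\neq 1\}) = 0$ into $w_i(\{U'\neq 1\}) = 0$ for $\mu$-a.e. $i$, so $U'(i') = 1 = U(i)$ jointly a.s., which is strong stationarity.

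The main obstacle is the null-set bookkeeping: justifying that weak stationarity, a statement only on $\{U>0\}$ where $\bar R_w$ is defined, nonetheless forces $\E[U^2] = 1$ despite $\langle U' \rangle_w$ being ill-defined on $\{U=0\}$; and handling the two-way transfer of almost-everywhere statements between $\mu'$ and the fibers $w_i$. Both are routine once one notes that the factor $U$ in the tower property kills the problematic set and that \eqref{eqn_disintegration} applied to an indicator set is an exact identity, so no regularity beyond Definition \ref{def_process} is needed.
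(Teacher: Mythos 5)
Your proposal is correct and follows essentially the same route as the paper: both reduce part (1) to showing $\E[U^2] = \E[U\langle U'\rangle_w] = \E'[U'] = 1$ via the tower property, forcing $\var(U)=0$, and both derive part (2) from part (1) plus the observation that strong stationarity implies weak. Your explicit use of the disintegration equation to transfer almost-everywhere statements between $\mu'$ and the fibers $w_i$ in part (2) is a careful elaboration of a step the paper treats as immediate, not a different argument.
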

\begin{proof}
Suppose $(w, w')$ is weakly stationary, so $\bar R_w = 1$ $\mu$-a.s. Thus:
\begin{equation}
\var(U) = \E[U^2] - 1 = \E[U^2 \bar R_w] - 1 = \E'[U'] - 1 = 0.
\end{equation}
Thus $U$ is a.s. constant, so $w$ is purely environmental ($U=1$ a.s.), and $\langle U'\rangle_w = \bar R_w U = 1$ almost surely. Conversely, if $U = 1$ a.s. and $\langle U' \rangle_w = 1$ a.s., then $\bar R_w = 1$ a.s.

If $w$ and $w'$ are both purely environmental, then $U' = 1 = U$, hence strongly stationary. Conversely, if $(w,w')$ is strongly stationary, then then $w$ is purely environmental (since strong implies weak stationarity), so $U' = U = 1$ a.s. So both $w$ and $w'$ are purely environmental. 
\end{proof}


We prove a strong lower bound for the environmental change of variance. 

\begin{pro}[Strong Lower Bound for $\partial_\EC\!\left( \var(U), \var'(U') \right)$] \label{pro_stronglowerboundforECvariance}
Let $w$ and $w'$ be composable finite-variance processes. Then:
\begin{equation}
\partial_\EC\!\left( \var(U), \var'(U') \right) \ge \frac{1}{\E[U^3]} \E\!\left[U^3 (\bar R_w-1)\right] \E\!\left[U^3 (\bar R_w+1)\right],
\end{equation}
which is saturated in the strongly stationary case (in which case, $\partial_\EC\!\left( \var(U), \var'(U') \right) = 0$). 

\end{pro}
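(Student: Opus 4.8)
The plan is to re-express the environmental change of variance through the intergenerational ratio $R = U'/U$, and then apply Jensen's inequality at two different levels: once fiber-wise inside the local-average operator $\langle\cdot\rangle_w$, and once against a probability measure tilted by $U^3$.

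First I would substitute $U'(i') = R(i,i')\,U(i)$ into the definition \eqref{eqn_fitnesschangeEC}. Since $U(i)$ is constant relative to the local average $\langle\cdot\rangle_w(i)$, it factors out of the squared average, giving $\langle (U')^2\rangle_w = U^2\,\langle R^2\rangle_w$, and hence
\begin{equation}
    \partial_\EC\!\left(\var(U),\var'(U')\right) = \E\!\left[U\big(\langle (U')^2\rangle_w - U^2\big)\right] = \E\!\left[U^3\big(\langle R^2\rangle_w - 1\big)\right].
\end{equation}
On $\{U=0\}$ the ratio $R$ is never evaluated, because every term carries a compensating factor of $U$; equivalently $U\,\langle Y\rangle_w = \tfrac{1}{\bar W}\int Y\,w_i$ is well defined there. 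The tower property \eqref{eqn_towerproperty} also yields the compact identity $\partial_\EC = \E'[(U')^2] - \E[U^3]$, showing the quantity is finite exactly under the standing moment assumption $\E[U^3] < \oo$ (needed for the bound to be meaningful, as in the First Law).

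Next I would run the two Jensen steps. As $\langle\cdot\rangle_w(i)$ is a probability expectation and $x\mapsto x^2$ is convex, the fiber-wise inequality $\langle R^2\rangle_w \ge \bar R_w^2$ holds pointwise, so
\begin{equation}
    \partial_\EC\!\left(\var(U),\var'(U')\right) \ge \E[U^3\bar R_w^2] - \E[U^3].
\end{equation}
Setting $a := \E[U^3\bar R_w]$ and $b := \E[U^3]$, I would introduce the tilted probability expectation $\E_\nu[Z] := \E[U^3 Z]/b$; a second Jensen application to $x\mapsto x^2$ under $\nu$ gives $\E_\nu[\bar R_w^2] \ge \E_\nu[\bar R_w]^2$, i.e. $\E[U^3\bar R_w^2] \ge a^2/b$. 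Combining the two bounds yields $\partial_\EC \ge a^2/b - b = (a-b)(a+b)/b$, and since $a-b = \E[U^3(\bar R_w-1)]$ and $a+b = \E[U^3(\bar R_w+1)]$, this is exactly the asserted right-hand side.

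For saturation I would verify the strongly stationary case directly: there $R \equiv 1$ jointly almost surely, so $\bar R_w = 1$ and $\langle R^2\rangle_w = 1$ almost surely, both Jensen inequalities become equalities, and both sides equal $0$. The main obstacle is conceptual rather than computational: recognizing the correct two-level Jensen structure — one inequality living fiber-wise through $\langle\cdot\rangle_w$ and the other through the $U^3$-tilted measure — and identifying that particular tilt, after which the collapse of $a^2/b - b$ into the stated product is routine algebra. A minor technical point is keeping $R$ well defined on the non-childbearing set $\{U=0\}$, which is handled by the compensating factors of $U$ noted above.
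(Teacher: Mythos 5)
Your proposal is correct and follows essentially the same route as the paper's proof: the identity $\partial_\EC = \E[U^3(\langle R^2\rangle_w - 1)]$, a fiber-wise Jensen step giving $\langle R^2\rangle_w \ge \bar R_w^2$, a second Jensen step under the $U^3$-tilted expectation, and the factorization $a^2/b - b = (a-b)(a+b)/b$. Your direct verification of saturation in the strongly stationary case suffices for the statement as written (the paper additionally tracks which stationarity condition saturates each of the two inequalities separately).
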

\begin{proof}

We use Jensen's inequality twice. First, observe that $\langle R^2 \rangle_w(i) \ge \bar R_w^2(i)$ for each $i$, since the function $R \mapsto R^2$ is convex and $\langle \cdot \rangle_w$ is a probability expectation for each $i$. Second, $\bar R_w \mapsto \bar R_w^2$ is convex and $\E$ is a probability expectation, so we compute: 
\begin{eqnarray}
\partial_\EC\!\left(\var(U),\var'(U')\right) &=& \E[U^3(\langle R^2 \rangle_w - 1)] \ge \E[U^3 (\bar R_w^2 - 1)] \nonumber \\
&\ge& \E[U^3] \left(\left(\frac{\E[U^3 \bar R_w]}{\E[U^3]}\right)^2 - 1\right) 
= \frac{1}{\E[U^3]} \left(\E[U^3 \bar R_w]^2 - \E[U^3]^2\right) \nonumber \\
&=& \frac{1}{\E[U^3]} (\E[U^3 \bar R_w] - \E[U^3]) (\E[U^3 \bar R_w] + \E[U^3]) \nonumber \\
&=& \frac{1}{\E[U^3]} \E\!\left[U^3 (\bar R_w-1) \right] \E\!\left[U^3 (\bar R_w+1)\right].
\end{eqnarray}

The first inequality is saturated when $R$ is constant $\mu'$-almost surely for a.e. $i$, i.e., when $w$ is locally homogeneous. The second inequality is saturated when $\bar R_w$ is constant $\mu$-almost surely, i.e., when $w$ is weakly stationary. Since strong stationarity implies weak stationarity, both inequalities are saturated exactly in the strongly stationary case. 
\end{proof}

By combining Theorem \ref{thm_firstlaw} and Proposition \ref{pro_stronglowerboundforECvariance} via the Price equation, we have the following lower bound on the average change.

\begin{cor}
Let $w$ and $w'$ be composable processes with $w$ finite-variance. Then:
\begin{equation}
    \Delta(\var(U), \var'(U')) \ge \var(U) \big( 1 + \var(U) \big) +  \frac{1}{\E[U^3]} (\E[U^3 (\bar R_w-1)]) (\E[U^3 (\bar R_w+1)]), 
\end{equation}
with saturation when $(w,w')$ is strongly stationary. Strong stationarity implies purely environmental, hence selective equilibrium.
\end{cor}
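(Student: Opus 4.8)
The plan is to read this corollary as nothing more than the sum, term by term, of the two lower bounds already in hand, glued together by the functional Price decomposition. First I would invoke the functional Price equation \eqref{eqn_fitnesschangeprice}, which splits the total change of variance into its selective and environmental pieces,
\[
\Delta(\var(U), \var'(U')) = \partial_\NS \var(U) + \partial_\EC(\var(U), \var'(U')).
\]
Because each summand has already been bounded below in isolation, the inequality of the corollary is immediate once the two bounds are added.

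Next I would apply the Strong First Law (Theorem \ref{thm_firstlaw}) to the selective term to obtain $\partial_\NS \var(U) \ge \var(U)\big(1 + \var(U)\big)$, and Proposition \ref{pro_stronglowerboundforECvariance} to the environmental term to obtain $\partial_\EC(\var(U), \var'(U')) \ge \frac{1}{\E[U^3]} \E[U^3(\bar R_w - 1)]\,\E[U^3(\bar R_w + 1)]$. Summing these two inequalities reproduces the claimed right-hand side exactly. I would note in passing that although the corollary is stated for $w$ finite-variance, the appearance of $\frac{1}{\E[U^3]}$ in the bound means finiteness of the third moment $\E[U^3] < \oo$ is implicitly required, which is precisely the hypothesis of both Theorem \ref{thm_firstlaw} and Proposition \ref{pro_stronglowerboundforECvariance}.

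For the saturation claim I would argue that strong stationarity simultaneously forces equality in both of the invoked bounds. By Lemma \ref{lem_stationary}(2), strong stationarity makes both $w$ and $w'$ purely environmental, so $U = 1$ a.s.; this is a (degenerate) selective equilibrium, which is exactly the saturation condition for Theorem \ref{thm_firstlaw}, while strong stationarity is itself the saturation condition for Proposition \ref{pro_stronglowerboundforECvariance}. Hence both inequalities become equalities together, and in that case every quantity collapses ($\var(U) = \var'(U') = 0$ and $\bar R_w = 1$), so the total change and its lower bound both vanish.

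The only real delicacy—and it is bookkeeping rather than analysis—is verifying that the two saturation criteria are compatible rather than mutually exclusive: the First Law saturates in selective equilibrium, whereas the environmental bound saturates under strong stationarity. The final sentence of the statement resolves this precisely through the chain that strong stationarity implies purely environmental, which implies selective equilibrium, so strong stationarity is the strictly stronger condition and entails saturation of both pieces at once. I expect no genuine obstacle beyond confirming this implication and checking that all terms reduce to zero in the saturated regime.
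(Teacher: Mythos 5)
Your proposal is correct and is essentially identical to the paper's own argument: the paper introduces this corollary with the phrase ``By combining Theorem \ref{thm_firstlaw} and Proposition \ref{pro_stronglowerboundforECvariance} via the Price equation,'' which is exactly your term-by-term summation of the two lower bounds under the decomposition \eqref{eqn_fitnesschangeprice}. Your handling of the saturation condition---strong stationarity forces purely environmental by Lemma \ref{lem_stationary}, hence the degenerate selective equilibrium with $p_* = 1$, so both bounds saturate simultaneously---also matches the chain of implications stated in the corollary itself.
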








\section{Multi-Level Price Equation} \label{sect_multilevelprice}

We present the general form of Price's multi-level equation \cite{price1972extension} which is useful in applications. Our version of the multi-level Price equation relaxes the assumption of additivity, and enables a hierarchical decomposition for any composed process. Additivity arises from measures being linear objects, and admitting a linear disintegration into different scales. No other additivity assumptions are required. 





Consider the case of composable processes $w : \mu \mapsto \mu'$ and $w' : \mu' \mapsto \mu''$, with fitness functions $W(i) := w_i(I')$ and $W'(i') := w'_{i'}(I'')$, selective coefficients $\bar W := \E[W] = \frac{N'}{N}$ and $\bar W' := \E'[W'] = \frac{N''}{N'}$, and relative fitnesses $U := \frac{W}{\bar W}$ and $U' := \frac{W'}{\bar W'}$. 

Define the composed process $w^{(2)} := w' \circ w : \mu \mapsto \mu''$ by $w^{(2)}_i(C) := \int_{I'} w'_{i'}(C) w_i(\d i')$, and the composed fitness function
\begin{equation}
    W^{(2)}(i) := \langle W' \rangle_w(i) W(i) = \int_{I'} W'(i') w_i(\d i'),
\end{equation}
with selective coefficient $\bar W^{(2)} := \E[W^{(2)}] = \frac{N''}{N}$ and relative fitness $U^{(2)} := \frac{W^{(2)}}{\bar W^{(2)}} = \langle U' \rangle_w U$.

Consider observables $X$, $Y$, and $Z$ on $I$, $I'$, and $I''$, respectively. The individual Price equation gives us the following for the processes $w$, $w'$, and $w^{(2)}$:
\begin{eqnarray}
    \Delta(\bar X, \bar Y) &=& \cov(X,U) + \E[\Delta_w(X,Y) U] =: \partial_\NS(X) + \partial_\EC(X,Y) \\
    \Delta(\bar Y, \bar Z) &=& \cov'(Y,U') + \E'[\Delta_{w'}(Y,Z) U'] =: \partial'_\NS(Y) + \partial'_\EC(Y,Z) \\
    \Delta(\bar X, \bar Z) &=& \cov(X,U^{(2)}) + \E[\Delta_{w^{(2)}}(X,Z) U^{(2)}] =: \partial^{(2)}_\NS(X) + \partial^{(2)}_\EC(X,Z)
\end{eqnarray}
The hierarchical Price equation allows us to decompose the composed selective and environmental changes in terms of those of the first process, and a conditioning of the second process with a certain ``drift'' term. 

Write $\E'_w[Y](i) := \langle Y \rangle(i) U(i)$ for the conditional expectation (satisfying the tower property $\E[\E'_w[Y]] = \E'[Y]$), and $\cov'_w(Y,Y')(i) := \E'_w[Y Y'](i) - \E'_w[Y](i) \E'_w[Y'](i)$ for the conditional covariance (satisfying $\E[\cov'_w(Y,Y')] = \cov(Y,Y') + \E'[Y] \E'[Y'] - \E[\E'_w[Y] \E'_w[Y']]$). We have:
\begin{equation}
    \E'_w[U'](i) = \langle U' \rangle_w(i) U(i) = U^{(2)}(i)
\end{equation}


\begin{thm}[Multi-Level Price Equation] \label{thm_multilevelprice}
Let $w$ and $w'$ be composable processes, with $w^{(2)} := w' \circ w$. For any observables $Y$ and $Z$ on $I'$ and $I''$, respectively, we have the multi-level selective and environmental changes:
\begin{eqnarray}
    \partial'_\NS(Y) &=& \cov(\E'_w[Y], \E'_w[U']) + \E\!\left[\cov_w(Y,U')\right] \label{eqn_multilevelpriceNS} \\
    \partial'_\EC(Y,Z) &=& \E\!\left[\E'_w[\Delta_w(Y,Z) U']\right] \label{eqn_multilevelpriceEC}
\end{eqnarray}
and the multi-level Price equation:
\begin{equation} \label{eqn_multilevelprice}
    \Delta(\bar Y, \bar Z) = \cov(\E'_w[Y], \E'_w[U']) + \E\!\left[\cov_w(Y,U') + \E'_w[\Delta_w(Y,Z) U']\right]
\end{equation}
\end{thm}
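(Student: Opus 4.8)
The plan is to obtain the multi-level formulas by applying the ordinary Price equation (Theorem \ref{thm_price}) to the second process $w'$ and then pulling every second-level average back to the base population via the fitness-weighted tower property. Concretely, since $\E'_w[Y] = \langle Y \rangle_w U$, identity \eqref{eqn_towerproperty} gives $\E[\E'_w[Y]] = \E[\langle Y \rangle_w U] = \E'[Y]$; this is the single device that transports any $\mu'$-average into a $\mu$-average, and everything else is bookkeeping built on top of it.

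First I would handle the selective change $\partial'_\NS(Y) = \cov'(Y,U')$. Since $\E'[U'] = 1$, I write $\cov'(Y,U') = \E'[YU'] - \E'[Y]$ and apply the tower property to each term, obtaining $\partial'_\NS(Y) = \E[\E'_w[YU']] - \E[\E'_w[Y]]$. The definition of the conditional covariance rearranges to $\E'_w[YU'] = \E'_w[Y]\,\E'_w[U'] + \cov_w(Y,U')$, so $\partial'_\NS(Y) = \E[\E'_w[Y]\,\E'_w[U']] - \E[\E'_w[Y]] + \E[\cov_w(Y,U')]$. The collapse into a base-level covariance then uses $\E[\E'_w[U']] = \E'[U'] = 1$ (equivalently, $\E'_w[U'] = U^{(2)}$ has unit mean): this makes $\E[\E'_w[Y]\,\E'_w[U']] - \E[\E'_w[Y]]$ equal to $\cov(\E'_w[Y],\E'_w[U'])$, which is exactly \eqref{eqn_multilevelpriceNS}.

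The environmental change is then immediate. Its defining expression $\partial'_\EC(Y,Z) = \E'[\Delta_{w'}(Y,Z)\,U']$ is the $\mu'$-average of a single function on $I'$, so one application of the tower property yields $\partial'_\EC(Y,Z) = \E[\E'_w[\Delta_{w'}(Y,Z)\,U']]$, giving \eqref{eqn_multilevelpriceEC}. To finish, I substitute both expressions into $\Delta(\bar Y,\bar Z) = \partial'_\NS(Y) + \partial'_\EC(Y,Z)$ and group the two outer $\E[\,\cdot\,]$ terms under one expectation to produce \eqref{eqn_multilevelprice}.

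I do not expect a genuine obstacle: the theorem is essentially a repackaging of the Price equation through the conditional operators. The one point requiring care is that $\E'_w$ is a \emph{fitness-weighted} conditional expectation rather than an honest probability conditional expectation, so the correct normalization is $\E[\E'_w[U']] = 1$ and not $\E'_w[U'] = 1$; it is precisely this averaged normalization that produces the base-population covariance $\cov(\E'_w[Y],\E'_w[U'])$. Beyond that, the only discipline needed is to keep track of which observables live on $I$ versus $I'$ when applying the tower property and the conditional-covariance identity.
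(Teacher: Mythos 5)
Your proposal is correct and matches the paper's argument in substance: both rest on the tower property $\E[\E'_w[\cdot]] = \E'[\cdot]$, the identity $\E'_w[YU'] = \E'_w[Y]\,\E'_w[U'] + \cov_w(Y,U')$, and the normalization $\E[\E'_w[U']] = \E'[U'] = 1$, with the only difference being that you derive the right-hand side from $\cov'(Y,U')$ while the paper expands the right-hand side and collapses it back to $\cov'(Y,U')$. Your closing remark correctly isolates the one genuine subtlety (the averaged normalization of the fitness-weighted conditional expectation), so no changes are needed.
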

\begin{proof}
For the proof of \eqref{eqn_multilevelpriceNS}, we compute:
\begin{eqnarray}
    \cov(\E'_w[Y], \E'_w[U']) + \E[\cov'_w(Y, U')]
    &=& \E[\E'_w[Y] \E'_w[U'] - \E'[Y] + \E'_w[YU'] - \E'_w[Y] \E'_w[U']] \nonumber \\
    &=& \E[\E'_w[YU'] - \E'[Y]] = \E'[YU'] - \E'[Y] \nonumber \\
    &=& \cov'(Y,U') = \partial'_\NS(Y).
\end{eqnarray}

For the proof of \eqref{eqn_multilevelpriceEC}, we compute:
\begin{eqnarray}
    \E[\E'_w[\Delta_w(Y,Z)U']] &=& \E'[\Delta_w(Y,Z)U'] = \partial'_\EC(Y,Z). 
\end{eqnarray}

\end{proof}

\begin{cor}[Composed Multi-Level Price Equation]
For any observables $X$, $Y$ and $Z$ on $I$, $I'$ and $I''$, respectively, we have 
the composed multi-level Price equation:
\begin{equation} \label{eqn_composedmultilevelprice}
    \Delta(\bar X, \bar Z) = \cov(X,U) + \cov(\E'_w[Y], \E'_w[U']) + \E[\Delta_w(X,Y) U] + \E\!\left[\cov_w(Y,U') + \E'_w[\Delta_w(Y,Z) U']\right]
\end{equation}
\end{cor}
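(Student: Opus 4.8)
The plan is to obtain the composed equation \eqref{eqn_composedmultilevelprice} by telescoping the total change through the intermediate population $\mu'$ and then invoking the two Price equations already established: the ordinary one (Theorem \ref{thm_price}) for the first step $w$, and the multi-level one (Theorem \ref{thm_multilevelprice}) for the second step $w'$. The key structural observation is that $\bar Y := \E'[Y]$ is a single number attached to the intermediate population $\mu'$, and it plays the role of ``child average'' for $w$ and ``parent average'' for $w'$ simultaneously, so the two change operators glue along it.

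First I would write the telescoping identity
\begin{equation}
    \Delta(\bar X, \bar Z) = \bar Z - \bar X = (\bar Y - \bar X) + (\bar Z - \bar Y) = \Delta(\bar X, \bar Y) + \Delta(\bar Y, \bar Z),
\end{equation}
which is valid precisely because $\bar Y = \E'[Y]$ is computed against the same measure $\mu'$ in both summands. Next I would apply the general Price equation (Theorem \ref{thm_price}) to the process $w : \mu \mapsto \mu'$ with parent observable $X$ and child observable $Y$, obtaining
\begin{equation}
    \Delta(\bar X, \bar Y) = \cov(X,U) + \E[\Delta_w(X,Y) U],
\end{equation}
which supplies the first and third terms of \eqref{eqn_composedmultilevelprice}.

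Then I would apply the multi-level Price equation \eqref{eqn_multilevelprice} from Theorem \ref{thm_multilevelprice} to the composable pair, expressing the second-step change as
\begin{equation}
    \Delta(\bar Y, \bar Z) = \cov(\E'_w[Y], \E'_w[U']) + \E\!\left[\cov_w(Y,U') + \E'_w[\Delta_w(Y,Z) U']\right],
\end{equation}
which supplies the second term and the combined fourth term. Substituting these two displays into the telescoping identity and summing yields \eqref{eqn_composedmultilevelprice} directly.

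Since this is a corollary that merely concatenates two results already proved, there is no genuine analytic obstacle; no new estimates, convexity, or measure-theoretic constructions are needed. The only point requiring care is the bookkeeping that the intermediate average $\bar Y$ is literally the same quantity in both applications — one must confirm that the ``child'' average $\E[U \langle Y \rangle_w]$ arising from $w$ coincides with the ``parent'' average $\E'[Y]$ fed into $w'$, which is exactly the content of the tower property \eqref{eqn_towerproperty} — and that the hypotheses under which Theorems \ref{thm_price} and \ref{thm_multilevelprice} hold, namely composability of $(w,w')$ and finite variance of $w$ so that $\cov(X,U)$ and the conditional covariance $\cov_w(Y,U')$ are well defined, are in force throughout.
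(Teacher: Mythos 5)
Your proof is correct and is exactly the intended argument: the paper gives no explicit proof for this corollary precisely because it follows by telescoping $\Delta(\bar X,\bar Z)=\Delta(\bar X,\bar Y)+\Delta(\bar Y,\bar Z)$ and adding the ordinary Price equation for $w$ to the multi-level Price equation \eqref{eqn_multilevelprice} for $(w,w')$, which is what you did. Your remark that the gluing works because $\bar Y=\E'[Y]$ is the same quantity in both summands (via the tower property) is the right point of care.
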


\begin{cor}[Multi-Level Fisher's Theorem]
We have the multi-level versions of Fisher's fundamental theorem:
\begin{eqnarray}
    0 &=& \Delta(\bar U', \bar U^{(2)}) = \var(\E'_w[U']) + \E\!\left[\var'_w(U') + \E'_w[\Delta_w(U',U^{(2)}) U']\right], \\
    0 &=& \Delta(\bar U, \bar U^{(2)}) = \var(U) + \var(\E'_w[U']) + \E[\Delta_w(U,U') U] + \E\!\left[\var'_w(U') + \E'_w[\Delta_w(U',U^{(2)}) U']\right].
\end{eqnarray}
\end{cor}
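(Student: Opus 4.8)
The plan is to obtain both identities as immediate specializations of the multi-level machinery already established in Theorem~\ref{thm_multilevelprice} and its composed corollary, with no new estimates required. The only genuinely new input is the Fisher-type observation (as in Theorem~\ref{thm_fisher}) that every relative fitness in sight has unit mean, which forces the averaged changes on the left to vanish. Accordingly, the proof is pure bookkeeping: substitute the relative-fitness observables, collapse the self-covariances into variances, and read off that the left-hand sides are zero.

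First I would treat the top identity by setting $Y := U'$ and $Z := U^{(2)}$ in the multi-level Price equation \eqref{eqn_multilevelprice}. Its right-hand side then reads $\cov(\E'_w[U'], \E'_w[U']) + \E[\cov_w(U',U') + \E'_w[\Delta_w(U',U^{(2)}) U']]$, and I would simplify the two covariances-of-a-function-with-itself via $\cov(V,V) = \var(V)$ and $\cov_w(V,V) = \var'_w(V)$; since $\E'_w[U'] = U^{(2)}$ (as recorded just before the statement), this is exactly the claimed right-hand side, with the local-change term carried over verbatim. For the left-hand side I would note that both $U'$ and $U^{(2)}$ average to one, namely $\bar U' = \E'[U'] = 1$ and $\bar U^{(2)} = \E[U^{(2)}] = \E[\E'_w[U']] = \E'[U'] = 1$ by the tower property, so that $\Delta(\bar U', \bar U^{(2)}) = \bar U^{(2)} - \bar U' = 0$. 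The second identity follows identically from the composed multi-level Price equation \eqref{eqn_composedmultilevelprice} with $X := U$, $Y := U'$, $Z := U^{(2)}$: the term $\cov(U,U)$ becomes $\var(U)$, the pair $\cov(\E'_w[U'],\E'_w[U'])$ and $\cov_w(U',U')$ become $\var(\E'_w[U'])$ and $\var'_w(U')$, the two local-change terms transfer unchanged, and the left-hand side is $\Delta(\bar U, \bar U^{(2)}) = \bar U^{(2)} - \bar U = 1 - 1 = 0$.

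I do not anticipate a real obstacle, since the substantive content was discharged in proving Theorem~\ref{thm_multilevelprice}. The single point demanding care is the conceptual identity hidden in the first equation: the selective part telescopes as a law of total variance, $\var'(U') = \var(\E'_w[U']) + \E[\var'_w(U')]$, which holds because $\E[\E'_w[\,\cdot\,]] = \E'[\,\cdot\,]$ and $\E[\E'_w[U']] = 1$. I would flag this explicitly, as it is precisely what makes the multi-level split of the selective term reproduce the single-level variance $\var'(U')$ and thereby renders the vanishing of the whole right-hand side consistent with Fisher's theorem applied at the level of $w'$.
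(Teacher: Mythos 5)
Your proposal is correct and follows essentially the same route as the paper, which simply sets $X = U$, $Y = U'$, $Z = U^{(2)} = \E'_w[U']$ and applies the multi-level Price equation (Theorem \ref{thm_multilevelprice}) and its composed corollary; your additional observations that the left-hand sides vanish because all relative fitnesses have unit mean, and that the selective split is consistent with the law of total variance, are exactly the content the paper records in Theorem \ref{thm_fisher} and Lemma \ref{lem_multilevelvariance} respectively.
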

\begin{proof}
We set $X = U$, $Y = U'$, and $Z = U^{(2)} = \langle U' \rangle U = \E'_w[U']$, and apply the multi-level Price equation.
\end{proof}

\begin{rem}[Stochastic Price Equation]
Rice \cite{rice2020universal} interprets the multi-level Price equation in a stochastic framework. Suppose that $I$ is some statistical parameter space with a distribution $\mu$, and let $w_i$ be a measure on $I'$ varying measurably in the parameter value $i \in I$. Let $\mu'$ be the measure given by integrating over parameter values $i$ (i.e., $\mu'(B) = \int_I w_i(B) \mu(\d i)$). Consider a process $w' : \mu' \mapsto \mu''$, for some measure $\mu''$ on $I''$. Then Rice's stochastic Price equation \cite[(2.2)]{rice2020universal} is exactly the multi-level Price equation \eqref{eqn_multilevelprice}. 

\end{rem}

\section{Smooth Price Equation} \label{sect_timevaryingpriceequation}

We state and prove a general time-varying version of the Price equation, generalizing Price's informal time-varying equation Price \cite[(A 23)]{price1972extension}. Page and Nowak \cite[(4)]{page2002unifying} stated Price's time-varying equation without precise definition as follows:
\begin{equation} \label{eqn_continuoustime}
    \dot \E(P) = \cov(P,U) + \E(\dot P),
\end{equation}
where $\E$ is a time-varying expectation, $U$ is a time-varying fitness function, and $P$ is a time-varying observable. However, this is ambiguous and 
needs a more precise formalism. 


Let $T \subseteq \R$ be an open set representing a time domain, and let $(I^t,\I^t,\mu^t)$ be a (possibly varying) family of measurable spaces. For each pair $t \le t'$ in $T$, let $w^{t,t'} : \mu^t \mapsto \mu^{t'}$ denote a time-varying evolutionary process, i.e., a transition kernel satisfying \eqref{eqn_disintegration} and the temporal consistency condition $w^{t,t''} = w^{t',t''} \circ w^{t,t'}$ for any $t,t',t'' \in T$. Define the time-varying fitness $W^{t,t'}(i^t) := w_{i^t}(I^{t'})$ and the relative fitness $U^{t,t}(i^t) := W^{t,t'}(i^t)/\bar W^{t,t'}$. 

Let $\X  = (X^t)$ be a time-varying finite-variance family of observables, with means $\bar X^t := \E^t[X^t]$. Define the local-average $\langle X^{t'} \rangle^{t,t'}_w(i^t) := \frac{1}{W^{t,t'}(i^t)} \int_{I^t} X^{t'}(i^{t'}) w^{t,t'}_{i^t}(\d i^{t'})$ and local-change $\Delta^{t,t'}_w(X^t,X^{t'})(i^t) := \langle X^{t'} \rangle^{t,t'}_w(i^t) - X^t(i^t)$. 

For any $t \le t'$, the discrete-time Price equation holds:
\begin{equation} \label{eqn_discretetimeprice}
    \Delta^{t,t'}(\bar X^t, \bar X^{t'}) = \cov^t(X^t, U^{t,t'}) + \E^t[\Delta_w^{t,t'}(X^t,X^{t'}) U^{t,t'}].
\end{equation}

\begin{defn}[Smooth Evolutionary Processes] \label{def_smoothprocesses}
Consider a time-varying process $w := (w^{t,t'})$ and a time-varying family of observables $\X := (X^t)$. We say that $w$ is a \emph{smooth evolutionary process} at $X^t$ if the following hold: 
\begin{enumerate}
    \item (Smooth Expectations) The time-varying average is smooth at $t$, i.e., the limit of real numbers is well-defined:
        \begin{equation} \label{eqn_smoothexpectation}
            \frac{\d \E^t}{\d t}[X^t] := \lim_{t' \downarrow t} \frac{\Delta^{t,t'}(\bar X^t,\bar X^{t'})}{|t'-t|} = \lim_{t' \downarrow t} \frac{\E^{t'}[X^{t'}] - \E^t[X^t]}{|t'-t|}.
        \end{equation}
    \item (Relative-Fitness Density) The time-varying relative fitness admits a density at $t$, where we take the $L^2(\mu^t)$-limit:
        \begin{equation} \label{eqn_relativefitnessdensity}
             \Upsilon^t(i) := \Ltwolim_{t' \downarrow t} \frac{U^{t,t'}(i)}{|t'-t|}.
        \end{equation}
    \item (Local-Change Density) The time-varying local change admits a density at $t$, where we take the $L^1(\mu^t)$-limit:
        \begin{equation} \label{eqn_localchangedensity}
            \delta^t(\X)(i) := \Lonelim_{t' \downarrow t} \frac{\Delta^{t,t'}_w(X^t,X^{t'})}{|t'-t|} := \Lonelim_{t' \downarrow t} \frac{\langle X^{t'} \rangle_w^{t,t'}(i) - X^t(i)}{|t'-t|}. 
        \end{equation}
\end{enumerate}
\end{defn}

The local-change density depends on $w^{t,t'}$ and $X^{t'}$ for $t'$ in an infinitesimal vicinity of $t$. 


\begin{thm}[Time-Varying Price Equation] \label{thm_smoothprice}
Suppose $w = (w^{t,t'})$ is a smooth process at $X^t$. 
Then the smooth Price equation holds at $t$:
\begin{equation} \label{eqn_timevaryingprice}
    \frac{\d \E^t}{\d t}\!\left[X^t\right] = \cov^t\!\left(X^t,\Upsilon^t\right) + \E^t\!\left[\delta^t(\X)\right].
\end{equation}

Furthermore, if $w$ is a process satisfying two of the assumptions of Definition \ref{def_smoothprocesses} at $t$, then the third assumption holds, and so $w$ is smooth at $t$ and the Price equation \eqref{eqn_timevaryingprice} holds.
\end{thm}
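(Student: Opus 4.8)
The plan is to start from the discrete-time Price equation \eqref{eqn_discretetimeprice}, which holds for every pair $t\le t'$, divide through by $s := |t'-t|$, and let $t'\downarrow t$. This gives, for each $t'$, the exact identity
\[
\frac{\Delta^{t,t'}(\bar X^t,\bar X^{t'})}{s} = \cov^t\!\left(X^t,\frac{U^{t,t'}}{s}\right) + \E^t\!\left[\frac{\Delta_w^{t,t'}(X^t,X^{t'})}{s}\,U^{t,t'}\right],
\]
whose three terms I label $A_{t'}$, $B_{t'}$, $C_{t'}$, so that $A_{t'}=B_{t'}+C_{t'}$. By the Smooth-Expectations assumption \eqref{eqn_smoothexpectation}, $A_{t'}\to\tfrac{\d\E^t}{\d t}[X^t]$. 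The whole theorem then reduces to the two limit evaluations $B_{t'}\to\cov^t(X^t,\Upsilon^t)$ and $C_{t'}\to\E^t[\delta^t(\X)]$: the first assertion is then immediate, and the converse follows by reading the linear relation $A_{t'}=B_{t'}+C_{t'}$ in the limit.

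For the selective term, I would use bilinearity and shift-invariance of covariance. Since $\E^t[U^{t,t'}]=1$, we have $B_{t'}=\cov^t(X^t,\tfrac{U^{t,t'}-1}{s})$, so the divergence of the mean of $U^{t,t'}/s$ is annihilated; in this sense $\Upsilon^t$ in \eqref{eqn_relativefitnessdensity} is determined only up to an additive constant. Continuity of covariance then gives $|B_{t'}-\cov^t(X^t,\Upsilon^t)| = |\cov^t(X^t,\tfrac{U^{t,t'}-1}{s}-\Upsilon^t)| \le \sqrt{\var^t(X^t)}\,\|\tfrac{U^{t,t'}-1}{s}-\Upsilon^t\|_{L^2(\mu^t)}\to 0$, using $\var^t(X^t)<\oo$ and the $L^2$-convergence in \eqref{eqn_relativefitnessdensity}.

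The environmental term is where the work lies. I would split $C_{t'}=\E^t[\Delta_w^{t,t'}/s]+\E^t[(\Delta_w^{t,t'}/s)(U^{t,t'}-1)]$. The first summand converges to $\E^t[\delta^t(\X)]$ directly from the $L^1$-convergence \eqref{eqn_localchangedensity}. The cross term is the main obstacle, and the key observation is that, because $\E^t[U^{t,t'}]=1$, it is exactly a covariance: $\E^t[(\Delta_w^{t,t'}/s)(U^{t,t'}-1)] = \cov^t(U^{t,t'},\Delta_w^{t,t'}/s)$. Cauchy--Schwarz then bounds it by $\sqrt{\var^t(U^{t,t'})}\,\sqrt{\var^t(\Delta_w^{t,t'}/s)}$. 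The first factor tends to $0$, since \eqref{eqn_relativefitnessdensity} forces $\|U^{t,t'}-1\|_{L^2(\mu^t)}=s\,O(1)\to 0$ and hence $\var^t(U^{t,t'})\to 0$; the second factor stays bounded because the finite-variance hypothesis on $\X$ controls $\Delta_w^{t,t'}/s$ in $L^2(\mu^t)$ for $t'$ near $t$ (where $U^{t,t'}$ is close to $1$, so the local average $\langle X^{t'}\rangle_w$ is $L^2$-bounded via conditional Jensen). Thus the cross term vanishes and $C_{t'}\to\E^t[\delta^t(\X)]$. I expect this $L^2$-control on the local change to be the genuinely delicate point: the bare $L^1$ statement \eqref{eqn_localchangedensity} does not by itself kill the cross term, and one must lean on the finite-variance structure.

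Finally, for ``any two of the three assumptions imply the third,'' I would isolate the cross-term estimate as a lemma and combine it with the exact identity $A_{t'}=B_{t'}+C_{t'}$. The lemma shows that $C_{t'}$ and $\E^t[\Delta_w^{t,t'}/s]$ always share a limit whenever $\var^t(U^{t,t'})\to0$ (a consequence of \eqref{eqn_relativefitnessdensity}), so once two of the three scalar limits are known the relation pins down the third, yielding the Price identity \eqref{eqn_timevaryingprice}. The one case needing extra care is when \eqref{eqn_relativefitnessdensity} is the assumption being derived: there I would carry the mild standing regularity $U^{t,t'}\to 1$ in $L^2(\mu^t)$ (weaker than the density's existence) in order to run the cross-term bound. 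In every case the cross-term estimate is the crux, which is why I would prove it once and reuse it in both directions.
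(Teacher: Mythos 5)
Your proposal follows the paper's proof in all essentials: divide the discrete-time Price equation \eqref{eqn_discretetimeprice} by $|t'-t|$, pass to the limit term by term (smooth expectations for the left side, $L^2$-convergence for the covariance term, a product-rule analysis for the environmental term), and obtain the two-out-of-three claim by reading the exact identity $A_{t'}=B_{t'}+C_{t'}$ in the limit; your observation that $\cov^t(X^t,\cdot)$ annihilates constants, so that \eqref{eqn_relativefitnessdensity} only matters modulo an additive constant, matches the paper's ``convergence in covariance'' step.

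Where you differ is in making the environmental term's cross term explicit. The paper simply invokes ``the product rule for $L^1$-differentiation'' and evaluates $\E^t[\delta^t(\X)U^{t,t}+\Delta_w^{t,t}(X^t,X^t)\Upsilon^t]$, whereas you isolate $\E^t[(\Delta_w^{t,t'}/s)(U^{t,t'}-1)]$ and bound it by Cauchy--Schwarz. That is the honest version of the argument, and you are right that the bare $L^1$-limit \eqref{eqn_localchangedensity} does not kill this term on its own. However, your claimed source of the missing control is not quite right: finite variance of $X^t$ and $X^{t'}$ gives $\Delta_w^{t,t'}\in L^2(\mu^t)$ for each fixed $t'$, but it does not give a bound on $\var^t\!\left(\Delta_w^{t,t'}/s\right)$ that is uniform as $s\downarrow 0$ --- the stated hypotheses only supply uniform $L^1$-control of $\Delta_w^{t,t'}/s$, coming from its $L^1$-convergence. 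So your cross-term lemma needs an additional standing assumption (e.g.\ uniform $L^2$-boundedness of $\Delta_w^{t,t'}/|t'-t|$, or uniform integrability of the product $(\Delta_w^{t,t'}/s)(U^{t,t'}-1)$); with that in place the rest of your argument, including the reuse of the lemma in the two-out-of-three direction, goes through. The paper's own proof has the same lacuna, hidden inside the product-rule step, so you have located a genuine soft spot rather than introduced a new one.
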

\begin{proof}
We begin with the time-varying Price equation \eqref{eqn_discretetimeprice}, and divide both sides by $|t'-t|$:
\begin{equation} \label{eqn_priceequation_discretedifferential}
    \frac{\Delta^{t,t'}(\bar X^t, \bar X^{t'})}{|t'-t|} = \cov^t\!\left(X^t, \frac{U^{t,t'}}{|t'-t|} \right) + \E^t\!\left[\frac{\Delta(X^t,X^{t'}) U^{t,t'}}{|t'-t|} \right].
\end{equation}

Assuming smooth expectations \eqref{eqn_smoothexpectation}, for the left side, when we take $t' \downarrow t$, we get $\frac{\d \E^t}{\d t}[X^t]$. 

Assuming a relative-fitness density \eqref{eqn_relativefitnessdensity}, for the selective term, we have the $L^2$-limit: $\frac{U^{t,t'}}{|t'-t|} \to \Upsilon^t$ in $L^2$ as $t' \downarrow t$. Since $\mu^t$ is a finite measure, this implies convergence in covariance, hence for the linear operator $\cdot \mapsto \cov(X^t,\cdot)$. This proves $\cov^t\!\left(X^t, \frac{U^{t,t'}}{|t'-t|} \right) \to \cov^t(X^t,\Upsilon^t)$

Assuming a local-change density \eqref{eqn_localchangedensity}, for the environmental term, we use the product rule for $L^1$-differentiation as $t' \downarrow t$:
\begin{eqnarray}
    \lim_{t' \downarrow t} \E^t\!\left[\frac{\Delta_w^{t,t'}(X^t,X^{t'}) U^{t,t'}}{|t'-t|}\right] = \E^t\!\left[\delta^t(\X) U^{t,t} + \Delta_w^{t,t}(X^t,X^t) \Upsilon^t\right] = \E^t\!\left[\delta^t(\X) \right],
\end{eqnarray}
since $U^{t,t}(i) = 1$ and $\Delta^{t,t}(X^t,X^t)(i) = 0$ almost surely. Thus 
\eqref{eqn_timevaryingprice} holds at $t$. 

If we assume only two assumptions of Definition \ref{def_smoothprocesses}, then we use \eqref{eqn_discretetimeprice} to put the two Cauchy sequence terms on one side, ensuring that the remaining term is also a Cauchy sequence, and the corresponding limits \eqref{eqn_smoothexpectation}, \eqref{eqn_relativefitnessdensity}, and \eqref{eqn_localchangedensity} all exist. This proves the missing assumption, and so $w$ is smooth.
\end{proof}

\begin{conj}
The author conjectures that there exist non-smooth processes satisfying any one of the assumptions of Definition \ref{def_smoothprocesses} without satisfying the other two assumptions. For those processes, the Price equation \eqref{eqn_timevaryingprice} would not hold. 
\end{conj}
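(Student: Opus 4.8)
The plan is to settle this conjecture by explicit \emph{construction}: for each of the three conditions in Definition~\ref{def_smoothprocesses}, I would exhibit a time-varying process on a minimal type space for which that one condition holds at a base point while the other two fail. The organizing principle is the exact discrete identity \eqref{eqn_priceequation_discretedifferential}, which at every scale $t' > t$ reads $a(t') = b(t') + c(t')$, where $a(t')$ is the difference quotient of $\bar X$, $b(t')$ is the selective term $\cov^t(X^t, U^{t,t'}/|t'-t|)$, and $c(t')$ is the environmental term. Since the theorem shows that convergence of any two of these forces the third, the task reduces to arranging for exactly one to converge (in its proper topology) while the other two diverge in a mutually consistent way.

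First I would fix the simplest workable setting: $I^t = \{1,2\}$ with a fixed reference measure, $T = (-1,1)$, base point $t = 0$, and a fixed finite-variance observable $X^t$. Each $w^{t,t'}$ is then determined by a few scalars (the fitness values and local transition weights), so that $U^{t,t'}/|t'-t|$, $\Delta_w^{t,t'}/|t'-t|$, and $a(t')$ become explicit functions of these scalars and can be controlled directly. To realize the case where only the relative-fitness density holds, I would force $U^{t,t'}/|t'-t|$ to converge in $L^2$ while injecting a time-rough local change, of order $|t'-t|\sin(1/|t'-t|)$ before normalization, so that $\Delta_w^{t,t'}/|t'-t|$ has no $L^1$ limit; the identity then makes $a$ diverge automatically. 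The case where only the local-change density holds is dual: keep $\Delta_w^{t,t'}/|t'-t|$ convergent in $L^1$ while making $U^{t,t'}/|t'-t|$ oscillate along the direction of $X^t - \bar X^t$, so that no $L^2$ limit exists and the scalar $b$ diverges with it. Finally, to realize only smooth expectations, I would give $b(t')$ and $c(t')$ cancelling oscillatory parts, so that their sum $a$ converges while neither density exists.

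The verification for each example is a direct two-point computation of the relevant limits, together with a check that the family is a genuine process: nonnegativity, the disintegration \eqref{eqn_disintegration} at each scale, and the finite-variance hypotheses.

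The hard part, and presumably why the author leaves this open, is the temporal-consistency cocycle $w^{t,t''} = w^{t',t''} \circ w^{t,t'}$. This forbids prescribing $U^{t,t'}$ and $\Delta_w^{t,t'}$ freely at each scale: the fitness must compose multiplicatively and the local averages must compose through the tower property \eqref{eqn_towerproperty}. So the desired rough time-behavior cannot be imposed scale-by-scale but must be encoded in a single consistent path of kernels, most cleanly by fixing the infinitesimal data and introducing a non-differentiable time reparametrization or a rough scaling factor, then reading off the induced densities. Reconciling, say, an $L^2$-rough fitness with an $L^1$-regular local change under this constraint, while preserving finite variance, is where the genuine difficulty lies.
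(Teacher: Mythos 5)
First, a point of order: the paper offers no proof of this statement --- it is explicitly left as an open conjecture --- so there is no argument of the author's to compare yours against. What you have written is also not a proof but a construction plan: each of the three examples is described in the conditional (``I would force\dots''), none is actually exhibited, and you yourself defer the central difficulty (the cocycle constraint $w^{t,t''}=w^{t',t''}\circ w^{t,t'}$) to future work. As it stands the conjecture remains unproved after your proposal.

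That said, the strategy is the right one, and two concrete repairs would turn it into a proof. \textbf{(i)} Your claim that once $U^{t,t'}/|t'-t|$ converges in $L^2$ and $\Delta_w^{t,t'}/|t'-t|$ has no $L^1$ limit, ``the identity then makes $a$ diverge automatically'' is not correct: the identity \eqref{eqn_priceequation_discretedifferential} controls only the \emph{scalar} $c(t')=\E^t[\Delta_w^{t,t'}(X^t,X^{t'})U^{t,t'}/|t'-t|]$, and this expectation can converge even when the function $\Delta_w^{t,t'}/|t'-t|$ does not converge in $L^1$ (oscillations of opposite sign at different types can cancel under the average). If that happened you would have produced a process satisfying assumptions (1) and (2) but not (3) --- a counterexample to Theorem \ref{thm_smoothprice} rather than an instance of the conjecture. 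You must additionally arrange that the scalar $c(t')$ itself diverges, e.g.\ by making the oscillation have a common sign across the two types. \textbf{(ii)} The cocycle constraint is not as obstructive as you suggest, and resolving it is exactly what is needed to finish. Your own hint --- a non-differentiable time reparametrization --- works: for ``only (3)'' take the purely selective family $w^{t,t'}_i:=\frac{m_i(t')}{m_i(t)}\delta_i$ on $I=\{1,2\}$, which satisfies the cocycle identically, has $\Delta_w\equiv 0$ (so (3) holds with $\delta^t=0$), and fails (1) and (2) whenever $\log m_1-\log m_2$ is non-differentiable at $t$; for ``only (2)'' take a purely environmental Markov semigroup $\e^{(g(t')-g(t))Q}$ with $g$ increasing but with infinite derivative at $t$, so that $U^{t,t'}\equiv 1$ while the local change and the expectation difference quotient both blow up; for ``only (1)'' superpose the two with magnitudes tuned so the scalar selective and environmental terms cancel. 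Until such examples are written down and checked against \eqref{eqn_disintegration}, the finite-variance hypotheses, and the cocycle, the conjecture is not settled.
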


Consider the situation where we have information of a process up to time $t$, and wish to understand new infinitesimal evolution happening at time $t$. We can apply the multi-level Price equation to each time interval, then take $t' \downarrow t$. We follow the notation of the previous section for conditional expectation and covariance. 

\begin{cor}[Time-Varying Multi-Level Price Equation]
Suppose $w = (w^{t,t'})$ is a smooth evolutionary process at $X^t$. Then for any time-varying family of observables $X = (X^t)$, the time-varying, multi-level Price equation holds at $t$:
\begin{equation} \label{eqn_timevaryingmultilevelpriceeqn}
    \frac{\d \E^t}{\d t}[X](i) = \cov^t\!\left(\E^t_{w^{t_0,t}}[X^t], \E^t_{w^{t_0,t}}[\Upsilon^t]\right) + \E^t\!\left[ \cov^t_{w^{t_0,t}}(X^t,\Upsilon^t) + \E^t_{w^{t_0,t}}[\Delta_{w^{t,t'}}(X^t,X^{t,t'}) U^{t,t'}] \right].
\end{equation}
\end{cor}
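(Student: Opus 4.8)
The plan is to reduce the time-varying multi-level equation to the discrete multi-level Price equation (Theorem~\ref{thm_multilevelprice}) applied to the composable pair $(w^{t_0,t}, w^{t,t'})$, and then pass to the limit $t' \downarrow t$ exactly as in the proof of Theorem~\ref{thm_smoothprice}. Concretely, I would take the first process to be $w^{t_0,t}$ and the second to be $w^{t,t'}$, set the observables $Y := X^t$ and $Z := X^{t'}$, and use the relative fitness $U^{t,t'}$ of the second process. For each fixed $t' > t$, Theorem~\ref{thm_multilevelprice} then gives
\[
\Delta^{t,t'}(\bar X^t, \bar X^{t'}) = \cov^t\!\left(\E^t_{w^{t_0,t}}[X^t], \E^t_{w^{t_0,t}}[U^{t,t'}]\right) + \E^t\!\left[\cov^t_{w^{t_0,t}}(X^t, U^{t,t'}) + \E^t_{w^{t_0,t}}[\Delta_{w^{t,t'}}(X^t,X^{t'}) U^{t,t'}]\right].
\]
Dividing both sides by $|t'-t|$ and taking $t' \downarrow t$ produces \eqref{eqn_timevaryingmultilevelpriceeqn}, provided each term converges. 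The left-hand side converges to $\frac{\d \E^t}{\d t}[X^t]$ by the smooth-expectation hypothesis \eqref{eqn_smoothexpectation}.

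For the two selective terms, the crucial input is the relative-fitness density \eqref{eqn_relativefitnessdensity}, which gives $U^{t,t'}/|t'-t| \to \Upsilon^t$ in $L^2(\mu^t)$. Since $w^{t_0,t}$ is fixed (independent of $t'$), the conditional operators $\E^t_{w^{t_0,t}}$ and $\cov^t_{w^{t_0,t}}(X^t,\cdot)$ are bounded linear maps on $L^2$ — conditional expectation is an $L^2$-contraction — so they commute with this limit. Continuity of $\cov^t(\E^t_{w^{t_0,t}}[X^t], \cdot)$ and boundedness of the outer integration $\E^t[\cdot]$ then yield convergence of the first two terms to $\cov^t(\E^t_{w^{t_0,t}}[X^t], \E^t_{w^{t_0,t}}[\Upsilon^t])$ and $\E^t[\cov^t_{w^{t_0,t}}(X^t,\Upsilon^t)]$, respectively.

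The main obstacle is the environmental term, where one must differentiate the product $\Delta_{w^{t,t'}}(X^t,X^{t'}) \, U^{t,t'}$ under the nested operators $\E^t_{w^{t_0,t}}$ and $\E^t$. I would handle this with the same product-rule argument used in Theorem~\ref{thm_smoothprice}: writing the quotient as $\frac{\Delta_{w^{t,t'}}}{|t'-t|}\,U^{t,t'} + \Delta_{w^{t,t'}}\,\frac{U^{t,t'}}{|t'-t|}$, the local-change density \eqref{eqn_localchangedensity} gives $\frac{\Delta_{w^{t,t'}}}{|t'-t|} \to \delta^t(\X)$ in $L^1(\mu^t)$ while $U^{t,t'} \to 1$ and $\Delta_{w^{t,t'}} \to 0$, so the cross term vanishes and the product converges in $L^1(\mu^t)$ to $\delta^t(\X)$. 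Again because $w^{t_0,t}$ is fixed, $\E^t_{w^{t_0,t}}$ preserves $L^1$ convergence and the outer $\E^t[\cdot]$ is $L^1$-continuous, so the environmental term converges to $\E^t[\E^t_{w^{t_0,t}}[\delta^t(\X)]]$, matching the single-level structure of \eqref{eqn_timevaryingprice}. Collecting the three limits gives \eqref{eqn_timevaryingmultilevelpriceeqn}.

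Finally, the ``two-of-three'' closure remark from Theorem~\ref{thm_smoothprice} transfers verbatim: if any two of the three density hypotheses hold, the displayed discrete identity forces the remaining difference quotient to be a Cauchy net, so its limit exists automatically, and smoothness need only be assumed for two of the three quantities \eqref{eqn_smoothexpectation}, \eqref{eqn_relativefitnessdensity}, \eqref{eqn_localchangedensity}.
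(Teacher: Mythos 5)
Your proposal is correct and follows essentially the same route as the paper: apply the discrete multi-level Price equation (Theorem~\ref{thm_multilevelprice}) to the pair $(w^{t_0,t}, w^{t,t'})$ for each $t' > t$, then pass to the limit $t' \downarrow t$ term by term. The paper justifies the limit interchange simply by invoking bounded convergence, whereas you spell out the $L^2$-contraction property of the conditional operators and the product-rule argument from Theorem~\ref{thm_smoothprice}; this is a more careful justification of the same step, not a different proof.
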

\begin{proof}
For each $t_0 \le t \le t'$, we apply the discrete multi-level Price equation:
\begin{equation}
    \Delta(\bar X^t, \bar X^{t'}) = \cov^t(\E^t_{w^{t_0,t}}[X^t], \E^t_{w^{t_0,t}}[U^{t,t'}]) + \E^t\!\left[\cov^t_{w^{t_0,t}}(X^t,U^{t,t'}) + \E_{w^{t_0,t}}[\Delta_{w^{t,t'}}(X^t,X^{t,t'}) U^{t,t'}]\right].
\end{equation}

By the bounded convergence theorem, for fixed $t_0, t$, when we take $t' \downarrow t$, we have $\cov^t(\E^t_{w^{t_0,t}}[X^t], \E^t_{w^{t_0,t}}[U^{t,t'}]) \to \cov^t\!\left(\E^t_{w^{t_0,t}}[X^t], \E^t_{w^{t_0,t}}[\Upsilon^t]\right)$; $\E^t\!\left[ \cov^t_{w^{t_0,t}}(X^t,U^{t,t'}) \right] \to \E^t\!\left[ \cov^t_{w^{t_0,t}}(X^t,\Upsilon^t) \right]$; and $\E^t\!\left[ \E^t_{w^{t_0,t}}[\Delta_{w^{t,t'}}(X^t,X^{t,t'}) U^{t,t'}] \right] \to \E^t\!\left[ \E_{w^{t_0,t}}[\delta^t(\X)] \right]$. This proves \eqref{eqn_timevaryingmultilevelpriceeqn}.
\end{proof}

\section{Quantum Price Equation} \label{sect_quantum}








We present a novel quantum form of the Price equation. Note: our presentation is unrelated to the ``quantum evolution'' of Simpson \cite{simpson1944tempo}. For a brief overview of quantum mechanics in general, see \cite[p.~65]{takhtadzhian2008quantum} or \cite{milz2017introduction}.

Let $\H$ and $\H'$ be arbitrary Hilbert spaces (not necessarily separable). Let $\A := \A(\H)$ and $\A' := \A'(\H')$ denote the spaces of observables, i.e., the self-adjoint operators on the Hilbert spaces. Let $\mu : \H \to \H$ and $\mu' : \H' \to \H'$ be trace-class, self-adjoint density operators with non-negative finite traces: $0 \le N := \Tr \mu < \oo$ and $0 \le N' := \Tr' \mu' < \oo$. We allow for unbounded observables and unbounded, trace-class density operators. Write $\bar W := \frac{N'}{N}$ for the ratio of population sizes, i.e., the quantum selective coefficient.  

The operators $\mu$ and $\mu'$ represent ``mixed states'' of quantum populations, and the non-negative real numbers $N$ and $N'$ represent the quantum ``sizes'' of the population. Just as classical evolutionary theory allows for populations of variable size, quantum evolutionary theory allows for quantum populations of variable size, and this variability is what drives quantum selective effects. The case where $N = 1$ and $N' \le 1$ is common in quantum computation, representing \cite{nielsen2002quantum,weedbrook2012gaussian}. 

Any population operator $\mu$ defines a measure on its Hilbert space $\H$ via the push forwards of the volume measures: $\mu_*(\lambda)(E) := \lambda(\mu^{-1} E)$. Consequently, we can make statements up to $\mu$-almost everywhere on $\H$ and $\mu'$-almost everywhere on $\H'$.

Let $\M := \M(\H)$ and $\M' := \M'(\H')$ denote the spaces of density operators. Define the population mean operators by normalizing the trace operators by quantum population sizes:
\begin{equation}
\E_\mu[X] := \frac{1}{N} \Tr(X\mu) \quad \mathrm{and} \quad \E'_{\mu'}[Y] := \frac{1}{N'} \Tr'(Y\mu').
\end{equation}

Define the average change between observables by
\begin{equation}
\Delta(X,Y) := \E'_{\mu'}[Y] - \E_\mu[X] = \frac{1}{N'} \Tr'(Y\mu') - \frac{1}{N} \Tr(X\mu).
\end{equation}

Define the population covariance operator by
\begin{equation}
\cov_\mu(X_0,X_1) := \E_\mu[X_1 X_0] - \E_\mu[X_1] \E_\mu[X_0] = \frac{1}{N} \Tr(X_1 X_0 \mu) - \frac{1}{N^2} \Tr(X_0 \mu) \Tr(X_1 \mu).
\end{equation}


We define a quantum evolutionary operation to be a measurable linear map which sends the non-negative cone $\M$ to the non-negative cone $\M'$. We do not need quantum operations to be trace-preserving or completely positive.







\begin{defn}[Quantum Evolutionary Operator as Quantum Channel]
We say that $\W : \M \to \M'$ is a \emph{quantum evolutionary operator} transforming $\mu$ into $\mu'$ if the following hold:
\begin{itemize}
\item The linear operator $\W : \M \to \M'$ is measurable. 
\item The child population is fully accounted for by the parent population via the process: 
\begin{equation} \label{eqn_disintegration_quantum}
    \mu' = \W(\mu).
\end{equation}
\end{itemize}

We say that $\W_\NS : \M \to \M$ is purely left- (resp. right-) selective if it is given by left- (resp. right) multiplication by a self-adjoint operator (i.e., $\W_\NS(\mu) = W^\opleft \mu$ for some $W^\opleft \in \A$, resp. $\W_\NS(\mu) = \mu W^\opright$ for some $W^\opright \in A$), and that $\W_\EC : \M \to \M'$ is purely environmental if it is trace-preserving. 
\end{defn}

\begin{rem}
Classical quantum channels are the trace-preserving, completely positive maps, which are a subclass of purely environmental maps. Not-completely-positive, trace-preserving operations are still purely environmental, because they preserve trace. Trace-decreasing quantum operations ha
ve $\bar W < 1$, and therefore admit selective effects. 
\end{rem}



\subsection{Quantum Selective Change}

A quantum evolutionary process admits an adjoint process, which defines a fitness operator. 

\begin{lem}[Quantum Adjoint]
Let $\W$ be a quantum evolutionary operator. There exists an adjoint operator $\W^\dagger : \A' \to \A$ satisfying 
\begin{equation}
    \Tr(\W^\dagger(Y) \mu) = \Tr'(Y \W(\mu))
\end{equation}
for all $\mu \in \M$. The adjoint does not depend on $\mu$. 
\end{lem}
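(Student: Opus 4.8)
The plan is to construct $\W^\dagger$ as the Banach-space transpose of $\W$ under the trace duality between trace-class operators and bounded operators. First I would extend $\W$ from the cone $\M$ to all self-adjoint trace-class operators: by the spectral theorem every such operator is a difference of two positive ones, so linearity forces a unique extension, which I then complexify to a linear map $\W : \T(\H) \to \T(\H')$ on all of the trace-class. Because $\W$ carries the positive cone into the positive cone, I would argue this extension is automatically bounded (a positive linear map taking the trace-class positive cone into itself is continuous, by a standard cone argument), so that the transpose is well defined.

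Next, for bounded observables I would invoke the isometric duality $\T(\H)^* \cong \B(\H)$ given by $Y \mapsto \big(\mu \mapsto \Tr(\mu Y)\big)$. The transpose of the bounded map $\W$ is then the unique bounded map $\W^\dagger : \B(\H') \to \B(\H)$ determined by $\Tr(\mu\,\W^\dagger(Y)) = \Tr'(\W(\mu)\,Y)$ for every trace-class $\mu$; cyclicity of the trace rewrites this as the stated identity. Uniqueness and independence of $\mu$ are then immediate: $\W^\dagger(Y)$ is a single operator pinned down by a pairing that holds for all $\mu$, and non-degeneracy of the trace pairing (if $\Tr(\mu A) = \Tr(\mu B)$ for all $\mu \in \M$, then $A = B$) forces it to be unique. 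To land in $\A$ rather than merely in $\B(\H)$, I would verify that $Y = Y^*$ implies $\W^\dagger(Y)^* = \W^\dagger(Y)$: a short conjugation computation, using that $\W$ preserves self-adjointness together with $\overline{\Tr(\mu A)} = \Tr(\mu A^*)$ for self-adjoint $\mu$, shows $\Tr(\mu\,\W^\dagger(Y)^*) = \Tr(\mu\,\W^\dagger(Y))$ for every self-adjoint $\mu$, whence the two operators coincide.

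The hard part is the unbounded observables the framework admits, for which the naive trace $\Tr'(Y\W(\mu))$ need not be finite and the transpose is no longer a bounded operator. Here I would use the spectral theorem to approximate $Y \in \A'$ by its bounded truncations $Y_n := \int_{[-n,n]} \lambda \, \d E_\lambda(Y)$, define $\W^\dagger(Y)$ from $\lim_n \W^\dagger(Y_n)$ on the domain where this limit exists as a self-adjoint operator, and pass the pairing identity to the limit by dominated convergence for the trace against the trace-class operators $\mu$ and $\W(\mu)$. The genuine obstacle is precisely this domain and integrability bookkeeping: one must restrict to observables $Y$ for which $Y\W(\mu)$ is trace-class for every $\mu \in \M$ (equivalently, $\W(\mu)$ lies in a suitable form domain of $Y$) and confirm that the resulting $\W^\dagger(Y)$ is a well-defined self-adjoint operator independent of the approximating sequence. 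On the bounded part everything reduces to standard transpose theory; all the real work lies in controlling these unbounded tails.
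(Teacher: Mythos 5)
Your construction via the trace duality $\T(\H)^* \cong \B(\H)$ is exactly the route the paper takes: its entire proof is the one-line observation that $\M$ and $\M'$ are topological linear spaces with dual spaces $\A$ and $\A'$ paired by the trace functionals, so the transpose is well-defined. Your version is correct and considerably more careful --- in particular the linear extension off the positive cone, the self-adjointness check, and the truncation argument for unbounded observables address technical points (domains, integrability of $\Tr'(Y\W(\mu))$) that the paper's proof silently elides.
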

\begin{proof}
Since $\M$ and $\M'$ are topological linear spaces, with dual spaces $\A$ and $\A'$ and dual product given by the trace functionals $\Tr$ and $\Tr'$, the adjoint is well-defined. 
\end{proof}


Define the quantum fitness operator for $\W$ as the pullback of the identity $\Id'$ on $I'$ via the adjoint, and the quantum relative-fitness operator by scaling by the selective coefficient:
\begin{equation}
    W := \W^\dagger(\Id') \quad \mathrm{and} \quad U := \frac{1}{\bar W} W = \frac{1}{\bar W} \W^\dagger(\Id').
\end{equation}

\begin{lem}
The fitness operator has mean equal to the selective coefficient, and the relative fitness has mean equal to one:
\begin{equation} \label{eqn_quantumaveragefitness}
    \E_\mu[W] = \bar W \quad \mathrm{and} \quad \E_\mu[U] = 1. 
\end{equation}
\end{lem}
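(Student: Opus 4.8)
The plan is to reduce both identities to a single application of the quantum adjoint relation together with the normalization $\Tr'\mu' = N'$. The key observation is that the fitness operator is defined as $W = \W^\dagger(\Id')$, so evaluating its mean against $\mu$ is exactly the situation covered by the preceding adjoint lemma, taken at $Y = \Id'$.

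First I would expand $\E_\mu[W] = \tfrac{1}{N}\Tr(W\mu) = \tfrac{1}{N}\Tr(\W^\dagger(\Id')\mu)$ directly from the definitions of $\E_\mu$ and $W$. Applying the adjoint identity $\Tr(\W^\dagger(Y)\mu) = \Tr'(Y\W(\mu))$ with $Y = \Id'$ converts this into $\tfrac{1}{N}\Tr'(\Id'\W(\mu)) = \tfrac{1}{N}\Tr'(\mu')$, using $\Id'\W(\mu) = \mu' = \W(\mu)$ and the fact that $\Id'$ acts as the identity under the trace. Since $N' = \Tr'\mu'$ by definition, this equals $\tfrac{N'}{N} = \bar W$, which gives the first identity.

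For the second identity I would invoke linearity of the trace, hence of $\E_\mu$, together with $U = \tfrac{1}{\bar W}W$, obtaining $\E_\mu[U] = \tfrac{1}{\bar W}\E_\mu[W] = \tfrac{1}{\bar W}\bar W = 1$. This is valid precisely because $\bar W = N'/N > 0$ whenever $N' > 0$; in the degenerate case $N' = 0$ the relative fitness $U$ is vacuous and the claim is empty.

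The computation itself is routine; the only point requiring care is the trace-class bookkeeping for the unbounded observables allowed in the setup. I would note that $\W^\dagger$ maps into $\A$ and $\mu$ is trace-class, so $\W^\dagger(\Id')\mu$ is trace-class and every trace above is finite, which is what licenses the adjoint identity here. The main (mild) obstacle is therefore confirming that $\Id' \in \A'$ is an admissible argument of $\W^\dagger$, so that $W = \W^\dagger(\Id')$ is a well-defined self-adjoint operator; once this is granted, both equalities follow immediately.
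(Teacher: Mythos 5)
Your proof is correct and follows essentially the same route as the paper: expand $\E_\mu[W]$ via the definition, apply the adjoint identity at $Y=\Id'$ to convert $\Tr(\W^\dagger(\Id')\mu)$ into $\Tr'(\mu') = N'$, and deduce $\E_\mu[U]=1$ by linearity. The additional remarks on trace-class bookkeeping and the degenerate case $N'=0$ are fine but not needed beyond what the paper already assumes.
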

\begin{proof}
Using the property of the adjoint, we have
    $N \E_\mu[W] = \Tr(W\mu) =  \Tr(\W^\dagger(\Id') \mu) =  \Tr'(\Id' \W(\mu)) =  \Tr'(\mu') = N' = \bar W N$.
\end{proof}

Define the quantum selection changes $\partial_\NS^\opleft(X) := \cov(X,U)$ and $\partial_\NS^\opright(X) := \cov(U,X)$. In general non-commutative settings, these functionals are distinct, and are related by 
\begin{equation}
    \cov(X,U) = \cov(U,X) + \E_\mu\!\left[ [X,U] \right],
\end{equation}
for the commutator $[X,U] = XU-UX$. Proof: $\cov(X,U) = \Tr(XU\mu) - \E_\mu[X] \E_\mu[U] = \Tr(UX\mu) - \E_\mu[U] \E_\mu[X] + \Tr([X,U]\mu) = \cov(U,X) + \E_\mu\!\left[ [X,U] \right]$.


\subsection{Quantum Environmental Change}


We define left and right local-average operators, by pre-composing or post-composing the adjoint operator with the inverse-fitness operator. Formally, for each $Y \in \A'$, we define the left local-average $\langle Y \rangle_\W^\opleft \in \A$ on the subspace $W \H$ and the right local-average $\langle Y \rangle_\W^\opright \in \A$ on the full space $\H$ by:
\begin{equation}
    \langle Y \rangle_\W^\opleft := (\W^\dagger Y) W^{-1} \qquad \mathrm{and} \qquad \langle Y \rangle_\W^\opright := W^{-1} (\W^\dagger Y).
\end{equation}
Formally, the left and right local-average operators are related by the identity $\langle Y \rangle^\opleft_\W W = \W^\dagger Y = W \langle Y \rangle^\opright_\W$, and satisfy the quantum tower property:
\begin{equation} \label{eqn_quantumtowerproperty}
    \E_\mu[\langle Y \rangle_\W^\opleft U] = \E'_{\mu'}[Y] = \E_\mu[U \langle Y \rangle_\W^\opright].
\end{equation}
We define the left and right local-change operators from $X$ to $Y$ by:
\begin{equation}
    \Delta_\W^\opleft(X,Y) := \langle Y \rangle^\opleft_\W - X \qquad \mathrm{and} \qquad \Delta_\W^\opright(X,Y) := \langle Y \rangle^\opright_\W - X.
\end{equation}

Define the quantum environmental changes by $\partial_\EC^\opleft(X,Y) := \E_\mu[\Delta_\W^\opleft(X,Y) U]$ and $\partial_\EC^\opright(X,Y) := \E_\mu[U \Delta_\W^\opright(X,Y)]$. These are related by:
\begin{equation}
    \partial_\EC^\opleft(X,Y) = \frac{1}{\bar W} \E_\mu[(\W^\dagger Y) - XW] = \partial_\EC^\opright(X,Y) + \E_\mu\!\left[ [U,X] \right].
\end{equation}

\subsection{Quantum Price Equations}





\begin{thm}[Quantum Price Equations]
For each $X \in \A$ and $Y \in \A'$, the left and right quantum Price equations are satisfied: 
\begin{equation}
    \Delta(\bar X, \bar Y) = \partial_\NS^\opleft(X) + \partial_\EC^\opleft(X,Y) 
    = \partial_\NS^\opright(X) + \partial_\EC^\opright(X,Y),
\end{equation}
that is,
\begin{eqnarray} 
    \Delta(\bar X, \bar Y) &=& \cov_\mu(X,U) + \E_\mu[\Delta_\W^\opleft(X,Y) U] \label{eqn_quantumprice_left} \\
    &=& \cov_\mu(U,X) + \E_\mu[U \Delta_\W^\opright(X,Y)]. \label{eqn_quantumprice_right}
\end{eqnarray}
\end{thm}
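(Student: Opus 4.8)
The plan is to mirror the classical proof of Theorem~\ref{thm_price} (the telescoping identity in \eqref{eqn_discretepriceproof}), handling the left and right equations separately and symmetrically, with the operator orderings chosen precisely so that the non-commutative cross-terms cancel. The only analytic inputs I need are already available: the quantum tower property \eqref{eqn_quantumtowerproperty} and the normalization $\E_\mu[U]=1$ from \eqref{eqn_quantumaveragefitness}.

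For the left equation \eqref{eqn_quantumprice_left}, I would first expand the selective term using $\E_\mu[U]=1$, so that (with the product ordering used in the displayed commutator identity for $\cov_\mu$) $\partial_\NS^\opleft(X)=\cov_\mu(X,U)=\E_\mu[XU]-\E_\mu[X]$. Next I would expand the environmental term straight from its definition, $\partial_\EC^\opleft(X,Y)=\E_\mu[(\langle Y\rangle_\W^\opleft-X)U]=\E_\mu[\langle Y\rangle_\W^\opleft U]-\E_\mu[XU]$. The crucial step is the left half of the tower property \eqref{eqn_quantumtowerproperty}, which gives $\E_\mu[\langle Y\rangle_\W^\opleft U]=\E'_{\mu'}[Y]=\bar Y$. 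Adding the two pieces, the shared summand $\E_\mu[XU]$ cancels and leaves $\partial_\NS^\opleft(X)+\partial_\EC^\opleft(X,Y)=\bar Y-\E_\mu[X]=\Delta(\bar X,\bar Y)$, as required.

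The right equation \eqref{eqn_quantumprice_right} proceeds identically after reversing every product order: expand $\partial_\NS^\opright(X)=\cov_\mu(U,X)=\E_\mu[UX]-\E_\mu[X]$ and $\partial_\EC^\opright(X,Y)=\E_\mu[U(\langle Y\rangle_\W^\opright-X)]=\E_\mu[U\langle Y\rangle_\W^\opright]-\E_\mu[UX]$, invoke the right half of \eqref{eqn_quantumtowerproperty} to replace $\E_\mu[U\langle Y\rangle_\W^\opright]$ by $\bar Y$, and cancel the common $\E_\mu[UX]$. That both decompositions sum to the \emph{same} total $\Delta(\bar X,\bar Y)$ is a consistency check against the already-recorded identities $\cov_\mu(X,U)-\cov_\mu(U,X)=\E_\mu[[X,U]]$ and $\partial_\EC^\opleft-\partial_\EC^\opright=\E_\mu[[U,X]]$: the two commutators are negatives of one another, so the differences in the selective and environmental splits exactly annihilate in the sum.

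The main obstacle is bookkeeping rather than conceptual, but it is genuinely the heart of the argument: since $\E_\mu[XU]\neq\E_\mu[UX]$ in general, one must keep the ordering inside the covariance matched to the ordering of the local-change factor, so that the identical product appears with opposite sign in the selective and environmental pieces. Pairing $\cov_\mu(X,U)$ with the \emph{left} local change (product $XU$), and $\cov_\mu(U,X)$ with the \emph{right} local change (product $UX$), is precisely what forces the cancellation; any mismatch would leave a residual commutator. A secondary technical point, given that the framework admits unbounded observables and unbounded trace-class densities, is to confirm that each operator product appearing under the trace ($XU\mu$, $UX\mu$, $\langle Y\rangle_\W^\opleft U\mu$, and so on) is trace-class, so that the cyclicity manipulations underlying the tower property are legitimate; I would dispatch this by working in the stated finite-trace regime and leaning on the defining adjoint relation $\Tr(\W^\dagger(Y)\mu)=\Tr'(Y\W(\mu))$, which already packages the required pairing.
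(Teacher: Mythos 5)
Your proof is correct and follows essentially the same route as the paper's: telescope by adding and subtracting $\E_\mu[XU]$ (resp. $\E_\mu[UX]$), then invoke $\E_\mu[U]=1$ and the left/right halves of the quantum tower property \eqref{eqn_quantumtowerproperty} to identify $\E_\mu[\langle Y\rangle_\W^\opleft U]=\E'_{\mu'}[Y]=\E_\mu[U\langle Y\rangle_\W^\opright]$. Your added observations on the commutator consistency check and matching product orderings are sound refinements of the same argument, not a different method.
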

\begin{proof}
Using the definitions constructed, the proof is trivial by adding and subtracting $XU$ (resp. $UX$) from the integrand, as with the classical case:
\begin{eqnarray}
    \Delta(X,Y) &=& \frac{1}{N'} \Tr'(Y\mu') - \frac{1}{N} \Tr(X\mu) \nonumber \\
    &=& \frac{1}{N} \Tr(XU\mu - X\mu) + \frac{1}{N'} \Tr\!\left(\langle Y \rangle^\opleft_\W U\mu - XU\mu\right) \\
    &=& \frac{1}{N} \Tr(UX\mu - X\mu) + \frac{1}{N'} \Tr\!\left(U \langle Y \rangle^\opright_\W \mu - UX\mu\right)
\end{eqnarray}
which yields \eqref{eqn_quantumprice_left} and \eqref{eqn_quantumprice_right} since $\E_\mu[U] = 1$ and $\E_\mu[\langle Y \rangle^\opleft_\W U] = \E'_{\mu'}[Y] = \E_\mu[U \langle Y \rangle^\opright_\W]$.
\end{proof}

The quantum version of Fisher's theorem follows. 


\begin{cor}[Quantum Fisher's Theorem]
Let $\W : \mu \mapsto \mu'$ and $\W' : \mu' \mapsto \mu''$ be composable quantum evolutionary processes, with relative fitness operators $U$ and $U'$, respectively. Then the quantum form of Fisher's fundamental theorem holds:
\begin{eqnarray}
    0 = \Delta(\bar U, \bar U') = \var(U) + \E_\mu[\Delta^\opleft_\W(U,U') U] 
    = \var(U) + \E_\mu[U \Delta^\opright_\W(U,U')].
\end{eqnarray}
\end{cor}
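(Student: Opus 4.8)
The plan is to specialize the two Quantum Price Equations to the case $X := U$ and $Y := U'$, mirroring exactly the classical derivation of Theorem \ref{thm_fisher}. First I would check that the average change vanishes: applying the quantum average-fitness lemma \eqref{eqn_quantumaveragefitness} separately to the composable processes $\W : \mu \mapsto \mu'$ and $\W' : \mu' \mapsto \mu''$, the relative-fitness operators satisfy $\E_\mu[U] = 1$ and $\E'_{\mu'}[U'] = 1$. Hence the left-hand side $\Delta(\bar U, \bar U') = \E'_{\mu'}[U'] - \E_\mu[U] = 1 - 1 = 0$, so the whole equation reduces to a relation between the selective and environmental terms.

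Next I would evaluate the selective contribution. Substituting $X = U$ into the left and right quantum selective changes gives $\partial_\NS^\opleft(U) = \cov_\mu(U,U)$ and $\partial_\NS^\opright(U) = \cov_\mu(U,U)$, and by the covariance definition both equal $\E_\mu[U^2] - \E_\mu[U]^2 = \E_\mu[U^2] - 1 = \var(U)$. The single place where the non-commutative structure could interfere is the discrepancy between the left and right selective changes, which is controlled by the commutator term $\E_\mu[[X,U]]$; but with $X = U$ we have $[U,U] = 0$, so the left and right versions coincide automatically and no quantum correction survives. This is the only conceptual point worth flagging, and it is precisely the observation that fitness commutes with itself.

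Finally, feeding $\Delta(\bar U, \bar U') = 0$ and $\partial_\NS^{\opleft}(U) = \partial_\NS^{\opright}(U) = \var(U)$ back into the left equation \eqref{eqn_quantumprice_left} and the right equation \eqref{eqn_quantumprice_right} yields $0 = \var(U) + \E_\mu[\Delta_\W^\opleft(U,U')\,U]$ and $0 = \var(U) + \E_\mu[U\,\Delta_\W^\opright(U,U')]$, which is exactly the claimed identity. I expect no substantive obstacle here: the result is an immediate corollary of the Quantum Price Equations, just as the classical Fisher theorem follows trivially from the classical Price equation, and the only genuine check is confirming that the left/right ambiguity collapses because $U$ is self-commuting.
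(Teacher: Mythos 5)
Your proposal is correct and follows exactly the route the paper intends: the corollary is stated as an immediate specialization of the left and right Quantum Price Equations with $X := U$ and $Y := U'$, using $\E_\mu[U] = 1 = \E'_{\mu'}[U']$ to kill the left-hand side, just as the classical Theorem \ref{thm_fisher} follows from Theorem \ref{thm_price}. Your explicit remark that the left/right discrepancy vanishes because $[U,U]=0$ is a worthwhile check the paper leaves implicit, but it does not change the argument.
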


Applied to the difference of traces, we have:
\begin{eqnarray} 
\Tr'(Y\mu') - \Tr(X\mu) &=& \Tr(X(W-1)\mu) +  \Tr\!\left((\langle Y \rangle_\W^\opleft - X)W\mu\right) \label{eqn_quantumpricetrace_left} \\
    &=& \Tr\!\left((W-1)X\mu) + \Tr(W(\langle Y \rangle_\W^\opright - X)\mu\right). \label{eqn_quantumpricetrace_right}
\end{eqnarray}
following the same algebra as the classical case (Corollary \ref{cor_aggregateprice}).

\subsection{Quantum Price Representation Theorem} 

A quantum Price representation theorem follows. Define the purely selective operation $\W_\NS : \M \to \M$ as left-multiplication by $W$ (i.e., $\W_\NS(\mu) := W\mu$), and the purely environmental operation $\W_\EC : \M \to \M'$ by precomposing $\W$ with inverse fitness (i.e., $\W_\EC(\tilde \mu) := \W(W^{-1} \tilde \mu)$. 

\begin{cor}[Quantum Price Representation Theorem]
The Price decomposition holds:
\begin{equation} \label{eqn_quantumpricerepresentation}
    \W = \W_\EC \circ \W_\NS.
\end{equation}
The selective (resp. environmental) change of $\W$ equals that of $\W_\NS$ (resp. $\W_\EC$).
\end{cor}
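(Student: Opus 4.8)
The plan is to follow the classical Price Representation Theorem (Theorem \ref{thm_pricerepresentation}) almost verbatim, substituting the fitness operator $W = \W^\dagger(\Id')$ for the scalar fitness function and tracking operator ordering carefully. First I would verify the composition identity by direct computation. Since $\W_\NS(\mu) = W\mu$ and $\W_\EC(\tilde\mu) = \W(W^{-1}\tilde\mu)$, we get
\begin{equation}
(\W_\EC \circ \W_\NS)(\mu) = \W_\EC(W\mu) = \W(W^{-1} W \mu) = \W(\mu) = \mu',
\end{equation}
where $W^{-1}W$ acts as the identity on the range $W\H$ on which $\W$ actually depends; this reproduces \eqref{eqn_quantumpricerepresentation}.

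Next I would pin down the fitness data of each factor by computing adjoints against the trace pairing. For $\W_\NS$, the identity $\Tr(\W_\NS^\dagger(\Id)\mu) = \Tr(\Id \cdot W\mu) = \Tr(W\mu)$ holding for all $\mu$ forces $\W_\NS^\dagger(\Id) = W$, so $\W_\NS$ has the same fitness operator $W$, the same selective coefficient $\bar W = N'/N$ (as $\Tr(W\mu) = N\bar W = N'$), and hence the same relative fitness $U = W/\bar W$ as $\W$; moreover $\W_\NS$ is left-multiplication by the self-adjoint $W$, so it is purely left-selective by definition. For $\W_\EC$, the computation $\Tr'(Y\W_\EC(\tilde\mu)) = \Tr(\W^\dagger(Y)W^{-1}\tilde\mu)$ gives $\W_\EC^\dagger(Y) = \W^\dagger(Y)W^{-1}$, so its fitness operator is $\W_\EC^\dagger(\Id') = WW^{-1} = \Id$ and its relative fitness is $\Id$; in particular $\W_\EC$ is trace-preserving, hence purely environmental, carrying the intermediate population $\tilde\mu := W\mu$ (of size $N'$) to $\mu'$.

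With the fitness data in hand, the two equalities of changes follow. The selective changes are literally identical, since $\partial_{\NS}^\opleft(X) = \cov_\mu(X,U)$ and $\partial_{\NS}^\opright(X) = \cov_\mu(U,X)$ depend only on $\mu$ and $U$, which $\W_\NS$ shares with $\W$. For the environmental changes, I would first observe that the local averages agree: since $\W_\EC^\dagger(Y) = \W^\dagger(Y)W^{-1}$ and $\W_\EC$ has unit fitness, $\langle Y\rangle_{\W_\EC}^\opleft = \W_\EC^\dagger(Y) = \W^\dagger(Y)W^{-1} = \langle Y\rangle_\W^\opleft$, and likewise on the right. Because $\W_\EC$ has relative fitness $\Id$, its environmental change is the plain $\tilde\mu$-average $\tilde\E[\langle Y\rangle_\W^\opleft - X]$ with $\tilde\E[Z] := \tfrac{1}{N'}\Tr(ZW\mu)$, and the change of measure $\tilde\E[Z] = \tfrac{1}{N\bar W}\Tr(ZW\mu) = \E_\mu[ZU]$ (with the matching ordering) identifies this with $\partial_{\W,\EC}^\opleft(X,Y) = \E_\mu[(\langle Y\rangle_\W^\opleft - X)U]$, and symmetrically on the right.

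The main obstacle is bookkeeping rather than conceptual: I must keep $W^{-1}$ restricted to the range $W\H$ (so that $W^{-1}W = \Id$ there and the unbounded, trace-class manipulations remain well-defined), and I must carry the left/right ordering consistently through every trace, since $X$, $U$, and $\W^\dagger(Y)$ need not commute. This non-commutativity is precisely what splits each identity into its left and right versions; the two are established by the same argument with factor order reversed, invoking cyclicity of the trace only within a fixed ordering so as not to silently discard commutator terms such as $\E_\mu[[X,U]]$.
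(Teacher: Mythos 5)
Your proposal is correct and follows essentially the same route as the paper's proof: verify $\W_\EC\circ\W_\NS=\W$ by cancelling $W^{-1}W$ on the relevant range, identify the fitness operators $W_{\NS}=W$ and $W_{\EC}=\Id$ via the trace pairing, and transfer the environmental change through the change of measure $\tilde\E[Z]=\E_\mu[ZU]$. Your additional care with the left/right orderings and the restriction of $W^{-1}$ to $W\H$ is consistent with (and slightly more explicit than) the paper's argument.
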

\begin{proof}
Decomposition \eqref{eqn_quantumpricerepresentation} follows trivially from the definition. The operation $\W_\EC$ is trace-preserving since $\Tr'(\W_\EC(\tilde \mu)) = \Tr'(\W(W^{-1} \tilde \mu)) = \Tr(\W^\dagger(\Id') W^{-1} \tilde \mu) = \Tr(\tilde \mu)$. 

Note that $\Tr(\W_\NS^\dagger(\Id) \mu) = \Tr(W\mu)$ hence $W_\NS = W$. Thus $\partial_{\W_\NS,\NS}(X) = \cov_\mu(X,U) = \partial_{\W,\NS}(X)$. 
Note that $\Tr(\W_\EC^\dagger(\Id') W\mu) = \Tr'(\W(W^{-1} W\mu)) = \Tr'(\mu') = \Tr(W\mu)$ hence $W_\EC = \Id$ and so $\langle Y \rangle_{\W_\EC} = \langle Y \rangle_\W$. Thus $\partial_{\W_\EC,\EC}(X,Y) = \E_{W\mu}[\Delta_{\W_\EC}(X,Y) \Id] = \E_\mu[\Delta_\W(X,Y) U] = \partial_{\W,\EC}(X,Y)$. 
\end{proof}

\subsection{Quantum Jensen's Inequality}

We present a lower bound for the variance, corresponding to a quantum version of the Zeroth Law (Proposition \ref{pro_zerothlaw}). First, we need a version of Jensen's inequality for weighted trace functionals which we can apply to the quantum setting. If $f$ is a real-valued function and $X$ is a self-adjoint operator, let $f(X)$ denote the self-adjoint operator defined using the spectral theorem.

\begin{lem}[Quantum Jensen's Inequality] \label{lem_quantumjensen}
Let $\mu$ be a finite-trace density operator, with expectation $\E_\mu[X] := \frac{1}{N} \Tr(X\mu)$. For any convex function $f$ and self-adjoint operator $X$:
\begin{equation}
    \E_\mu[f(X)] \ge f\!\left(\E_\mu[X]\right).
\end{equation}
Saturation holds if and only if the operator $X$ is constant $\mu$-almost everywhere.
\end{lem}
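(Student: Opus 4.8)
The plan is to reduce the claim to the ordinary (scalar) Jensen's inequality by transporting the density operator $\mu$ through the spectral measure of $X$. By the spectral theorem, write $X = \int_\R \lambda \, \d E(\lambda)$ for a projection-valued measure $E$ on $\R$; this representation is valid even for unbounded self-adjoint $X$, which the framework permits, and it gives $f(X) = \int_\R f(\lambda) \, \d E(\lambda)$ by the spectral calculus (recall $f$, being convex, is Borel measurable). First I would define the scalar set function $\nu(A) := \tfrac{1}{N} \Tr(E(A) \mu)$ for Borel $A \subseteq \R$, and verify that $\nu$ is a Borel probability measure.

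For that verification: non-negativity follows from $\Tr(E(A)\mu) = \Tr(E(A) \mu E(A)) \ge 0$, since $E(A)$ is a projection (so $E(A) = E(A)^2$ and cyclicity applies) and $E(A)\mu E(A) \ge 0$ as $\mu \ge 0$; total mass one follows from $\nu(\R) = \tfrac{1}{N}\Tr(E(\R)\mu) = \tfrac{1}{N}\Tr(\Id\,\mu) = 1$; and countable additivity follows from the strong $\sigma$-additivity of $E$ together with continuity of the (finite, positive) trace functional against the fixed trace-class operator $\mu$. With $\nu$ in hand, the spectral calculus and linearity/continuity of $\Tr(\cdot\,\mu)$ give
\begin{equation}
    \E_\mu[X] = \frac{1}{N}\Tr(X\mu) = \int_\R \lambda \, \d\nu(\lambda) \qquad \text{and} \qquad \E_\mu[f(X)] = \frac{1}{N}\Tr(f(X)\mu) = \int_\R f(\lambda)\,\d\nu(\lambda).
\end{equation}
Applying classical Jensen to the probability measure $\nu$ and the convex $f$ yields $\int_\R f \, \d\nu \ge f\!\left(\int_\R \lambda\,\d\nu\right)$, which is exactly $\E_\mu[f(X)] \ge f(\E_\mu[X])$.

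For saturation, the classical equality case tells us that (for strictly convex $f$) equality holds if and only if $\nu$ is a point mass $\delta_c$; tracing the definition of $\nu$ back, $\nu = \delta_c$ means $\Tr(E(\{c\})\mu) = N$, i.e.\ $E(\{c\})$ acts as the identity on the range of $\mu$, which is precisely the statement that $X = c$ holds $\mu$-almost everywhere in the sense of the pushforward measures defined earlier; the converse direction is immediate, since $X = c$ a.e.\ forces $f(X) = f(c)$ a.e.\ and both sides collapse to $f(c)$. I expect the main obstacle to be the unbounded case: one must justify the interchange $\Tr\!\left(\int_\R f\,\d E \cdot \mu\right) = \int_\R f \, \d\nu$ (and the analogue for $X$) when $X$ or $f(X)$ is unbounded, which I would handle by monotone/dominated convergence applied to the spectral truncations $E([-n,n])$, and by treating the degenerate case $\E_\mu[f(X)] = +\oo$ separately (where the inequality is trivial provided $\E_\mu[X]$ is finite).
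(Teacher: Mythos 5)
Your proof is correct, but it takes a genuinely different route from the paper's. The paper argues via the supporting-line property of convex functions: it picks $a = f'(\bar X)$ and $b = f(\bar X) - f'(\bar X)\bar X$ so that $ax + b \le f(x)$ pointwise with equality at $\bar X$, promotes this to the operator inequality $f(X) \ge aX + b\Id$ via the spectral calculus, and then applies the linear functional $\E_\mu$ to both sides. You instead push $\mu$ forward through the spectral measure of $X$ to obtain a scalar probability measure $\nu(A) = \tfrac{1}{N}\Tr(E(A)\mu)$ and invoke classical Jensen for $\nu$. Each approach has its merits: the paper's tangent-line argument is shorter and sidesteps the integrability bookkeeping you correctly flag in the unbounded case (the operator inequality $f(X) \ge aX + b\Id$ holds regardless, and only finiteness of $\E_\mu[X]$ is needed for the right-hand side), though as written it implicitly assumes $f$ differentiable at $\bar X$ where a subgradient would be needed in general. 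Your reduction to the scalar case buys a cleaner and more honest treatment of saturation --- the equality case becomes literally the classical one for the measure $\nu$, and you are right to note that the "if" direction of the saturation claim requires strict convexity (a caveat the paper's own proof glosses over with the phrase "for operator values away from $\bar X$"). Your identification of $\nu = \delta_c$ with "$X = c$ $\mu$-almost everywhere" also makes precise the almost-everywhere language the paper introduces only informally via pushforward measures. The one place you should be careful is the interchange $\Tr(f(X)\mu) = \int f\,\d\nu$ for unbounded $f(X)$: your proposed truncation by $E([-n,n])$ works for the lower bound via monotone convergence once $f$ is bounded below by an affine function (which convexity guarantees), so the plan closes.
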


The proof is similar to the standard measure-theoretic proof. See Appendix \ref{app_quantumjensenproof}.

\subsection{Zeroth Law of Quantum Selection}

The quantum Jensen's inequality allows us to quantize inequalities for convex functionals presented in this article. 
Let $\pi_* := \pi_{U\ne 0} = \Id - \pi_{U=0}$ be the projection operator onto the subspace orthogonal to the null space. Write $\mu_* := \pi_* \mu$, $p_* := \E_\mu[\pi_*] = \frac{1}{N} \Tr(\mu_*)$, and $\E_*[X] := \E_{\mu_*}[X] = \Tr(X\mu_*) = \Tr(X \pi_* \mu)$. We say that $\W$ is in \emph{quantum selective equilibrium} if the fitness operator is constant $\mu_*$-almost surely (in which case $W = \frac{1}{p_*} \Id$ $\mu_*$-a.s.), or equivalently, if $W \in \{0, \frac{1}{p_*} \Id\}$ $\mu$-a.s.

\begin{pro}[Weak Zeroth Law of Quantum Selection]
Let $\W$ be a quantum evolutionary process. Then:
\begin{equation}
    \partial_\NS(U) = \var_\mu(U) \ge \frac{1}{p_*} - 1.
\end{equation}
This is saturated when $\W$ is in quantum selective equilibrium. 
\end{pro}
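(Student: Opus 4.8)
The plan is to transpose the proof of the classical Weak Zeroth Law (Proposition \ref{pro_zerothlaw}) into the operator setting, replacing the indicators $1_{U>0}$ and $1_{U=0}$ by the spectral projections of the self-adjoint fitness operator, and replacing classical Jensen by the Quantum Jensen's Inequality (Lemma \ref{lem_quantumjensen}). First I would record the facts I need: $U = \frac{1}{\bar W}\W^\dagger(\Id')$ is self-adjoint and non-negative (as $\W^\dagger$ carries non-negative observables to non-negative observables), $\E_\mu[U] = 1$ by \eqref{eqn_quantumaveragefitness}, and $U$ commutes with its own spectral projection $\pi_* = \pi_{U \ne 0} = \Id - \pi_{U=0}$, so that $U\pi_{U=0} = 0$ and hence $U = U\pi_*$ and $U^2 = U^2\pi_*$.

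Next I would reduce the variance to an expectation against the childbearing state. Since $\E_\mu[U]=1$, we have $\var_\mu(U) = \E_\mu[U^2] - 1$. Writing $\E_*$ for the normalized expectation against the compressed density operator $\pi_*\mu\pi_*$ (whose trace is $N p_*$, the same as $\Tr\mu_*$), cyclicity of the trace together with $\pi_* U^2 \pi_* = U^2\pi_* = U^2$ gives $\Tr(U^2\pi_*\mu\pi_*) = \Tr(U^2\mu)$, so that $\E_\mu[U^2] = p_*\,\E_*[U^2]$; the same computation with $U$ in place of $U^2$ gives $\Tr(U\pi_*\mu\pi_*) = \Tr(U\mu) = N$, whence $\E_*[U] = \frac{1}{p_*}$.

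Then I would apply Lemma \ref{lem_quantumjensen} to the convex function $f(x) = x^2$ and the probability expectation $\E_*$, obtaining
\begin{equation}
    \E_*[U^2] \ge \big(\E_*[U]\big)^2 = \frac{1}{p_*^2},
\end{equation}
and conclude
\begin{equation}
    \var_\mu(U) = p_*\,\E_*[U^2] - 1 \ge p_* \cdot \frac{1}{p_*^2} - 1 = \frac{1}{p_*} - 1.
\end{equation}
For saturation, Lemma \ref{lem_quantumjensen} yields equality exactly when $U$ is constant $\mu_*$-almost everywhere; since $\E_*[U] = 1/p_*$, this constant must equal $1/p_*$, i.e. $W = \frac{1}{p_*}\Id$ $\mu_*$-almost surely, which is precisely quantum selective equilibrium.

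The main obstacle is the operator bookkeeping caused by $\pi_*$ and $\mu$ failing to commute. Classically the indicator splits the expectation cleanly, whereas here I must pass to the compression $\pi_*\mu\pi_*$ to obtain a genuine non-negative, finite-trace density operator on the range of $\pi_*$, so that $\E_*$ is a bona fide probability expectation to which Lemma \ref{lem_quantumjensen} applies. I then verify, through cyclicity of the trace and the identities $U\pi_* = U$ and $U^2\pi_* = U^2$, that the relevant compressed traces collapse back to the full-space traces $\Tr(U\mu)$ and $\Tr(U^2\mu)$. Once this reduction is in place, the inequality and its saturation condition follow exactly as in the classical case.
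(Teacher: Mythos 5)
Your proof is correct and follows essentially the same route as the paper: decompose the expectation over the childbearing projection $\pi_*$ and apply the Quantum Jensen's Inequality (Lemma \ref{lem_quantumjensen}), arriving at the same bound and the same saturation condition. The only cosmetic difference is that you apply Jensen to $x^2$ via $\var_\mu(U) = p_*\E_*[U^2]-1$ while the paper applies it to $(x-1)^2$ via $\var_\mu(U) = (1-p_*) + p_*\E_*[(U-1)^2]$; your extra care with the compression $\pi_*\mu\pi_*$ is a welcome tightening of the paper's \emph{mutatis mutandis} bookkeeping but not a different argument.
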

\begin{proof}
The proof is similar to that of Proposition \ref{pro_zerothlaw}, \emph{mutatis mutandis}, including the saturation condition. Write $\var_\mu(U) = \E_\mu[(U-1)^2] = (1-p_*) + p_* \E_*[(U-1)^2]$. Then by quantum Jensen's inequality, $\var_\mu(U) \ge (1-p_*) + p_* (\E_*[U]-1)^2) = \frac{1}{p_*}-1$. 
\end{proof}

\subsection{First Law of Quantum Selection}

Define the quantum selective acceleration $\partial_\NS^2(U) := \partial_\NS \var_\mu(U) = \cov_\mu(U^2,U)$. Because $U^2$ commutes with $U$, this is unhanded.

\begin{thm}[First Law of Quantum Selection]
Let $\W$ be a quantum evolutionary process. Then:
\begin{equation*}
    \partial_\NS^2(U) := \partial_\NS \var_\mu(U) \ge \var_\mu(U) \left( 1 + \var_\mu(U) \right) \ge 0,
\end{equation*}
with saturation of the first inequality exactly when $\W$ is in selective equilibrium.
\end{thm}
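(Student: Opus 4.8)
The plan is to mirror the proof of the classical Strong First Law (Theorem~\ref{thm_firstlaw}), exploiting the observation flagged just before the statement: since $U^2$ is a function of $U$ via the spectral theorem, the operators $U$ and $U^2$ commute, so no left/right distinction arises and every Jensen argument will involve only functions of the single self-adjoint operator $U$. First I would record that $U \ge 0$: the adjoint $\W^\dagger$ of a cone-preserving operator is positive (for $Y \ge 0$ and all $\mu \in \M$ we have $\Tr(\W^\dagger(Y)\mu) = \Tr'(Y\W(\mu)) \ge 0$), so $W = \W^\dagger(\Id') \ge 0$ and hence $U = W/\bar W \ge 0$ admits a square root $U^{1/2}$. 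Using $\E_\mu[U] = 1$ from \eqref{eqn_quantumaveragefitness} and the commutation of $U$ with $U^2$, I would expand
\begin{equation*}
    \partial_\NS^2(U) = \cov_\mu(U^2,U) = \E_\mu[U\cdot U^2] - \E_\mu[U]\,\E_\mu[U^2] = \E_\mu[U^3] - \E_\mu[U^2],
\end{equation*}
assuming $\E_\mu[U^3] < \oo$ (the quantum analogue of the classical finite-third-moment hypothesis).

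Next I would introduce the tilted density operator $\tilde\mu := U^{1/2}\mu\, U^{1/2}$, which is positive (as $\mu \ge 0$ and $U^{1/2}$ is self-adjoint) and has trace $\Tr\tilde\mu = \Tr(U\mu) = N\,\E_\mu[U] = N$. Defining $\tilde\E[X] := \tfrac1N \Tr(X\tilde\mu)$, cyclicity gives $\tilde\E[X] = \tfrac1N\Tr(U^{1/2}X U^{1/2}\mu)$, so for any $X$ commuting with $U$ (in particular all powers of $U$) we get $\tilde\E[X] = \E_\mu[UX]$ and $\tilde\E[1] = 1$. This lets me rewrite $\partial_\NS^2(U) = \tilde\E[U^2] - \E_\mu[U^2]$ with $\tilde\E[U] = \E_\mu[U^2]$.

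I would then apply the Quantum Jensen Inequality (Lemma~\ref{lem_quantumjensen}) to the finite-trace density $\tilde\mu$ with the convex function $f(x) = x^2$ and the self-adjoint operator $U$, yielding $\tilde\E[U^2] \ge \tilde\E[U]^2 = \E_\mu[U^2]^2$. Combining with the previous display,
\begin{equation*}
    \partial_\NS^2(U) \ge \E_\mu[U^2]^2 - \E_\mu[U^2] = \E_\mu[U^2]\big(\E_\mu[U^2]-1\big) = \var_\mu(U)\big(1 + \var_\mu(U)\big) \ge 0,
\end{equation*}
where the last equality uses $\E_\mu[U^2] = 1 + \var_\mu(U)$. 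For the saturation claim, Lemma~\ref{lem_quantumjensen} saturates exactly when $U$ is constant $\tilde\mu$-almost everywhere; since $\tilde\mu = U^{1/2}\mu U^{1/2}$ and $\mu_* = \pi_*\mu$ have the same support (both supported where $U > 0$ on the support of $\mu$), this is equivalent to $U$ being constant $\mu_*$-almost everywhere, i.e.\ to quantum selective equilibrium.

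The main obstacle I anticipate is entirely in the operator-theoretic bookkeeping of the tilted functional: verifying that $\tilde\mu$ is a genuine positive, finite-trace density operator and that the identity $\tilde\E[X] = \E_\mu[UX]$ holds for the relevant commuting observables. The non-commutativity of the general quantum setting is tamed precisely because we only ever tilt by $U^{1/2}$ and evaluate Jensen on functions of $U$, so $U^{1/2}$ commutes with everything in sight; this is the step where the ``unhanded'' remark does the real work. I would also take care with the support/mutual-absolute-continuity argument identifying the null spaces of $\tilde\mu$ and $\mu_*$, as that is what pins the equilibrium characterization of the saturation case.
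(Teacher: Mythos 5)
Your proof is correct and follows essentially the same route as the paper's: rewrite $\partial_\NS\var_\mu(U)=\cov_\mu(U^2,U)$ as $\tilde\E[U^2]-\E_\mu[U^2]$ for the tilted expectation $\tilde\E[X]=\E_\mu[UX]$, apply the quantum Jensen inequality (Lemma \ref{lem_quantumjensen}) to the convex function $x\mapsto x^2$, and identify saturation with $U$ being constant on the support of the tilted state, i.e.\ quantum selective equilibrium. Your symmetrized tilting $\tilde\mu = U^{1/2}\mu\,U^{1/2}$ is a welcome refinement of the paper's informal $\E_{U\mu}$, since it exhibits the tilted state as a genuine positive finite-trace density operator to which the lemma literally applies.
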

\begin{proof}
The proof is similar to that of Theorem \ref{thm_firstlaw}, \emph{mutatis mutandis}, including the saturation condition. Write $\tilde \E_\mu[X] := \E_{UM}[X] =  \E_\mu[XU]$. We have $\partial_\NS \var_\mu(U) = \cov(U^2,U) = \tilde \E_\mu[U^2] - \E[U^2]$. By quantum Jensen's inequality, $\partial_\NS \var(U) \ge \tilde \E_\mu[U]^2 - \E[U^2] = \E_\mu[U^2]^2 - \E[U^2] = \E[U^2] \left(\E[U^2]-1 \right)$.
\end{proof}


\subsection{Time-Varying Quantum Price Equation}

We consider a time-varying quantum evolutionary process $\W^{t,t'} : \mu^t \to \mu^{t'}$, relating a time-varying family of population density operators $\mu := (\mu^t)$. Let $N^t := \Tr(\mu^t)$ be the population at time $t$. Write $\E^t_\mu[A] := \frac{1}{N^t} \Tr(AM^t)$ and $\cov^t_\mu(A_1,A_2) := \E_\mu^t[A_1 A_2] - \E_\mu^t[A_1] \E_\mu^t[A_2]$.
Write the relative-fitness observable $U^{t,t'} := \frac{1}{\bar W} (\W^{t,t'})^\dagger(\Id^{t'})$ for $\W^{t,t'}$. Define left local-average and local-change observables:
\begin{equation}
    \langle X^{t'} \rangle_\W^{\opleft,t,t'} := (\W^{t,t'})^\dagger(X^{t'}) (W^{t,t'})^{-1} \qquad \mathrm{and} \qquad \Delta_\W^{\opleft,t,t'}(X^t,X^{t'}) := \langle X^{t'} \rangle_\W^{\opleft,t,t'} - X^t,
\end{equation}
and the right local-average and local-change observables:
\begin{equation}
    \langle X^{t'} \rangle_\W^{\opright,t,t'} := (W^{t,t'})^{-1} (\W^{t,t'})^\dagger(X^{t'}) \qquad \mathrm{and} \qquad \Delta_\W^{\opright,t,t'}(X^t,X^{t'}) := \langle X^{t'} \rangle_\W^{\opright,t,t'} - X^t,
\end{equation}
The discrete-time quantum Price equations hold for each $t < t'$ and observables $X^t$ and $X^{t'}$:
\begin{eqnarray} \label{eqn_quantumdiscretetimeprice}
    \E_\mu^{t'}[X^{t'}] - \E_\mu^t[X^t] 
    &=& \cov_\mu^t(X^t, U^{t,t'}) + \E_\mu^t[\Delta^{\opleft,t,t'}_\W(X^t,X^{t'}) U^{t,t'}] \\
    &=& \cov_\mu^t(U^{t,t'}, X^t) + \E_\mu^t[\Delta^{\opright,t,t'}_\W(X^t,X^{t'}) U^{t,t'}]
\end{eqnarray}

We say that $\W := (\W^{t,t'})$ is a \emph{smooth left quantum evolutionary process} if the equivalent conditions to Definition \ref{def_smoothprocesses} hold in the quantum case. Specifically, for each time-varying family of observables $\X := (X^t)$ we have:
\begin{enumerate}
    \item (Smooth Expectations) The time-varying average is smooth at $t$: $\frac{\d \E_\mu^t}{\d t}[X^t] := \lim_{t' \downarrow t} \frac{\E^{t'}_M[X^{t'}] - \E^t_\mu[X^t]}{|t'-t|}$. 
    \item (Relative-Fitness Density) The time-varying relative fitness admits a density at $t$, where we take the $\cov^t_\mu$-limit: $\Upsilon^t := \Ltwolim_{t' \downarrow t} \frac{U^{t,t'}}{|t'-t|}$.
    \item (Left Local-Change Density) The time-varying left and right local changes admit densities at $t$, where we take the $\E^t_\mu$-limit:
    \begin{eqnarray}
        \delta^{\opleft,t}(\X) &:=& \Lonelim_{t' \downarrow t} \frac{\Delta_\W^{\opleft,t,t'}(X^t,X^{t'})}{|t'-t|} = \Lonelim_{t' \downarrow t} \frac{(\W^{t,t'})^\dagger(X^{t'}) (W^{t,t'})^{-1} - X^t}{|t'-t|} \\
        \delta^{\opright,t}(\X) &:=& \Lonelim_{t' \downarrow t} \frac{\Delta_\W^{\opright,t,t'}(X^t,X^{t'})}{|t'-t|} = \Lonelim_{t' \downarrow t} \frac{(W^{t,t'})^{-1} (\W^{t,t'})^\dagger(X^{t'}) - X^t}{|t'-t|} \qquad
    \end{eqnarray}

\end{enumerate}

\begin{thm}[Smooth Quantum Price Equations] \label{thm_smoothprice_quantum}
Suppose $\W = (\W^{t,t'})$ is a smooth quantum process at $X^t$. The smooth quantum Price equations hold at $t$:
\begin{eqnarray} \label{eqn_lefttimevaryingquantumprice}
    \frac{\d \E_\mu^t}{\d t}\!\left[X^t\right] &=& \cov_\mu^t\!\left(X^t,\Upsilon^t\right) + \E_\mu^t\!\left[\delta^{\opleft,t}(\X)\right] \\
    &=& \cov_\mu^t\!\left(\Upsilon^t, X^t\right) + \E_\mu^t\!\left[\delta^{\opright,t}(\X)\right]
\end{eqnarray}

\end{thm}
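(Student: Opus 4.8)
The plan is to adapt the proof of the classical time-varying Price equation (Theorem \ref{thm_smoothprice}) almost verbatim, replacing its starting point \eqref{eqn_discretetimeprice} with the discrete-time quantum Price equations \eqref{eqn_quantumdiscretetimeprice}, and replacing its three scalar density hypotheses with their operator counterparts from the definition of a smooth quantum process (the quantum analogue of Definition \ref{def_smoothprocesses}). First I would take the left discrete-time quantum Price equation, divide both sides by $|t'-t|$, and let $t' \downarrow t$. By the smooth-expectation hypothesis the left-hand side converges to $\frac{\d \E_\mu^t}{\d t}[X^t]$, which is precisely the definition of that derivative, so no work is required there.

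For the selective term I would invoke the relative-fitness density hypothesis, $\frac{U^{t,t'}}{|t'-t|} \to \Upsilon^t$ in the $\cov_\mu^t$-seminorm. Since this seminorm annihilates scalar multiples of $\Id$, the hypothesis is really the convergence of $\frac{U^{t,t'}-\Id}{|t'-t|}$; and because $\cov_\mu^t(X^t,\cdot)$ is a continuous linear functional on finite-variance observables for fixed $X^t$, continuity gives $\cov_\mu^t(X^t, \frac{U^{t,t'}}{|t'-t|}) \to \cov_\mu^t(X^t, \Upsilon^t)$, exactly as in the classical case. The right equation uses the same step with the arguments transposed, yielding $\cov_\mu^t(\Upsilon^t, X^t)$.

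For the environmental term I would apply the operator product rule. Temporal consistency $\W^{t,t} = \Id$ gives $U^{t,t} = \Id$ and $\Delta_\W^{\opleft,t,t}(X^t,X^t) = 0$, so writing $U^{t,t'} = \Id + (U^{t,t'}-\Id)$ splits the integrand as $\frac{\Delta_\W^{\opleft,t,t'}}{|t'-t|} + \frac{\Delta_\W^{\opleft,t,t'}}{|t'-t|}(U^{t,t'}-\Id)$. The first summand tends to $\delta^{\opleft,t}(\X)$ in the $\E_\mu^t$-norm by the left local-change density hypothesis, while the second is the product of a factor bounded in $\E_\mu^t$-norm with $U^{t,t'}-\Id \to 0$, so it vanishes in the limit. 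Hence $\E_\mu^t[\frac{\Delta_\W^{\opleft,t,t'} U^{t,t'}}{|t'-t|}] \to \E_\mu^t[\delta^{\opleft,t}(\X)]$, establishing the left equation; the right equation follows identically from the right discrete-time quantum Price equation and the right local-change density $\delta^{\opright,t}(\X)$.

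The main obstacle is the non-commutativity of $\Delta_\W^{\opleft,t,t'}(X^t,X^{t'})$ and $U^{t,t'}$, which enter as operators inside the weighted trace $\E_\mu^t[\,\cdot\,] = \frac{1}{N^t}\Tr(\,\cdot\,\mu^t)$; in the classical proof the analogous product rule is immediate because the factors are real-valued functions. In the quantum case I would control the cross term using the cyclicity of the trace together with a H\"older (operator Cauchy--Schwarz) bound for the trace pairing, estimating $|\Tr(\frac{\Delta_\W^{\opleft,t,t'}}{|t'-t|}(U^{t,t'}-\Id)\mu^t)|$ by the product of the $\E_\mu^t$-size of $\frac{\Delta_\W^{\opleft,t,t'}}{|t'-t|}$ and the operator norm of $U^{t,t'}-\Id$; the former is bounded by the assumed $L^1$ convergence and the latter tends to zero. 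The delicate point to verify is the requisite operator-norm (or strong) convergence $U^{t,t'} \to \Id$ with a rate compatible with that bound, which I expect to deduce from temporal consistency together with the density assumptions.
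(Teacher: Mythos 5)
Your proposal follows essentially the same route as the paper's proof: begin with the discrete-time quantum Price equations \eqref{eqn_quantumdiscretetimeprice}, divide by $|t'-t|$, and pass to the limit using the $\cov_\mu^t$-convergence hypothesis for the selective term and the product rule (with $U^{t,t}=\Id$ and $\Delta_\W^{\opleft,t,t}(X^t,X^t)=0$) for the environmental term, mirroring Theorem \ref{thm_smoothprice}. The extra care you take with the non-commutative cross term via a trace--H\"older estimate is a legitimate tightening of the paper's terse ``product rule for derivatives'' step, but it does not change the structure of the argument.
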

\begin{proof}
The proof is similar to that of Theorem \ref{thm_smoothprice}, \emph{mutatis mutandis}. We begin with the discrete-time left quantum Price equation \eqref{eqn_quantumdiscretetimeprice}. By $\cov^t_\mu$-convergence, we have $\cov_\mu^t(X^t, \frac{U^{t,t'}}{|t'-t|}) \to \cov^t_\mu(X^t,\Upsilon^t)$. By $\E^t_\mu$-convergence and the product rule for derivatives, we have
\begin{equation}
    \lim_{t' \downarrow t} \E^t_\mu\!\left[ \frac{\Delta^{\opleft,t,t'}_\W(X^t,X^{t'}) U^{t,t'}}{|t'-t|} \right] 
    = \E_\mu^t\!\left[ \delta^{\opleft,t}(X) U^{t,t} + \Delta_\W^{\opleft,t,t}(X^t,X^t) \Upsilon^t \right] 
    = \E^t[\delta^{\opleft,t}(\X)]
\end{equation}
since $U^{t,t} = \Id^t$ and  $\Delta_\W^{\opleft,t,t}(X^t,X^t) = 0$. This proves the left time-varying Price equation.
\end{proof}





\section{Open Evolutionary Processes and the Kerr-Godfrey-Smith Equation} \label{sect_kgs}

\renewcommand{\opin}{{\pi}}
\renewcommand{\opout}{{\nu}}

Kerr and Godfrey-Smith \cite{kerr2009generalization} relaxed the assumption \eqref{eqn_disintegration} of a full accounting of the child population, allowing for orphaned children with unaccounted parents. Such open processes have a Price-like equation with a third term. We generalize their approach for open measure-theoretic and quantum processes. 


\begin{exa} \label{exa_kgs}
Kerr and Godfrey-Smith considered the combinatorial case of discrete populations $(I,\mu)$ and $(I',\mu')$ with population sizes $N$ and $N'$, and an edge set $C$ from $I$ to $I'$, allowing for orphaned descendent types. They consider the number of edges $C_*(i)$ \emph{from} parent $i$, and the number of edges $C^*(i')$ \emph{to} child $i'$. The classical Kerr-Godfrey-Smith equation states that for any observables $X$ and $Y$:
\begin{equation} \label{eqn_kgs_classical}
    \E'[Y] - \E[X] = \cov(X,U) + \E[\Delta_C(X,Y) U] - \cov'(Y,C^*),
\end{equation}
for relative fitness $U(i) = \frac{C(i)}{N'/N}$ and local change $\Delta_C(X,Y)(i) = \sum _i\sum_{(i,i') \in C} Y(i')  - X(i) C(i)$.
\end{exa}


We define an open evolutionary process $w : \mu \mapsto \mu'$ to consist of the following:
\begin{enumerate}
    \item Sub-populations (``demes'') of parented and orphaned children $\mu'_{\opin}$ and $\mu'_{\opout} = \mu' - \mu'_{\opin}$;
    \item A (closed) evolutionary process $w_\pi : \mu \mapsto \mu'_{\opin}$, mapping parents to their children.
\end{enumerate}

Write the child deme sizes $N'_{\opin} := \mu'_{\opin}(I')$ and $N'_{\opout} := \mu'_{\opout}(I') = N'-N'_{\opin}$. Write the proportions $p'_\pi := \frac{N'_\pi}{N'}$ and $p'_\nu := \frac{N'_\nu}{N'} = 1-p'_\pi$. The selective coefficient of the closed process is $\bar W_{\opin} := \frac{N'_{\opin}}{N} = p'_\pi \bar W$. 
A type can have parented and orphaned children (i.e., $\mu'_{\opin}(B) > 0$ and $\mu'_{\opout}(B) > 0$), for example a child with two parents, one accounted for and one not. Write the deme expectations $\E'_\pi[Y] := \frac{1}{N'_\pi} \int Y \mu'_\pi$ and $\E'_\nu[Y] := \frac{1}{N'_\nu} \int Y \mu'_\nu$, so that $\E'[Y] = p'_\pi \E'_\pi[Y] + p'_\nu \E'_\nu[Y]$. 



\begin{lem}[Open Tower Property] \label{lem_childdemes} 
The child demes $\mu'_{\opin}$ and $\mu'_{\opout}$ are absolutely continuous with respect to $\mu'$, with non-negative Radon-Nikodym densities $\pi = \frac{\d \mu'_{\opin}}{\d \mu'}$ and $\nu = \frac{\d \mu'_{\opout}}{\d \mu'}$,
and $\pi + \nu = 1$ $\mu'$-a.s. The expectations satisfy $\E'[Y\pi] = p'_\pi \E'_\pi[Y]$ and $\E'[Y\nu] = p'_\nu \E'_\nu[Y]$, so $\E'[\pi] = p'_\pi$ and $\E'[\nu] = p'_\nu$.
The tower property holds for the parented children:
\begin{equation} \label{eqn_closedtower}
    \E'[Y\pi] = p'_\pi \E'_\pi[Y] = p'_\pi \E[\langle Y \rangle_{w_\pi} U],
\end{equation}
and the open tower property holds for the entire population: 
\begin{eqnarray}
    \E'[Y] &=& p'_\pi \E[\langle Y \rangle_{w_\pi} U] + \E'[Y \nu] 
    = p'_\pi \E[\langle Y \rangle_{w_\pi} U] + \cov(Y,\nu) + p'_\nu \E'[Y] \label{eqn_opentower_base} \\
    &=& \E[\langle Y \rangle_{w_\pi} U] + \tfrac{1}{p'_\pi} \cov(Y,\nu) = \E[\langle Y \rangle_{w_\pi} U] - \tfrac{1}{p'_\pi} \cov(Y,\pi). \label{eqn_opentower_renormalized}
\end{eqnarray}

\end{lem}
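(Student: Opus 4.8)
The plan is to establish the five assertions in order; each follows by elementary measure theory, and the only genuine care-point is the normalization of the relative fitness. Here $U$ is the relative fitness of the \emph{closed} process $w_\pi : \mu \mapsto \mu'_\pi$, i.e. $U = W_\pi/\bar W_\pi$ with $W_\pi(i) := w_{\pi,i}(I')$ and $\bar W_\pi = N'_\pi/N$, so that $\E[U]=1$; this is the reading under which the stated identities hold. The second care-point is that the covariances $\cov(Y,\nu)$ and $\cov(Y,\pi)$ in the statement are the child-population covariance $\cov'$, taken against $\mu'/N'$.

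First I would record the measure-theoretic structure. Since the demes are genuine sub-populations with $\mu'_\nu := \mu' - \mu'_\pi$ a non-negative measure, both $\mu'_\pi$ and $\mu'_\nu$ are dominated by $\mu'$ (i.e. $\mu'_\pi(B) \le \mu'(B)$, and likewise for $\nu$), so the Radon--Nikodym theorem supplies densities $\pi := \tfrac{\d\mu'_\pi}{\d\mu'}$ and $\nu := \tfrac{\d\mu'_\nu}{\d\mu'}$, each valued in $[0,1]$. Differentiating the identity $\mu'_\pi + \mu'_\nu = \mu'$ against $\mu'$ gives $\pi + \nu = 1$ $\mu'$-almost surely.

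Next I would verify the deme-expectation identities straight from the definition of the Radon--Nikodym derivative: $\E'[Y\pi] = \tfrac{1}{N'}\int_{I'} Y\pi \,\d\mu' = \tfrac{1}{N'}\int_{I'} Y \,\d\mu'_\pi = \tfrac{N'_\pi}{N'}\E'_\pi[Y] = p'_\pi \E'_\pi[Y]$, and symmetrically $\E'[Y\nu] = p'_\nu \E'_\nu[Y]$; specializing to $Y \equiv 1$ gives $\E'[\pi] = p'_\pi$ and $\E'[\nu] = p'_\nu$. The parented tower property \eqref{eqn_closedtower} is then immediate: the closed process $w_\pi$ satisfies the ordinary tower property \eqref{eqn_towerproperty}, namely $\E'_\pi[Y] = \E[U \langle Y \rangle_{w_\pi}]$, and multiplying by $p'_\pi$ and using the previous line yields $\E'[Y\pi] = p'_\pi\E'_\pi[Y] = p'_\pi\E[\langle Y\rangle_{w_\pi} U]$.

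Finally I would assemble the open tower property. Splitting $\E'[Y] = \E'[Y\pi] + \E'[Y\nu]$ via $\pi + \nu = 1$ and inserting \eqref{eqn_closedtower} gives the first equality of \eqref{eqn_opentower_base}. For the second equality I would expand $\cov'(Y,\nu) = \E'[Y\nu] - \E'[Y]\,\E'[\nu]$ and substitute $\E'[\nu] = p'_\nu$ to get $\E'[Y\nu] = \cov'(Y,\nu) + p'_\nu\E'[Y]$. To reach \eqref{eqn_opentower_renormalized}, I would move the $p'_\nu\E'[Y]$ term to the left, use $1 - p'_\nu = p'_\pi$, and divide by $p'_\pi$, obtaining $\E'[Y] = \E[\langle Y\rangle_{w_\pi} U] + \tfrac{1}{p'_\pi}\cov'(Y,\nu)$; the last form follows because $\pi = 1-\nu$ forces $\cov'(Y,\pi) = -\cov'(Y,\nu)$, covariance against a constant vanishing. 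There is no substantive obstacle here: the work lies entirely in keeping the two normalizations (the relative fitness $U$ by $\bar W_\pi$, and the covariance by $\mu'/N'$) consistent throughout.
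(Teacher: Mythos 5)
Your proof is correct and follows essentially the same route as the paper's: absolute continuity of the demes via domination by $\mu'$, the Radon--Nikodym computation of $\E'[Y\pi]$, the closed tower property applied to $w_\pi$, and the covariance expansion followed by division by $p'_\pi = 1 - p'_\nu$. Your two explicit "care-points" — that $U = W_\pi/\bar W_\pi$ is normalized by the closed process's own selective coefficient $N'_\pi/N$, and that $\cov(Y,\nu)$ must be read as the child-population covariance $\cov'$ — are in fact clarifications the paper leaves implicit, and they are exactly the readings under which the stated identities hold.
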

\begin{proof}
Child deme sizes are non-negative, so $\max\{\pi(B),\nu(B)\} \le \mu'(B)$. If $\mu'(B) = 0$, then $\pi(B) = 0 = \nu(0)$, hence the child demes are absolutely continuous. We compute $\E'[Y \pi] = \frac{1}{N'} \int Y \pi \mu' = \frac{p'_\pi}{N'_\pi} \int Y \mu'_\pi = p'_\pi \E'_\pi[Y]$, and similarly for $\nu$.
We compute:
$\E'[Y] = \frac{1}{N'} \int Y \mu' = \frac{1}{N'} \iint Y w_i \mu + \frac{1}{N'} \int Y \nu \mu'  = \E[\langle Y \rangle_w U] + \E'[Y \nu] = \E[\langle Y \rangle_w U] + \E'[Y (1-\pi)]$, proving the first identity of \eqref{eqn_opentower_base}. The second identity follows since $\E'[Y\nu] = \cov(Y,\nu) + \E'[\nu] \E'[Y]$. The first identity of  \eqref{eqn_opentower_renormalized} follows from moving the third term of \eqref{eqn_opentower_base} to the left side, and dividing by $1-p'_\nu = \pi'_\pi$. The second identity follows from $\cov(Y,\nu) = \cov(Y,1-\pi) = -\cov(Y,\pi)$. 
\end{proof}

\begin{thm}[Kerr-Godfrey-Smith Equation] \label{thm_kgs}
Let $w$ be an finite-variance open process. For any observables $X$ on $I$ and $Y$ on $I'$, we have:
\begin{eqnarray} 
    \E'[Y] - \E[X] &=& \cov(X,U) + \E[\langle Y \rangle_{w_\pi} U] + \tfrac{1}{p'_\pi} \cov'(Y,\nu) \label{eqn_kgs_nu} \\
    &=& \cov(X,U) + \E[\langle Y \rangle_{w_\pi} U] - \tfrac{1}{p'_\pi} \cov'(Y,\pi). \label{eqn_kgs_pi}
\end{eqnarray}
\end{thm}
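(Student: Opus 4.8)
The plan is to read Theorem~\ref{thm_kgs} as a short algebraic corollary of the open tower property, Lemma~\ref{lem_childdemes}, which already contains all the genuinely new content of the open setting: the absolute continuity of the parented and orphaned demes, the partition of unity $\pi+\nu=1$, and the renormalization factor $\tfrac{1}{p'_\pi}$. Since that lemma is assumed, the remaining work is purely formal. First I would start from the renormalized open tower identity \eqref{eqn_opentower_renormalized},
\[
\E'[Y] = \E[\langle Y \rangle_{w_\pi} U] + \tfrac{1}{p'_\pi}\,\cov'(Y,\nu),
\]
and subtract $\E[X]$ from both sides, so that the whole problem reduces to rewriting $\E[\langle Y \rangle_{w_\pi} U] - \E[X]$ as a selective change plus a local (environmental) change.

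The key algebraic step is to extract the selective-change covariance out of $-\E[X]$. By the definition \eqref{def_naturalselection} of selective change we have $\cov(X,U) = \E[XU] - \E[X]$, hence $-\E[X] = \cov(X,U) - \E[XU]$; here one must remember that $U$ and $\langle\cdot\rangle_{w_\pi}$ refer to the \emph{closed} parented process $w_\pi : \mu \mapsto \mu'_\pi$, whose selective coefficient is $\bar W_\pi = p'_\pi \bar W$. Substituting and regrouping yields
\[
\E'[Y] - \E[X] = \cov(X,U) + \big(\E[\langle Y \rangle_{w_\pi} U] - \E[XU]\big) + \tfrac{1}{p'_\pi}\,\cov'(Y,\nu),
\]
and the middle bracket is precisely $\E[(\langle Y \rangle_{w_\pi} - X)U] = \E[\Delta_{w_\pi}(X,Y)U]$ by the definition \eqref{def_localchange} of the local change — the same local-change term that appears in the classical equation \eqref{eqn_kgs_classical}. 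This establishes the first form \eqref{eqn_kgs_nu}.

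The second form \eqref{eqn_kgs_pi} is then immediate from the partition of unity $\pi+\nu=1$ $\mu'$-a.s.\ provided by Lemma~\ref{lem_childdemes}: since $\cov'$ is bilinear and vanishes when an argument is constant, $\cov'(Y,\nu)=\cov'(Y,1-\pi)=-\cov'(Y,\pi)$, which converts $+\tfrac{1}{p'_\pi}\cov'(Y,\nu)$ into $-\tfrac{1}{p'_\pi}\cov'(Y,\pi)$.

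I do not anticipate a substantive obstacle here, because the analytic difficulty has been front-loaded into the lemma. The only points demanding care are bookkeeping and integrability: one must consistently use the closed-process relative fitness $U$ (not that of the full open process) and track the $\tfrac{1}{p'_\pi}$ factor, and one should check that the finite-variance hypothesis makes $\cov(X,U)$ well-defined while the boundedness $0\le\pi,\nu\le 1$ keeps $\cov'(Y,\pi)$ and $\cov'(Y,\nu)$ finite for $\mu'$-integrable $Y$. If anything is subtle, it is only the conceptual check that the renormalization by $\tfrac{1}{p'_\pi}$ in the tower identity is exactly what lets the orphaned-children correction appear as a single covariance term rather than being entangled with the selective change.
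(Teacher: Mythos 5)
Your proof is correct and follows essentially the same route as the paper's: add and subtract $\E[XU]$ to extract $\cov(X,U)$, then invoke the renormalized open tower property \eqref{eqn_opentower_renormalized} and the identity $\cov'(Y,\nu)=-\cov'(Y,\pi)$. One remark: your derivation produces the middle term $\E[\Delta_{w_\pi}(X,Y)U]=\E[(\langle Y\rangle_{w_\pi}-X)U]$ rather than the $\E[\langle Y\rangle_{w_\pi}U]$ printed in the theorem statement, and yours is the correct form --- the printed right-hand side is off by exactly $\E[XU]$, as one checks by expanding $\cov(X,U)=\E[XU]-\E[X]$ and comparing with the classical equation \eqref{eqn_kgs_classical} --- so in effect you have also caught a typo in the statement.
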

\begin{proof}
We write $\E'[Y] - \E[X] = \E[X(U-1)] + \E'[Y] - \E[XU]$, then apply the open tower property \eqref{eqn_opentower_renormalized}. 
\end{proof}



The classical Kerr-Godfrey-Smith equation \eqref{eqn_kgs_classical} is recovered when the process $w_\pi$ can be represented a kernel against some background measure $\lambda'$. This holds if each $w_i$ is absolutely continuous to $\lambda'$, with $w_\pi(i,i') := \frac{\d w_i}{\d \lambda'}(i')$ satisfying the $w_i(B) = \int_B w_\pi(i,i') \lambda'(\d i')$. We write the dual fitness $W^*_\pi(i') := \int_I w_\pi(i,i') \mu(\d i)$, representing the infinitesimal amount of parented child population at $i'$. The dual fitness satisfies the identity $\frac{1}{p'_\pi} \E'[W^*_\pi] = \E'_\pi[W^*_\pi] = 1$. Proof. We interchange integrals by Fubini's theorem: $\E'_\pi[W^*_\pi] = \frac{1}{N'_\pi} \int_I \int_{I'} w_\pi(i,i') \mu'_\pi(\d i') \mu(\d i) = \frac{1}{N'_\pi} \int_I W_\pi(i) \mu(\d i) = \frac{1}{\bar W_\pi} \bar W_\pi = 1$.

For example, if the parented child population $\mu'_\pi$ is discrete, then any process admits a kernel $w_\pi(i,i') = w_i(\{i'\})$ against counting measure, with dual fitness $W^*_\pi(i') = \int_I w_i(\{i'\}) \mu(\d i)$. 


\begin{cor}[Dual-Fitness Version of Kerr-Godfrey-Smith Equation]
If $w_\pi$ admits a kernel $w_\pi(i,i')$ relative to $\lambda'$, then $\mu'_\pi$ is absolutely continuous with respect to $\lambda'$, with $\frac{\d \mu'_\pi}{\d \lambda'}(i') = W^*_\pi(i')$. 
Consequently, for any observables $X$ and $Y$:
\begin{equation} \label{eqn_kgs_kernel}
    \E'[Y] - \E[X] = \cov(X,U) + \E[\langle Y \rangle_{w_\pi} U] - \frac{1}{N'_\pi} \int_{I'} Y(i') W^*_\pi(i') \lambda'(\d i') + 1.
\end{equation}

\end{cor}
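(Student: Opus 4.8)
The plan is to prove the two assertions in turn: first the Radon--Nikodym identity $\tfrac{\d\mu'_\pi}{\d\lambda'} = W^*_\pi$, which is a direct application of Tonelli's theorem to the disintegration of the parented process $w_\pi$, and then the equation \eqref{eqn_kgs_kernel}, which is an algebraic rewriting of the Kerr--Godfrey--Smith equation \eqref{eqn_kgs_pi} once the dual fitness is available.

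For the density, I would begin from $\mu'_\pi(B) = \int_I w_i(B)\,\mu(\d i)$ (the disintegration defining $w_\pi : \mu \mapsto \mu'_\pi$) and insert the kernel representation $w_i(B) = \int_B w_\pi(i,i')\,\lambda'(\d i')$, giving $\mu'_\pi(B) = \int_I \int_B w_\pi(i,i')\,\lambda'(\d i')\,\mu(\d i)$. Since $w_\pi \ge 0$, Tonelli's theorem licenses interchanging the order of integration, so $\mu'_\pi(B) = \int_B\big(\int_I w_\pi(i,i')\,\mu(\d i)\big)\lambda'(\d i') = \int_B W^*_\pi(i')\,\lambda'(\d i')$. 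As this holds for every measurable $B \subseteq I'$, it both establishes $\mu'_\pi \ll \lambda'$ with density $W^*_\pi$ and, taking $B = I'$, records the normalization $\int_{I'} W^*_\pi\,\lambda' = N'_\pi$, i.e. $\tfrac{1}{N'_\pi}\int_{I'} W^*_\pi\,\lambda' = 1$.

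With the density in hand, the bridge to the equation is the observation that integration against the parented deme is integration of $Y W^*_\pi$ against the background measure: $\E'_\pi[Y] = \tfrac{1}{N'_\pi}\int_{I'} Y\,\d\mu'_\pi = \tfrac{1}{N'_\pi}\int_{I'} Y(i') W^*_\pi(i')\,\lambda'(\d i')$. I would then take the orphan term of \eqref{eqn_kgs_pi}, expand the covariance using Lemma \ref{lem_childdemes} as $\cov'(Y,\pi) = \E'[Y\pi] - \E'[Y]\,\E'[\pi] = p'_\pi\,\E'_\pi[Y] - p'_\pi\,\E'[Y]$, divide by $p'_\pi$, and substitute the dual-fitness expression for $\E'_\pi[Y]$. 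The closed tower property $\E'_\pi[Y] = \E[\langle Y\rangle_{w_\pi}U]$ from \eqref{eqn_closedtower} identifies the environmental contribution $\E[\langle Y\rangle_{w_\pi}U]$ appearing in \eqref{eqn_kgs_kernel} with the dual-fitness integral $\tfrac{1}{N'_\pi}\int_{I'} Y W^*_\pi\,\lambda'$, and the constant term is produced by the normalization $\tfrac{1}{N'_\pi}\int_{I'} W^*_\pi\,\lambda' = 1$ together with $\E'[\pi] = p'_\pi$.

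The only analytic input is the Tonelli interchange, which is immediate from non-negativity of the kernel $w_\pi$; there are no convergence or measurability subtleties. I therefore expect the main difficulty to be purely the bookkeeping across the three measures $\lambda'$, $\mu'_\pi$, and $\mu'$ --- in particular keeping straight that the orphan covariance $\cov'$ is computed against $\mu'$ while the dual-fitness integral is against $\lambda'$, and verifying that the constant emerging from the $\lambda'$-normalization of $W^*_\pi$ and the deme weight $p'_\pi$ matches exactly the term recorded in \eqref{eqn_kgs_kernel}.
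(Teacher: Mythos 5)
Your overall strategy coincides with the paper's: establish the density $\frac{\d \mu'_\pi}{\d \lambda'} = W^*_\pi$ by inserting the kernel into the disintegration $\mu'_\pi(B) = \int_I w_i(B)\,\mu(\d i)$ and interchanging the integrals (Tonelli, or Fubini as the paper cites it), then rewrite the orphan covariance of \eqref{eqn_kgs_pi} via the dual-fitness integral. The first half of your argument is complete and correct, and the normalization $\tfrac{1}{N'_\pi}\int_{I'} W^*_\pi\,\lambda'(\d i') = 1$ is a valid byproduct of taking $B = I'$.

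The gap is in the final step, where you claim the constant $+1$ is "produced by the normalization." Your own expansion gives
\begin{equation*}
  -\tfrac{1}{p'_\pi}\cov'(Y,\pi) \;=\; -\E'_\pi[Y] + \E'[Y] \;=\; -\tfrac{1}{N'_\pi}\int_{I'} Y(i')\,W^*_\pi(i')\,\lambda'(\d i') \;+\; \E'[Y],
\end{equation*}
so the additive term is $\E'[Y]$, not $1$. The identity $\tfrac{1}{N'_\pi}\int_{I'} W^*_\pi\,\d\lambda' = 1$ is merely the $Y \equiv 1$ instance of the dual-fitness integral; it does not enter the covariance expansion as a separate summand, and combining it with $\E'[\pi] = p'_\pi$ cannot convert $\E'[Y]$ into $1$. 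As written, \eqref{eqn_kgs_kernel} holds only for observables with $\E'[Y] = 1$ (e.g.\ $Y = U'$); in the closed case $\pi \equiv 1$ the covariance term vanishes while $-\tfrac{1}{N'}\int Y W^*\,\d\lambda' + 1 = -\E'[Y] + 1 \ne 0$ in general. Indeed, your own observation that the closed tower property identifies $\E[\langle Y\rangle_{w_\pi} U]$ with $\E'_\pi[Y] = \tfrac{1}{N'_\pi}\int Y W^*_\pi\,\d\lambda'$ makes the two middle terms of \eqref{eqn_kgs_kernel} cancel identically, which would reduce the right-hand side to $\cov(X,U) + 1$ --- clearly not equal to $\E'[Y] - \E[X]$. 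Be aware that the paper's own proof makes the identical slip (it replaces $\E'[Y]\,\E'[\pi/p'_\pi]$ by $\E'[\pi/p'_\pi] = 1$), so the repair is to the statement rather than to your method: either replace the trailing $+1$ by $+\E'[Y]$, or restrict the claim to unit-mean observables.
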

\begin{proof}
For any observable $Y$, we use the kernel property and interchange integrals by Fubini's theorem to show that $\frac{\d \mu'_\pi}{\d \lambda'} = W^*_\pi$ $\mu'$-a.s.:
\begin{eqnarray}
    \int_{I'} Y(i') \mu'_\pi(\d i') 
    &=& \int_I \int_{I'} Y(i') w_i(\d i') \mu(\d i) 
    = \int_I \int_{I'} Y(i') w_\pi(i,i') \lambda'(\d i') \mu(\d i) \nonumber \\
    &=& \int_{I'} Y(i') \left( \int_I w_\pi(i,i') \mu(\d i) \right)  \lambda'(\d i') = \int_{I'} Y(i') W^*_\pi(i') \lambda'(\d i').
\end{eqnarray}
Consequently,
    $-\cov'\!\Big(Y, \frac{\pi}{p'_\pi}\Big) = -\frac{1}{N'_\pi} \int_{I'} Y(i') \mu'_\pi(\d i') + 1 = -\frac{1}{N'_\pi} \int_{I'} Y(i') W^*_\pi(i') \lambda'(\d i') + 1.$

\end{proof}



Recall the quantum evolutionary formalism of Section \ref{sect_quantum}. An open quantum process consists of parented and orphaned child density operators $\mu'_\pi = \pi \mu'$ and $\mu'_\nu = \nu \mu'$ satisfying $\pi + \nu = \Id_{\H'}$, and a closed quantum process $\W_\pi : \mu \mapsto \mu'_\pi$. By a similar proof as Lemma \ref{lem_childdemes}, the quantum open tower properties hold: $\E'_{\mu'}[Y] - \E_\mu[\langle Y \rangle^\opleft_{\W_\pi} U] = \frac{1}{p'_\pi} \cov'_{\mu'}(Y,\nu) = - \frac{1}{p'_\pi} \cov'_{\mu'}(Y,\pi)$. 

\begin{thm}[Quantum Kerr-Godfrey-Smith Equation]
Let $\W$ be an open quantum process. 
The left and right quantum Kerr-Godfrey-Smith equations hold: 
\begin{eqnarray}
    \E'_{\mu'}[Y] - \E_\mu[X] &=& \cov_\mu(X,U) + \E[\langle Y \rangle_{\W_\pi}^\opleft U] + \tfrac{1}{p'_\pi} \cov'_{\mu'}(Y,\nu) \label{eqn_kgs_nu_quantum_left} \\
    &=& \cov(X,U) + \E[\langle Y \rangle_{\W_\pi}^\opleft U] - \tfrac{1}{p'_\pi} \cov'_{\mu'}(Y,\pi) \label{eqn_kgs_pi_quantum_left} \\
    &=& \cov_\mu(U,X) + \E[U \langle Y \rangle_{\W_\pi}^\opright] + \tfrac{1}{p'_\pi} \cov'_{\mu'}(\nu, Y) \label{eqn_kgs_nu_quantum_right} \\
    &=& \cov(U,X) + \E[U \langle Y \rangle_{\W_\pi}^\opright] - \tfrac{1}{p'_\pi} \cov'_{\mu'}(\pi, Y) \label{eqn_kgs_pi_quantum_right}
\end{eqnarray}
\end{thm}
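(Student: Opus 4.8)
The plan is to prove the Quantum Kerr-Godfrey-Smith Equation by establishing the quantum open tower property and then performing the same add-and-subtract algebra used in the classical case (Theorem \ref{thm_kgs}), but now tracking the left and right multiplication conventions separately because observables no longer commute. The four displayed equations are really two pairs: the left equations \eqref{eqn_kgs_nu_quantum_left} and \eqref{eqn_kgs_pi_quantum_left} differ only by the identity $\nu = \Id - \pi$, and likewise for the right equations, so the essential content is proving one left equation and one right equation.

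First I would establish the quantum open tower property asserted just before the theorem statement, namely $\E'_{\mu'}[Y] - \E_\mu[\langle Y \rangle^\opleft_{\W_\pi} U] = \tfrac{1}{p'_\pi} \cov'_{\mu'}(Y,\nu)$, following the proof of Lemma \ref{lem_childdemes} \emph{mutatis mutandis}. The key inputs are that the child demes decompose as $\mu'_\pi = \pi\mu'$ and $\mu'_\nu = \nu\mu'$ with $\pi + \nu = \Id_{\H'}$, that the closed quantum process $\W_\pi$ satisfies the quantum tower property \eqref{eqn_quantumtowerproperty} restricted to the parented population, and that $\E'_{\mu'}[\nu] = p'_\nu$. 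One computes $\E'_{\mu'}[Y] = \tfrac{1}{N'}\Tr'(Y\mu') = \tfrac{1}{N'}\Tr'(Y(\pi+\nu)\mu')$, identifies the $\pi$-term with $p'_\pi\E_\mu[\langle Y\rangle^\opleft_{\W_\pi} U]$ via the left quantum tower property for $\W_\pi$, and rewrites the $\nu$-term as $\cov'_{\mu'}(Y,\nu) + p'_\nu\E'_{\mu'}[Y]$; moving the $p'_\nu\E'_{\mu'}[Y]$ term across and dividing by $p'_\pi = 1 - p'_\nu$ yields the renormalized form. The second equality uses $\cov'_{\mu'}(Y,\nu) = -\cov'_{\mu'}(Y,\pi)$.

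Given the tower property, the theorem follows by writing
\begin{equation*}
    \E'_{\mu'}[Y] - \E_\mu[X] = \E_\mu[X(U-1)] + \big(\E'_{\mu'}[Y] - \E_\mu[XU]\big),
\end{equation*}
recognizing the first term as the left selective change $\cov_\mu(X,U)$ (using $\E_\mu[U]=1$), and then substituting the tower property into $\E'_{\mu'}[Y] = \E_\mu[\langle Y\rangle^\opleft_{\W_\pi} U] + \tfrac{1}{p'_\pi}\cov'_{\mu'}(Y,\nu)$. For the right equations one instead adds and subtracts $UX$, producing $\cov_\mu(U,X)$ and the right local average $\langle Y\rangle^\opright_{\W_\pi}$, and uses the right-multiplication version of the tower property together with $\cov'_{\mu'}(\nu,Y)$ in place of $\cov'_{\mu'}(Y,\nu)$.

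The main obstacle is bookkeeping the noncommutativity correctly: in the trace $\Tr'(Y(\pi+\nu)\mu')$ the ordering of $Y$, the density factors $\pi,\nu$, and $\mu'$ matters, and the left versus right covariance and local-average operators must be matched consistently so that the cyclic property of the trace delivers exactly $\cov'_{\mu'}(Y,\nu)$ rather than $\cov'_{\mu'}(\nu,Y)$. I would verify at each step that the chosen tower property (left or right) is the one compatible with the selective term produced by the add-and-subtract step, so that the four stated forms emerge without stray commutator corrections; since $\pi,\nu,\mu'$ all act on the common space $\H'$ and the covariance there is defined symmetrically via $\mu'$, the relations $\cov'_{\mu'}(Y,\nu)=-\cov'_{\mu'}(Y,\pi)$ and their reversed analogues close the argument.
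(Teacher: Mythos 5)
Your proposal is correct and follows essentially the same route the paper intends: establish the quantum open tower property by adapting Lemma \ref{lem_childdemes} \emph{mutatis mutandis}, then perform the add-and-subtract of $XU$ (resp.\ $UX$) exactly as in the classical Theorem \ref{thm_kgs}, with the left/right bookkeeping you describe. Note only that this computation actually produces the environmental term $\E_\mu[(\langle Y\rangle^\opleft_{\W_\pi} - X)U]$ rather than $\E_\mu[\langle Y\rangle^\opleft_{\W_\pi}U]$ --- the same discrepancy already present in the paper's own statement and proof of the classical Theorem \ref{thm_kgs} --- so your derivation is faithful to the paper's argument.
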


\part{Selective Entropy (Kullback-Leibler Divergence of Relative Fitness)} \label{part_selectiveentropy}




\section{Definition of Selective Entropy and Gibbs's Inequalities} \label{sect_entropyNS}



In Part 2, we introduce the selective entropy to quantify ``the amount of selection'' of a process, by generalizing Kullback-Leibler divergence to the case of evolutionary processes. This represents a ``biological entropy'' or ``negentropy''. Selective entropy is non-positive, and bounded above by a negative value in the strict selective-equilibrium case. We prove a Second Law, showing that the selective change of selective entropy is non-positive, as well as a speed limit. Both inequalities are saturated in the selective equilibrium case.
Write $U := W/\bar W$ for the relative fitness function, i.e., $U(i) := w_i(I')/\bar W$.

\begin{defn}[Selective Entropy]
We say that a process is finite-entropy if $\E[|U \log U|] < \oo$. For any finite-entropy process, we define the selective entropy as the average of $-U \log U$:
\begin{equation} \label{def_entropyNS} 
S_\NS := \E[-U \log U] = \frac{1}{N} \int \left( - \frac{W(i)}{\bar W} \log \frac{W(i)}{\bar W} \right) \, \mu(\d i).
\end{equation}
\end{defn}

The selective entropy is fully concentrated in the selective part of a process. That is, if $w$ and $\hat w$ both have the same selective part $w_\NS$, i.e., the same relative fitness function $U$, then they have the same selective entropy $S_\NS$. The environmental part does not contribute to selective entropy. 
For purely selective processes (where $\E'[X] = \E[U X]$), the selective entropy is the Kullback-Leibler divergence of $\E'$ relative to $\E$. That is, selective entropy is exactly the familiar relative entropy from information theory. For purely environmental processes (i.e., $U=1$ $\mu$-a.s.), selective entropy vanishes. For all other processes, selective entropy measures the degree to which selective effects are present in the process $w$, and the Price equation ensures all remaining effects are environmental. 


%

\begin{rem}(Sign Convention)
We choose sign convention to be consistent with classical information theory and statistical mechanics. The non-positive selective entropy $S_\NS$ has the biological role of ``negentropy'' \cite{schrodinger1944life}, and in Section \ref{sect_environmentalentropy}, we introduce the non-negative environmental entropy $S_\EC$ to represent the classical physical role of dynamical entropy. 
The total entropy is given by $S_\tot := S_\NS + S_\EC$, which is negative or positive depending on whether selective effects outweigh environmental effects. 
\end{rem}

\subsection{Selective Entropy Bounds} \label{sect_selectiveentropybounds}

We now state and prove that $S_\NS \le 0$, which corresponds to the Gibbs' inequality in classical information theory. This inequality is saturated exactly in the case of purely environmental processes (in which case $S_\NS = 0$, otherwise $S_\NS < 0$). Thus the selective entropy is a proxy for ``selectiveness'' in a process.

\begin{lem}[Weak Gibbs Inequality] 
Let $w$ be a finite-entropy process with relative fitness $U$, and let $S_\NS := \E[-U \log U]$ be the selective entropy of $w$. The Gibbs' inequality holds:
\begin{equation} \label{ineq_gibbsNS}
S_\NS \le 0.
\end{equation}
This is an equality ($S_\NS = 0$) if and only if $w$ is purely environmental (i.e., $U$ is a.s. constant with $U = 1$). 
\end{lem}
\begin{proof}
Observe that $-U \log U$ is a concave function of $U$, and $\E[U] = 1$. By Jensen's inequality, we have:
\begin{equation}
S_\NS = \E[-U \log U] \le - \E[U] \log \E[U] = - 1 \log 1 = 0,
\end{equation}
with equality if and only if $U$ is constant (with $U = \E[U] = 1$ almost surely).
\end{proof}

We strengthen \eqref{ineq_gibbsNS} and derive a window \eqref{ineq_stronggibbsNS} in which selective entropy can fluctuate. Recall from Section \ref{sect_zerothlaw} that $w$ is in selective equilibrium when $U \in \{0,1/p_*\}$ a.s. The window \eqref{ineq_stronggibbsNS} collapses to a single value when $w$ is in selective equilibrium (in which case $S_\NS = \log p^*$), and otherwise the inequalities are strict. Since lost population does not contribute to entropy ($0 \log 0 = 0$), all selective entropy is generated by the childbearing population. 

Define the childbearing population $\mu_*(A) := \mu(A \cap \{W>0\})$, with population size $N_* = p_* N_*$. Define the childbearing expectation $\E_*[X] := \frac{1}{p_*} \E[1_{U>0} X]$, and the childbearing variance $\var_*(X) := \E_*[X^2] - \E_*[X]^2$. Recall $U_* = p_* U$. Note that the general variance and childbearing variance are related as follows:
\begin{equation}
    \var(U) = \E[U^2]-1 = p_* \E_*\!\left[\left(\frac{U_*}{p_*}\right)^2\right] - 1 = \frac{1}{p_*} \E_*\!\left[U_*^2\right] - 1 = \frac{1}{p_*} \var_*(U_*) + \frac{1}{p_*} - 1.
\end{equation}

\begin{thm}[Strong Gibbs Inequality] \label{thm_stronggibbsNS} 
Let $w$ be a finite-entropy process with selective entropy $S_\NS$, and let $p_* = \mu(W>0)/N$ be the childbearing population proportion. Then:
\begin{equation} \label{ineq_stronggibbsNS}
\log p_* - \log(1 + \var_*(U_*)) = - \log \big( 1 + \var(U) \big) \le S_\NS \le \log p_*, 
\end{equation}
with saturation in the selective-equilibrium case (in which case $S_\NS = \log p_* = -\log \big(1+\var(U)\big)$), and otherwise the inequalities are strict.
\end{thm}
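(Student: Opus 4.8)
The plan is to reduce everything to the childbearing population and then apply Jensen's inequality twice, once for each bound. The first equality in the statement is purely algebraic: since $U_* = p_* U$ and $\E_*[U_*] = 1$, one has $\E_*[U_*^2] = p_* \E[U^2]$, so $1 + \var(U) = \E[U^2] = \tfrac{1}{p_*}\big(1 + \var_*(U_*)\big)$, and taking $-\log$ gives the claimed identity $-\log(1+\var(U)) = \log p_* - \log(1+\var_*(U_*))$. The central step I would establish first is that all selective entropy is carried by the childbearing population. Writing $U = U_*/p_*$ on $\{U>0\}$ and using $0\log 0 = 0$,
\[
S_\NS = \E[-U\log U] = p_* \E_*\!\left[-\tfrac{U_*}{p_*}\log\tfrac{U_*}{p_*}\right] = \E_*[-U_*\log U_*] + (\log p_*)\,\E_*[U_*] = \E_*[-U_*\log U_*] + \log p_*,
\]
using $\E_*[U_*] = 1$. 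Both bounds then reduce to controlling the single quantity $\E_*[-U_*\log U_*]$.

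For the upper bound I apply Jensen's inequality to the concave map $x \mapsto -x\log x$ under the probability expectation $\E_*$: since $\E_*[U_*] = 1$,
\[
\E_*[-U_*\log U_*] \le -\E_*[U_*]\log \E_*[U_*] = 0,
\]
whence $S_\NS \le \log p_*$. For the lower bound I tilt the measure by $U_*$. Because $\E_*[U_*] = 1$, the functional $\tilde\E_*[f] := \E_*[U_* f]$ is again a probability expectation, so rewriting and applying Jensen to the concave function $\log$ gives
\[
\E_*[U_*\log U_*] = \tilde\E_*[\log U_*] \le \log \tilde\E_*[U_*] = \log \E_*[U_*^2] = \log(1 + \var_*(U_*)).
\]
Negating and adding $\log p_*$ yields $S_\NS \ge \log p_* - \log(1+\var_*(U_*)) = -\log(1+\var(U))$. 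If $\E[U^2] = \oo$ the lower bound reads $-\oo$ and holds vacuously, so finite variance is not actually needed here.

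The point I would be most careful about is the saturation analysis, showing both inequalities are equalities precisely in selective equilibrium. For the upper bound, strict concavity of $-x\log x$ makes Jensen under $\E_*$ tight exactly when $U_*$ is $\E_*$-a.s. constant, forcing $U_* = \E_*[U_*] = 1$, i.e. $U = 1/p_*$ on $\{U>0\}$. For the lower bound, Jensen for the strictly concave $\log$ under $\tilde\E_*$ is tight exactly when $U_*$ is $\tilde\E_*$-a.s. constant; since $\tilde\E_*$ and $\E_*$ are mutually absolutely continuous on $\{U_*>0\}$, this again forces $U_* = 1$ $\E_*$-a.s. Thus both saturations coincide with the two-valued condition $U \in \{0, 1/p_*\}$ of selective equilibrium, and off equilibrium both inequalities are strict. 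As a consistency check, in equilibrium $\var_*(U_*) = 0$, so $-\log(1+\var(U)) = \log p_* = S_\NS$, matching the common value asserted in the statement.
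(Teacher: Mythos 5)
Your proof is correct and follows essentially the same route as the paper: both bounds come from Jensen's inequality, the upper bound via the concave $-x\log x$ under the childbearing expectation $\E_*$, and the lower bound via concavity of $\log$ under the fitness-tilted probability expectation, with the same saturation analysis identifying selective equilibrium. The only cosmetic difference is that you normalize to $U_*$ and stay inside the childbearing population for the lower bound, where the paper tilts the full expectation $\E[U\,\cdot\,]$ directly; the two are equivalent since $\E[U^2] = \tfrac{1}{p_*}\E_*[U_*^2]$.
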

\begin{proof}
We decompose the expectation into the sum of childless and childbearing parts:
\begin{equation} \label{eqn_childbearingdecomposition}
\E[X] = p_0 \E_0[X] + p_* \E_*[X]
\end{equation}
where $p_0 = 1 - p_*$, $\E_*[X] := 1/p_* \E[1_{U>0} X]$ and $\E_0[X] := 1/p_0 \E[1_{U=0} X]$. Note that $\E_*[U] = 1/p_*$.
We apply the decomposition \eqref{eqn_childbearingdecomposition} to $-U \log U$. Since $0 \log 0 = 0$, we have:
\begin{equation}
S_\NS = 0 + p_* \E_*[-U \log U].
\end{equation}
Since $\E_*$ is a probability expectation and $-U \log U$ is concave, we use Jensen's inequality:
\begin{equation}
S_\NS \le - p_* \E_*[U] \log \E_*[U] = - p_* \frac{1}{p_*} \log \frac{1}{p_*} = \log p_* \le 0. 
\end{equation}
This proves the upper bound for \eqref{ineq_stronggibbsNS}. This is saturated when $U$ is constant $\mu_*$-a.s., i.e., the selective-equilibrium case. 
For the lower bound, observe that $\E[U\cdot]$ is a probability expectation and $-\log x$ is convex, therefore by Jensen's inequality:
\begin{equation}
S_\NS = \E[U (-\log U)] \ge - \log \E[U^2].
\end{equation}
This is saturated exactly when $U$ is constant $U \mu$-almost surely. Since $U \mu$ and $\mu_*$ are mutually absolutely continuous, the saturation condition is equivalent to {\quasienvironmentality}. 
\end{proof}

This implies a strong version of the Zeroth Law (Proposition \ref{pro_zerothlaw}), with an improved lower bound based on selective entropy. 

\begin{cor}[Strong Zeroth Law] \label{cor_strongzerothlaw}
The inequalities \eqref{ineq_stronggibbsNS} are equivalent to the following:
\begin{equation} \label{ineq_exponentialentropyrelation}
    \partial_\NS(U) = \var(U) \ge \e^{-S_\NS} - 1 \ge \frac{1}{p_*} - 1 \qquad \mathrm{and} \qquad p_* \ge \e^{S_\NS} \ge \frac{1}{1 + \var U},
\end{equation}
with saturation in the selective-equilibrium case. 
\end{cor}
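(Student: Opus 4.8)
The plan is to derive both displayed chains purely by applying strictly monotone transformations to the two bounds established in Theorem \ref{thm_stronggibbsNS}, namely $-\log(1+\var(U)) \le S_\NS \le \log p_*$, together with Fisher's theorem for the opening identity $\partial_\NS(U) = \var(U)$. The key observation is that $x \mapsto \e^x$ is a strictly increasing bijection of $\R$ and $x \mapsto 1/x$ is a strictly decreasing bijection of $(0,\oo)$; composing these with the bounds preserves the inequalities (up to reversal of direction) and, because they are strict monotone bijections, preserves both equivalence and the saturation case exactly. So no genuinely new analysis is needed beyond careful bookkeeping of inequality directions.

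First I would handle the upper bound $S_\NS \le \log p_*$. Exponentiating gives $\e^{S_\NS} \le p_*$, which is the rightmost inequality $p_* \ge \e^{S_\NS}$ of the second chain. Taking reciprocals (valid since $p_*, \e^{S_\NS} > 0$, and reversing the direction) yields $\e^{-S_\NS} \ge 1/p_*$, and subtracting $1$ gives the rightmost inequality $\e^{-S_\NS} - 1 \ge 1/p_* - 1$ of the first chain. Next I would treat the lower bound, rewritten as $-S_\NS \le \log(1+\var(U))$: exponentiating gives $\e^{-S_\NS} \le 1 + \var(U)$, and subtracting $1$ gives the leftmost inequality $\var(U) \ge \e^{-S_\NS} - 1$. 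Taking reciprocals of $\e^{-S_\NS} \le 1 + \var(U)$ produces $\e^{S_\NS} \ge 1/(1+\var(U))$, the leftmost inequality of the second chain. The opening equalities $\partial_\NS(U) = \var(U)$ are Fisher's fundamental theorem (Theorem \ref{thm_fisher}).

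For saturation, since each transformation is a strict monotone bijection, equality in any one transformed inequality holds if and only if equality held in the corresponding bound of \eqref{ineq_stronggibbsNS}; by Theorem \ref{thm_stronggibbsNS} this is exactly the selective-equilibrium case $U \in \{0, 1/p_*\}$ a.s., where moreover $S_\NS = \log p_* = -\log(1+\var(U))$, so all the displayed inequalities collapse to equalities simultaneously. The only point requiring care — and the closest thing to an obstacle — is tracking the reversal of inequality direction under reciprocals, together with confirming that every inverted quantity is strictly positive ($p_* > 0$ whenever the process has any childbearing population, and $1 + \var(U) \ge 1 > 0$). Everything else is immediate.
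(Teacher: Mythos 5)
Your proposal is correct and is essentially the paper's own argument: the corollary is stated as an immediate consequence of the Strong Gibbs Inequality (Theorem \ref{thm_stronggibbsNS}), obtained exactly as you describe by exponentiating, taking reciprocals, and subtracting $1$, with Fisher's theorem supplying $\partial_\NS(U)=\var(U)$ and the strict monotonicity of these maps carrying the selective-equilibrium saturation condition through unchanged. The only nit is cosmetic: you call $p_*\ge \e^{S_\NS}$ the ``rightmost'' inequality of the second chain when it is positionally the leftmost, but this does not affect the mathematics.
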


\section{Selective Change of Selective Entropy and the Second Law of Natural Selection} \label{sect_NSNS}

In this section, we analyze the change of the selective entropy functional across generations. We decompose the selective change and environmental change of the functional, and prove bounds showing the ``typical'' tendency of change. The selective change of selective entropy is negative, representing that selection always amplifies existing selective effects. 


Consider composable processes $w : \mu \mapsto \mu'$ and $w' : \mu' \mapsto \mu''$, with relative fitnesses $U$ and $U'$. We define the change of selective entropy as the difference in selective entropies:
\begin{equation}
    \Delta(S_\NS,S_\NS') := S_\NS' - S_\NS = \Delta(-U \log U,-U' \log U') = \E'[-U'\log U'] - \E[-U\log U]. 
\end{equation}

Define the selective and environmental changes of selective entropy:
\begin{eqnarray}
    \partial_\NS S_\NS &:=& \partial_\NS(-U \log U) = \cov(-U \log U, U) = -\E[(U-1) U \log U] \label{eqn_defNSNS} \\
    \partial_\EC(S_\NS,S'_\NS) &:=& \partial_\EC(-U \log U,-U' \log U')  \nonumber \\
    &=& \E[\Delta_w(-U \log U, -U' \log U') U] = \E[(\langle -U'\log U' \rangle_w + U \log U)U]. \qquad
\end{eqnarray}

The functional Price equation (Corollary \ref{cor_functionalprice}) decomposes the selective-entropy change as the sum of selective and environmental changes:
\begin{eqnarray}
    \Delta(S_\NS,S_\NS') &=& \partial_\NS S_\NS + \partial_\EC(S_\NS,S_\NS') \\
    &=& \cov(-U \log U, U) + \E[\Delta_w(-U \log U,-U' \log U') U].
\end{eqnarray}

\subsection{Selective Change of Selective Entropy}

Our main result is that the selective-change term $\partial_\NS S_\NS$ is always non-positive, i.e., vanishing in the purely environmental case and otherwise strictly negative. The meaning is that \emph{under the effect of natural selection, selective entropy cannot increase}. 
We state a Weak Second Law showing non-positivity, saturated in the purely environmental case, and a strong Second Law providing a chain of inequalities, saturated in the selective equilibrium case. The Weak Second Law follows from the non-positivity of the function $-(x-1) x \log x$, and does not rely on concavity. The Strong Second Law does rely on concavity of the functions $-x \log x$ and $\log x$. 



\begin{pro}[Weak Second Law of Natural Selection] 
Let $w$ be an evolutionary process with $\E[|U^2 \log U|] < \oo$. The selective change in selective entropy is non-positive:
\begin{equation} \label{ineq_entropychangeNSNS}
\partial_\NS S_\NS \le 0.
\end{equation}
The inequality is saturated exactly for purely environmental processes, and is otherwise strictly negative.
\end{pro}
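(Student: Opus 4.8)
The plan is to reduce the functional inequality to a scalar pointwise inequality and then integrate. First I would use the definition \eqref{eqn_defNSNS} together with the covariance identity $\partial_\NS(X) = \cov(X,U) = \E[X(U-1)]$ (valid since $\E[U]=1$) to write
\begin{equation}
    \partial_\NS S_\NS = \cov(-U\log U, U) = -\E[(U-1)\,U\log U].
\end{equation}
It therefore suffices to prove that $\E[(U-1)U\log U] \ge 0$, and the natural route is to show that the integrand is non-negative $\mu$-almost everywhere.

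The core step is the scalar inequality $(x-1)\,x\log x \ge 0$ for every $x \ge 0$, with the convention $0\log 0 = 0$. I would verify this by a three-way sign analysis: for $x>1$ both $(x-1)$ and $\log x$ are positive; for $0<x<1$ both are negative, so their product is again positive; and at $x\in\{0,1\}$ the expression vanishes. Since $U \ge 0$ $\mu$-a.s., substituting $x=U(i)$ and integrating against the probability measure $\mu/N$ yields $\E[(U-1)U\log U]\ge 0$, hence $\partial_\NS S_\NS \le 0$. Note this uses only the sign of the scalar function, not any convexity or concavity of $-x\log x$, which matches the remark preceding the statement that the Weak Second Law ``does not rely on concavity.''

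For the saturation claim I would argue that equality forces the non-negative integrand $(U-1)U\log U$ to vanish $\mu$-a.s., which by the scalar analysis means $U\in\{0,1\}$ $\mu$-a.s. The key additional input is the unit-mean normalization: if a non-negative $U$ is supported on $\{0,1\}$, then $\E[U]=\mu(U=1)/N$, so the constraint $\E[U]=1$ forces $\mu(U=0)=0$, i.e.\ $U=1$ a.s. By Theorem \ref{thm_pureprocesses} this is exactly the purely environmental case, and conversely $U=1$ a.s.\ makes the integrand vanish identically, giving $\partial_\NS S_\NS = 0$. I expect this last step---upgrading ``$U\in\{0,1\}$ a.s.'' to ``$U=1$ a.s.'' via the mean constraint---to be the only point requiring any thought; the rest is elementary calculus and pointwise integration.
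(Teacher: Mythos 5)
Your proposal is correct and follows essentially the same route as the paper: both rest on the pointwise sign identity $(x-1)x\log x \ge 0$ and deduce saturation from the integrand vanishing $\mu$-a.s. Your explicit use of the unit-mean constraint $\E[U]=1$ to upgrade ``$U\in\{0,1\}$ a.s.'' to ``$U=1$ a.s.'' is a step the paper leaves implicit, and it is a worthwhile clarification.
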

\begin{proof}
Observe that the real-valued functions $x-1$ and $\log x$ always have the same sign, therefore the function $-x(x-1) \log x$ is non-positive. Thus:
\begin{equation}
\partial_\NS S_\NS = \E[-(U-1) U \log U] \le 0.
\end{equation}
The function $-x(x-1) \log x$ vanishes only at $x=0$ and $x=1$. If $w$ is purely environmental ($U=1$ a.s.), then $\partial_\NS S_\NS = \E[(-1-1) 1 \log 1] = 0$. 
If $w$ is not purely environmental, then there exists $\epsilon > 0$ and measurable $A \subseteq I$ on which $U \notin \{0,1\}$ and $-U (U-1) \log U < -\epsilon$. Thus 
\begin{equation}
    \partial_\NS S_\NS = -\E[1_{I-A} (U-1) U \log U] - \E[1_{A} (U-1) U \log U] \le 0 - \E[1_A] \epsilon = -\frac{\mu(A)}{N} \epsilon < 0.
\end{equation}
\end{proof}


\begin{rem}[Selective-Equilibrium Case]
Recall that $p_* = \mu(U > 0)/N$ is the proportion of childless population. If $w$ is in selective equilibrium, then we can specify selective change of selective entropy explicitly. Since relative fitness takes exactly two values $0$ and $1/p_*$ almost surely, the selective change of selective entropy takes the form:
\begin{eqnarray}
\partial_\NS S_\NS &=& \cov(-U \log U, U) = -\E[U(U-1) \log U] \nonumber \\
&=& - p_* \E_*[U(U-1) \log U] = - p_* \frac{1}{p_*} \big( \frac{1}{p_*} - 1 \big) \log \frac{1}{p_*} \nonumber \\
&=& - \left( \frac{1}{p_*} - 1 \right) \log \frac{1}{p_*} < 0. \label{eqn_quasienvchangeNSNS}
\end{eqnarray}
Recall that for selective-equilibrium processes, $S_\NS = -\log \frac{1}{p_*}$ and $\var(U) = \frac{1}{p_*} - 1$. Consequently, in the selective-equilibrium case,
\begin{equation}
\partial_\NS S_\NS = \var(U) S_\NS.
\end{equation}
\end{rem}

This provides a baseline for improving the Weak Second Law, by proving $\partial_\NS S_\NS \le \var(U) S_\NS$, which itself is further bounded by the (non-positive) expression $(\e^{-S_\NS} - 1) S_\NS$. The meaning of this statement is that $\partial_\NS S_\NS$ is ``maximally controlled'' in the selective-equilibrium case (in which case it equals $\big( \frac{1}{p_*} - 1 \big) \log p_*)$, and otherwise it is strictly bounded by these quantities. This provides a ``minimal velocity'' for selective entropy, achieved only in selective equilibrium.


%



\begin{thm}[Strong Second Law of Natural Selection] 
\label{thm_secondlaw}
Let $w$ be an evolutionary process with $\E[|U^2 \log U|] < \oo$. Let $\partial_\NS S_\NS := \cov(-U \log U, U)$ be the selective change of selective entropy. The following upper bound holds:
\begin{equation} \label{ineq_upperboundNSNS}
\partial_\NS S_\NS \le -\var(U) \log(1 + \var(U)) \le \var(U) S_\NS \le (\e^{-S_\NS} - 1) S_\NS \le -\left( \frac{1}{p_*} - 1 \right) \log \frac{1}{p_*} \le 0,
\end{equation}
where all but the last inequalities are saturated in the selective-equilibrium case, and are otherwise strict inequalities. All quantities vanish exactly in the purely environmental case, otherwise they are all strictly negative. 

\end{thm}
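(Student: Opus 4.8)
The plan is to separate the displayed chain into its five constituent inequalities and dispatch each one individually, since only the leftmost is genuinely new; the middle three are repackagings of the Strong Gibbs inequality (Theorem \ref{thm_stronggibbsNS}) and the Strong Zeroth Law (Corollary \ref{cor_strongzerothlaw}), and the last is elementary. Throughout I would work with the tilted probability expectation $\tilde\E[X] := \E[UX]$, which is a genuine probability expectation because $\E[U]=1$, and record the standing facts $\E[U^2]=1+\var(U)$ and $\tilde\E[U]=\E[U^2]$.

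The main obstacle is the first inequality $\partial_\NS S_\NS \le -\var(U)\log(1+\var(U))$, the one step that does not reduce to an already-proven bound. Here I would rewrite the selective change using \eqref{eqn_defNSNS} and the change of measure to $\tilde\mu = U\mu$:
\[
    \partial_\NS S_\NS = -\E[(U-1)U\log U] = -\tilde\E[(U-1)\log U] = -\tilde\E[f(U)], \qquad f(u) := (u-1)\log u.
\]
The crucial observation is that $f$ is convex on $(0,\oo)$, since $f''(u) = u^{-1}+u^{-2} > 0$. Jensen's inequality under $\tilde\E$ then gives
\[
    \tilde\E[f(U)] \ge f\big(\tilde\E[U]\big) = f\big(\E[U^2]\big) = \big(\E[U^2]-1\big)\log\E[U^2] = \var(U)\log(1+\var(U)),
\]
and negating yields the claim. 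The care needed is in the change of measure: the set $\{U=0\}$ is $\tilde\mu$-null, and the integrand $U f(U) = U(U-1)\log U$ is integrable by the hypothesis $\E[|U^2\log U|]<\oo$; and in reading off the Jensen equality case, namely $U$ constant $\tilde\mu$-a.s. Since $\tilde\mu = U\mu$ and $\mu_*$ are mutually absolutely continuous, this is exactly selective equilibrium.

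For the remaining inequalities I would argue as follows. The second inequality, after dividing by $\var(U)\ge 0$, is precisely the lower Strong Gibbs bound $-\log(1+\var(U)) \le S_\NS$, the case $\var(U)=0$ being a trivial equality. The third, after dividing by $S_\NS\le 0$ (which reverses the inequality), is exactly the Strong Zeroth Law $\var(U) \ge \e^{-S_\NS}-1$. For the fourth I would substitute $s := -S_\NS \ge 0$ and $t := \log(1/p_*)\ge 0$, so that $1/p_* = \e^{t}$, reducing the claim to $s(\e^{s}-1) \ge t(\e^{t}-1)$; since $\phi(x) := x(\e^{x}-1)$ is increasing on $[0,\oo)$ and the upper Strong Gibbs bound $S_\NS \le \log p_*$ gives $s \ge t$, this follows. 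The fifth inequality is immediate from $p_* \in (0,1]$.

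Finally I would assemble the saturation and strictness statements. Each of the first four Jensen/monotonicity steps is saturated exactly in selective equilibrium: the first by the equality analysis above, the second and third because Theorem \ref{thm_stronggibbsNS} and Corollary \ref{cor_strongzerothlaw} saturate there, and the fourth because $s=t$ there; hence the whole chain collapses to the value $-(1/p_*-1)\log(1/p_*)$ computed in \eqref{eqn_quasienvchangeNSNS}. In the purely environmental case $U=1$ a.s., so $\var(U)=0$, $S_\NS=0$, and every term vanishes. Otherwise $\var(U)>0$ and $S_\NS<0$ force the first four intermediate expressions to be strictly negative, and the Weak Second Law already gives $\partial_\NS S_\NS<0$; I would note that the final bound $-(1/p_*-1)\log(1/p_*)$ vanishes on the larger set $\{p_*=1\}$, so it is the one term that can equal zero outside the purely environmental case.
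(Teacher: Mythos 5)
Your proof is correct, and for the one genuinely new step --- the bound $\partial_\NS S_\NS \le -\var(U)\log(1+\var(U))$ --- it reaches the paper's conclusion by a different packaging of the same underlying mechanism. The paper splits $\partial_\NS S_\NS = \E[-U^2\log U] - S_\NS$, applies Jensen to the concave $-x\log x$ under the tilted expectation $\tilde \E$ for the first term, and imports the already-proven lower Gibbs bound $-S_\NS \le \log\E[U^2]$ for the second; you instead write $\partial_\NS S_\NS = -\tilde\E[(U-1)\log U]$ and apply a single Jensen inequality to the convex function $f(u)=(u-1)\log u$ at $\tilde\E[U]=\E[U^2]$. Since $f$ is the sum of the two convex pieces $u\log u$ and $-\log u$, both routes produce the identical bound and the identical saturation condition ($U$ constant $\tilde\mu$-a.s., i.e.\ selective equilibrium), so your version is a self-contained one-line replacement for the paper's two-term argument; your attention to the $\tilde\mu$-nullity of $\{U=0\}$ and to integrability is appropriate and matches what the paper leaves implicit. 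The downstream inequalities are handled essentially as in the paper (division by $\var(U)$ and by $S_\NS$ to reduce to Theorem \ref{thm_stronggibbsNS} and Corollary \ref{cor_strongzerothlaw}; your monotone $\phi(x)=x(\e^x-1)$ argument is a clean restatement of the paper's double use of $S_\NS\le\log p_*$). Your closing observation that the final bound $-\left(\tfrac{1}{p_*}-1\right)\log\tfrac{1}{p_*}$ vanishes on the larger set $\{p_*=1\}$ is a legitimate precision on the theorem's last sentence that the paper's own proof does not address.
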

\begin{proof}
We write $\partial_\NS S_\NS$ as the sum of two terms, and we analyze an upper bound for each separately. Observe that: 
\begin{equation}
\partial_\NS S_\NS = \cov(-U \log U, U) = \E((-U \log U - S_\NS) U] = \E[-U^2 \log U] - S_\NS. 
\end{equation}
We rewrite the first term as a weighted probability expectation of the concave function $-x \log x$. Observe that relative fitness $U$ is a probability density function, since $U \ge 0$ and $\E[U] = 1$. Consequently, Jensen's inequality implies:
\begin{equation} \label{ineq_firsttermNSNSupper}
\E[-U^2 \log U] = \E[U(-U \log U)] \le -\E[U^2] \log \E[U^2].
\end{equation}
The strong inequality \eqref{ineq_firsttermNSNSupper} is saturated exactly in the case that $U$ is constant $U \mu/N$-a.s. Since $\mu_*$ and $U/N$ are mutually absolutely continuous, saturation is equivalent to the selective-equilibrium case (i.e., $U = 1$ $\tilde \mu$-a.s.). 

For the second term, we use the strong lower bound for selective entropy, which translates into a strong \emph{upper} bound for the negative selective entropy:
\begin{equation} \label{ineq_secondtermNSNSupper}
-S_\NS \le \log(1 + \var(U)) = \log \E[U^2],
\end{equation}
with saturation in the selective-equilibrium case. Combining \eqref{ineq_firsttermNSNSupper} and \eqref{ineq_secondtermNSNSupper}, we have:
\begin{eqnarray}
\partial_\NS S_\NS &\le& -\E[U^2] \log \E[U^2] + \log \E[U^2] = -\big( \E[U^2] - 1 \big) \log \E[U^2] \nonumber \\
&=& -\var(U) \log(1 + \var(U)), \label{ineq_strongupperboundNSNSproof1}
\end{eqnarray}
since $\E[U^2] = 1 + \var(U)$. The bound \eqref{ineq_stronggibbsNS} states that $-\log(1+\var(U)) \le S_\NS$. When we apply this to \eqref{ineq_strongupperboundNSNSproof1}, we have:
\begin{equation}
\partial_\NS S_\NS \le \var(U) S_\NS.
\end{equation}
The variance lower bound \eqref{ineq_exponentialentropyrelation} ($\var(U) \ge \e^{-S_\NS} - 1$) becomes an \emph{upper} bound when we multiply by the non-positive $S_\NS$. Thus $\var(U) S_\NS \le (\e^{-S_\NS} - 1) S_\NS$. 

For the final non-trivial inequality, we have $S_\NS \le \log p_*$ from \eqref{ineq_stronggibbsNS}, hence $(\e^{-S_\NS} - 1) S_\NS \le (\e^{-S_\NS} - 1) \log p_*$. Similarly, we have $\e^{-S_\NS} - 1 \ge \frac{1}{p_*} - 1$. When we multiply by the non-positive $\log p_*$, we obtain the inequality $(\e^{-S_\NS} - 1) S_\NS \le \left(\frac{1}{p_*} - 1 \right) \log p_*$. 
\end{proof}

This leads to a selective feedback loop. 
If a process is purely environmental, then selective entropy does not change. However, in the presence of even minimal selective effects (such as selective equilibrium), then the strictly negative quantity $(\e^{-S_\NS} - 1) S_\NS$ ``drives'' selective entropy change downward. This forces \emph{some} change across evolutionary processes. Thus in the presence of any selective effects, a system is driven to have even more selection, as measured by more negative $S_\NS$. Nonetheless, environmental effects can effect $S_\NS$ arbitrarily. 

\begin{cor}
Suppose that $\partial_\EC(S_\NS,S_\NS') = 0$ (such as the purely-selective case). 
Then: 
\begin{equation}
    S'_\NS - S_\NS = \partial_\NS S_\NS \le -\var(U) \log(1 + \var(U)) \le \var(U) S_\NS \le (\e^{-S_\NS} - 1) S_\NS \le 0.
\end{equation}
\end{cor}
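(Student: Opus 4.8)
The plan is to read this off as an immediate corollary of two results already established: the functional Price equation (Corollary \ref{cor_functionalprice}) supplies the equality, and the Strong Second Law (Theorem \ref{thm_secondlaw}) supplies the entire chain of inequalities. No new estimate is needed, so the work is purely one of assembly.

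First I would establish the equality. The functional Price equation applied to the functionals $-U\log U$ on $I$ and $-U'\log U'$ on $I'$ gives the decomposition $\Delta(S_\NS,S_\NS') = \partial_\NS S_\NS + \partial_\EC(S_\NS,S_\NS')$, and by definition $\Delta(S_\NS,S_\NS') = S_\NS' - S_\NS$. Under the standing hypothesis $\partial_\EC(S_\NS,S_\NS') = 0$ the environmental term vanishes, leaving $S_\NS' - S_\NS = \partial_\NS S_\NS$ at once. This is the only place the hypothesis is used.

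Next I would invoke Theorem \ref{thm_secondlaw} verbatim. That theorem already proves $\partial_\NS S_\NS \le -\var(U)\log(1+\var(U)) \le \var(U) S_\NS \le (\e^{-S_\NS}-1)S_\NS \le -\big(\tfrac{1}{p_*}-1\big)\log\tfrac{1}{p_*} \le 0$, and the corollary's displayed chain is precisely the truncation of this after the term $(\e^{-S_\NS}-1)S_\NS$. The terminal inequality $(\e^{-S_\NS}-1)S_\NS \le 0$ does not even require the dropped terms: by the Gibbs inequality \eqref{ineq_gibbsNS} we have $S_\NS \le 0$, hence $\e^{-S_\NS}-1 \ge 0$, so the product of a non-negative factor with a non-positive factor is non-positive. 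Concatenating the equality with this chain yields the statement.

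There is essentially no obstacle here, since all the analytic content (the Jensen-inequality bounds and their saturation) lives inside Theorem \ref{thm_secondlaw}; the corollary only specializes it to the case where the Price equation collapses the total change onto the selective change. The one point meriting care is the parenthetical ``(such as the purely-selective case)'': this is an informal motivation rather than an automatic consequence, since for a purely selective $w$ one computes $\partial_\EC(S_\NS,S_\NS') = \E[(-U'\log U' + U\log U)U]$, which vanishes only when the second-generation relative fitness agrees with $U$ in the appropriate $U$-weighted sense. Accordingly I would keep $\partial_\EC(S_\NS,S_\NS')=0$ as the clean hypothesis actually invoked and treat the parenthetical purely as a heuristic example.
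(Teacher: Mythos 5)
Your proof is correct and matches the paper's intended (implicit) argument: the functional Price equation gives the equality $S'_\NS - S_\NS = \partial_\NS S_\NS$ once the environmental term is assumed to vanish, and the displayed chain is precisely a truncation of Theorem \ref{thm_secondlaw}. Your caveat about the parenthetical ``purely-selective case'' is also well taken --- the paper's definition of a purely selective process only forces $\partial_\EC(X,X)=0$ for a single observable on a common space, so for the two-observable quantity $\partial_\EC(S_\NS,S_\NS') = \E[(-U'\log U' + U\log U)U]$ the example is indeed heuristic rather than automatic, and treating $\partial_\EC(S_\NS,S_\NS')=0$ as the operative hypothesis is the right call.
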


\subsection{The Selective Speed Limit} \label{sect_selectivespeedlimit}

The Second Law \eqref{ineq_upperboundNSNS} provides a minimal speed that selection must occur at, driving $S_\NS$ ever more negative. We use a similar technique to prove a speed limit, showing that $S_\NS$ cannot change in an unbounded way. The saturation condition is again given by selective equilibrium. This requires additional moment assumptions. 

%


\begin{thm}[Selective Speed Limits] \label{thm_selectivespeedlimit}
Let $w$ be finite-variance. 
\begin{enumerate}
\item 
Suppose $\E[U^{2+c'}] < \oo$ for some $c' > 0$. 
The basic speed limit holds:
\begin{equation} \label{ineq_speedlimit_basic}
    \partial_\NS S_\NS \ge \log \frac{1}{p_*} + \sup_{c \in (0,c')} \left\{ - \frac{\E[U^2]}{c} \log \frac{\E[U^{2+c}]}{\E[U^2]} \right\} 
\end{equation}
with saturation in the selective-equilibrium case (in which case $\partial_\NS S_\NS = - \left(\frac{1}{p_*} - 1\right) \log \frac{1}{p_*}$). 

\item 
Suppose $\E[|U^{2+c'} \log U|] < \oo$ for some $c'>0$.
If there exists $c_* \in (0,c')$ satisfying the functional equation
\begin{equation} \label{eqn_speedlimit_functionalidentity}
1 = \frac{\E[U^{1+c_*}]^{c_*}}{\E[U^2]^{c_*-1} \, \E[U^{2+c_*}]},
\end{equation}
then the continuum speed limit at $c_*$ holds:
\begin{equation} \label{ineq_speedlimit_continuum}
    \partial_\NS S_\NS \ge 
    \frac{1}{p_*}  - \frac{\E[U^2]}{c_*} \log \frac{\E[U^{2+c_*}]}{\E[U^2]} 
    = \log \frac{1}{p_*} - \frac{\E[U^2]}{c_*} \log \frac{\E[U^{1+c_*}]^{c_*}}{\E[U^2]^{c_*-1} \E[U^2]},
\end{equation}
with saturation in the selective-equilibrium case.

\item 
Suppose $\E[|U^{2+c' } \log U|] < \oo$ for some $c' > 0$. 
The infinitary speed limit holds:
\begin{equation} \label{ineq_speedlimit_infinitary}
    \partial_\NS S_\NS \ge \log \frac{1}{p_*} - \E[U^2 \log U].
\end{equation}
with saturation in the selective-equilibrium case. 
\end{enumerate}
\end{thm}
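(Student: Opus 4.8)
The plan is to reduce all three speed limits to a single lower bound on the quantity $-\E[U^2\log U]$, then extract that bound from Jensen's inequality in a tilted probability measure. Throughout I would use the identity
\[
\partial_\NS S_\NS = \cov(-U\log U, U) = \E[-U^2\log U] - S_\NS,
\]
established in the proof of Theorem \ref{thm_secondlaw}, together with the upper Gibbs bound $S_\NS \le \log p_*$ from \eqref{ineq_stronggibbsNS}, which rearranges to $-S_\NS \ge \log\tfrac{1}{p_*}$. Hence $\partial_\NS S_\NS \ge \log\tfrac{1}{p_*} + \big(-\E[U^2\log U]\big)$, and the whole theorem amounts to lower-bounding $-\E[U^2\log U]$ in three increasingly refined ways. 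The infinitary limit (3) then needs no further work: keeping $-\E[U^2\log U]$ intact yields \eqref{ineq_speedlimit_infinitary} immediately, with the hypothesis $\E[|U^{2+c'}\log U|]<\oo$ serving only to guarantee that $\E[U^2\log U]$ is finite.

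For (1) I would introduce the tilted probability expectation $\hat\E[f] := \E[U^2 f]/\E[U^2]$, which is a genuine probability measure since $\E[U^2] = 1+\var(U) \in (0,\oo)$. Under it, $-\E[U^2\log U] = -\E[U^2]\,\hat\E[\log U]$ and $\hat\E[U^c] = \E[U^{2+c}]/\E[U^2]$. For each $c \in (0,c')$ I would write $\log U = \tfrac1c\log U^c$ and apply Jensen to the concave function $\log$, obtaining $\hat\E[\log U] \le \tfrac1c\log\hat\E[U^c] = \tfrac1c\log\frac{\E[U^{2+c}]}{\E[U^2]}$, hence the per-$c$ bound $-\E[U^2\log U] \ge -\frac{\E[U^2]}{c}\log\frac{\E[U^{2+c}]}{\E[U^2]}$. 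Taking the supremum over $c$ and adding $\log\tfrac{1}{p_*}$ gives \eqref{ineq_speedlimit_basic}. Finiteness of $\E[U^{2+c}]$ for intermediate $c\in(0,c')$ follows from the log-convexity (Lyapunov interpolation) of $s\mapsto\E[U^s]$ between $s=2$ and $s=2+c'$, which I would record as a preliminary. As a consistency check, letting $c\to0^+$ makes the supremand converge to $-\E[U^2\log U]$, so (1) is at least as strong as (3).

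Statement (2) is then the per-$c$ bound of (1) specialized to $c=c_*$, combined with an algebraic rewriting. Upon taking logarithms, the functional equation \eqref{eqn_speedlimit_functionalidentity} is exactly equivalent to $\frac{\E[U^{2+c_*}]}{\E[U^2]} = \frac{\E[U^{1+c_*}]^{c_*}}{\E[U^2]^{c_*}}$, which converts the first expression of \eqref{ineq_speedlimit_continuum} into the second. For the saturation claims in all three parts, I would observe that selective equilibrium forces $U$ to be constant, equal to $1/p_*$, on $\{U>0\}$, so $\hat\E$ concentrates on the single atom $1/p_*$; Jensen is then an equality for every $c$, the Gibbs bound $S_\NS=\log p_*$ is an equality, and a direct substitution recovers $\partial_\NS S_\NS = -\big(\tfrac1{p_*}-1\big)\log\tfrac1{p_*}$, matching each right-hand side. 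The main obstacle I anticipate is not any single inequality — each is a one-line Jensen application — but the bookkeeping around well-definedness: verifying $\hat\E[|\log U|]<\oo$ so that Jensen is legitimate (bounding $\log^+U \le U^c/c$ and using that $x^2|\log x|$ is bounded on $(0,1]$), confirming the interpolation of intermediate moments, and checking that the supremum in (1) is compatible with the $c\to0^+$ limit reproducing (3).
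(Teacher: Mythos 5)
Your proposal is correct and takes essentially the same route as the paper: the same decomposition $\partial_\NS S_\NS = \E[-U^2\log U]+\E[U\log U]$, the same Jensen's inequality under the $U^2$-tilted probability measure for the first term, and the Gibbs bound (itself the same Jensen application) for the second. The only notable difference is in part (3), where you obtain the infinitary limit directly from the decomposition without bounding the first term, whereas the paper recovers it as the $c\to 0^+$ limit of the basic bound via L'H\^opital's rule; your version is a mild simplification of the same argument.
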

\begin{proof}

Proof of (1). Consider arbitrary $c \in C$. We apply Jensen's inequality to the convex functionals $-\log x$ and $x \log x$, and compute:
\begin{eqnarray}
    \partial_\NS S_\NS = \cov(-U \log U,U) &=& \frac{1}{c} \E[-U^2 \log U^c] + \E[U \log U] \nonumber \\
    &=& \frac{p_* \E_*[U^2]}{c} \frac{1}{\E_*[U^2]} \E_*[U^2 (-\log U^c)] + p_* \E_*[U \log U] \nonumber \\
    &\ge& - \frac{p_* \E_*[U^2]}{c} \log \frac{\E_*[U^{2+c}]}{\E_*[U^2]} + p_* \E_*[U] \log \E_*[U] \nonumber \\
    &=& - \frac{\E[U^2]}{c} \log \frac{\E[U^{2+c}]}{\E[U^2]} - \log p_* 
\end{eqnarray}
since $\E[UX] = p_* \E_*[UX]$ for any observable $X$, in particular, $\E_*[U] = \frac{1}{p_*}$. Saturation holds when $U$ and $U^c$ are constant $\tilde \mu$-almost surely, which is equivalent to $U$ being constant $\mu_*$-almost surely, i.e., the case of selective equilibrium. Taking suprema over all $c$ yields the first identity of \eqref{ineq_speedlimit_basic}. Setting $c = \E[U^2] = 1 + \var(U)$ yields the second identity of \eqref{ineq_speedlimit_basic}.

Proof of (2). To optimize \eqref{ineq_speedlimit_basic}, we find stationary points $c_*$ by differentiating the argument of the supremum in $c$ and solving for zero:
\begin{equation}
    0 = \frac{\d}{\d c} \left( - \frac{\E[U^2]}{c} \log \frac{\E[U^{2+c}]}{\E[U^2]} \right) = \frac{\E[U^2]}{c^2} \log \frac{\E[U^{2+c}]}{\E[U^2]} - \frac{\E[U^2]}{c} \log \frac{\E[U^{1+c}]}{\E[U^2]},
\end{equation}
hence
\begin{equation}
    0 = \log \left( \frac{\E[U^{1+c}]^c}{\E[U^2]^{c-1} \, \E[U^{2+c}]} \right)
\end{equation}
which is equivalent to \eqref{eqn_speedlimit_functionalidentity}. For optimal $c_*$, apply \eqref{eqn_speedlimit_functionalidentity} to \eqref{ineq_speedlimit_basic}, which yields \eqref{ineq_speedlimit_continuum}.



Proof of (3). When we take the limsup as $c \to 0$ in \eqref{ineq_speedlimit_basic}, the expression is indeterminate. Note that $\frac{\d}{\d c} \log F(c) = \frac{1}{F(c)} \frac{\d F}{\d c}(c)$ by the chain rule, and $\frac{\d}{\d c} \E[U^{2+c}] = \frac{\d}{\d c} \E[\e^{(2+c) \log U}] = \E[U^{2+c} \log U]$ by bringing the limit into the expectation \cite{folland2013real}. Thus by L'H\^opital's rule: 
\begin{eqnarray}
    \partial_\NS S_\NS &\ge& -\log p_* + \limsup_{c\to 0} \left\{ - \frac{\E[U^2]}{c} \log \frac{\E[U^{2+c}]}{\E[U^2]}  \right\} \nonumber \\
    &=& - \log p_* + \limsup_{c \to 0} \left\{ -\E[U^2] \frac{\E[U^2]}{\E[U^{2+c}]} \frac{\E[U^{2+c} \log U]}{\E[U^2]} \right\} \nonumber \\
    &=& - \log p_* - \E[U^2 \log U].
\end{eqnarray}
\end{proof}

Combining the Second Law and the Selective Speed Limit, we have the following.

\begin{cor}
Let $w$ be finite-variance with $\E[|U^{2+c'} \log U|] < \oo$ for some $c' > 0$. 
Then:
\begin{equation}
    \log \frac{1}{p_*} - \E[U^2 \log U] \le \partial_\NS S_\NS \le \log \frac{1}{p_*} - \frac{1}{p_*} \log \frac{1}{p_*},
\end{equation}
with saturation when $w$ is in selective equilibrium. In particular, selective equilibrium is equivalent to the identity
\begin{equation}
    -S_\NS - \E[U^2 \log U] = \partial_\NS S_\NS = -S_\NS + \E[U^2] S_\NS = \var(U) S_\NS.
\end{equation}
\end{cor}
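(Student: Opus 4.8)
The plan is to obtain the two bounds by directly invoking the preceding results and then to analyze the saturation structure to establish the equivalence. For the lower bound I would simply cite the infinitary speed limit \eqref{ineq_speedlimit_infinitary} of Theorem \ref{thm_selectivespeedlimit}(3), which reads verbatim $\partial_\NS S_\NS \ge \log\frac{1}{p_*} - \E[U^2\log U]$. For the upper bound I would take the last non-trivial term in the Strong Second Law chain \eqref{ineq_upperboundNSNS}, namely $\partial_\NS S_\NS \le -\left(\frac{1}{p_*}-1\right)\log\frac{1}{p_*}$, and distribute the product to rewrite the right-hand side as $\log\frac{1}{p_*} - \frac{1}{p_*}\log\frac{1}{p_*}$. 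Both cited bounds saturate precisely in selective equilibrium, where one checks that $U \in \{0,1/p_*\}$ gives $\E[U^2\log U] = \frac{1}{p_*}\log\frac{1}{p_*}$, so that the lower and upper bounds coincide at $-\left(\frac{1}{p_*}-1\right)\log\frac{1}{p_*}$; this proves the stated saturation of the sandwiched quantity.

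For the identity chain I would first record two equalities valid for every finite-entropy process. Expanding the covariance and using $\E[U]=1$ together with $S_\NS = \E[-U\log U]$ yields
\begin{equation}
\partial_\NS S_\NS = \cov(-U\log U, U) = \E[-U^2\log U] - S_\NS = -S_\NS - \E[U^2\log U],
\end{equation}
which is the leftmost equality and holds unconditionally. Since $\E[U^2] = 1 + \var(U)$, we also have $-S_\NS + \E[U^2]S_\NS = (\E[U^2]-1)S_\NS = \var(U)S_\NS$ unconditionally, the rightmost equality. Hence the four-term chain collapses to the single assertion $\partial_\NS S_\NS = \var(U)S_\NS$, and it remains to show that this one equation is equivalent to selective equilibrium.

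The forward implication is immediate from Theorem \ref{thm_secondlaw}: in selective equilibrium every inequality of \eqref{ineq_upperboundNSNS} except the last is saturated, so in particular $\partial_\NS S_\NS = \var(U)S_\NS$. For the converse, suppose $\partial_\NS S_\NS = \var(U)S_\NS$. If $\var(U)=0$ then $U=1$ $\mu$-a.s., the purely environmental case, a degenerate selective equilibrium with $p_*=1$. If $\var(U)>0$, then the equality forces each inequality in $\partial_\NS S_\NS \le -\var(U)\log(1+\var(U)) \le \var(U)S_\NS$ to be an equality; in particular the Jensen step $\E[-U^2\log U] = \E[U(-U\log U)] \le -\E[U^2]\log\E[U^2]$ must saturate. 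As in the proof of Theorem \ref{thm_secondlaw}, equality for the strictly concave $x\mapsto -x\log x$ under the probability weight $U$ holds exactly when $U$ is constant $U\mu$-almost surely, and since $U\mu$ and $\mu_*$ are mutually absolutely continuous this is precisely selective equilibrium.

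I expect the main obstacle to be the converse of the equivalence: one must verify that saturation of the \emph{composite} bound $\partial_\NS S_\NS \le \var(U)S_\NS$ propagates back to saturation of the interior Jensen inequality, and separately handle the degenerate branch $\var(U)=0$, where $\var(U)S_\NS=0$ carries no information by itself and selective equilibrium must instead be extracted directly from $U=1$ a.s.
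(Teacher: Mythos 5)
Your proposal is correct and follows essentially the same route as the paper: the lower bound is the infinitary speed limit \eqref{ineq_speedlimit_infinitary}, the upper bound is the final non-trivial term of the Strong Second Law chain \eqref{ineq_upperboundNSNS}, and the equivalence with selective equilibrium rests on the same saturation conditions (the paper cites the Weak Zeroth Law and Strong Gibbs inequality, which encode exactly the Jensen saturations you invoke). Your additional observation that the outer equalities of the four-term chain hold unconditionally, so the claim reduces to $\partial_\NS S_\NS = \var(U) S_\NS$, is a useful clarification that the paper's terse proof leaves implicit.
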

\begin{proof}
By the Weak Zeroth Law (Proposition \ref{pro_zerothlaw}) and Strong Gibbs' inequality (Theorem \ref{thm_stronggibbsNS}), 
selective equilibrium is equivalent to $\E[U^2] = 1 + \var(U) = \frac{1}{p_*}$ and $S_\NS = - \log \frac{1}{p_*}$. 
\end{proof}

\subsection{Selective Acceleration of Selective Entropy} \label{sect_selectiveacceleration}

We define the selective acceleration of selective entropy as the selective change of the selective change:
\begin{equation}
\partial_\NS^2 S_\NS := \partial_\NS \partial_\NS S_\NS := \cov(-(U-1) U \log U, U) = \E[-(U-1)^2 U \log U]. 
\end{equation}

The next result gives an upper bound on the selective acceleration of selective entropy, amplifying the selective feedback loop: in the presence of non-trivial selective effects, the selective velocity in the second generation is more negative than the selective velocity in the first generation. 



\begin{thm}[Strong Bounds for Selective Acceleration] \label{thm_upperboundforselectiveacceleration}
Let $w$ be a process for which $\E[|(U-1)^2 U \log U|] < \oo$. Then:
\begin{equation} \label{ineq_selectiveacceleration_upperbound}
    \partial_\NS^2 S_\NS \le - \frac{1}{2} \var(U)^2 \log \var(U)^2 \le 0,
\end{equation}
and
\begin{equation} \label{ineq_selectiveacceleration_lowerbound}
    \partial_\NS^2 S_\NS \ge - \frac{1}{2} \var(U)^2 \log \var(U)^2 - \var(U)^2 \log \frac{\var(U^2) + \var(U)^2}{\var(U)^3} 
    = \var(U)^2 \log \frac{\var(U^2) + \var(U)^2}{\var(U)^4}
\end{equation}
with saturation of the first inequalities of \eqref{ineq_selectiveacceleration_upperbound} and \eqref{ineq_selectiveacceleration_lowerbound} exactly in the purely environmental case (in which case $\partial_\NS^2 S_\NS = 0$), or in the selective-equilibrium case with $p_* = 1/2$ (in which case $\partial_\NS^2 S_\NS = -2 \log 4 \approx -1.204$). In all other cases, $\partial_\NS^2 S_\NS < - \frac{1}{2} \var(U)^2 \log \var(U)^2 < 0$.
\end{thm}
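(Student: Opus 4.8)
The plan is to treat $\partial_\NS^2 S_\NS = \E[-(U-1)^2 U\log U]$ as a weighted average and apply Jensen's inequality to an appropriate convex/concave function, exactly in the spirit of the Weak First Law (Proposition \ref{pro_weakfirstlaw}) and of the double-Jensen argument in Proposition \ref{pro_stronglowerboundforECvariance}. The two bounds will come from two \emph{different} non-negative weightings of $\mu$. Throughout I abbreviate $v := \var(U)$ and use $\E[U]=1$, $\E[U^2]=1+v$.

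For the upper bound I would factor the integrand as $(U-1)^2\cdot(-U\log U)$ and normalize the weight $(U-1)^2$: since $\E[(U-1)^2]=v$, the functional $\E_\sharp[X]:=\tfrac1v\E[(U-1)^2 X]$ is a probability expectation and $\partial_\NS^2 S_\NS = v\,\E_\sharp[-U\log U]$. As $x\mapsto -x\log x$ is concave, Jensen gives $\E_\sharp[-U\log U]\le -\E_\sharp[U]\log\E_\sharp[U]$, hence $\partial_\NS^2 S_\NS \le -v\,\E_\sharp[U]\log\E_\sharp[U]$. The decisive algebraic step is to evaluate $\E_\sharp[U]=\tfrac1v\E[(U-1)^2 U]$; expanding and using $\cov(U^2,U)=\partial_\NS\var(U)$ from \eqref{eqn_fitnesschangeNS} yields the identity $\E[(U-1)^2 U]=\partial_\NS\var(U)-v$, so by the Strong First Law (Theorem \ref{thm_firstlaw}) we get $\E[(U-1)^2 U]\ge v^2$ and therefore $\E_\sharp[U]\ge v$, with equality exactly in selective equilibrium. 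Substituting into the \emph{decreasing} branch of $a\mapsto -a\log a$ then produces $\partial_\NS^2 S_\NS \le -v\cdot v\log v = -\tfrac12 v^2\log v^2$.

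For the lower bound I would instead re-factor as $U(U-1)^2\cdot(-\log U)$ and normalize the weight $U(U-1)^2$, whose total mass is $M:=\E[U(U-1)^2]=\partial_\NS\var(U)-v\ge v^2$; set $\E_\flat[X]:=\tfrac1M\E[U(U-1)^2 X]$. Now $-\log x$ is convex, so Jensen runs the other way, $\E_\flat[-\log U]\ge -\log\E_\flat[U]$, giving
\[
\partial_\NS^2 S_\NS = M\,\E_\flat[-\log U]\ge -M\log\E_\flat[U].
\]
Here $\E_\flat[U]=\tfrac1M\E[U^2(U-1)^2]$, and expanding the fourth-moment term $\E[U^2(U-1)^2]$ and re-expressing it through $\var(U^2)$ and $v$ (using $\var(U^2)+\var(U)^2=\E[(U^2-1)^2]$) is what introduces the $\var(U^2)$-dependence of the stated bound, while the First-Law estimate $M\ge v^2$ supplies the $\var(U)^2$ prefactor. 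The Jensen step here is saturated when $U$ is constant under $U(U-1)^2\mu$; since that weight annihilates both the atom $U=0$ and the atom $U=1$, this condition is met precisely in selective equilibrium.

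The clean equality case is the purely environmental one, where $v=0$, both weights $(U-1)^2$ and $U(U-1)^2$ vanish, and all three quantities are $0$. The step I expect to be the genuine obstacle is the monotonicity passage in the upper bound: $a\mapsto -a\log a$ is \emph{increasing} on $(0,1/\e)$ and only decreasing on $(1/\e,\infty)$, so converting $-\E_\sharp[U]\log\E_\sharp[U]$ into $-v\log v$ via $\E_\sharp[U]\ge v$ is legitimate only once $v\ge 1/\e$ (equivalently $p_*$ bounded away from $1$). In the small-variance regime the leading behaviour of $\partial_\NS^2 S_\NS$ is governed by the third central moment $\E[(U-1)^3]$ rather than by $v^2\log v$, so controlling that regime — and reconciling it with the asymmetry that the two Jensen weightings saturate on \emph{different} conditions — is where the argument needs the most care, and is the step I would verify before trusting the stated constant and saturation set.
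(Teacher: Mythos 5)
Your route is the paper's route almost verbatim: the upper bound via Jensen for the concave $-x\log x$ under the normalized weight $(U-1)^2/\var(U)$, followed by the estimate $\E[(U-1)^2U]\ge\var(U)^2$; the lower bound via Jensen for the convex $-\log x$ under the weight $U(U-1)^2$. But the obstacle you flag at the end is not a technicality to be checked afterward --- it is a genuine gap, and it is present in the paper's own proof. The paper asserts that ``for each $y'$, the function $x\mapsto -x\log y'$ is decreasing'' and uses this to pass from $-\E[(U-1)^2U]\log\var(U)$ to $-\var(U)^2\log\var(U)$; that monotonicity holds only when $\log\var(U)>0$, i.e.\ $\var(U)>1$ (your one-step version needs $\var(U)\ge 1/\e$). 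In the small-variance regime the stated inequality \eqref{ineq_selectiveacceleration_upperbound} actually fails: for $U=1+\epsilon Z$ with $\E[Z]=0$, $\E[Z^2]=1$, $\E[Z^3]<0$, one has $\partial_\NS^2 S_\NS=-\epsilon^3\E[Z^3]+O(\epsilon^4)>0$ of order $\epsilon^3$, while $-\tfrac12\var(U)^2\log\var(U)^2=O(\epsilon^4|\log\epsilon|)$; moreover $-\tfrac12\var(U)^2\log\var(U)^2>0$ whenever $0<\var(U)<1$, so even the trailing ``$\le 0$'' in \eqref{ineq_selectiveacceleration_upperbound} is false there. The theorem needs an extra hypothesis of the form $\var(U)\ge 1$, and your instinct that the small-variance regime is governed by the third central moment rather than by $\var(U)^2\log\var(U)$ is exactly right.

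Two further points. First, to land on the stated lower bound you (like the paper) would need $\E[(U-1)^2U^2]=\var(U^2)+\var(U)^2$; but $\var(U^2)+\var(U)^2=\E[(U^2-1)^2]=\E[(U-1)^2(U+1)^2]$, which is not $\E[(U-1)^2U^2]=\E[U^4-2U^3+U^2]$ --- the paper's computation silently replaces $(U-1)^2U^2$ by $(U^2-1)^2$, so the quantity appearing in \eqref{ineq_selectiveacceleration_lowerbound} is not the one the Jensen step produces. Second, your suspicion about the asymmetric saturation sets is correct: the weight $U(U-1)^2\mu$ annihilates both atoms $U=0$ and $U=1$, so the lower-bound Jensen step is tight in selective equilibrium, whereas $(U-1)^2\mu$ retains the atom at $U=0$, so the upper-bound step is tight only when $U=1\pm\sqrt{\var(U)}$ a.s.; intersecting that with selective equilibrium forces $p_*=1/2$, which is where the paper's saturation claim comes from (though even there the claimed value does not match a direct evaluation, which gives $\partial_\NS^2 S_\NS=-\log 2$ against a right-hand side of $0$). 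In short: your proposal reproduces the paper's argument, and the step you single out as needing care is precisely where both the proof and the statement break.
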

\begin{proof}
Observe that $\E[(U-1)^2] = \var(U)$. We use Jensen's inequality for the upper bound: 
\begin{eqnarray}
\partial_\NS^2 S_\NS &=& \E[(U-1)^2 \big( -U \log U \big) ] = \var(U) \frac{1}{\var(U)} \E[(U-1)^2 \big( -U \log U \big) ] \nonumber \\
&\le& - \E[(U-1)^2 U] \log \frac{\E[(U-1)^2 U]}{\var(U)}, \label{proof_selectiveaccelerationofentropy1}
\end{eqnarray}
with saturation when $(U-1)^2 = \var(U)$ almost surely, i.e., when $U = 1 \pm \sqrt{\var(U)}$ almost surely. We again use Jensen's inequality to compute
\begin{equation} \label{proof_selectiveaccelerationofentropy2}
\E[U (U-1)^2] \ge (\E[U^2] - 1)^2 = \var(U)^2,
\end{equation}
with saturation when $U$ is constant $U \mu$-almost surely, i.e., the selective-equilibrium case. 

For each $x$, the function $y \mapsto -x \log y$ is decreasing. Then for each $y'$, the function $x \mapsto -x \log y'$ is also decreasing. Applying \eqref{proof_selectiveaccelerationofentropy2} to \eqref{proof_selectiveaccelerationofentropy1} twice, we have:
\begin{eqnarray}
- \E[(U-1)^2 U] \log \frac{\E[(U-1)^2 U]}{\var(U)} &\le& - \E[(U-1)^2 U] \log \frac{\var(U)^2}{\var(U)} \le - \var(U)^2 \log \frac{\var(U)^2}{\var(U)} \nonumber \\
&=& - \var(U)^2 \log \var(U) = - \frac{1}{2} \var(U)^2 \log \var(U)^2.
\end{eqnarray}

For the lower bound, we compute via Jensen's inequality:
\begin{eqnarray}
    \partial_\NS^2 S_\NS &=& \cov(-(U-1) U \log U, U) = \E[(U-1)^2(-U \log U)] \nonumber \\
    &\ge& - \E[(U-1)^2 U] \log \frac{\E[(U-1)^2 U^2]}{\E[(U-1)^2 U]} \nonumber \\
    &=& \E[(U-1)^2 U] \log \frac{\E[(U-1)^2 U]}{\var(U^2) + \var(U)^2},
\end{eqnarray}
where we use the identity
\begin{eqnarray}
\E[(U-1)^2 U^2] &=& \E[U^4 - 2U^2 + 1] 
= \var(U^2) + \E[U^2]^2 - 2 \E[U^2] + 1 \nonumber \\
&=& \var(U^2) + (\var(U)+1)^2 - 2 (1 + \var(U)) + 1 \nonumber \\
&=& \var(U^2) + \var(U)^2.
\end{eqnarray}
Combine \eqref{proof_selectiveaccelerationofentropy2} and the fact that $(x,y) \mapsto x \log y$ is increasing in both arguments, so
\begin{eqnarray}
    \partial_\NS^2 S_\NS
    &\ge& \var(U)^2 \log \frac{\var(U)^2}{\var(U^2) + \var(U)^2} 
    = - \var(U)^2 \log \frac{\var(U^2) + \var(U)^2}{\var(U)^2} \nonumber \\
    &=& - \frac{1}{2} \var(U)^2 \log \var(U)^2 - \var(U)^2 \log \frac{\var(U^2) + \var(U)^2}{\var(U)^3}.
\end{eqnarray}

For the saturation conditions, suppose that $U = 1 \pm \sqrt{\var(U)}$ a.s. and $w$ is in selective equilibrium, so that $1/p_* = 1 + \sqrt{\var(U)} = 1 + \sqrt{1/p_* - 1}$. Solving for $p_*$, we have $1/p_* - 1 = \sqrt{1/p_* - 1}$, so $1/p_* - 1 = 0$ or $1/p_* - 1 = 1$. In the first case, $p_* = 1$ so $w$ is purely environmental. In the second case, $p_* = 1/2$.
\end{proof}





\section{Environmental Change of Selective Entropy} \label{sect_environmentalchangeofselectiveentropy}


We analyze the environmental change of selective entropy. Recall the intergenerational relative fitness and its average from Section \ref{sect_environmentalchangevariance}:
\begin{equation}
R(i,i') := \frac{U'(i')}{U(i)} \qquad \mathrm{and} \qquad \bar R_w(i) := \langle R(\cdot|i) \rangle_w := \frac{\langle U' \rangle_w(i)}{U(i)}.
\end{equation}
Using the definition, we have the identity:
\begin{equation}
\E[U^2 \bar R_w] = \E[U \langle U' \rangle_w] = \E'[U'] = 1,
\end{equation}
since $U'$ is the relative fitness for $w'$.

Observe the elementary pointwise identity for any observables $X$ and $Y$:
\begin{equation} \label{eqn_decomposition_entropydiffs_XY}
    -Y \log Y + X \log X = - X \frac{Y}{X} \log \frac{Y}{X} - \left( \frac{Y}{X} - 1 \right) X \log X
\end{equation}
In particular, when $X = U$ and $Y = U'$, we have
\begin{equation} \label{eqn_decomposition_entropydiffs_relativefitness}
    -U' \log U' + U \log U = - U R \log R - (R-1) U \log U.
\end{equation}

When we average \eqref{eqn_decomposition_entropydiffs_relativefitness}, this implies for environmental change:
\begin{eqnarray} 
\partial_\EC(S_\NS,S_\NS') &=& \E\!\left[ \left( \langle -U' \log U'\rangle_w + U \log U \right) U \right] \nonumber \\
&=& \E\!\left[ \langle - U R \log R - (R-1) U \log U \rangle_w U \right] \nonumber \\
&=& \E[\langle -R \log R \rangle_w U ] - \E[(\bar R_w - 1) U^2 \log U].
\label{eqn_adaptiveenvchange}
\end{eqnarray}



We now state and prove the upper bound. We use a double Jensen's inequality approach, first leveraging concavity of the function $-R \log R$ and the measure $\langle \cdot \rangle_w(i)$ for each $i$, then concavity of $\bar R_w \log \bar R_w$ against a certain weighted measure. Recall stationarity conditions from Section \ref{sect_environmentalchangevariance}: the coupled process $(w,w')$ is strongly stationary if $R = 1$ a.s., weakly stationary if $\bar R_w = 1$ a.s., and locally homogeneous if $R$ is constant a.s.. 


\begin{thm}[Strong Upper Bound for $\partial_\EC(S_\NS,S_\NS')$] \label{thm_strongupperboundECNS}
Let $w$ and $w'$ be composable processes. Then the environmental change of selective entropy satisfies the upper bound:
\begin{equation}
\partial_\EC(S_\NS, S_\NS') \le \log \E[U^2] + \log \E[U^3]
\end{equation}
This inequality is saturated exactly in the case that $(w,w')$ is strongly stationary (in which case $\partial_\EC(S_\NS,S_\NS') = 0$).


\end{thm}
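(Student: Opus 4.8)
The plan is to start from the pointwise decomposition recorded in \eqref{eqn_adaptiveenvchange}, which (weighting the local average by $U^2$) reads
\[
\partial_\EC(S_\NS,S_\NS') = \E\!\left[ U^2 \langle -R\log R\rangle_w \right] - \E\!\left[(\bar R_w-1)U^2\log U\right],
\]
and to control the two pieces by separate applications of Jensen's inequality, as the hint suggests. The structural fact I would lean on throughout is the constraint $\E[U^2\bar R_w] = \E[U\langle U'\rangle_w] = \E'[U'] = 1$, which fixes the first moment of $\bar R_w$ against the $U^2$-weighted measure. As a cross-check it is worth noting that the tower property collapses the whole quantity to $\partial_\EC(S_\NS,S_\NS') = S_\NS' + \E[U^2\log U]$, so the first Jensen step below is really just the bound $S_\NS' \le -\E[U\langle U'\rangle_w\log\langle U'\rangle_w]$ in disguise.

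First I would apply Jensen \emph{inside}: for each fixed $i$, concavity of $r\mapsto -r\log r$ together with the fact that $\langle\cdot\rangle_w(i)$ is a probability expectation gives $\langle -R\log R\rangle_w(i) \le -\bar R_w(i)\log\bar R_w(i)$. Multiplying by $U^2$ and integrating yields $\partial_\EC(S_\NS,S_\NS') \le -\E[U^2\bar R_w\log\bar R_w] - \E[(\bar R_w-1)U^2\log U]$. For the first term I would then apply Jensen \emph{outside}, against the probability measure $\hat\E[\cdot] := \E[U^2\,\cdot\,]/\E[U^2]$: convexity of $r\mapsto r\log r$ gives $\hat\E[\bar R_w\log\bar R_w]\ge \hat\E[\bar R_w]\log\hat\E[\bar R_w]$, and since $\hat\E[\bar R_w] = \E[U^2\bar R_w]/\E[U^2] = 1/\E[U^2]$, this produces the clean bound
\[
-\E[U^2\bar R_w\log\bar R_w] = -\E[U^2]\,\hat\E[\bar R_w\log\bar R_w] \le \log\E[U^2].
\]

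The remaining task is to absorb $-\E[(\bar R_w-1)U^2\log U]$ into $\log\E[U^3]$, and I expect this to be the main obstacle. Writing $\bar R_w U^2 = U\langle U'\rangle_w$ and peeling off $\E[U^2\log U]$, this is precisely the term carrying the $\log U$ factor; the naive route of discarding the cross term and bounding $\E[U^2\log U]$ in isolation is too lossy, since $\E[U^2\log U]$ can exceed $\log\E[U^2]+\log\E[U^3]$ already for highly concentrated fitness. The bound therefore has to come from a single Jensen step against a measure weighted by $U^3$ (so that $\E[U^3]$ emerges as the normalizing mass), applied jointly to the $\bar R_w$ and $\log U$ factors and again evaluated via $\E[U^2\bar R_w]=1$. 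Checking that this third weighting is genuinely compatible with the constraint — rather than re-introducing moments of $U'$ or $U^4$ — is the delicate point, and I would stress-test it against the selective-equilibrium case $U\in\{0,1/p_*\}$, where the target inequality is tightest, before trusting the constant.

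Finally, I would read off the saturation condition by tracking equality in the Jensen steps. The inner step is an equality iff $R(i,\cdot)$ is $w_i$-a.s.\ constant (local homogeneity), and the $U^2$-weighted step iff $\bar R_w$ is constant on the support of $U$; by Lemma \ref{lem_stationary} these conditions, combined with $\E[U^2\bar R_w]=1$, force both $w$ and $w'$ to be purely environmental, i.e.\ $(w,w')$ strongly stationary. In that regime $U=U'=1$, so both sides vanish, consistent with the stated $\partial_\EC(S_\NS,S_\NS')=0$.
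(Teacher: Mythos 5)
Your treatment of the first term is exactly the paper's: Jensen inside against $\langle\cdot\rangle_w(i)$ using concavity of $-r\log r$, then Jensen outside against the $U^2$-weighted probability measure using $\E[U^2\bar R_w]=1$, yielding $\log\E[U^2]$. That half is complete and correct, including the saturation bookkeeping (local homogeneity plus weak stationarity).

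The second term, however, is a genuine gap: you correctly identify $-\E[(\bar R_w-1)U^2\log U]$ as the obstacle, but you never actually bound it — you only gesture at "a single Jensen step against a measure weighted by $U^3$, applied jointly to the $\bar R_w$ and $\log U$ factors," without specifying the convex function, the normalization, or why $\log\E[U^3]$ emerges. The paper's route is concrete: it rewrites
\begin{equation*}
-\E[(\bar R_w - 1) U^2 \log U] \;=\; \tfrac{1}{2}\,\E\!\left[\bar R_w\left(-U^2\log U^2\right)\right] + \E\!\left[U^2\log U\right],
\end{equation*}
kills the first piece by Jensen applied to the concave $-x\log x$ at $x=U^2$ under the $\bar R_w$-weighting (using $\E[\bar R_w U^2]=1$ so the bound is $-1\cdot\log 1=0$), and bounds the second piece by $\log\E[U^3]$ via a further Jensen step. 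Your proposal contains neither this decomposition nor a substitute for it, so as written it proves only $\partial_\EC(S_\NS,S_\NS')\le\log\E[U^2]-\E[(\bar R_w-1)U^2\log U]$. I will add that your skepticism about the remaining piece is well-founded rather than excessive: in selective equilibrium ($U\in\{0,1/p_*\}$) one computes $\E[U^2\log U]=\tfrac{1}{p_*}\log\tfrac{1}{p_*}$ while $\log\E[U^3]=2\log\tfrac{1}{p_*}$, so the inequality $\E[U^2\log U]\le\log\E[U^3]$ fails for $p_*<1/2$ — the stress test you propose would indeed expose trouble with the constant. But flagging a difficulty is not the same as resolving it; to count as a proof, the proposal must either supply the paper's splitting (and verify each Jensen step) or produce a working alternative, and it currently does neither.
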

\begin{proof}
We start by analyzing the first term of \eqref{eqn_adaptiveenvchange}. First we apply Jensen's inequality to $-R \log R$ using the measure $\langle \cdot \rangle_w$, then we apply Jensen's inequality to $-\bar R_w \log R$ using the measure $\frac{1}{\E[U^2]} \E[U^2 \cdot]$. Note $\E[U^2 \bar R_w] = \E'[U'] = 1$. We compute:
\begin{eqnarray}
\E[U^2 \langle -R \log R \rangle] &\le& \E[U^2 (-\bar R_w \log \bar R_w)] = \E[U^2] \frac{1}{\E[U^2]} \E[U^2 (-\bar R_w \log \bar R_w)] \label{ineq_proofECNS1} \\
&\le& - \E[U^2 \bar R_w] \log \frac{\E[U^2 \bar R_w]}{\E[U^2]} = - \E'[U'] \log \frac{\E'[U']}{\E[U^2]} = \log \E[U^2]., \label{ineq_proofECNS2} 
\end{eqnarray} 

Inequality \eqref{ineq_proofECNS1} is saturated exactly in the case that, for each $i$, $R$ is constant (and equal to $\bar R_w$), meaning locally homogeneous. Inequality \eqref{ineq_proofECNS2} is saturated exactly in the case that $\bar R_w = 1$ a.s., meaning weakly stationary. Both occur exactly in the strongly stationary case.

We split the second term of \eqref{eqn_adaptiveenvchange} into two terms, and apply Jensen's inequality to each:
\begin{eqnarray}
-\E[(\bar R_w - 1) U^2 \log U] &=& \frac{1}{2} \E[\bar R_w (-U^2 \log U^2)] + \E[U^2 \log U] \nonumber \\
&\le& -\frac{1}{2} \E[\bar R_w U^2] \log \E[\bar R_w U^2] + \log \E[U^3] = \log \E[U^3] \label{ineq_proofECNS3}
\end{eqnarray}
since $\E[\bar R_w U^2] = \E[U \langle U' \rangle_w] = \E'[U'] = 1$ and $1 \log 1 = 0$. The first inequality of \eqref{ineq_proofECNS3} is saturated when $U$ is constant $\bar R_w \mu$-a.s., i.e., the strongly stationary case; and the second inequality of \eqref{ineq_proofECNS3} is satisfied when $U^2$ is constant $U \mu$-a.s., i.e., selective equilibrium.  
\end{proof}

An upper bound for the full change $\Delta(S_\NS, S_\NS')$ immediately follows.

\begin{cor}
Let $w$ and $w'$ be composable processes. Then:
\begin{eqnarray}
    \Delta(S_\NS, S_\NS') = S_\NS' - S_\NS &=& \partial_\NS S_\NS + \partial_\EC(S_\NS,S_\NS') \nonumber \\
    &\le& -\var(U) \log(1+\var(U)) + \log E[U^2] + \log \E[U^3] \nonumber \\
    &=& (1-\var(U)) \log(1 + \var(U)) + \log \E[U^3],
\end{eqnarray}
with saturation exactly when $w$ is strongly stationary. 
\end{cor}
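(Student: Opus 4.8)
The plan is to read all three lines of the display directly off results already established, so that the only genuine content lies in the saturation bookkeeping. The first equality requires no new computation: it is the functional Price equation (Corollary \ref{cor_functionalprice}) applied to the selective-entropy functional, exactly the decomposition $\Delta(S_\NS, S_\NS') = \partial_\NS S_\NS + \partial_\EC(S_\NS, S_\NS')$ recorded just before Theorem \ref{thm_secondlaw}.

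For the inequality I would bound the two summands separately and add. The selective term is controlled by the very first link of the Strong Second Law (Theorem \ref{thm_secondlaw}), namely $\partial_\NS S_\NS \le -\var(U)\log(1+\var(U))$; the environmental term is controlled by the Strong Upper Bound (Theorem \ref{thm_strongupperboundECNS}), namely $\partial_\EC(S_\NS, S_\NS') \le \log \E[U^2] + \log \E[U^3]$. Summing these two inequalities produces the middle line verbatim. The final equality is then elementary: since $\E[U^2] = 1 + \var(U)$, we have $\log \E[U^2] = \log(1+\var(U))$, so $-\var(U)\log(1+\var(U)) + \log\E[U^2] = (1-\var(U))\log(1+\var(U))$, while the $\log\E[U^3]$ term is merely carried along.

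The one step that needs genuine care — and the only plausible obstacle — is the saturation claim, since equality in the full chain forces \emph{both} component inequalities to be tight at once. Theorem \ref{thm_secondlaw} is tight exactly in selective equilibrium, whereas Theorem \ref{thm_strongupperboundECNS} is tight exactly when $(w,w')$ is strongly stationary; a priori these are different conditions, and I would need to check they are compatible. Here I would invoke Lemma \ref{lem_stationary}(2): strong stationarity is equivalent to both $w$ and $w'$ being purely environmental, hence $U = 1$ almost surely, which is precisely the $p_* = 1$ instance of selective equilibrium. Thus strong stationarity already subsumes the selective-equilibrium condition, so the two saturation requirements are not in conflict, and their conjunction reduces to strong stationarity alone.

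Finally I would close the equivalence in both directions. Conversely, if $(w,w')$ is strongly stationary then $\var(U) = 0$ and $S_\NS = 0$, making each link tight and all quantities zero; and if equality holds throughout the chain then in particular the environmental link is tight, forcing strong stationarity. Hence equality holds exactly when $(w,w')$ is strongly stationary, completing the proof.
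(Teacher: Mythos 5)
Your proposal is correct and follows essentially the same route as the paper: combine the first link of Theorem \ref{thm_secondlaw} with Theorem \ref{thm_strongupperboundECNS}, simplify via $\log \E[U^2] = \log(1+\var(U))$, and observe that strong stationarity implies purely environmental, hence selective equilibrium, so both saturation conditions are compatible. Your explicit treatment of the converse direction of the saturation claim is slightly more careful than the paper's, but the argument is the same.
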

\begin{proof}
This follows from Theorems \ref{thm_secondlaw} and \ref{thm_strongupperboundECNS}. Observe that $\log E[U^2] = \log(1+ \var(U))$. If $(w,w')$ is strongly stationary, then $w$ is purely environmental hence in selective equilibrium, so the bound on the first term is saturated. 
\end{proof}

\section{Multi-Level Selective Entropy} \label{sect_multilevel_selentropy}

We apply the multi-level Price equation to selective entropy, allowing us to isolate the selective information generated in the second stage of the process, as distinct from the initial selective information. 
Consider composable processes $w : \mu \mapsto \mu'$, $w' : \mu' \mapsto \mu''$, and $w'' : \mu'' \mapsto \mu'''$, with selective entropies $S_\NS = \E[-U \log U]$, $S'_\NS = \E'[-U' \log U']$, and $S''_\NS = \E''[-U'' \log U'']$, respectively.
Write the secondary selective change of selective entropy as $\partial'_\NS S'_\NS := \cov'(-U' \log U', U') = \E'[(-U' \log U') (U'-1)]$. The Strong Second Law of Natural Selection (Theorem \ref{thm_secondlaw}) ensures that $\partial'_\NS S'_\NS \le -\var'(U') \log\!\left(\var'(U')+1\right)$, with saturation in the case that $w'$ is in selective equilibrium. We improve upon this by incorporating multi-level information. 
Define the conditional expectation $\E'_w[Y] := \langle Y \rangle_w U$ and conditional covariance $\cov_w(Y,Y') := \E'_w[Y Y'] - \E'_w[Y] \E'_w[Y']$. 

The multi-level Price equation (Theorem \ref{thm_multilevelprice}) ensures that 
\begin{eqnarray}
    \partial'_\NS S'_\NS &=& \cov\!\left( \E'_w[-U' \log U'], \E'_w[U'] \right) + \E\!\left[ \cov_w(-U' \log U',U') \right] \label{eqn_multilevelprice_selectivenentropy_NS} \\
    \partial'_\EC(S'_\NS, S''_\NS) &=& \E\!\left[ \E'_w[\Delta_{w'}(-U' \log U', -U'' \log U'') U' ] \right] \label{eqn_multilevelprice_selectivenentropy_EC} \\
    \Delta(S'_\NS, S''_\NS) &=& \cov\!\left(\E'_w[-U' \log U'], \E'_w[U'] \right) + \E\!\left[ \cov_w(-U' \log U',U') \right] \nonumber \\
    && ~+~ \E\!\left[ \E'_w[\Delta_{w'}(-U' \log U', -U'' \log U'') U' ] \right]. \label{eqn_multilevelprice_selectivenentropy}
\end{eqnarray}


The following identity allows us to relate variances at different levels. 

\begin{lem}[Multi-Level Variance Identity] \label{lem_multilevelvariance}
Let $w$ and $w'$ be composable processes. Then:
\begin{equation} \label{eqn_multilevelvariance}
    \var'(U') = \var\!\left(U^{(2)}\right) + \E[\var'_w(U')]. 
\end{equation} 
\end{lem}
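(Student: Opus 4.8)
The plan is to prove the multi-level variance identity by recognizing it as a ``law of total variance'' adapted to the variable-size measure setting. The conditional expectation $\E'_w[U'](i) = \langle U' \rangle_w(i) U(i) = U^{(2)}(i)$ is exactly the composed relative fitness, as noted just before Theorem \ref{thm_multilevelprice}. Since $U'$ has unit mean under $\E'$ and $U^{(2)}$ has unit mean under $\E$, both variances can be written as second moments minus one: $\var'(U') = \E'[(U')^2] - 1$ and $\var(U^{(2)}) = \E[(U^{(2)})^2] - 1$. So the identity reduces to showing $\E'[(U')^2] = \E[(U^{(2)})^2] + \E[\var'_w(U')]$, where the ``$-1$'' terms cancel and the conditional variance term $\E[\var'_w(U')]$ is already centered (it has no hidden constant since $\var'_w(U')(i) := \E'_w[(U')^2](i) - \E'_w[U'](i)^2$ is a difference of squares).

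First I would expand the conditional variance pointwise using its definition, giving $\var'_w(U')(i) = \E'_w[(U')^2](i) - (U^{(2)}(i))^2$, since $\E'_w[U'] = U^{(2)}$. Taking the $\E$-average yields $\E[\var'_w(U')] = \E[\E'_w[(U')^2]] - \E[(U^{(2)})^2]$. Next I would invoke the tower property for the conditional expectation, namely $\E[\E'_w[Y]] = \E'[Y]$ (stated in Section \ref{sect_multilevelprice}), applied with $Y = (U')^2$, to get $\E[\E'_w[(U')^2]] = \E'[(U')^2]$. Substituting this in gives $\E[\var'_w(U')] = \E'[(U')^2] - \E[(U^{(2)})^2]$, which rearranges immediately to the claimed decomposition of second moments.

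Finally I would assemble the pieces: adding $\E[(U^{(2)})^2] = \var(U^{(2)}) + 1$ to $\E[\var'_w(U')]$ reconstructs $\E'[(U')^2] = \var'(U') + 1$, and subtracting the matching constants on both sides produces $\var'(U') = \var(U^{(2)}) + \E[\var'_w(U')]$. The main (and only real) obstacle is bookkeeping the two different ambient probability measures $\E$ and $\E'$ correctly, ensuring that the unit-mean facts $\E[U^{(2)}] = 1 = \E'[U']$ are used to align the additive constants, and that the tower property is applied to the right functional; none of the steps require concavity or Jensen's inequality, so the computation is purely algebraic once the tower property is in hand.
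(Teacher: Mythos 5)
Your proposal is correct and follows essentially the same route as the paper's proof: reduce both variances to second moments using the unit-mean facts $\E'[U'] = 1 = \E[U^{(2)}]$, expand $\var'_w(U') = \E'_w[(U')^2] - (U^{(2)})^2$ via $\E'_w[U'] = U^{(2)}$, and apply the tower property $\E[\E'_w[(U')^2]] = \E'[(U')^2]$ to close the identity. No gaps; the argument is the same purely algebraic computation the paper gives.
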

\begin{proof}
We compute:
\begin{eqnarray}
    \var'(U') - \var\!\left(U^{(2)}\right) &=& \E'[(U')^2] - \E[(U^{(2)})^2] = \E[\E'_w[(U')^2] - (U^{(2)})^2] \nonumber \\
    &=& \E[\E'_w[ (U')^2 - \E'_w[U']^2 ]] = \E[\var'_w(U')],
\end{eqnarray}
since $\E'[U'] = 1 = \E[U^{(2)}]$, $\E'_w[U'] = U^{(2)}$, and $\var'_w(U') = \E'_w[(U')^2] - \E'_w[U']^2 = \E'_w[(U')^2] - (U^{(2)})^2$.
\end{proof}

By applying the Second Law (Theorem \ref{thm_secondlaw}) and the multi-level variance identity \eqref{eqn_multilevelvariance}, we have the following multi-level version of the Second Law.



\begin{thm}[Multi-Level Second Law of Natural Selection] \label{thm_multilevel_secondlaw}
Let $w$ and $w'$ be composable processes. Then:
\begin{eqnarray}
    \partial'_\NS S'_\NS &\le& -\var'(U') \log\!\left( 1 + \var'(U') \right) \label{ineq_multilevelsecondlaw1} \\
    &=& -\left(\var(U^{(2)}) + \E[\var'_w(U')]\right) \log\!\left(1 + \var(U^{(2)}) + \E[\var'_w(U')]\right) \label{ineq_multilevelsecondlaw2} 
\end{eqnarray}
which is saturated when $w'$ is in selective equilibrium ($U'$ is constant $\mu'_*$-a.s.).
\end{thm}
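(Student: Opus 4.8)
The plan is to recognize that the secondary selective change $\partial'_\NS S'_\NS := \cov'(-U' \log U', U')$ is structurally identical to the single-process quantity $\partial_\NS S_\NS$ analyzed in the Strong Second Law (Theorem~\ref{thm_secondlaw}), with every object simply transported to the intermediate population $\mu'$: the relative fitness $U'$ plays the role of $U$, the expectation $\E'$ plays the role of $\E$, and the covariance $\cov'$ (taken against $\mu'/N'$) plays the role of $\cov$. Consequently the bound of Theorem~\ref{thm_secondlaw} applies verbatim to the process $w'$ in isolation, under the regularity hypothesis $\E'[|(U')^2 \log U'|] < \oo$. First I would invoke that theorem to obtain
\begin{equation}
    \partial'_\NS S'_\NS = \cov'(-U' \log U', U') \le -\var'(U') \log\!\left(1 + \var'(U')\right),
\end{equation}
which is exactly \eqref{ineq_multilevelsecondlaw1}, together with its saturation clause: equality holds precisely when $U'$ is constant $\mu'_*$-almost surely, i.e.\ when $w'$ is in selective equilibrium.

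Second, I would substitute the Multi-Level Variance Identity (Lemma~\ref{lem_multilevelvariance}), namely $\var'(U') = \var(U^{(2)}) + \E[\var'_w(U')]$, into the right-hand side above. Since the function $v \mapsto -v \log(1+v)$ is being evaluated at the single scalar $\var'(U')$, this substitution is purely algebraic and immediately yields \eqref{ineq_multilevelsecondlaw2}. This step is the conceptual payoff rather than a technical one: it rewrites the bound as a ``law of total variance'' decomposition, separating the between-group contribution $\var(U^{(2)})$ from the averaged within-group contribution $\E[\var'_w(U')]$, thereby isolating the selective variance generated in the second stage of the process.

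The proof is therefore a direct composition of two previously established results, and I anticipate no genuine obstacle. The only point requiring care is the saturation condition: the Strong Second Law produces a chain of inequalities whose saturation is governed by selective equilibrium of the process being analyzed, so I would verify that it is equilibrium of $w'$ (i.e.\ $U'$ constant on the childbearing set, $\mu'_*$-a.s.), and \emph{not} of $w$ or of the composed process $w^{(2)}$, that makes \eqref{ineq_multilevelsecondlaw1} tight. I would also confirm that this equilibrium condition for $w'$ is logically independent of any equilibrium behavior of the first process $w$, so that the result genuinely quantifies the second stage on its own terms.
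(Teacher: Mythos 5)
Your proposal is correct and matches the paper's own proof exactly: it applies the Strong Second Law (Theorem \ref{thm_secondlaw}) to the process $w'$ to obtain \eqref{ineq_multilevelsecondlaw1} with the stated saturation condition, and then substitutes the Multi-Level Variance Identity (Lemma \ref{lem_multilevelvariance}) to obtain \eqref{ineq_multilevelsecondlaw2}. Your additional care about which process's equilibrium governs saturation is a correct reading of the theorem statement.
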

\begin{proof}
\eqref{ineq_multilevelsecondlaw1} and the saturation condtiion follows from the Second Law (Theorem \ref{thm_secondlaw}) applied to the process $w'$. \eqref{ineq_multilevelsecondlaw2} follows from the variance identity \eqref{eqn_multilevelvariance}.

\end{proof}



\section{Quantum Selective Entropy} \label{sect_quantum_selentropy}

Recall the quantum formalism of Section \ref{sect_quantum}. Consider a quantum evolutionary process $\W : \mu \mapsto \mu'$, with quantum relative fitness operator $U := \frac{1}{\bar W} \W^\dagger(\Id')$. 
Define the selective entropy operator $-U \log U$ using the spectral theorem. We say that $\W$ is finite entropy if $\E_\mu[|U\log U|] = \Tr(|U \log U| \mu) < \oo$. Define the quantum selective entropy
\begin{equation}
    S_\NS := \E_\mu[-U \log U] = \frac{1}{N} \Tr\!\left((-U \log U) \mu\right) = \frac{1}{N} \Tr\!\left(\mu(-U \log U)\right).
\end{equation}

Write $\pi_* = \pi_{U\ne 0} = \Id - \pi_{U=0}$ for the projection onto the childbearing subspace, orthogonal to the null space of $U$. Write the childbearing population $\mu_* = \pi_* \mu$, and the childbearing proportion $p_* := \E_\mu[\pi_*] = \frac{1}{N} \Tr(\pi_* \mu) = \frac{1}{N} \Tr(\mu_*)$. 

\begin{thm}[Strong Quantum Gibb's Inequality]
Let $\W$ be a finite-entropy quantum evolutionary process. Then:
\begin{equation}
    -\log \left( 1 + \var_\mu(U) \right) \le S_\NS \le \log p_*,
\end{equation}
with saturation in the quantum selective-equilibrium case (in which case $S_\NS= \log p_* = -\log(1+\var_\mu(U))$.
\end{thm}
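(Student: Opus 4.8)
The plan is to mirror the classical proof of Theorem \ref{thm_stronggibbsNS} \emph{mutatis mutandis}, replacing the measure-theoretic Jensen's inequality with the Quantum Jensen's Inequality (Lemma \ref{lem_quantumjensen}). The crucial structural observation is that every quantity to which we apply Jensen is a spectral function of the \emph{single} self-adjoint operator $U$ (namely $-U\log U$, $U^2$, and $-\log U$), so these all mutually commute and commute with the spectral projection $\pi_*$. Thus the only non-commutativity in play is that between $U$ and the density operator $\mu$, which is exactly what Lemma \ref{lem_quantumjensen} is built to absorb.

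First I would establish the upper bound $S_\NS \le \log p_*$. Using the spectral convention $0\log 0 = 0$, the operator $-U\log U$ is supported on the childbearing subspace, so $-U\log U = (-U\log U)\pi_*$ and hence $S_\NS = \tfrac1N\Tr\big((-U\log U)\pi_*\mu\big) = p_* \E_*[-U\log U]$, where $\E_*$ is the childbearing expectation associated to the positive density operator $\mu_* = \pi_*\mu\pi_*$ of trace $p_* N$. Since $\E_*$ is a probability expectation and $x \mapsto -x\log x$ is concave, Lemma \ref{lem_quantumjensen} gives $\E_*[-U\log U] \le -\E_*[U]\log\E_*[U]$. I would then compute $\E_*[U] = \tfrac{1}{p_* N}\Tr(U\pi_*\mu) = \tfrac{1}{p_*}\E_\mu[U] = \tfrac{1}{p_*}$, using $U\pi_* = U$ and $\E_\mu[U]=1$ from \eqref{eqn_quantumaveragefitness}, yielding $S_\NS \le p_*\big(-\tfrac{1}{p_*}\log\tfrac{1}{p_*}\big) = \log p_*$.

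Next I would establish the lower bound $S_\NS \ge -\log(1+\var_\mu(U))$ via a tilted density operator. Set $\tilde\mu := U^{1/2}\mu U^{1/2}$, which is self-adjoint and positive with $\Tr(\tilde\mu) = \Tr(U\mu) = N$, so the tilted expectation $\tilde\E[X] := \tfrac1N\Tr(X\tilde\mu)$ is a probability expectation. For any spectral function $X$ of $U$ one has $[X,U]=0$, hence $\Tr(X\tilde\mu) = \Tr(XU\mu)$ by cyclicity, so $\tilde\E[X] = \E_\mu[XU]$. Applying Lemma \ref{lem_quantumjensen} to the convex function $-\log$ and the operator $U$ gives $\tilde\E[-\log U] \ge -\log\tilde\E[U]$. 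Evaluating, $\tilde\E[-\log U] = \E_\mu[(-\log U)U] = S_\NS$ and $\tilde\E[U] = \E_\mu[U^2] = 1+\var_\mu(U)$, which delivers the claimed bound.

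Finally I would read off the saturation condition directly from Lemma \ref{lem_quantumjensen}: both Jensen applications saturate exactly when $U$ is constant $\mu_*$-almost everywhere, i.e.\ $U = \tfrac{1}{p_*}\pi_*$, which is precisely quantum selective equilibrium; in that case $\var_\mu(U) = \tfrac{1}{p_*}-1$ and both bounds collapse to $\log p_* = -\log(1+\var_\mu(U))$. The main obstacle I anticipate is purely bookkeeping around non-commutativity: I must verify that the symmetrized operators $\mu_* = \pi_*\mu\pi_*$ and $\tilde\mu = U^{1/2}\mu U^{1/2}$ are genuine positive self-adjoint density operators of the correct trace and that replacing $\E_\mu[X U]$ by a trace against a \emph{bona fide} density operator is legitimate, so that Lemma \ref{lem_quantumjensen} is actually applicable rather than merely formally invoked. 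Once this is pinned down, the computation of the two relevant means reduces via cyclicity of the trace to the classical values $1/p_*$ and $\E_\mu[U^2]$, and the argument proceeds as in the commutative case.
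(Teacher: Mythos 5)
Your proposal is correct and follows essentially the same route as the paper's proof: decompose into the childbearing part and apply quantum Jensen's inequality to the concave $-x\log x$ for the upper bound, then apply quantum Jensen's inequality to the convex $-\log x$ against a $U$-tilted expectation for the lower bound, with saturation read off from constancy of $U$ on the childbearing subspace. Your symmetrization $\tilde\mu = U^{1/2}\mu U^{1/2}$ is a slightly more careful rendering of the paper's informal $\tfrac{1}{N}\Tr(U\mu U)$, but since everything in sight is a spectral function of $U$, cyclicity of the trace makes the two formulations identical.
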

\begin{proof}
The proof is similar to Theorem \ref{thm_stronggibbsNS}, \emph{mutatis mutandis}. We have $\E_\mu[X] = (1-p_*)\E_0[X] + p_* \E_*[X]$. For the upper bound, we use quantum Jensen's inequality: $S_\NS = p_* \E_*[-U \log U] \le -p_* \E_*[U] \log \E_*[U] = \log p_* \le 0$. This is saturated when $U$ is constant $\mu_*$-a.s., i.e., quantum selective equilibrium.

For the lower bound, we use quantum Jensen's inequality: $S_\NS = \E_\mu[U(-\log U)] = \frac{1}{N} \Tr((-\log U) \mu U) \ge - \log \left( \frac{1}{N} \Tr(U\mu U) \right) = -\log \E_\mu[U^2]$. This is saturated when $U$ is constant $\mu U$-a.s. Since $\mu U$ and $\mu_*$ have the same null subspace, this is equivalent to quantum selective equilibrium. 
\end{proof}

\subsection{Quantum Second Law}

Define the selective change of quantum selective entropy: $\partial_\NS S_\NS := \cov_\mu(-U \log U, U) = \E_\mu[(-U \log U)(U-1)]$. 

\begin{thm}[Strong Second Law of Quantum Selection]
Let $\W$ be a quantum evolutionary process with $\E_\mu[|U^2 \log U|] < \oo$. Then
\begin{equation}
\partial_\NS S_\NS \le -\var(U) \log(1 + \var(U)) \le \var(U) S_\NS \le (\e^{-S_\NS} - 1) S_\NS \le \left( \frac{1}{p_*} - 1 \right) \log p_* \le 0,
\end{equation}
with saturation of all but the last inequality in the quantum selective-equilibrium case. 
\end{thm}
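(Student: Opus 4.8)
The plan is to reproduce the proof of the classical Strong Second Law (Theorem~\ref{thm_secondlaw}) \emph{mutatis mutandis}, using the quantum Jensen's inequality (Lemma~\ref{lem_quantumjensen}) in place of the classical one and the quantum Strong Gibbs inequality (the theorem immediately preceding) in place of Theorem~\ref{thm_stronggibbsNS}. The conceptual observation that makes this routine is that every functional in the statement — namely $S_\NS$, $\partial_\NS S_\NS = \cov_\mu(-U\log U,U)$, $\var_\mu(U)$, and $p_* = \E_\mu[\pi_*]$ with $\pi_* = 1_{(0,\oo)}(U)$ — is an expectation $\E_\mu[f(U)] = \tfrac{1}{N}\Tr(f(U)\mu)$ of a \emph{function of the single self-adjoint operator} $U$. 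Writing the spectral decomposition $U = \int \lambda\,\d P(\lambda)$, each such expectation equals $\int_{\sigma(U)} f(\lambda)\,\d\nu(\lambda)$ for the Borel probability measure $\nu(E) := \tfrac{1}{N}\Tr(P(E)\mu)$ on $\sigma(U)\subseteq[0,\oo)$; positivity of $\nu$ follows from $\mu\ge 0$ and normalization from $\E_\mu[U]=1$. Thus the whole statement collapses to the classical inequality chain on the probability space $(\sigma(U),\nu)$, and Theorem~\ref{thm_secondlaw} applies directly.

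To present it in the paper's idiom I would carry out the chain explicitly. Using $\E_\mu[U]=1$ from \eqref{eqn_quantumaveragefitness} and the fact that $U$, $\log U$, $U^2$ are mutually commuting functions of $U$, first write
\begin{equation*}
\partial_\NS S_\NS = \cov_\mu(-U\log U,U) = \E_\mu[(-U\log U)U] - S_\NS = \E_\mu[-U^2\log U] - S_\NS.
\end{equation*}
Next introduce the tilted expectation $\tilde\E_\mu[X] := \E_\mu[XU]$, which under the spectral reduction is a genuine probability expectation against $\d\tilde\nu := \lambda\,\d\nu$ (legitimate since $\lambda\ge 0$ on $\sigma(U)$ and $\int\lambda\,\d\nu = \E_\mu[U]=1$). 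Applying Jensen to the concave $x\mapsto -x\log x$ gives $\E_\mu[-U^2\log U] = \tilde\E_\mu[-U\log U] \le -\E_\mu[U^2]\log\E_\mu[U^2]$, and combining with the lower quantum Gibbs bound $-S_\NS \le \log(1+\var_\mu(U)) = \log\E_\mu[U^2]$ yields the first inequality
\begin{equation*}
\partial_\NS S_\NS \le -\big(\E_\mu[U^2]-1\big)\log\E_\mu[U^2] = -\var_\mu(U)\log\big(1+\var_\mu(U)\big).
\end{equation*}

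The remaining three inequalities are purely algebraic, exactly as classically: apply $-\log(1+\var_\mu(U)) \le S_\NS$ to pass to $\var_\mu(U)\,S_\NS$; apply the relation $\var_\mu(U)\ge \e^{-S_\NS}-1$ (the quantum form of Corollary~\ref{cor_strongzerothlaw}, immediate from the quantum Strong Gibbs inequality) and multiply by the non-positive $S_\NS$ to reach $(\e^{-S_\NS}-1)S_\NS$; and finally combine $S_\NS\le\log p_*$ with $\e^{-S_\NS}-1\ge \tfrac{1}{p_*}-1$, multiplying by the non-positive $\log p_*$, to obtain $\big(\tfrac{1}{p_*}-1\big)\log p_*\le 0$. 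Each Jensen step saturates precisely when $U$ is constant on the relevant tilted spectral measure; since $\tilde\nu$ (equivalently $U\mu$) and $\mu_*$ are mutually absolutely continuous, this is equivalent to $U$ being constant $\mu_*$-almost surely, i.e.\ quantum selective equilibrium. The one genuinely non-classical point, and the only place care is needed, is justifying that the $U$-tilted functional $\tilde\E_\mu$ is a bona fide positive normalized expectation to which Jensen applies despite $U$ and $\mu$ failing to commute; the spectral reduction to $\nu$ on $\sigma(U)$ settles this, since every operator in sight is a function of $U$ alone.
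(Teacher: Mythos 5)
Your proposal is correct and follows essentially the same route as the paper's proof: decompose $\partial_\NS S_\NS = \E_\mu[-U^2\log U] - S_\NS$, control the first term by quantum Jensen's inequality and the second by the quantum Strong Gibbs lower bound, then chain the remaining inequalities algebraically exactly as in the classical Theorem~\ref{thm_secondlaw}. Your explicit spectral reduction to the probability measure $\nu(E) = \tfrac{1}{N}\Tr(P(E)\mu)$ on $\sigma(U)$ is a welcome extra justification of why the $U$-tilted expectation is a legitimate positive normalized functional, a point the paper leaves implicit.
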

\begin{proof}
The proof is similar to Theorem \ref{thm_secondlaw}, \emph{mutatis mutandis}. We write $\partial_\NS S_\NS  = \E_\mu[-U^2 \log U] - S_\NS$. We control the first term with quantum Jensen's inequality (Lemma \ref{lem_quantumjensen}): $\E_\mu[U (-U \log U)] \le -\E_\mu[U^2] \log \E_\mu[U^2] = -(1+\var_\mu(U)) \log (1+\var_\mu(U))$. We use the strong lower bound for $S_\NS$ for the upper bound: $-S_\NS \le \log(1 + \var_\mu(U))$. Combining these terms we have the result. The other inequalities follow by applying different versions of the strong bounds for $S_\NS$. Saturation holds when $U$ is constant $U\mu$-a.s., 
i.e., quantum selective equilibrium. 
\end{proof}

\begin{thm}[Upper Bound for Quantum Selective Acceleration]
Let $\W$ be a quantum process for which $\E_\mu[|(U-1)^2 U \log U|] < \oo$. Then:
\begin{equation}
    \partial_\NS^2 S_\NS \le - \frac{1}{2} \var(U)^2 \log \var(U)^2 \le 0,
\end{equation}
with saturation of the first inequality exactly in the quantum purely environmental case (in which case $\partial_\NS^2 S_\NS = 0$), or in the quantum selective-equilibrium case with $p_* = 1/2$ (in which case $\partial_\NS^2 S_\NS = -2 \log 4 \approx -1.204$). In all other cases, $\partial_\NS^2 S_\NS < - \frac{1}{2} \var(U)^2 \log \var(U)^2 < 0$.
\end{thm}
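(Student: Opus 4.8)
The plan is to mirror the proof of the classical Theorem \ref{thm_upperboundforselectiveacceleration} \emph{mutatis mutandis}, exploiting the fact that every quantity in the statement is a function of the single self-adjoint relative-fitness operator $U$. First I would observe that, by the spectral theorem, the operators $(U-1)^2$, $U\log U$, and $U$ all commute, so the selective acceleration $\partial_\NS^2 S_\NS := \cov_\mu(-(U-1)U\log U, U) = \E_\mu[-(U-1)^2 U \log U]$ is unambiguous (``unhanded''), with no left/right distinction. Consequently $\E_\mu$, restricted to the commutative algebra generated by $U$, is integration against the spectral distribution of $U$ in the state $\mu$, so the entire argument reduces to the scalar computation underlying Theorem \ref{thm_upperboundforselectiveacceleration}.

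The core is two applications of the Quantum Jensen's Inequality (Lemma \ref{lem_quantumjensen}), each against a reweighted density operator. For the first I would introduce the weight $(U-1)^2 \ge 0$: since $\E_\mu[(U-1)^2] = \var_\mu(U)$, the symmetrized operator $\tfrac{1}{\var_\mu(U)}(U-1)\mu(U-1)$ is a genuine density operator, and against it the concave function $x \mapsto -x\log x$ gives
\begin{equation}
\partial_\NS^2 S_\NS = \var_\mu(U) \cdot \tfrac{1}{\var_\mu(U)} \E_\mu[(U-1)^2(-U\log U)] \le -\E_\mu[(U-1)^2 U] \log \frac{\E_\mu[(U-1)^2 U]}{\var_\mu(U)},
\end{equation}
with saturation exactly when $U$ is constant on this reweighted state, i.e. $(U-1)^2 = \var_\mu(U)$ almost surely, equivalently $U = 1 \pm \sqrt{\var_\mu(U)}$. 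For the second I would use the weight $U \ge 0$ with $\E_\mu[U] = 1$, so that $U^{1/2}\mu U^{1/2}$ is a density operator; applying Quantum Jensen to the convex function $(x-1)^2$ yields $\E_\mu[(U-1)^2 U] \ge (\E_\mu[U^2]-1)^2 = \var_\mu(U)^2$, saturated exactly in quantum selective equilibrium ($U$ constant on $U\mu$).

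Finally I would combine the two bounds using the monotonicity of $(x,y)\mapsto -x\log y$ (decreasing in each argument on the relevant range), substituting $\E_\mu[(U-1)^2 U] \ge \var_\mu(U)^2$ into both slots to obtain
\begin{equation}
\partial_\NS^2 S_\NS \le -\var_\mu(U)^2 \log \frac{\var_\mu(U)^2}{\var_\mu(U)} = -\var_\mu(U)^2 \log \var_\mu(U) = -\tfrac{1}{2} \var_\mu(U)^2 \log \var_\mu(U)^2 \le 0.
\end{equation}
The saturation bookkeeping then matches the classical case: the first Jensen step forces $U = 1 \pm \sqrt{\var_\mu(U)}$ and the second forces selective equilibrium, and solving $1/p_* = 1 + \sqrt{1/p_* - 1}$ gives either $p_* = 1$ (quantum purely environmental, where $\partial_\NS^2 S_\NS = 0$) or $p_* = 1/2$ (where $\partial_\NS^2 S_\NS = -2\log 4$). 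I expect the only genuine obstacle to be the careful justification that the $(U-1)^2$- and $U$-reweighted functionals really are expectations against legitimate finite-trace density operators, so that Lemma \ref{lem_quantumjensen} applies; this is exactly where commutativity of all functions of $U$ is essential, since it lets the symmetrized weights $(U-1)\mu(U-1)$ and $U^{1/2}\mu U^{1/2}$ reproduce the naive products $(U-1)^2\mu$ and $U\mu$ under the trace.
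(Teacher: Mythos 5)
Your proposal is correct and follows essentially the same route as the paper's proof: quantum Jensen's inequality against the $(U-1)^2$-reweighted state, a second Jensen application giving $\E_\mu[(U-1)^2U]\ge\var_\mu(U)^2$, and the monotonicity of $(x,y)\mapsto -x\log y$, with the same saturation bookkeeping reducing to $p_*=1$ or $p_*=1/2$. Your extra care in exhibiting the symmetrized density operators $(U-1)\mu(U-1)$ and $U^{1/2}\mu U^{1/2}$ is a welcome tightening of a step the paper leaves implicit, but it is not a different argument.
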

\begin{proof}
The proof is similar to Theorem \ref{thm_upperboundforselectiveacceleration}, \emph{mutatis mutandis}. We use quantum Jensen's inequality (Lemma \ref{lem_quantumjensen}):
$\partial^2_\NS S_\NS = \var_\mu(U) \frac{1}{\var_\mu(U)} \E_\mu[(U-1)^2(-U \log U)] \le -\E_\mu[(U-1)^2 U] \log \frac{\E_\mu[(U-1)^2 U]}{\var_\mu(U)}$. We again use Jensen's inequality to compute $\E_\mu[(U-1)^2 U] \ge (\E_\mu[U^2]-1)^2 = \var_\mu(U)^2$. Since $(x,y) \mapsto -x \log y$ is decreasing in each argument, we have: $\partial^2_\NS S_\NS \le -\var_\mu(U)^2 \log \var_\mu(U)$. Saturation holds when $U$ is constant $(U-1)^2$-a.s.
\end{proof}

\part{Environmental Entropy (One-Step Kolmogorov-Sinai Entropy)} \label{part_environmentalentropy}

\section{Definitions of Environmental Entropy and Total Entropy} \label{sect_environmentalentropy}



In this section, we introduce \emph{environmental entropy} $S_\EC$ to measure environmental effects along a process, defined as the Kolmogorov-Sinai entropy of the environmental part of the process. 
Unlike the selective entropy, the environmental entropy is defined by measuring local redistributions between pairs of sets. We then take the sum over any partition, and define the general environmental entropy as the supremum of this quantity over all partitions. 

Let $A \subseteq I$ and $B \subseteq I'$ be measurable sets. We define the \emph{local fitness function} from $A$ to $B$ by restricting the process to parent set $A$ and child set $B$: 
\begin{equation}
    W_{A,B}(i) := 1_A(i) w_i(B)
\end{equation}
That is, $W_{A,B}(i)$ is the number of children of an individual $i$ of parent set $A$ who are members of child set $B$. Note that $W_{A,B} \le W$. 
We define the \emph{local relative fitness} (LRF) by dividing the local fitness by the selective coefficient:
\begin{equation}
    U_{A,B}(i) := 1_A(i) \frac{w_i(B)}{\bar W} \ge 0.
\end{equation}
Note that $U_{A,B}(i) \le U(i)$. 
Write the average LRF as $\bar U_{A,B} = \E[U_{A,B}]$. 

The LRF decomposes the relative fitness into four local pieces:
\begin{equation} \label{eqn_relativefitnessdecomp}
    U = U_{I,I'} = U_{A,B} + U_{A^c,B} + U_{A,B^c} + U_{A^c,B^c}.
\end{equation}


We define \emph{environmental entropy} as a one-step version of the familiar Kolmogorov-Sinai entropy from probability and dynamical systems. This is defined locally relative to parent and child sets; partitionally relative to countable, measurable partitions; and generally by taking suprema over all partitions. 


\begin{defn}[Environmental Entropy]
Let $w : \mu \to \mu'$ be an evolutionary process.
\begin{enumerate}
    \item Consider measurable sets $A \subseteq I$ and $B \subseteq I'$. We define the \emph{local environmental entropy} from $A$ to $B$ as: 
        \begin{equation} \label{def_localentropyEC}
            S_\EC(A,B) := - \bar U_{A,B} \log \bar U_{A,B} \ge 0.
        \end{equation}
    
    \item Consider countable, measurable partitions $\A$ of $I$ and $\B$ of $I'$. We define the \emph{partition environmental entropy} from $\A$ to $\B$ as:
        \begin{equation} \label{def_partitionentropyEC}
            S_\EC(\A,\B) := \sum_{A \in \A, B \in \B} S_\EC(A,B) = \sum_{A \in \A, B \in \B} \left(- \bar U_{A,B} \log \bar U_{A,B}\right) \ge 0.
        \end{equation}
        
    \item We define the \emph{general environmental entropy} from $\mu$ to $\mu'$ as:
        \begin{equation} \label{def_entropyEC}
            S_\EC := \sup_{\A,\B} S_\EC(\A,\B) = \sup_{\A,\B}  \sum_{A \in \A, B \in \B} \left(- \bar U_{A,B} \log \bar U_{A,B}\right) \ge 0,
        \end{equation}
    where the supremum is over all countable, measurable partitions $\A$ of $I$ and $\B$ of $I'$.
\end{enumerate}
\end{defn}

The environmental entropy functionals are non-negative. To see this, note that $U_{A,B} \le U$, hence $\bar U_{A,B} \le 1$ and so $-\bar U_{A,B} \log \bar U_{A,B} \ge 0$. Sinai's Theorem (Theorem \ref{thm_sinai}) ensures this supremum can be realized for a certain pair of partitions.


\begin{lem} \label{lem_vanishinglocalselectiveentropy}
For measurable $A \subseteq I$ and $B \subseteq I'$: $S_\EC(A,B) = 0$ if and only if $A \cap w^{-1} B = I$ or $\varnothing$. For countable, measurable partitions $\A$ and $\B$ of $I$ and $I'$: $S_\EC(\A,\B) = 0$ if and only if $A \cap w^{-1} B = I$ for a single pair $(A,B) \in \A \times \B$. 
\end{lem}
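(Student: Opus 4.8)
The plan is to reduce everything to the elementary properties of the real function $\phi(x) := -x \log x$ on $[0,1]$, which is continuous (with $\phi(0)=0$), vanishes exactly at $x=0$ and $x=1$, and is strictly positive on the open interval $(0,1)$. First I would record the two-sided bound $0 \le \bar U_{A,B} \le 1$: non-negativity is immediate from $U_{A,B} \ge 0$, while the upper bound follows from the pointwise inequality $U_{A,B} \le U$ (noted after \eqref{eqn_relativefitnessdecomp}) together with $\E[U]=1$, so that $\bar U_{A,B} = \E[U_{A,B}] \le \E[U] = 1$. With $\bar U_{A,B}$ confined to $[0,1]$, the local statement is immediate, since $S_\EC(A,B) = \phi(\bar U_{A,B}) = 0$ iff $\bar U_{A,B} \in \{0,1\}$. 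It then remains to translate these two boundary values into the set condition: writing $\bar U_{A,B} = \tfrac{1}{N'}\int_A w_i(B)\,\mu(\d i)$, the value $0$ occurs exactly when $w_i(B) = 0$ for $\mu$-a.e. $i \in A$ (i.e. $A \cap w^{-1}B = \varnothing$ up to a null set), while the value $1$, being the maximum, forces $w_i(B) = W(i)$ for $\mu$-a.e. $i \in A$ and $W = 0$ $\mu$-a.e. off $A$ (i.e. $A \cap w^{-1}B = I$ up to a null set).

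For the partition statement, the key observation is that summing over a product of countable partitions yields $\sum_{A\in\A} 1_A = 1$ and $\sum_{B\in\B} w_i(B) = W(i)$, hence $\sum_{A,B} U_{A,B} = U$ pointwise and $\sum_{A,B}\bar U_{A,B} = \E[U] = 1$. Thus the family $\{\bar U_{A,B}\}_{(A,B)\in\A\times\B}$ is a countable probability vector and $S_\EC(\A,\B) = \sum_{A,B}\phi(\bar U_{A,B})$ is precisely its Shannon entropy. Since every summand is non-negative, the countable sum vanishes iff each summand vanishes, i.e. iff each $\bar U_{A,B}\in\{0,1\}$; combined with $\sum_{A,B}\bar U_{A,B}=1$, this happens exactly when a single pair $(A_0,B_0)$ has $\bar U_{A_0,B_0}=1$ and all others vanish. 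Invoking the local characterization just established, this is the condition that $A_0 \cap w^{-1}B_0 = I$ for a unique pair $(A_0,B_0)$.

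The main obstacle is the careful bookkeeping of the almost-everywhere qualifiers when converting the extreme values $\bar U_{A,B}=0,1$ into set-theoretic statements about $A \cap w^{-1}B$; in particular $\bar U_{A,B}=1$ must be shown equivalent to the simultaneous conditions that every child of $\mu$-a.e. parent in $A$ lands in $B$ and that $\mu$-a.e. parent outside $A$ is childless, which is what the notation $A \cap w^{-1}B = I$ encodes (all equalities of sets being understood modulo $\mu$-null sets). Everything else is routine once the reduction to $\phi$ and to a Shannon entropy is in place.
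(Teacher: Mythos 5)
Your proof is correct and follows essentially the same route as the paper's: both reduce the claim to the fact that $-x\log x$ vanishes on $[0,1]$ only at the endpoints, applied to $\bar U_{A,B} \in [0,1]$, and then translate the two extreme values into the set conditions on $A \cap w^{-1}B$. Your explicit use of the normalization $\sum_{A,B}\bar U_{A,B} = \E[U] = 1$ in the partition case is a nice touch that makes the existence of the single pair with $A \cap w^{-1}B = I$ fully explicit, a step the paper's proof leaves implicit.
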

\begin{proof}
If $A \cap w^{-1} B = I$, then $U_{A,B} = U$ a.s. and $\bar U_{A,B} = 1$, so $S_\EC(A,B) = 0$. If $A \cap w^{-1} B = \varnothing$, then $U_{A,B} = 0$ a.s. and $\bar U_{A,B} = 0$, so $S_\EC(A,B) = 0$. If $A \cap w^{-1} B$ is non-empty and $\ne I$, then $U_{A,B} \in (0,U)$, and so $\bar U_{A,B} \in (0,1)$ and $S_\EC(A,B) > 0$.

If $S_\EC(\A,\B) = 0$, then $S_\EC(A,B) = 0$ for all $(A,B) \in \A \times \B$, and only one of these pairs can satisfy $A \cap w^{-1} B = I$. If $S_\EC(\A,\B) > 0$, then $S_\EC(A,B) > 0$ for some $(A,B) \in \A \times \B$. For that pair, at least one of $U_{A^c,B}$, $U_{A,B^c}$, and $U_{A^c,B^c}$ must be positive. Let $(A',B') \in \A \times \B$ be another pair such that $U_{A',B'} > 0$, then $S_\EC(A',B') > 0$. 
\end{proof}

\begin{rem}[Kolmogorov-Sinai Entropy]
The environmental entropy is a ``one step'' form of Kolmogorov-Sinai entropy, which is instead optimized over all iterates of a process. To see this formally, let $w^t : \mu^{t-1} \mapsto \mu^t$ be a family of composable evolutionary processes on the spaces $I^t$. Classically, the Kolmogorov-Sinai entropy is defined by iterating a single process $w$ on a static state space. Let $w^{(T)} : \mu \mapsto \mu^T$ be the $T$-step iterated process $w^{(T)} := w^T \circ \cdots w^1$ from $I^0$ to $I^T$. Let $w^{-(T)} : \I^T \to \I^0$ be the $T$-step parent-set mapping. Define the $T$-step selective coefficient $\bar W^{(T)} := \bar W^1 \cdots \bar W^T = \frac{N^T}{N}$. 
Define the $T$-step local relative fitness for $i \in I^0$,
    \begin{equation}
        U_{A^0,\cdots,A^T}(i) := 1_{A^0 \cap w^{-1}A^1 \cap w^{-(2)} A^2 \cdots \cap w^{-(T)}A^T}(i) \frac{w^{(T)}_i(A^T)}{\bar W^{(T)}}.    
    \end{equation}
    The Kolmogorov-Sinai entropy 
    is the supremum over $T$-step environmental entropies:
        \begin{equation} \label{def_KSentropy}
            S_\KS := \sup_{T \ge 1} \sup_{\A^0, \cdots, \A^T} \sum_{(A^0,\cdots,A^T) \in \A^0 \times \cdots \times \A^T} \left( - \E[U_{A^0,\cdots,A^T}] \log \E[U_{A^0,\cdots,A^T}] \right) \ge S_\EC,
        \end{equation}
    where the first supremum is over natural numbers $T \ge 1$, the second supremum is over countable, measurable partitions $\A^0, \cdots, \A^T$ of $I^0, \cdots, I^T$, and the sum is over partition sets. We have $S_\KS = S_\EC$ if each $T$-step environmental entropy is at most the $1$-step environmental entropy, otherwise $S_\KS \ge S_\EC$. The Kolmogorov-Sinai entropy is an invariant of a sequence of processes (i.e., $S_\KS(w,w^2,\dots) = S_\KS(w^2,w^3,\dots)$).
\end{rem}

\subsection{Generalized Sinai's Theorem}

Recall Sinai's classic theorem \cite{sinai1959notion}, which states that Kolmogorov-Sinai entropy on a static space is realized by a \emph{generating partition}, a single countable, measurable partition $\A_*$ which realizes \eqref{def_KSentropy}. We state a generalized version of Sinai's theorem for environmental entropy, allowing for generating joint partitions which realize environmental entropy and generalized Kolmogorov-Sinai entropy.

\begin{thm}[Generalized Sinai's Theorem] \label{thm_sinai}
    (One-Step) Let $w : \mu \mapsto \mu'$ be an evolutionary process. There exist countable, measurable partitions $A_*$ of $I$ and $\B_*$ of $I'$ so that
        \begin{equation}
            S_\EC = S_\EC(\A_*,\B_*).
        \end{equation}
    A joint partition $(\A_*, \B_*)$ is generating if and only if $\I$ is the smallest $\sigma$-algebra containing sets $A \cap w^{-1} B$ for $A \in \A_*, B \in \B_*$.
    
    (Iterated) Let $w$ be an evolutionary process on the same space $I$. There exists a countable, measurable partition $\A_*$ of $I$ such that
        \begin{equation}
            S_\KS = \sup_{T \ge 1} \sum_{A^0,\cdots,A^T \in \A_*} \left( - \E[U_{A^0,\cdots,A^T}] \log \E[U_{A^0,\cdots,A^T}] \right)
        \end{equation}
\end{thm}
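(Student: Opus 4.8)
The plan is to reduce the definition of environmental entropy to the ordinary Shannon entropy of a single \emph{coupling measure} on the product space, and then deploy the classical refinement and generator machinery from entropy theory.

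First I would introduce the coupling. Define a set function $\theta$ on the product $\sigma$-algebra $\I \otimes \I'$ by $\theta(A \times B) := \frac{1}{N'}\int_A w_i(B)\,\mu(\d i)$ and extend it by Carath\'eodory. Since $\theta(I \times I') = \frac{1}{N'}\mu'(I') = 1$, it is a probability measure coupling the fitness-weighted parent law $\frac{1}{N'}\int_A W\,\d\mu$ with the normalized child law $\mu'(B)/N'$ through the kernel $w_i$. By construction $\bar U_{A,B} = \theta(A\times B)$, so the local term $-\bar U_{A,B}\log\bar U_{A,B}$ is the Shannon contribution of the rectangle $A\times B$, and the partition entropy $S_\EC(\A,\B)$ is exactly the Shannon entropy $H_\theta(\A\vee\B)$ of the product partition $\A\vee\B := \{A\times B : A\in\A,\ B\in\B\}$ of $I\times I'$. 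Hence $S_\EC = \sup_{\A,\B} H_\theta(\A\vee\B)$, the supremum of Shannon entropy over product partitions.

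Next I would record refinement monotonicity. The function $x\mapsto -x\log x$ is subadditive, i.e. $-(x+y)\log(x+y) \le -x\log x - y\log y$ for $x,y\ge 0$, by the elementary identity $x\log\frac{x+y}{x} + y\log\frac{x+y}{y}\ge 0$; hence splitting any cell $A$ or $B$ can only increase $S_\EC(\A,\B)$. Thus the product partitions form a directed family on which $S_\EC(\cdot,\cdot)$ is monotone, and the task becomes the purely measure-theoretic one of computing and attaining $\sup_{\mathcal P} H_\theta(\mathcal P)$ over a cofinal family of partitions of $(I\times I', \theta)$.

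The crux is attainment together with the generating characterization. Assuming $\I,\I'$ are countably generated, I would pick increasing finite partitions $\A_n,\B_n$ whose products $\A_n\vee\B_n$ generate $\I\otimes\I'$ modulo $\theta$-null sets; since rectangles generate the product $\sigma$-algebra, product partitions are cofinal, and the increasing-partition (martingale) convergence theorem for Shannon entropy gives $H_\theta(\A_n\vee\B_n)\uparrow H(\theta):=\sup_{\mathcal P}H_\theta(\mathcal P)$, so $S_\EC = H(\theta)$. When this value is finite, $\theta$ is purely atomic, and a countable product partition separating its atoms realizes the supremum; this is the generating pair $(\A_*,\B_*)$, and for it the refinement argument gives $H_\theta(\A\vee\B)\le H_\theta(\A\vee\B\vee\A_*\vee\B_*) = H_\theta(\A_*\vee\B_*)$ for every competitor, forcing equality. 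Translating back through the process, the rectangle $A\times B$ meets the support of $\theta$ over the parent set $A\cap w^{-1}B$, with $w^{-1}B=\{i: w_i(I'\setminus B)=0\}$ as in Lemma \ref{lem_vanishinglocalselectiveentropy}, so separating the atoms of $\theta$ by a product partition is equivalent to the sets $\{A\cap w^{-1}B\}$ generating $\I$. I expect this to be the main obstacle: attainment by a \emph{single} countable partition genuinely requires $\theta$ to be purely atomic (otherwise $H(\theta)=+\infty$ and is only approached in the limit), and matching the product structure on $I\times I'$ to the $\sigma$-algebra condition on $A\cap w^{-1}B$ in $I$ demands careful bookkeeping of $\theta$-null sets.

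Finally, for the iterated statement I would repeat the coupling construction at level $T$: the $T$-step quantities satisfy $\E[U_{A^0,\dots,A^T}] = \theta^{(T)}(A^0\times\cdots\times A^T)$ for the coupling $\theta^{(T)}$ of the composed process $w^{(T)}$ on $I^0\times\cdots\times I^T$, so each $T$-step sum is the Shannon entropy of the joint partition $\bigvee_{t=0}^T (w^{(t)})^{-1}\A_*$ under $\theta^{(T)}$. Then I would run the classical Sinai generator argument: for any competing partition $\A$, subadditivity gives a bound of the form $H(\text{$\A$-join}) \le H(\text{$\A_*$-join}) + H\!\left(\A \mid \bigvee_t (w^{(t)})^{-1}\A_*\right)$, and the generating property forces the conditional-entropy defect to vanish in the limit by continuity of conditional entropy under increasing generating families; taking the supremum over $T$ and over competitors shows that a generating $\A_*$ attains $S_\KS$.
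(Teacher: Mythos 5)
Your proposal is correct in outline but takes a genuinely different route from the paper. You collapse the whole problem onto a single coupling probability measure $\theta(A\times B) := \tfrac{1}{N'}\int_A w_i(B)\,\mu(\d i)$ on $I\times I'$, observe that $\bar U_{A,B} = \theta(A\times B)$ so that $S_\EC(\A,\B)$ is the Shannon entropy $H_\theta(\A\vee\B)$ of a product partition, and then invoke the classical facts that entropy increases along refinements, converges along increasing generating families, and is attained by an atom-separating partition when $\theta$ is purely atomic. The paper (Appendix \ref{app_sinai}) never introduces $\theta$: it works directly on the space $\PP$ of joint partitions, equips it with the symmetric-difference pseudo-metric $d_1$ and the conditional-entropy (Rokhlin-type) metric $d_\EC$, shows $d_\EC$-convergence implies $d_1$-convergence, and extracts the generating partition as the $d_1$-limit of a maximizing refining sequence. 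Both arguments share the same backbone (the chain rule $S_\EC(\A',\B') = S_\EC(\A,\B) + S_\EC(\A',\B'\,|\,\A,\B)$ and refinement monotonicity), but your version makes the answer conceptually transparent — $S_\EC$ is exactly the Shannon entropy $H(\theta)$ of the coupling, and attainment is a statement about the atoms of $\theta$ — whereas the paper's completeness argument produces a limit partition without identifying what it separates.

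Two points need repair before your argument covers the theorem as stated. First, the infinite-entropy case: the theorem does not assume $S_\EC<\oo$, and your attainment step relies on $\theta$ being purely atomic, which fails exactly when $H(\theta)=+\oo$. This is patchable — when the non-atomic part of $\theta$ is nontrivial one can exhibit a single countable product partition whose cells have masses $p_n$ with $\sum_n -p_n\log p_n = \oo$, so the supremum $+\oo$ is still attained — but as written your proof only delivers the finite case. (The paper's Cauchy argument has the same unaddressed degeneracy, since $S_\EC(\A_{n'},\B_{n'}\,|\,\A_n,\B_n)\to 0$ uses the difference of two finite quantities.) Second, in the atomic case you need the atom-separating partition to be a \emph{countable product} partition: separating countably many atoms pairwise by rectangles and taking the common refinement a priori produces uncountably many cells in each factor, and you must lump the $\theta$-null residue into a single cell to stay inside the class of countable measurable partitions of $I$ and $I'$. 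Both issues are bookkeeping rather than conceptual obstructions, and you flag the second family of difficulties yourself, but they are where a referee would push.
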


We prove both forms of Sinai's theorem in Appendix \ref{app_sinai}. Our argument generalizes the proofs of \cite[Theorems 5,6]{le2017notes} and \cite[Section 1.7]{downarowicz2011entropy} from the classical Sinai's theorem.

\subsection{Local Selective Entropy}

Recall the definition of selective entropy $S_\NS = \E[-U \log U]$ from Part 2. For any measurable $A \subseteq I$ and $B \subseteq I'$, we define local selective entropy $S_\NS(A,B) := \E[-U_{A,B} \log U]$. For any countable, measurable partitions $\A$ and $\B$, the partition selective entropy equals the general selective entropy: $\sum_{A \in \A, B \in \B} S_\NS(A,B) = S_\NS$. This follows from additivity: $\sum_{A,B} S_\NS(A,B) = \E\!\left[ (-\sum_{A,B} U_{A,B} \log U) \right] = S_\NS$ since $\sum_{A,B} U_{A,B} = U$.

\begin{defn}
We say that $w$ is locally purely selective from $A$ to $B$ when local environmental entropy vanishes ($S_\EC(A,B) = 0$), meaning $A \cap w^{-1} B = \varnothing$ or $I$ by Lemma \ref{lem_vanishinglocalselectiveentropy}.
We say that $w$ is locally purely environmental from $A$ to $B$ when $U = 1$ $\mu$-a.s. on $A \cap w^{-1} B$. 
\end{defn}



\begin{lem}[Non-Positivity of Local Selective Entropy] \label{lem_localselectiveentropynonpositive}
For any measurable $A \subseteq I$ and $B \subseteq I'$, the local selective entropy is non-positive:
\begin{equation} \label{ineq_localNSbound}
    S_\NS(A,B) \le 0. 
\end{equation}
This is saturated exactly when $w$ is locally purely environmental from $A$ to $B$.

\end{lem}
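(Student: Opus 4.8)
The plan is to imitate the proof of the Weak Gibbs Inequality, where the global bound $S_\NS = \E[-U\log U]\le 0$ follows from concavity of $h(u):=-u\log u$ together with $\E[U]=1$. First I would reformulate the local functional in a way that isolates a single concave function of $U$. Since $U_{A,B}(i)=1_A(i)\,w_i(B)/\bar W$ and $U(i)=W(i)/\bar W$, I would introduce the measurable weight $q(i):=1_A(i)\,w_i(B)/W(i)\in[0,1]$ (defined where $W(i)>0$), so that $U_{A,B}=q\,U$ pointwise and hence
$S_\NS(A,B)=\E[-U_{A,B}\log U]=\E[\,q\,h(U)\,]$, with the integrand supported exactly on $A\cap w^{-1}B$ by Lemma \ref{lem_vanishinglocalselectiveentropy}.

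The main line of attack is then a normalized Jensen estimate. Assuming $\bar U_{A,B}>0$ (the case $\bar U_{A,B}=0$ is immediate), I would pass to the probability expectation $\E_{A,B}[X]:=\bar U_{A,B}^{-1}\,\E[U_{A,B}X]$, writing $S_\NS(A,B)=\bar U_{A,B}\,\E_{A,B}[-\log U]$, and apply Jensen's inequality to the convex function $-\log$ under $\E_{A,B}$. For the saturation claim I would use strict convexity of $-\log$: equality forces $U$ to be $\E_{A,B}$-almost-surely constant, and since $\E_{A,B}$ and $\mu$ restricted to $A\cap w^{-1}B$ are mutually absolutely continuous, this constant is pinned to the value $1$ by the definition of $w$ being \emph{locally purely environmental from $A$ to $B$}, i.e. $U=1$ $\mu$-a.s.\ on $A\cap w^{-1}B$.

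The hard part will be the sign of the resulting bound, and I expect this to be the genuine obstacle rather than a routine endpoint. Unlike the global statement, the logarithm is taken of the total relative fitness $U$ and not of the weight $U_{A,B}$, so the clean single-function concavity argument does not transfer verbatim: the integrand $q\,h(U)$ is \emph{not} pointwise sign-definite, since $h(U)<0$ where $U>1$ but $h(U)>0$ where $U<1$, and damping a sign-changing integrand by a factor $q\in[0,1]$ need not preserve the sign of its mean. The crux is therefore to show that conditioning through $\E_{A,B}$ cannot raise the average above zero, which I anticipate requires combining the comparison $U_{A,B}\le U$, the normalization $\E[U]=1$, and the additivity identity $\sum_{A,B}S_\NS(A,B)=S_\NS\le 0$, rather than a one-line Jensen step; verifying that this control is exactly tight in the locally-purely-environmental case is where the careful work lies.
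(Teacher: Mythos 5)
Your proposal is a plan rather than a proof: the decisive step --- showing that $\E_{A,B}[-\log U]\le 0$, equivalently $\E[U_{A,B}\log U]\ge 0$ --- is explicitly deferred (``I anticipate [this] requires combining\dots''), and the Jensen step you set up runs the wrong way. Applying Jensen to the convex function $-\log$ under the probability expectation $\E_{A,B}[\,\cdot\,]:=\bar U_{A,B}^{-1}\E[U_{A,B}\,\cdot\,]$ gives $\E_{A,B}[-\log U]\ge -\log \E_{A,B}[U]$, a \emph{lower} bound; and $\E_{A,B}[U]=\E[U_{A,B}U]/\bar U_{A,B}$ is not pinned to $1$, so even that bound has no definite sign. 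So, as written, there is a genuine gap at exactly the point you flag as ``the hard part.''

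That said, your diagnosis of the obstacle is more accurate than the paper's own resolution of it. The paper's Appendix~\ref{app_localselectiveentropynonpositive} splits $-U_{A,B}\log U$ using the claimed identity $U=\hat U_{A,B}+\hat U_{A^c,B}+\hat U_{A,B^c}+\hat U_{A^c,B^c}$, so that the cross term becomes $-\hat U_{A,B}\log(1+Y)$ with $Y\ge 0$; but that identity is false, because each $\hat U_{\cdot,\cdot}=U_{\cdot,\cdot}/\bar U_{\cdot,\cdot}$ carries a \emph{different} normalizing constant, while the completeness identity \eqref{eqn_relativefitnessdecomp} holds only for the un-hatted quantities. In fact the statement itself fails: take $I=I'=\{1,2\}$ with $\mu(\{1\})=\mu(\{2\})=\tfrac12$, $w_1=\tfrac12\delta_1$, $w_2=\tfrac32\delta_2$, so that $\bar W=1$ and $U=(\tfrac12,\tfrac32)$; with $A=\{1\}$ and $B=I'$ one gets $S_\NS(A,B)=\tfrac12\cdot\bigl(-\tfrac12\log\tfrac12\bigr)=\tfrac14\log 2>0$, even though $w$ is not locally purely environmental from $A$ to $B$. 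The sign-indefiniteness you point out --- the integrand $-U_{A,B}\log U$ is strictly positive wherever $0<U<1$ on $A\cap w^{-1}B$ --- is therefore not a technical nuisance to be engineered around but the reason no argument along these lines, including the paper's, can close: if $A$ concentrates on the sub-unit-fitness population, the local functional is positive, and non-positivity is only recovered after summing over a full partition, which is just the global Gibbs inequality you started from.
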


We prove Lemma \ref{lem_localselectiveentropynonpositive} in Appendix \ref{app_localselectiveentropynonpositive}. The proof involves the completeness identity \eqref{eqn_relativefitnessdecomp}.

\subsection{Total Entropy}

We define the total entropy as the sum of selective entropy and environmental entropy. Total entropy can be negative or positive, depending on whether the process is ``more selective'' or ``more environmental''. 

\begin{defn}[Total Entropy]
The total entropy of a process is defined as the sum of the selective entropy and environmental entropy at each level. 
    For each $A \in \I$ and $B \in \I'$, we define local total entropy:
    \begin{equation}
        S_\tot(A,B) := S_\NS(A,B) + S_\EC(A,B).
    \end{equation}
    For each countable, measurable $A$ of $I$ and $\B$ of $I'$, we define partition total entropy:
    \begin{equation}
        S_\tot(\A,\B) := S_\NS + S_\EC(\A,\B);
    \end{equation}
and general total entropy:
\begin{equation} \label{def_totalentropy}
        S_\tot := S_\NS + S_\EC.
    \end{equation}

\end{defn}

\begin{cor} \label{cor_localextremes}
\textbf{ }
\begin{itemize}
\item $w$ is locally purely environmental from $A$ to $B$ iff $S_\tot(A,B) = S_\EC(A,B) \ge 0$.
\item $w$ is locally purely selective from $A$ to $B$ iff $S_\tot(A,B) = S_\NS(A,B) \le 0$.
\end{itemize}
\end{cor}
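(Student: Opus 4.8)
The plan is to observe that this corollary is essentially a restatement of Lemma \ref{lem_localselectiveentropynonpositive} together with the non-negativity of local environmental entropy, so the work is pure bookkeeping on the defining identity $S_\tot(A,B) = S_\NS(A,B) + S_\EC(A,B)$. The two substantive inputs are already in hand: (i) $S_\EC(A,B) = -\bar U_{A,B}\log\bar U_{A,B} \ge 0$ always, which was noted right after the definition since $U_{A,B} \le U$ forces $\bar U_{A,B} \le 1$; and (ii) Lemma \ref{lem_localselectiveentropynonpositive}, which gives $S_\NS(A,B) \le 0$ with saturation \emph{exactly} when $w$ is locally purely environmental from $A$ to $B$.

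For the first bullet, I would simply subtract: since $S_\tot(A,B) = S_\NS(A,B) + S_\EC(A,B)$, the equality $S_\tot(A,B) = S_\EC(A,B)$ holds if and only if $S_\NS(A,B) = 0$. By the saturation clause of Lemma \ref{lem_localselectiveentropynonpositive}, $S_\NS(A,B) = 0$ if and only if $w$ is locally purely environmental from $A$ to $B$. The trailing inequality $S_\EC(A,B) \ge 0$ is automatic from input (i), so it places no additional constraint and is recorded only for interpretive completeness. This chains the stated equivalence.

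For the second bullet, the same manipulation gives $S_\tot(A,B) = S_\NS(A,B)$ if and only if $S_\EC(A,B) = 0$, and $S_\EC(A,B) = 0$ \emph{is} the definition of $w$ being locally purely selective from $A$ to $B$ (equivalently $A \cap w^{-1}B = \varnothing$ or $I$ by Lemma \ref{lem_vanishinglocalselectiveentropy}). Here the trailing $S_\NS(A,B) \le 0$ is automatic from Lemma \ref{lem_localselectiveentropynonpositive}, so again it is a free consequence rather than part of the criterion.

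There is no real obstacle to foresee: the only potential subtlety is cosmetic, namely recognizing that the decorations ``$\ge 0$'' and ``$\le 0$'' in the two bullets are not hypotheses to be verified separately but automatic consequences of inputs (i) and (ii) respectively, so each equivalence reduces to a single vanishing condition on one of the two entropy pieces. Thus the entire argument is one line of algebra on the sum $S_\tot = S_\NS + S_\EC$ per bullet, invoking the cited lemma and definition.
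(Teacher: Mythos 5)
Your proposal is correct, and for the first bullet it is essentially the paper's argument: both reduce $S_\tot(A,B) = S_\EC(A,B)$ to the vanishing of $S_\NS(A,B)$ and invoke the saturation clause of Lemma \ref{lem_localselectiveentropynonpositive}. For the second bullet your route is cleaner and, arguably, more complete than the paper's. The paper argues the forward direction from the \emph{global} purely selective characterization (writing $w_i(\d i') = W(i)\,\delta_i(\d i')$ and computing $S_\EC(A,B) = \E[-1_{A\cap B} U \log 1_{A\cap B}] = 0$ directly), and it never explicitly supplies the converse. You instead observe that ``locally purely selective from $A$ to $B$'' is \emph{defined} in the paper as $S_\EC(A,B) = 0$, so the equivalence $S_\tot(A,B) = S_\NS(A,B) \iff S_\EC(A,B) = 0$ is immediate from the decomposition $S_\tot(A,B) = S_\NS(A,B) + S_\EC(A,B)$, with both directions handled at once. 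This buys you a genuinely local statement (the paper's computation only covers processes that are purely selective everywhere, not merely from $A$ to $B$) and closes the missing converse; the only thing the paper's version buys in exchange is an explicit illustration of what $S_\EC(A,B)$ looks like for Dirac-type kernels. Your reading of the trailing ``$\ge 0$'' and ``$\le 0$'' as automatic consequences rather than additional hypotheses is also the right one.
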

\begin{proof}
Suppose that $w$ is purely environmental, so that $U = 1$ almost surely. Then $S_\NS(A,B) = \E[U_{A,B} \log 1] = 0$, so $S_\tot(A,B) = S_\EC(A,B) \ge 0$. Conversely, if $S_\NS(A,B) = 0$, then $w$ is locally purely environmental.

Suppose that $w$ is purely selective. Then $w_i(B) = 1_B(i) W(i)$ for any measurable $B$ and a.e. $i$, so $U_{A,B} = 1_{A \cap B}(i) U(i)$. In that case, $S_\EC(A,B) = \E[-1_{A \cap B} U \log 1_{A \cap B}]$. If $A \cap B = \varnothing$, then $S_\EC(A,B) = 0$. If $A \cap B \ne \varnothing$, then $\log 1_{A \cap B} = 0$ on the set $A \cap B$, so $S_\EC(A,B) = 0$. Consequently, $S_\tot = S_\NS \le 0$.
\end{proof}


\section{Dispersion Entropy and Mixing Entropy} \label{sect_dispersionmixing}

In this section, we introduce dispersion entropy, which represents the ``spreading'' of a process, and mixing entropy, which represents the ``coalescing'' of a process. Both these entropies are non-negative, and we show that environmental entropy decomposes as the sum of dispersion and mixing entropies. We also present strong bounds on these entropies.

We introduce dispersion and mixing coefficients $D_{A,B}$ and $M_{A,B}$ to help us quantify dispersive and mixing effects. The dispersion coefficient is the ratio of local relative fitness $U_{A,B}$ to total relative fitness $U$, measuring how much dispersion from set $A$ to $B$. The mixing coefficient is further normalized by $\bar U_{A,B}$.



\begin{defn}[Dispersion and Mixing Coefficients]
Consider measurable $A \subseteq I$ and $B \subseteq I$. 
Define the \emph{dispersion coefficient} from $A$ to $B$ as the ratio of local relative fitness to relative fitness:
\begin{equation} \label{def_dispersioncoefficient}
    D_{A,B}(i) := \frac{U_{A,B}(i)}{U(i)} = \frac{W_{A,B}(i)}{W(i)} = \frac{1_A(i) w_i(B)}{w_i(I')},
\end{equation}
and the \emph{mixing coefficient} from $A$ to $B$ by normalizing by the average local relative fitness:
\begin{equation}
    M_{A,B}(i) := \frac{D_{A,B}(i)}{\bar U_{A,B}} = \frac{U_{A,B}(i)}{\bar U_{A,B} U(i)} = \frac{W_{A,B}(i)}{\bar U_{A,B} W(i)} = \frac{1_A(i) w_i(B)}{\bar U_{A,B} w_i(I')}.
\end{equation}
\end{defn}

These coefficients satisfy the bounds $D_{A,B} \in [0,1]$ and $M_{A,B} \in \left[0, \frac{1}{\bar U_{A,B}}\right]$. When we average over the intermediate population, we have:
\begin{equation}
    \tilde \E[D_{A,B}] = \bar U_{A,B} \qquad \mathrm{and} \qquad \tilde \E[M_{A,B}] = 1.
\end{equation}
When we average over the initial population, we have the non-reduced $\E[D_{A,B}] = \E\!\left[\frac{U_{A,B}}{U}\right]$ and. $\E\!\left[M_{A,B}\right] = \frac{\E[D_{A,B}]}{\bar U_{A,B}} = \E\!\left[\frac{U_{A,B}}{\bar U_{A,B} U}\right]$.


\subsection{Definitions of Dispersion and Mixing Entropies}


Dispersion entropy is the amount of environmental entropy generated by asexual or clonal reproduction, i.e., dispersion of an individual type. Mixing entropy is the amount of environmental entropy generated by sexual reproduction, i.e., mixing of two distinct types. The dispersion and mixing entropies are non-negative (Lemma \ref{lem_lowerbounddispersionmixingentropies}), and their sum is environmental entropy (Proposition \ref{pro_environmentalentropydecomposition}). 
Write $\tilde \E[Y] = \E[U Y]$ for the intermediate expectation. 

\begin{defn}[Dispersion and Mixing Entropies]
Let $w$ be a finite-entropy process. 
\begin{enumerate}

\item 
Consider measurable $A \subseteq I$ and $B \subseteq I'$. Define the \emph{local dispersion entropy} from $A$ to $B$ as:
\begin{equation}
    S_\dis(A,B) := \E\!\left[ - U_{A,B} \log D_{A,B} \right] = \tilde \E\!\left[ - D_{A,B} \log D_{A,B} \right] \ge 0,
\end{equation}
and the \emph{local mixing entropy} from $A$ to $B$ as:
\begin{equation}
    S_\mix(A,B) := \E\!\left[U_{A,B} \log M_{A,B} \right] = \bar U_{A,B} \tilde \E\!\left[M_{A,B} \log M_{A,B}\right] \ge 0. 
\end{equation}

\item Consider countable, measurable partitions $\A$ and $\B$ of $I$ and $I'$. Define the \emph{partition dispersion entropy} from $\A$ to $\B$ by summing over partition sets:
\begin{equation}
    S_\dis(\A,\B) := \sum_{A\in\A, B\in\B} S_\dis(A,B) = \sum_{A,B} \E\!\left[ - U_{A,B} \log D_{A,B} \right] \ge 0,
\end{equation}
and \emph{partition mixing entropy} from $\A$ to $\B$ by:
\begin{equation}
    S_\mix(\A,\B) := \sum_{A\in\A, B\in\B} S_\mix(A,B) = \sum_{A,B} \E\!\left[ U_{A,B} \log M_{A,B} \right] \ge 0.
\end{equation}

\item Define the \emph{general dispersion entropy} by taking the supremum over all countable, measurable partitions:
\begin{equation}
    S_\dis := \sup_{\A,\B} S_\dis(\A,\B) = \sup_{\A,\B} \sum_{A,B} \E\!\left[ - U_{A,B} \log D_{A,B} \right] \ge 0,
\end{equation}
and general mixing entropy:
\begin{equation}
    S_\mix := \sup_{\A,\B} S_\mix(\A,\B) = \sup_{\A,\B} \sum_{A,B} \E\!\left[ U_{A,B} \log M_{A,B} \right] \ge 0.
\end{equation}
\end{enumerate}
\end{defn}

The extreme processes are those which exhibit only purely dispersive or purely mixing effects. We quantify those as when the dispersion or mixing coefficients are constant. 

\begin{defn}[Purely Dispersive and Purely Mixing Processes] \label{defn_dispersionmixingentropies}
Let $w$ be a finite-entropy process.
\begin{enumerate}
\item We say that $w$ is \emph{locally purely dispersive} (resp. \emph{locally purely mixing}) from $A$ to $B$ when $D_{A,B}(i) \in \{0,1\}$ (resp. $M_{A,B}(i) \in \{0,1\}$) for $\tilde \mu$-almost every $i$. 

\item We say that $w$ is \emph{partition purely dispersive} (resp. \emph{partition purely mixing}) from $\A$ to $\B$ when for all $A \in \A$, there exists $B \in \B$ such that $w$ is locally purely dispersive (resp. locally purely mixing) from $A$ to $B$.

\item We say that $w$ is \emph{generally purely dispersive} (resp. \emph{generally purely mixing}) when it is partition purely dispersive (resp. partition purely mixing) for all countable, measurable partitions $\A$ and $\B$ of $I$ and $I'$, respectively.
\end{enumerate}

\end{defn}

\begin{lem}[Non-Negativity of Dispersion and Mixing Entropies] \label{lem_lowerbounddispersionmixingentropies}
Let $w$ be finite-entropy.
\begin{enumerate}
\item The dispersion entropy functionals are non-negative:
\begin{equation}
    S_\dis(A,B) \ge 0, \qquad S_\dis(\A,\B) \ge 0, \quad \mathrm{and} \quad S_\dis \ge 0,
\end{equation}
and vanish when $w$ is locally, partition, or generally purely mixing, respectively.

\item The mixing entropy functionals are non-negative:
\begin{equation}
    S_\mix(A,B) \ge 0, \qquad S_\mix(\A,\B) \ge 0, \quad \mathrm{and} \quad S_\mix \ge 0,
\end{equation}
and vanish when $w$ is locally, partition, or generally purely dispersive, respectively.
\end{enumerate}
\end{lem}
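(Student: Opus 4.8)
The plan is to reduce the whole statement to the two local inequalities $S_\dis(A,B)\ge 0$ and $S_\mix(A,B)\ge 0$. The partition functionals $S_\dis(\A,\B)$ and $S_\mix(\A,\B)$ are countable sums of local terms, and the general functionals are suprema of partition functionals; since non-negativity is preserved under both sums and suprema, the partition and general bounds follow immediately from the local ones, and a countable sum of non-negative terms is zero exactly when every summand is zero. So the core is the two local estimates, which I would prove by two different convexity arguments reflecting the different ranges of $D_{A,B}$ and $M_{A,B}$.

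For dispersion I would argue pointwise rather than by Jensen. Writing $S_\dis(A,B)=\tilde\E[-D_{A,B}\log D_{A,B}]$ with $\tilde\E[\cdot]=\E[U\,\cdot]$, and recalling $D_{A,B}(i)\in[0,1]$, the function $x\mapsto-x\log x$ is non-negative on $[0,1]$ (with $0\log 0=0$). Hence the integrand is pointwise non-negative and $S_\dis(A,B)\ge 0$. This simultaneously identifies the equality case: $S_\dis(A,B)=0$ iff $-D_{A,B}\log D_{A,B}=0$ holds $\tilde\mu$-a.e., i.e. iff $D_{A,B}\in\{0,1\}$ $\tilde\mu$-a.e.

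For mixing the integrand $M_{A,B}\log M_{A,B}$ changes sign, so I would instead use Jensen's inequality against the intermediate expectation $\tilde\E$, which is genuinely a probability expectation since $\tilde\E[1]=\E[U]=1$. The key normalization is $\tilde\E[M_{A,B}]=1$, which follows from $\tilde\E[D_{A,B}]=\bar U_{A,B}$ together with $M_{A,B}=D_{A,B}/\bar U_{A,B}$ (valid on $\{\bar U_{A,B}>0\}$; the degenerate case $\bar U_{A,B}=0$ forces $U_{A,B}=0$ a.e. and $S_\mix(A,B)=0$ by convention). Since $x\mapsto x\log x$ is convex, Jensen gives $\tilde\E[M_{A,B}\log M_{A,B}]\ge\tilde\E[M_{A,B}]\log\tilde\E[M_{A,B}]=0$, and multiplying by $\bar U_{A,B}\ge 0$ yields $S_\mix(A,B)=\bar U_{A,B}\,\tilde\E[M_{A,B}\log M_{A,B}]\ge 0$. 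Strict convexity pins down equality: $S_\mix(A,B)=0$ iff $M_{A,B}$ is $\tilde\mu$-a.e. constant, hence equal to its mean $1$.

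The main obstacle I expect is not these inequalities but aligning their equality cases with the named conditions in Definition~\ref{defn_dispersionmixingentropies} and propagating the vanishing to the partition and general scales. The dispersion functional vanishes precisely on the pointwise set $\{D_{A,B}\in\{0,1\}\}$, whereas the mixing functional vanishes under the strictly stronger Jensen-saturation condition $M_{A,B}\equiv 1$ ($\tilde\mu$-a.e.); I would have to check carefully that these match the respective purely-mixing and purely-dispersive conditions, paying close attention to the direction of the correspondence and to the null sets where $U=0$ or $\bar U_{A,B}=0$. Lifting the local equality case to $S_\dis(\A,\B)=0$ requires that \emph{every} local term vanish, not merely one per row, so reconciling this with the ``for each $A$ there exists $B$'' form of the partition definition is the delicate step; here I would exploit the completeness relation $\sum_{B\in\B}D_{A,B}=1_A$, which forces the conditional block-distribution of each parent to be degenerate, to rule out leftover dispersion or mixing in the remaining blocks. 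The general case then follows by taking suprema, since a process that is purely mixing (resp. dispersive) for every partition pair makes every $S_\dis(\A,\B)$ (resp. $S_\mix(\A,\B)$) vanish.
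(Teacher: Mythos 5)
Your proof is correct and follows essentially the same route as the paper's: pointwise non-negativity of $-D_{A,B}\log D_{A,B}$ on $[0,1]$ for the dispersion term, Jensen's inequality for the convex $x \log x$ against the probability expectation $\tilde \E$ (using $\tilde \E[M_{A,B}]=1$) for the mixing term, and summation/suprema for the partition and general levels. The obstacle you flag at the end is real but is a defect of the paper rather than of your argument: the parenthetical conditions in Definition \ref{defn_dispersionmixingentropies} are swapped relative to their usage in the Efficiency and Consistency Theorems and in the paper's own proof of this lemma (which calls $D_{A,B}\in\{0,1\}$ ``the purely mixing case''); note also that since $\tilde \E[M_{A,B}]=1$, the conditions $M_{A,B}\in\{0,1\}$ $\tilde\mu$-a.s.\ and $M_{A,B}=1$ $\tilde\mu$-a.s.\ coincide, so the Jensen saturation condition you call ``strictly stronger'' is in fact equivalent.
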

\begin{proof}
Non-negativity of dispersive entropy is trivial since $U_{A,B} \le U$ hence $D_{A,B} \le 1$, and so $- D_{A,B} \log D_{A,B} \ge 0$ a.s. Saturation holds ($S_\dis(A,B) = 0$) iff $D_{A,B}(i) \in \{0,1\}$ $\tilde \mu$-a.s., i.e., the purely mixing case. 
Non-negativity of mixing entropy follows from Jensen's inequality since $x \log x$ is convex:
\begin{equation}
    S_\mix(A,B) = \bar U_{A,B}\,\tilde \E[M_{A,B}  \ge \bar U_{A,B}\,\tilde \E[M_{A,B}] \log \tilde \E[M_{A,B}] = 0,
\end{equation}
with saturation when $M_{A,B}$ is constant a.s. on the weighted measure $M_{A,B} \tilde \mu$. Saturation holds ($S_\mix(A,B) = 0$) iff $M_{A,B}(i) \in \{0,1\}$ $\tilde \mu$-a.s., i.e., the purely dispersive case.
\end{proof}

\subsection{Environmental Entropy Decomposition}

We now decompose the environmental entropy into dispersion and mixing components, with no additional factors. 

\begin{pro}[Environmental Entropy Decomposition, Local and Partition Versions]
\label{pro_environmentalentropydecomposition}
Let $w$ be a finite-entropy process. 
\begin{enumerate}
\item Let $A \in \I$ and $B \in \I'$. Local environmental entropy from $A$ to $B$ decomposes as:
\begin{equation} \label{eqn_localentropyECidentity}
    S_\EC(A,B) = S_\dis(A,B) + S_\mix(A,B).
\end{equation}

\item Consider countable, measurable partitions $\A$ of $I$ and $\B$ of $I'$. Partition environmental entropy from $\A$ to $\B$ decomposes as:
\begin{equation} \label{eqn_partitionentropyECidentity}
    S_\EC(\A,\B) = S_\dis(\A,\B) + S_\mix(\A,\B).
\end{equation}
\end{enumerate}
\end{pro}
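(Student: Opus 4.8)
The plan is to establish the local identity (1) by a short algebraic manipulation and then obtain the partition identity (2) by summing over partition cells. The crux is the defining relation $M_{A,B} = D_{A,B}/\bar U_{A,B}$ between the mixing and dispersion coefficients: since $\bar U_{A,B}$ is a constant (independent of $i$), the two logarithms differ by the constant $\log \bar U_{A,B}$, and this is what converts the sum of a dispersion term and a mixing term into the single environmental term.

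Concretely, I would write both $S_\dis(A,B)$ and $S_\mix(A,B)$ in their $\E[U_{A,B}\,\cdot\,]$ forms (the first equalities in their definitions) and add them:
\begin{equation}
S_\dis(A,B) + S_\mix(A,B) = \E\!\left[ U_{A,B}\left( \log M_{A,B} - \log D_{A,B} \right) \right].
\end{equation}
Substituting $\log M_{A,B} - \log D_{A,B} = -\log \bar U_{A,B}$ collapses the bracket to a constant, which factors out of the expectation. Using $\E[U_{A,B}] = \bar U_{A,B}$ then gives $-\bar U_{A,B} \log \bar U_{A,B} = S_\EC(A,B)$, which is exactly the claimed local identity.

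The main obstacle is bookkeeping around the sets where the coefficients are undefined, rather than any genuine analytic difficulty. On $\{U=0\}$ we have $U_{A,B} = 0$ (since $U_{A,B} \le U$), so $D_{A,B}$ and $M_{A,B}$ are formally $0/0$; I would restrict every integral to $\{U_{A,B} > 0\}$, where $U > 0$ and all coefficients are strictly positive, and invoke the convention $0 \log 0 = 0$ to discard the complementary set. If $\bar U_{A,B} = 0$ then $U_{A,B} = 0$ $\mu$-a.s. and all three functionals vanish, so the identity holds trivially; otherwise $\log \bar U_{A,B}$ is finite and the factoring step is legitimate. The finite-entropy hypothesis guarantees the relevant integrals converge, so each manipulation is justified.

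Finally, the partition version (2) follows immediately: summing (1) over all $(A,B) \in \A \times \B$ and using $S_\EC(\A,\B) = \sum_{A,B} S_\EC(A,B)$, $S_\dis(\A,\B) = \sum_{A,B} S_\dis(A,B)$, and $S_\mix(\A,\B) = \sum_{A,B} S_\mix(A,B)$ yields $S_\EC(\A,\B) = S_\dis(\A,\B) + S_\mix(\A,\B)$, where the non-negativity of each summand (Lemma \ref{lem_lowerbounddispersionmixingentropies}) ensures the sums are unambiguous even when infinite.
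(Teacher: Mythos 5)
Your proof is correct and follows essentially the same route as the paper: the key step in both is the pointwise identity $\log M_{A,B} - \log D_{A,B} = -\log \bar U_{A,B}$ (a constant), which upon averaging against $U_{A,B}$ collapses $S_\dis(A,B)+S_\mix(A,B)$ to $-\bar U_{A,B}\log\bar U_{A,B} = S_\EC(A,B)$, with the partition version obtained by summation. Your explicit treatment of the degenerate sets where the coefficients are $0/0$ and of the case $\bar U_{A,B}=0$ is more careful than the paper's one-line argument, but it is the same proof.
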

\begin{proof}
When we average the pointwise identity $-\bar U_{A,B} \log \bar U_{A,B} = -\bar U_{A,B} \log D_{A,B} + \bar U_{A,B} \log M_{A,B}$, we have the local identity \eqref{eqn_localentropyECidentity}.




\end{proof}

For the general case, we need a variant of Sinai's theorem which allows us to use the same generating partitions for $S_\dis$ and $S_\mix$ as with $S_\EC$.

\begin{thm}[Generalized Sinai's Theorem for Dispersion and Mixing Entropies] \label{thm_sinaithm_dispersionmixing}
    A joint partition $(\A_*,\B_*)$ is generating for $S_\EC$ iff it is generating for both $S_\dis$ and $S_\mix$. That is, 
    \begin{equation}
        \mbox{$S_\EC = S_\EC(A_*,B_*)$ if and only if $S_\dis = S_\dis(A_*,B_*)$ and $S_\mix = S_\mix(A_*,B_*)$.}
    \end{equation}
    

\end{thm}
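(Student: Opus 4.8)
The plan is to reduce everything to a single \textbf{monotonicity lemma}: each of the three partition functionals $S_\EC(\A,\B)$, $S_\dis(\A,\B)$, $S_\mix(\A,\B)$ is non-decreasing under joint refinement of $(\A,\B)$. Granting this, the additive identity $S_\EC(\A,\B) = S_\dis(\A,\B) + S_\mix(\A,\B)$ from Proposition~\ref{pro_environmentalentropydecomposition} does all the remaining work through a common-refinement argument, and the genuinely new content is the monotonicity of $S_\dis$ and $S_\mix$.

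First I would establish the monotonicity lemma by pointwise estimates. For a refined child set $B = B_1 \sqcup B_2$ one has $U_{A,B} = U_{A,B_1} + U_{A,B_2}$ and $\bar U_{A,B} = \bar U_{A,B_1} + \bar U_{A,B_2}$, while for a refined parent set $A = A_1 \sqcup A_2$ the local fitnesses have disjoint supports. For $S_\EC$ and $S_\dis$ the required inequality reduces to the elementary superadditivity $d_1 \log d_1 + d_2 \log d_2 \le (d_1+d_2)\log(d_1+d_2)$ for $d_1,d_2 \ge 0$ (each $d_j \le d_1+d_2$, so $\log d_j \le \log(d_1+d_2)$), applied pointwise and weighted by $U \ge 0$. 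For $S_\mix$ the $-\log U$ contributions cancel after refinement (since $\sum_j U_{A,B_j} = U_{A,B}$), and what remains is exactly the two-term \emph{log-sum inequality}
\[
a \log \tfrac{a}{\bar a} + b \log \tfrac{b}{\bar b} \ge (a+b)\log \tfrac{a+b}{\bar a + \bar b},
\]
applied with $a = U_{A,B_1}(i)$, $b = U_{A,B_2}(i)$ and constants $\bar a = \bar U_{A,B_1}$, $\bar b = \bar U_{A,B_2}$, then integrated against $\E$. I expect this log-sum step to be the main obstacle, since it is the one place where genuine convexity rather than a crude monotonicity bound is essential; care is needed with the conventions $0\log 0 = 0$ and with the edge case $\bar U_{A,B_j} = 0$.

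With monotonicity in hand I would prove the forward implication as follows. Assume $S_\EC(\A_*,\B_*) = S_\EC$, and for arbitrary $(\A,\B)$ form the common refinement $(\A \vee \A_*, \B \vee \B_*)$. Monotonicity and the definition of the supremum squeeze $S_\EC(\A \vee \A_*, \B \vee \B_*)$ between $S_\EC(\A_*,\B_*) = S_\EC$ and $S_\EC$, so it equals $S_\EC$; additivity then gives $S_\dis(\A\vee\A_*,\B\vee\B_*) + S_\mix(\A\vee\A_*,\B\vee\B_*) = S_\dis(\A_*,\B_*) + S_\mix(\A_*,\B_*)$, while monotonicity forces each summand on the left to be at least its counterpart on the right. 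An equal sum with termwise domination forces termwise equality, so the refinement realizes both $S_\dis(\A_*,\B_*)$ and $S_\mix(\A_*,\B_*)$; a final application of monotonicity (the refinement refines $(\A,\B)$) yields $S_\dis(\A,\B) \le S_\dis(\A_*,\B_*)$ and likewise for mixing. As $(\A,\B)$ was arbitrary, $(\A_*,\B_*)$ is generating for both $S_\dis$ and $S_\mix$, and as a byproduct $S_\EC = S_\dis + S_\mix$, establishing \eqref{eqn_keyidentity_KS}.

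For the converse I would use only the trivial bound $S_\EC \le S_\dis + S_\mix$, immediate from $S_\EC(\A,\B) = S_\dis(\A,\B) + S_\mix(\A,\B) \le S_\dis + S_\mix$ upon taking the supremum. If $(\A_*,\B_*)$ generates both $S_\dis$ and $S_\mix$, then $S_\EC(\A_*,\B_*) = S_\dis(\A_*,\B_*) + S_\mix(\A_*,\B_*) = S_\dis + S_\mix \ge S_\EC$, and since always $S_\EC(\A_*,\B_*) \le S_\EC$, we get $S_\EC(\A_*,\B_*) = S_\EC$, i.e.\ it is generating for $S_\EC$. The existence of at least one generating joint partition, which makes the statement non-vacuous, is supplied by Theorem~\ref{thm_sinai}.
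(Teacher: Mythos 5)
Your proof is correct, and it reaches the theorem by a route that is parallel to but cleaner than the paper's. The paper's Appendix E argument runs through the conditional entropies $S_\EC(\A',\B'\mid\A,\B)$ and $S_\dis(\A',\B'\mid\A,\B)$, a Jensen comparison between them, and the metric-space machinery ($d_\dis$ coarsening $d_\EC$, transfer of Cauchy sequences) inherited from the proof of Theorem \ref{thm_sinai}; it then handles $S_\mix$ only indirectly, as the residual $S_\EC - S_\dis$ under refinement. You instead isolate a single monotonicity lemma for all three partition functionals and prove it by bare pointwise inequalities: superadditivity of $x\log x$ for $S_\EC$ and $S_\dis$, and the log-sum inequality for $S_\mix$ --- which, note, is exactly the same convexity fact as the paper's Jensen step $S_\dis(\cdot\mid\cdot)\le S_\EC(\cdot\mid\cdot)$, since the gap between those two conditional entropies is precisely the conditional mixing entropy whose non-negativity you are proving. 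What your version buys is symmetry (all three functionals are treated identically rather than $S_\mix$ being a by-product), no reliance on the completeness/Cauchy apparatus, and a converse direction that is essentially the same one-line supremum argument as the paper's. The only points to make explicit in a final write-up are the finiteness of $S_\dis(\A_*,\B_*)$ and $S_\mix(\A_*,\B_*)$ (needed for "equal sum plus termwise domination forces termwise equality", and supplied by $S_\dis(\A,\B)+S_\mix(\A,\B)=S_\EC(\A,\B)\le S_\EC<\oo$ for finite-entropy processes) and the two refinement cases (child-set versus parent-set splitting), both of which you already flag.
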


We prove Theorem \ref{thm_sinaithm_dispersionmixing} in Appendix \ref{app_sinaithm_dispersionmixing}. The proof of Theorem \ref{thm_sinaithm_dispersionmixing} depends on the partition identity \eqref{eqn_partitionentropyECidentity}. 

\begin{thm}[Environmental Entropy Decomposition, General Version] \label{thm_environmentalentropydecomposition}
General environmental entropy decomposes as:
\begin{equation} \label{eqn_generalentropyECidentity}
    S_\EC = S_\dis + S_\mix.
\end{equation}
\end{thm}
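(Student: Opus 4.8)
The plan is to combine the partition-level decomposition (Proposition \ref{pro_environmentalentropydecomposition}) with the existence of a common generating partition furnished by the two Sinai-type theorems. The key conceptual point is that passing to suprema in the exact partition identity \eqref{eqn_partitionentropyECidentity} only yields one inequality for free; the reverse requires all three entropies to be realized \emph{simultaneously} at a single partition.

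First I would record the easy inequality. For every pair of countable measurable partitions $(\A,\B)$, the partition identity \eqref{eqn_partitionentropyECidentity} gives $S_\EC(\A,\B) = S_\dis(\A,\B) + S_\mix(\A,\B)$. Taking the supremum over $(\A,\B)$ and using subadditivity of the supremum of a sum, $\sup(f+g)\le \sup f + \sup g$, yields
\[
S_\EC = \sup_{\A,\B}\big(S_\dis(\A,\B)+S_\mix(\A,\B)\big) \le S_\dis + S_\mix.
\]

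For the reverse inequality I would invoke the two Sinai theorems in tandem. By the generalized Sinai's theorem (Theorem \ref{thm_sinai}) there is a generating joint partition $(\A_*,\B_*)$ with $S_\EC = S_\EC(\A_*,\B_*)$. By Theorem \ref{thm_sinaithm_dispersionmixing}, a joint partition is generating for $S_\EC$ precisely when it is generating for both $S_\dis$ and $S_\mix$, so the same $(\A_*,\B_*)$ satisfies $S_\dis = S_\dis(\A_*,\B_*)$ and $S_\mix = S_\mix(\A_*,\B_*)$. Applying the partition identity \eqref{eqn_partitionentropyECidentity} at this distinguished partition then gives
\[
S_\EC = S_\EC(\A_*,\B_*) = S_\dis(\A_*,\B_*) + S_\mix(\A_*,\B_*) = S_\dis + S_\mix,
\]
which is the desired identity (and, in fact, supplies both inequalities at once).

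The only genuine obstacle is the gap between $\sup(f+g)$ and $\sup f + \sup g$: termwise additivity of the local entropies does not automatically transfer to the suprema, since the optimizing partitions could a priori differ for dispersion and for mixing. This obstruction is exactly what Theorem \ref{thm_sinaithm_dispersionmixing} removes, by forcing a single partition to be generating for all three functionals; its proof (deferred to the appendix) is where the real work lies. Granting that theorem together with the existence of an $S_\EC$-generating partition, the decomposition is immediate.
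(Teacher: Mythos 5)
Your proposal is correct and follows essentially the same route as the paper: both arguments evaluate the exact partition identity \eqref{eqn_partitionentropyECidentity} at a joint partition that is simultaneously generating for $S_\EC$, $S_\dis$, and $S_\mix$, with Theorem \ref{thm_sinai} supplying existence and Theorem \ref{thm_sinaithm_dispersionmixing} supplying the simultaneity. Your preliminary subadditivity inequality and your remark that the whole difficulty is concentrated in the common-generating-partition lemma are accurate but not needed beyond what the paper already does.
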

\begin{proof}
For the general result \eqref{eqn_generalentropyECidentity}, let $(\A_*, \B_*)$ denote a generating joint partition for environmental, dispersion, and mixing entropies simultaneously, using Theorem \ref{thm_sinaithm_dispersionmixing}. We evaluate the partition identity \eqref{eqn_partitionentropyECidentity} at the generating partitions, hence
\begin{equation}
    S_\EC = S_\EC(\A_*,\B_*) = S_\dis(\A_*,\B_*) + S_\mix(\A_*,\B_*) = S_\dis + S_\mix.
\end{equation}
\end{proof}


Both dispersion and environmental entropies are bounded above by environmental entropy, by non-negativity (Lemma \ref{lem_lowerbounddispersionmixingentropies}) and the environmental decomposition (Proposition \ref{pro_environmentalentropydecomposition}). 

\begin{cor}[Environmental Upper Bound for Dispersion and Mixing Entropies] \label{cor_upperboundfordispersionmixingentropy} 
Let $w$ be finite-entropy. The dispersion entropy functionals are bounded by environmental entropy:
\begin{equation}
    S_\dis(A,B) \le S_\EC(A,B), \qquad S_\dis(\A,\B) \le S_\EC(\A,\B), \quad \mathrm{and} \quad S_\dis \le S_\EC,
\end{equation}
with saturation when $w$ is locally, partition, or generally purely dispersive, respectively. 

The mixing entropy functionals are bounded by environmental entropy:
\begin{equation}
    S_\mix(A,B) \le S_\EC(A,B), \qquad S_\mix(\A,\B) \le S_\EC(\A,\B), \quad \mathrm{and} \quad S_\mix \le S_\EC,
\end{equation}
with saturation when $w$ is locally, partition, or generally purely mixing, respectively. 
\end{cor}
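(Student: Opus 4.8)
The plan is to read this corollary directly off the environmental entropy decomposition together with the non-negativity of each summand, so that no new estimate is required. First I would establish the local statements. By the local decomposition \eqref{eqn_localentropyECidentity} of Proposition \ref{pro_environmentalentropydecomposition}, we have $S_\EC(A,B) = S_\dis(A,B) + S_\mix(A,B)$. Since Lemma \ref{lem_lowerbounddispersionmixingentropies} gives $S_\mix(A,B) \ge 0$, rearranging yields $S_\dis(A,B) = S_\EC(A,B) - S_\mix(A,B) \le S_\EC(A,B)$; symmetrically, $S_\dis(A,B) \ge 0$ gives $S_\mix(A,B) \le S_\EC(A,B)$.

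Next I would dispatch the saturation conditions. Equality $S_\dis(A,B) = S_\EC(A,B)$ holds exactly when the discarded term $S_\mix(A,B)$ vanishes, and by Lemma \ref{lem_lowerbounddispersionmixingentropies} the local mixing entropy vanishes precisely in the locally purely dispersive case; the mixing statement is its mirror image, with $S_\dis(A,B) = 0$ characterizing the locally purely mixing case. For the partition level I would repeat the identical two-line argument verbatim, using the partition decomposition \eqref{eqn_partitionentropyECidentity} in place of the local one and the partition non-negativity from Lemma \ref{lem_lowerbounddispersionmixingentropies}. For the general level I would invoke the general decomposition $S_\EC = S_\dis + S_\mix$ (Theorem \ref{thm_environmentalentropydecomposition}) together with $S_\dis \ge 0$ and $S_\mix \ge 0$, concluding $S_\dis \le S_\EC$ and $S_\mix \le S_\EC$, with saturation forced by the vanishing of the complementary entropy in the generally purely dispersive (resp. mixing) case.

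Since every ingredient is already established, there is essentially no analytic obstacle here; the entire content is that $S_\EC$ is a sum of two non-negative terms. The only point demanding care is the bookkeeping of the saturation conditions: I must verify that ``the other summand is zero'' translates into the \emph{correct} purity condition via the \emph{correct} clause of Lemma \ref{lem_lowerbounddispersionmixingentropies} (purely dispersive annihilates mixing entropy, so it saturates the dispersion bound; purely mixing annihilates dispersion entropy, so it saturates the mixing bound), and that at the general level this step relies on Theorem \ref{thm_environmentalentropydecomposition} rather than on the local or partition forms of the decomposition.
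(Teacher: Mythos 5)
Your proposal is correct and is essentially the paper's own argument: the corollary is read off the decomposition $S_\EC = S_\dis + S_\mix$ (local, partition, and general via Theorem \ref{thm_environmentalentropydecomposition}) together with the non-negativity of each summand from Lemma \ref{lem_lowerbounddispersionmixingentropies}, with saturation coming from the vanishing of the complementary entropy in the corresponding pure case. Your bookkeeping of which purity condition saturates which bound matches the paper exactly.
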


\subsection{Bernoulli Examples} \label{sect_bernoulliexamples}

We illustrate the extreme cases of dispersion and mixing via simple examples based on Bernoulli random variables.
The Bernoulli dispersion process as the simple one-to-two mapping splitting population from one point onto two points, which has positive dispersion entropy. The Bernoulli mixing process is the simple two-to-one mapping combining population from two points onto one point, which has positive mixing entropy.



\begin{exa}[Bernoulli Dispersive Process] \label{exa_bernoullidispersion}
Define $I := \{0\}$ and $I' := \{0,1\}$. Let $q \in [0,1]$, and define the discrete measures $\mu$ and $\mu'$ by $\mu(0) := 1$, $\mu'(0) := q$, and $\mu'(1) := 1-q$. Define the Bernoulli dispersion process by $w_0(0) := q$ and $w_0(1) := 1-q$. Then $w : \mu \mapsto \mu'$. We have $\bar W = 1$. Note that $W_{0,0}(0) = q$ and $W_{0,1}(0) = 1-q$, and hence $W(0) = 1$.  Thus dispersion entropy is non-zero:
\begin{equation}
    S_\dis(0,0) = -q \log q \quad \mathrm{and} \quad S_\dis(0,1) = -(1-q) \log (1-q),
\end{equation}
representing dispersive effects. Since there is only one originating point $0$, the local fitnesses have the same values: $\bar W_{0,0} = q$ and $\bar W_{0,1} = 1-q$. Hence the environmental entropy equals the dispersion entropy, $S_\EC(0,0) = -q \log q = S_\dis(0,0)$ and $S_\EC(0,1) = -(1-q) \log (1-q) = S_\dis(0,1)$. Consequently, mixing entropy vanishes: $S_\mix(0,0) = 0 = S_\mix(0,1)$.

\end{exa}

\begin{exa}[Bernoulli Mixing Process] \label{exa_bernoullicoalescent}
Define $I := \{0,1\}$ and $I' := \{0\}$. Let $p \in [0,1]$, and define the measures $\mu$ and $\mu'$ by $\mu(0) := p$, $\mu(1) := 1-p$, and $\mu'(0) = 1$. Define the Bernoulli mixing process by $w_0(0) := 1$ and $w_1(0) := 1$. Then $w : \mu \mapsto \mu'$. We have $\bar W = 1$.
Note that $W_{0,0}(0) = 1$ and $W_{1,0}(1) = 1$, and so $W(0) = 1$ and $W(1) = 1$. Thus dispersion entropy vanishes: $S_\EC(0,0) = \E[-W_{0,0} / W \log W_{0,0}/W] = p*0 = 0$ and $S_\EC(1,0) = \E[-W_{1,0} / W \log W_{1,0}/W] = (1-p)*0 = 0$. Note that $\bar W_{0,0} = p$ and $\bar W_{1,0} = 1-p$. Hence environmental entropy equals mixing entropy: $S_\EC(0,0) = -p \log p = S_\mix(0,0)$ and $S_\EC(1,0) = -(1-p) \log (1-p) = S_\mix(1,0)$.




\end{exa}

The Bernoulli dispersion and coalescent processes are inverses of each other: $w_\mix \circ w_\dis$ is the identity on measures on $\{0\}$, and $w_\dis \circ w_\mix$ is the identity on measures on $\{0,1\}$.





\section{Vanishing Entropies: Efficiency, Consistency, Reversibility, and Irreversibility} \label{sect_ecri}

In this section, we relate the vanishing of dispersive, mixing, and environmental entropies to concepts of left, right and full invertibility, respectively. 

\begin{thm}[Efficiency Theorem] \label{thm_efficiency}
Let $w$ be finite-entropy. The following are equivalent: 
\begin{enumerate}
    \item $w$ is purely mixing (i.e., $D_{A,B} \in \{0,1\}$ $\tilde \mu$-a.s.~for all $A \in \I$ and $B \in \I'$),
    \item $S_\dis = 0$,
    \item There exists a purely-environmental section to $w_\EC$, i.e., a right-inverse process $w' : \mu' \to \tilde \mu$ (i.e., $w_\EC \circ w' = 1_{\mu'}$). If $S_\mix > 0$, then $w'$ is not unique. 
\end{enumerate}
\end{thm}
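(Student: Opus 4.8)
The plan is to prove the implications $(1)\Rightarrow(2)\Rightarrow(3)\Rightarrow(1)$, with the bulk of the work in passing to and from the section $w'$. The equivalence $(1)\Leftrightarrow(2)$ is a repackaging of Lemma~\ref{lem_lowerbounddispersionmixingentropies}: since $S_\dis=\sup_{\A,\B}S_\dis(\A,\B)$ is a supremum of non-negative partition entropies, $S_\dis=0$ holds iff $S_\dis(A,B)=0$ for every measurable $A\subseteq I$ and $B\subseteq I'$ (test against the binary partitions $\{A,A^c\}$ and $\{B,B^c\}$, whose remaining terms are all non-negative), and the local saturation analysis in that lemma identifies this with $D_{A,B}\in\{0,1\}$ $\tilde\mu$-a.s., which is condition $(1)$. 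I would then record the geometric content: for $i\in A$ one has $D_{A,B}(i)=w_{\EC,i}(B)$, so $D_{A,B}\in\{0,1\}$ for all $B$ says the normalized kernel $w_{\EC,i}=w_i/W(i)$ is a Dirac mass; thus $(1)$ is equivalent to $w_\EC$ being carried by a measurable map $f:\tilde I\to I'$, with $w_{\EC,\tilde i}=\delta_{f(\tilde i)}$.

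For $(1)\Rightarrow(3)$ I would build the section by disintegration. By the Price representation theorem (Theorem~\ref{thm_pricerepresentation}), $w_\EC:\tilde\mu\mapsto\mu'$ is the purely environmental factor, and by the previous step it is deterministic via $f$, so $\mu'=f_*\tilde\mu$. Disintegrating $\tilde\mu$ along $f$ (the regular conditional measures exist under the standing disintegration hypotheses invoked in Definition~\ref{def_process}) yields a measurable family $i'\mapsto w'_{i'}$ of probability measures with $w'_{i'}(f^{-1}(i'))=1$ and $\tilde\mu(\cdot)=\int_{I'}w'_{i'}(\cdot)\,\mu'(\d i')$. This $w'$ is a process $\mu'\mapsto\tilde\mu$, purely environmental because each $w'_{i'}$ is a probability measure, so its fitness is identically $1$ (Theorem~\ref{thm_pureprocesses}(2)). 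The section identity follows from support considerations:
\[
(w_\EC\circ w')_{i'}(C)=\int w_{\EC,\tilde i}(C)\,w'_{i'}(\d\tilde i)=\int 1_C(f(\tilde i))\,w'_{i'}(\d\tilde i)=w'_{i'}(f^{-1}(C))=1_C(i'),
\]
so $w_\EC\circ w'=1_{\mu'}$.

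For $(3)\Rightarrow(1)$ I would reverse the reasoning via an extreme-point argument. A purely environmental section satisfies, for $\mu'$-a.e.\ $i'$,
\[
\delta_{i'}(\cdot)=\int_{\tilde I}w_{\EC,\tilde i}(\cdot)\,w'_{i'}(\d\tilde i),
\]
exhibiting the Dirac mass $\delta_{i'}$ as a barycenter of the kernels $w_{\EC,\tilde i}$. Evaluating on a measurable $C$ gives $\int w_{\EC,\tilde i}(C)\,w'_{i'}(\d\tilde i)=1_C(i')$, and since $0\le w_{\EC,\tilde i}(C)\le 1$ this forces $w_{\EC,\tilde i}(C)=1_C(i')$ for $w'_{i'}$-a.e.\ $\tilde i$; a monotone-class argument over a countable generating family upgrades this to $w_{\EC,\tilde i}=\delta_{i'}$ for $w'_{i'}$-a.e.\ $\tilde i$. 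Integrating against $\mu'$ and using that $w'$ restores $\tilde\mu$ (i.e.\ $\int w'_{i'}\,\mu'(\d i')=\tilde\mu$) shows $w_\EC$ is a Dirac mass for $\tilde\mu$-a.e.\ $\tilde i$, which is $(1)$, whence $S_\dis=0$. I expect this last step to be the main obstacle: the measure-restoring property of $w'$ is exactly what propagates the deterministic conclusion to a $\tilde\mu$-full set. A right-inverse that fails to reproduce $\tilde\mu$ could steer its conditional mass away from a dispersing region and so exist even when $S_\dis>0$, so the hypothesis that $w'$ be a genuine process onto $\tilde\mu$ cannot be dropped here.

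Finally, for the non-uniqueness clause I would argue that $S_\mix>0$ forces the carrying map $f$ to be non-injective on a set of positive $\tilde\mu$-measure (by the mixing counterpart of the local saturation analysis, equivalently by $S_\EC=S_\mix>0$ with $S_\dis=0$), so some fibers $f^{-1}(i')$ contain more than one point. Within such a fiber the conditional mass may be redistributed while remaining supported in $f^{-1}(i')$, and by the support computation above every such choice still yields $w_\EC\circ w'=1_{\mu'}$; distinct redistributions give distinct purely environmental right-inverses. If one additionally insists that the section restore $\tilde\mu$, the disintegration is essentially unique, so the non-uniqueness is genuinely a statement about right-inverses in the defining sense $w_\EC\circ w'=1_{\mu'}$.
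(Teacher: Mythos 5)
Your route is genuinely different from the paper's and, for the most part, sound. The paper never represents $w_\EC$ by a point map: it constructs a set-level ``child-set mapping'' $\chi:\I\to\I'$ by a Zorn's-lemma argument (Definition \ref{def_childset}, Theorem \ref{thm_childsetexistence}), shows via Proposition \ref{pro_puremixing_chileftinverse} that pure mixing is equivalent to $\chi\circ w^{-1}\approx\mathrm{id}$, and then manufactures the section as an element of a nested intersection of spaces of processes indexed by partition refinements, the defining condition being $w'_{i'}(w^{-1}B)=1$ for $i'\in B$. Your replacement --- read $D_{A,B}(i)=1_A(i)\,w_{\EC,i}(B)$, conclude that $w_{\EC,i}$ is a.s.\ a Dirac mass $\delta_{f(i)}$, and take $w'$ to be the disintegration of $\tilde\mu$ along $f$ --- is conceptually cleaner, and your $(3)\Rightarrow(1)$ barycenter argument is tighter than the paper's: for each fixed $B$, the identity $\int w_{\EC,\tilde i}(B)\,w'_{i'}(\d\tilde i)=1_B(i')$ with a $[0,1]$-valued integrand forces $w_{\EC,\cdot}(B)\in\{0,1\}$ $w'_{i'}$-a.s., and integrating the exceptional set against $\mu'$ kills it in $\tilde\mu$; no monotone-class upgrade to a genuine point mass is even needed for that direction. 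Your observation that the measure-restoring property of $w'$ is what propagates determinism to a $\tilde\mu$-full set is exactly right, and the non-uniqueness clause you give (redistributing conditional mass within a non-singleton fiber) is the fiberwise version of the paper's observation that $\chi(A)=\chi(A')$ for distinct $A,A'$ when $S_\mix>0$.

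The gap is in $(1)\Rightarrow(3)$, and it is one of generality rather than of logic. The paper works over arbitrary measurable spaces ``with no topological constraints,'' and two of your steps need more. First, passing from ``$D_{A,B}\in\{0,1\}$ $\tilde\mu$-a.s.\ for each $(A,B)$'' (null set depending on $B$) to ``for $\tilde\mu$-a.e.\ $i$, $w_{\EC,i}=\delta_{f(i)}$ with $f$ measurable'' requires $\I'$ to be countably generated (to extract one null set for all $B$ via a $\lambda$-system argument) and to separate points in the standard-Borel sense: a $\{0,1\}$-valued probability measure on a general measurable space need not be a Dirac mass at any point. Second, the regular conditional measures $w'_{i'}$ disintegrating $\tilde\mu$ over $f_*\tilde\mu=\mu'$ are not supplied by Definition \ref{def_process} --- that definition only says a process \emph{is} a disintegration, not that one exists for an arbitrary measurable map on an arbitrary measurable space. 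Under a standard Borel hypothesis both points are fine and your proof goes through; in the paper's stated generality you would have to either add that hypothesis explicitly or fall back on a set-level construction like the child-set mapping, which is precisely what the paper's partition-refinement intersection argument is designed to do.
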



\begin{thm}[Consistency Theorem] \label{thm_consistency}
Let $w$ be finite-entropy. The following are equiv.: 
\begin{enumerate}
    \item $w$ is purely dispersive (i.e., $M_{A,B} \in \{0,1\}$ $\tilde \mu$-a.s.~for all $A \in \I$ and $B \in \I'$),
    \item $S_\mix = 0$,
    \item There exists a unique purely-environmental retraction to $w_\EC$, i.e., a left-inverse process $w_\EC' : \mu' \to \tilde \mu$ (i.e., $w_\EC^\dagger \circ w_\EC = 1_{\tilde \mu}$). 
\end{enumerate}
\end{thm}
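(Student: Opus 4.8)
The plan is to establish the three-way equivalence as the mirror image of the Efficiency Theorem (Theorem~\ref{thm_efficiency}), interchanging the roles of dispersion and mixing, and then to treat separately the one genuinely new feature: uniqueness of the retraction. I would begin with $(1)\iff(2)$, which is essentially bookkeeping on top of Lemma~\ref{lem_lowerbounddispersionmixingentropies}. That lemma gives $S_\mix(A,B)=0$ iff $M_{A,B}\in\{0,1\}$ $\tilde\mu$-a.s. Since $S_\mix=\sup_{\A,\B}S_\mix(\A,\B)$ is a supremum of sums of non-negative local terms, $S_\mix=0$ forces every $S_\mix(\A,\B)=0$ and hence every local $S_\mix(A,B)=0$; reversing the implication gives the converse. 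Thus $S_\mix=0$ holds exactly when $M_{A,B}\in\{0,1\}$ for all measurable $A,B$, which is condition $(1)$.

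For $(2)\Rightarrow(3)$, I would work directly with the Markov kernel $w_\EC:\tilde\mu\mapsto\mu'$, since $D_{A,B}$ and $M_{A,B}$ depend only on the normalized kernels $w_i/W(i)=w_{\EC,\tilde i}$ and hence on the environmental part alone. The candidate retraction $w_\EC'$ is the Bayesian reverse kernel obtained by disintegrating the joint law $\tilde\mu(\d\tilde i)\,w_{\EC,\tilde i}(\d i')$ over its $\mu'$-marginal (this reverse disintegration exists under the standing hypotheses of Definition~\ref{def_process}). The crucial step is to upgrade the analytic condition ``$M_{A,B}\in\{0,1\}$ for all $A,B$'' into the structural statement that the family $\{w_{\EC,\tilde i}\}$ is $\tilde\mu$-essentially pairwise mutually singular, i.e.\ distinct parents disperse onto disjoint child regions with no coalescence. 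Here I would invoke the generating joint partition from Theorem~\ref{thm_sinaithm_dispersionmixing} to reduce the all-partitions condition to a single generating pair $(\A_*,\B_*)$, then pass to the generated $\sigma$-algebra. Mutual singularity makes $w_\EC'(i',\cdot)$ a $\mu'$-a.s.\ point mass, so that testing $w_\EC'\circ w_\EC$ against observables via the disintegration identity \eqref{eqn_disintegration_observable} recovers $1_{\tilde\mu}$.

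For $(3)\Rightarrow(2)$ I would argue by contraposition: if $S_\mix>0$, then by the decomposition in Proposition~\ref{pro_environmentalentropydecomposition} together with Sinai there is a generating pair carrying a strictly positive local mixing term, witnessing two parent regions whose children overlap in some $B$ and cannot be disentangled. A data-processing argument then shows that composing the information-losing $w_\EC$ with any purely-environmental $w'$ cannot restore the lost source information, so $w'\circ w_\EC$ sends some $\delta_{\tilde i}$ to a genuine mixture and $w'\circ w_\EC\neq 1_{\tilde\mu}$. Finally, uniqueness of the retraction—the feature distinguishing this theorem from Efficiency, where the section was non-unique precisely when $S_\mix>0$—follows because in the purely dispersive case there is no backward freedom: since $\mu'$ is exactly the pushforward $\tilde\mu\,w_\EC$ and the parent is $\mu'$-a.s.\ a deterministic function of the child, any two purely-environmental left inverses must agree $\mu'$-a.s.\ with the reverse kernel and hence coincide as processes on $\mu'$.

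I expect the main obstacle to be the structural upgrade inside $(2)\Rightarrow(3)$: converting the pointwise, all-partitions condition $M_{A,B}\in\{0,1\}$ into $\tilde\mu$-a.e.\ mutual singularity of the forward kernels, carried out rigorously through a generating partition and the generated $\sigma$-algebra. The attendant measure-theoretic care—verifying that the Bayesian reverse kernel is a legitimate evolutionary process and that the composition identities hold $\tilde\mu$- and $\mu'$-almost everywhere—is where the real work lies, whereas the $(1)\iff(2)$ equivalence and the uniqueness clause are comparatively routine once the singularity structure is in hand.
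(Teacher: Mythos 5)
Your equivalence $(1)\Leftrightarrow(2)$ is correct and matches the paper's local-to-global principle (Lemma \ref{lem_localtoglobal_dis}), and your contrapositive sketch for $(3)\Rightarrow(2)$ captures the right idea (two disjoint parent regions with overlapping children defeat any candidate retraction, which the paper makes quantitative with the inequality $1 \ge (w'\circ w_\EC)_{\tilde i}(A) + (w'\circ w_\EC)_{\tilde i}(A')$). The genuine gap is in $(2)\Rightarrow(3)$: you build the retraction as the Bayesian reverse kernel obtained by disintegrating $\tilde\mu(\d\tilde i)\,w_{\EC,\tilde i}(\d i')$ over its $\mu'$-marginal, and assert that this reverse disintegration ``exists under the standing hypotheses of Definition \ref{def_process}.'' It does not. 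Definition \ref{def_process} supplies only the \emph{forward} disintegration; the existence of a regular conditional probability in the reverse direction requires structure (e.g.\ standard Borel or Polish spaces) that the paper deliberately avoids — the whole framework is stated for arbitrary measurable spaces ``with no topological constraints.'' On a general measurable space the reverse kernel you want need not exist, so your construction does not get off the ground in the stated generality. A second, related difficulty is your ``structural upgrade'' from the set-level condition $M_{A,B}\in\{0,1\}$ to $\tilde\mu$-a.e.\ \emph{pairwise} mutual singularity of the kernels $\{w_{\EC,\tilde i}\}$: this is a pointwise statement about individual parents, whereas the hypothesis only constrains measurable sets, and passing from one to the other through a generating partition is exactly the step you defer but never supply; it is not clear it can be done without atoms or additional regularity.

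The paper circumvents both problems by working entirely at the level of sets. It constructs a \emph{child-set mapping} $\chi:\I\to\I'$ (Definition \ref{def_childset}), proved to exist and be essentially unique by a Zorn's lemma argument on a lattice of covering mappings (Theorem \ref{thm_childsetexistence}); Proposition \ref{pro_puredispersive_chirightinverse} shows pure dispersivity is equivalent to $\chi$ being a right inverse of the parent-set mapping $w^{-1}$, i.e.\ disjoint parent sets have essentially disjoint child sets. The retraction is then defined directly by $w'_{i'}(A):=1_{\chi(A)}(i')$, which is a well-defined measure precisely because of that disjointness, and uniqueness follows by testing any other retraction $w''$ against $A' := w^{-1}\{i': w''_{i'}(A)\ne 1\}$ and deriving a contradiction — no child-to-parent \emph{function} is ever needed. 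If you want to salvage your route, you must either add a standing regularity hypothesis guaranteeing the reverse disintegration (weakening the theorem) or replace the reverse kernel with a set-level construction along the paper's lines.
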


\begin{thm}[Reversibility Theorem] \label{thm_reversibility}
Let $w$ be finite-entropy. The following are equiv.: 
\begin{enumerate}
    \item $w$ is purely dispersive and purely mixing (i.e., $\{D_{A,B}, M_{A,B}\} \subseteq \{0,1\}$ $\tilde \mu$-a.s.~for all $A \in \I$ and $B \in \I'$),
    \item $S_\EC = 0$,
    \item There exists a unique purely-environmental inverse process $w_\EC^\dagger : \mu' \to \tilde \mu$ (i.e., $w_\EC^\dagger \circ w_\EC = 1_{\mu'}$ and $w_\EC \circ w_\EC^\dagger = 1_{\tilde \mu}$). 
\end{enumerate}
\end{thm}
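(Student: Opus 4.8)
The plan is to obtain the Reversibility Theorem by fusing the Efficiency Theorem (Theorem \ref{thm_efficiency}) and the Consistency Theorem (Theorem \ref{thm_consistency}) through the environmental entropy decomposition $S_\EC = S_\dis + S_\mix$ (Theorem \ref{thm_environmentalentropydecomposition}) together with the non-negativity of each summand (Lemma \ref{lem_lowerbounddispersionmixingentropies}). The whole theorem is essentially a conjunction: ``purely dispersive and purely mixing'' is ``purely dispersive'' (Consistency) and ``purely mixing'' (Efficiency) at once, and ``two-sided inverse'' is ``left inverse'' and ``right inverse'' at once.

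First I would establish $(1) \iff (2)$. Since $S_\dis \ge 0$ and $S_\mix \ge 0$, the identity $S_\EC = S_\dis + S_\mix$ gives $S_\EC = 0$ if and only if $S_\dis = 0$ and $S_\mix = 0$. By the Efficiency Theorem, $S_\dis = 0$ is equivalent to $w$ being purely mixing ($D_{A,B} \in \{0,1\}$ $\tilde\mu$-a.s.), and by the Consistency Theorem, $S_\mix = 0$ is equivalent to $w$ being purely dispersive ($M_{A,B} \in \{0,1\}$ $\tilde\mu$-a.s.). Conjoining these two equivalences yields $(1) \iff (2)$ directly.

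Next I would prove $(2) \implies (3)$. Assuming $S_\EC = 0$, we have $S_\dis = S_\mix = 0$, so the Efficiency Theorem furnishes a purely-environmental right inverse (section) $R : \mu' \mapsto \tilde\mu$ with $w_\EC \circ R = 1_{\mu'}$, and the Consistency Theorem furnishes a purely-environmental left inverse (retraction) $L : \mu' \mapsto \tilde\mu$ with $L \circ w_\EC = 1_{\tilde\mu}$. The standard cancellation argument forces these to agree: $L = L \circ 1_{\mu'} = L \circ (w_\EC \circ R) = (L \circ w_\EC) \circ R = 1_{\tilde\mu} \circ R = R$, using associativity of process composition. Setting $w_\EC^\dagger := L = R$ produces a purely-environmental two-sided inverse, and uniqueness follows since any two two-sided inverses $w_\EC^\dagger, \hat w$ satisfy $w_\EC^\dagger = w_\EC^\dagger \circ (w_\EC \circ \hat w) = (w_\EC^\dagger \circ w_\EC) \circ \hat w = \hat w$. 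For $(3) \implies (2)$, a two-sided inverse is in particular a right inverse, so Efficiency gives $S_\dis = 0$, and it is a left inverse, so Consistency gives $S_\mix = 0$; hence $S_\EC = S_\dis + S_\mix = 0$.

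The main obstacle I anticipate is purely bookkeeping rather than analytic: I must confirm that composition of evolutionary processes (defined via the iterated disintegration integral) is associative and admits the identity processes $1_{\mu'}, 1_{\tilde\mu}$, so that the ``left inverse equals right inverse'' cancellation is legitimate, and that $w_\EC^\dagger$ built from $L = R$ is genuinely a purely-environmental process on the correct spaces $\mu' \mapsto \tilde\mu$. A secondary subtlety is the uniqueness claim: the Efficiency Theorem warns that the section is \emph{not} unique when $S_\mix > 0$, but in the reversibility regime $S_\mix = 0$, the Consistency Theorem's unique retraction is what pins down $w_\EC^\dagger$, and the cancellation argument shows every section must coincide with it, restoring uniqueness.
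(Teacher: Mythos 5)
Your proof is correct, but for the direction $(2)\Rightarrow(3)$ you take a genuinely different route from the paper. The paper does not invoke the abstract ``left inverse equals right inverse'' cancellation: it takes the \emph{specific} unique retraction $w'$ produced by the Consistency Theorem (defined via the child-set mapping $\chi$ by $w'_{i'}(A)=1$ for $i'\in\chi(A)$) and then verifies directly, using Proposition \ref{pro_puremixing_chileftinverse} (that pure mixing makes $\chi$ a left inverse of $w^{-1}$), that this same $w'$ satisfies the section condition \eqref{eqn_efficiency_proof2}, hence is a two-sided inverse. Your version instead takes the retraction $L$ and \emph{some} section $R$ as black boxes from the two theorems and concludes $L=R$ by associativity of kernel composition; this is cleaner and avoids re-entering the child-set machinery, at the cost of the bookkeeping you correctly flag: the identities $L\circ w_\EC=1_{\tilde\mu}$ and $w_\EC\circ R=1_{\mu'}$ hold only up to null sets, so in the chain $L=L\circ(w_\EC\circ R)=(L\circ w_\EC)\circ R=R$ you need $R_{i'}$ to not charge the $\tilde\mu$-null exceptional set; this follows because $R:\mu'\mapsto\tilde\mu$ is a disintegration, so $\int R_{i'}(N)\,\mu'(\d i')=\tilde\mu(N)=0$ for $\tilde\mu$-null $N$. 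Your handling of $(1)\Leftrightarrow(2)$ via $S_\EC=S_\dis+S_\mix$ with both summands non-negative, and of $(3)\Rightarrow(1)$ by reading off the retraction and section separately, matches the paper's logic. One small payoff of your route is that the uniqueness of the two-sided inverse falls out of the same cancellation, whereas the paper inherits it from the uniqueness clause of the Consistency Theorem; both are valid.
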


\begin{thm}[Irreversibility Theorem] \label{thm_irreversibility}
Let $w$ be finite-entropy. The following are equiv.: 
\begin{enumerate}
    \item $w$ exhibits dispersive or mixing effects (or both),
    \item $S_\EC > 0$,
    \item $S_\dis >0$ or $S_\mix > 0$ (or both)
    \item There does not exist a purely-environmental inverse process $w_\EC^\dagger : \mu' \to \tilde \mu$ (i.e., every purely environmental process $w' : \mu \mapsto \tilde \mu$ must satisfy $w' \circ w_\EC \ne 1_{\tilde \mu}$ or $w_\EC \circ w' \ne 1_{\tilde \mu'}$). 
\end{enumerate}
\end{thm}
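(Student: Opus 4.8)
The cleanest strategy is to observe that the Irreversibility Theorem is simply the logical contrapositive of the Reversibility Theorem (Theorem \ref{thm_reversibility}), packaged together with the decomposition identity $S_\EC = S_\dis + S_\mix$ (Theorem \ref{thm_environmentalentropydecomposition}). Rather than re-proving anything from scratch, I would establish the cyclic chain of implications $(1)\Rightarrow(3)\Rightarrow(2)\Rightarrow(4)\Rightarrow(1)$, leaning on the already-proven Efficiency, Consistency, and Reversibility Theorems at each step.

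\begin{proof}
We prove the equivalences by negating the corresponding statements in Theorem \ref{thm_reversibility}, together with the environmental decomposition $S_\EC = S_\dis + S_\mix$ (Theorem \ref{thm_environmentalentropydecomposition}).

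$(1) \Leftrightarrow (3)$: By definition, $w$ exhibits dispersive or mixing effects precisely when it is \emph{not} the case that $w$ is both purely dispersive and purely mixing. By Lemma \ref{lem_lowerbounddispersionmixingentropies} and Corollary \ref{cor_upperboundfordispersionmixingentropy}, being purely mixing is equivalent to $S_\dis = 0$ and being purely dispersive is equivalent to $S_\mix = 0$. Hence $w$ exhibits dispersive or mixing effects if and only if $S_\dis > 0$ or $S_\mix > 0$.

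$(3) \Leftrightarrow (2)$: Since both $S_\dis \ge 0$ and $S_\mix \ge 0$ (Lemma \ref{lem_lowerbounddispersionmixingentropies}), the decomposition $S_\EC = S_\dis + S_\mix$ gives $S_\EC > 0$ if and only if at least one of $S_\dis, S_\mix$ is strictly positive. Thus $(2) \Leftrightarrow (3)$.

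$(2) \Leftrightarrow (4)$: This is the logical negation of the equivalence $(2) \Leftrightarrow (3)$ in the Reversibility Theorem (Theorem \ref{thm_reversibility}). There, $S_\EC = 0$ holds if and only if there exists a unique purely-environmental inverse process $w_\EC^\dagger : \mu' \to \tilde\mu$ satisfying $w_\EC^\dagger \circ w_\EC = 1_{\mu'}$ and $w_\EC \circ w_\EC^\dagger = 1_{\tilde\mu}$. Negating both sides, $S_\EC > 0$ if and only if no purely-environmental inverse process exists, i.e., every purely environmental process $w' : \mu \mapsto \tilde\mu$ fails either $w' \circ w_\EC = 1_{\tilde\mu}$ or $w_\EC \circ w' = 1_{\tilde\mu'}$.

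This closes the chain of equivalences $(1) \Leftrightarrow (2) \Leftrightarrow (3) \Leftrightarrow (4)$.
\end{proof}

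The main subtlety, and the only place where care is genuinely needed, is the step $(2)\Leftrightarrow(4)$: one must verify that negating the \emph{existence of an inverse} correctly yields the \emph{non-existence} clause as stated, namely that every purely environmental candidate $w'$ fails at least one of the two composition identities. This is a purely logical maneuver once Theorem \ref{thm_reversibility} is in hand, so there is no real analytic obstacle here; the theorem is essentially a restatement of the reversibility dichotomy under De Morgan's laws, with the environmental decomposition supplying the bridge between the aggregate entropy $S_\EC$ and its dispersive and mixing constituents.
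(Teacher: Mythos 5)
Your proposal is correct and matches the paper's own treatment: the paper disposes of the Irreversibility Theorem in one line at the end of Appendix \ref{app_ECRIproofs}, stating that it ``follows from the law of the excluded middle, as the contrapositive of the Reversibility Theorem.'' You simply spell out the same contrapositive argument in more detail, using the decomposition $S_\EC = S_\dis + S_\mix$ and the non-negativity of the two summands to link items (1)--(3), exactly as the paper's cited results (Theorems \ref{thm_efficiency}--\ref{thm_reversibility}, Lemma \ref{lem_lowerbounddispersionmixingentropies}, Theorem \ref{thm_environmentalentropydecomposition}) intend.
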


We prove Theorems \ref{thm_efficiency} through \ref{thm_irreversibility} in Appendix \ref{app_ECRIproofs}. To do so, we define a unique ``child-set mapping'' $\chi$, show partial invertibility of $\chi$ for purely dispersive or mixing processes, then define the partial inverse process such that $\chi$ is its parent-set mapping.

\begin{exa}
The Bernoulli dispersion process (Example \ref{exa_bernoullidispersion}) is purely dispersive but not purely mixing, with left-inverse given by mapping both child types back to the single parent type. The Bernoulli coalescent process (Example \ref{exa_bernoullicoalescent}) is purely mixing but not purely dispersive, with right-inverses given by mapping the child to any mix of parent types.  

A process generated by a function $f : I \to I'$ is purely mixing, and is reversible if and only if $f$ is an invertible function. 
\end{exa}

\subsection{Dollo's Law of Irreversibility}

The above results provide a mathematical framework for reasoning around reversibility and irreversibility. In the biology literature, a notable example is Dollo's law of irreversibiliity, which states that ``an organism never returns exactly to a former state, even if it finds itself placed in conditions of existence identical to those in which it has previously lived \dots it always keeps some trace of the intermediate stages through which it has passed'' \cite{dollo1893laws,gould1970dollo}. 

Dollo's result is an empirical observation without mathematical proof, and in fact researchers have documented exceptions to Dollo's law \cite{collin2008reversing}. Nonetheless Dollo's observation illustrates that there are strong restrictions on biologically reversible processes. We state and prove a weak form of Dollo's law as Corollary \ref{cor_dollo}, by combining the above theorems with a simple fact about selective reversibility. We do not explore a strong formalization of Dollo's law as he stated above, an investigation which is more empirical in nature. 

Observe that a purely selective process can always be reversed to restore the childbearing population, but never the childless population. 
The selective inverse $w_\NS^{-1} : \tilde \mu \mapsto \mu_*$ is defined by scaling by the reverse fitness $\frac{1}{W}$, and is an inverse to the restricted selective process $w_\NS|_{W>0} : \mu_* \mapsto \tilde \mu$. Formally, define $w_\NS^{-1} : \tilde \mu \mapsto \mu_*$ by $w_{\NS,\tilde i}^{-1}(\d i) := \frac{1}{W(\tilde i)} \delta_{\tilde i, i}$. Then $w_\NS^{-1}$ is an inverse to the restricted process $w_\NS|_{W>0} : \mu_* \mapsto \tilde \mu$. Thus $w$ is selectively reversible if and only if $p_* = 1$, i.e., $\mu_* = \mu$. 

We say that $w : \mu \to \mu'$ is \emph{fully reversible} if there exists $w^{-1} : \mu' \mapsto \mu$ such that $w^{-1} \circ w = 1_{\mu}$ and $w \circ w^{-1} = 1_{\mu'}$. We say that $w$ is \emph{childbearing reversible} if we may invert $w$ up to the childbearing population, i.e., if there exists $w^{-1} : \mu' \mapsto \mu_*$ such that $w^{-1} \circ w|_{\mu_*} = 1_{\mu_*}$ and $w \circ w^{-1} = 1_{\mu'}$. 
We say that $w$ is \emph{environmentally reversible} if $0 = S_\dis = S_\mix = S_\EC$. 

\begin{cor}[Weak Version of Dollo's Law of Irreversibility] \label{cor_dollo}
Let $w$ be finite-entropy. Then:
\begin{enumerate}
    \item $w$ is childbearing reversible if and only if $w_\EC$ is environmentally reversible. 
    \item $w$ is fully reversible if and only if $w_\EC$ is environmentally reversible and $p_* = 1$.
\end{enumerate}
In both cases, the inverse process is defined by $w^{-1} := w_\NS^{-1} \circ w_\EC^\dagger$. The inverse admits the Price representation $w^{-1} = \hat w_\EC \circ \hat w_\NS$, with selective part $\hat w_\NS : \mu' \mapsto \hat W \mu'$ given by scaling $\hat W(i') := \int \frac{1}{W(\tilde i)} w_{\EC,i'}^{-1}(\d \tilde i)$, and environmental part $\hat w_\EC : \hat W \mu' \mapsto \mu_*$ given by $\hat w_{\EC,\hat i'}(A) := \frac{1}{\hat W(\hat i')} w^{-1}_{\hat i'}(A) = \int_{A} \frac{1}{W(\tilde i) \hat W(\hat i')} w_{\EC,\hat i'}^{-1}(\d \tilde i)$. 
\end{cor}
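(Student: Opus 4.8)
The plan is to reduce both reversibility statements to the Reversibility Theorem (Theorem \ref{thm_reversibility}) applied to the purely environmental factor $w_\EC$, exploiting the fact recorded just before the corollary that the selective factor is \emph{always} invertible after restriction to the childbearing population: $w_\NS^{-1}:\tilde\mu\mapsto\mu_*$ is a two-sided inverse of $w_\NS|_{W>0}:\mu_*\mapsto\tilde\mu$, where $\tilde\mu:=W\mu$ is the intermediate population of the Price Representation Theorem (Theorem \ref{thm_pricerepresentation}) and $w=w_\EC\circ w_\NS$. Thus childbearing reversibility of $w$ should be exactly reversibility of $w_\EC$, and by Theorem \ref{thm_reversibility} the latter is equivalent to $S_\EC=0$, which is the definition of $w_\EC$ being environmentally reversible (recall $S_\EC$ depends only on the environmental part, and $S_\EC=S_\dis+S_\mix$).

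For the ``if'' direction of (1), I would assume $S_\EC=0$, invoke Theorem \ref{thm_reversibility} to obtain the unique purely-environmental inverse $w_\EC^\dagger:\mu'\mapsto\tilde\mu$ with $w_\EC^\dagger\circ w_\EC=1_{\tilde\mu}$ and $w_\EC\circ w_\EC^\dagger=1_{\mu'}$, and set $w^{-1}:=w_\NS^{-1}\circ w_\EC^\dagger$. Checking that this is a childbearing inverse is then a routine bracketing of compositions: $w^{-1}\circ w|_{\mu_*}=w_\NS^{-1}\circ(w_\EC^\dagger\circ w_\EC)\circ w_\NS|_{\mu_*}=w_\NS^{-1}\circ w_\NS|_{\mu_*}=1_{\mu_*}$, and symmetrically $w\circ w^{-1}=w_\EC\circ(w_\NS|_{\mu_*}\circ w_\NS^{-1})\circ w_\EC^\dagger=w_\EC\circ w_\EC^\dagger=1_{\mu'}$, using associativity together with the four identity relations for $w_\NS^{-1}$ and $w_\EC^\dagger$.

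For the ``only if'' direction of (1)---the crux---I would start from a childbearing inverse $w^{-1}:\mu'\mapsto\mu_*$ and manufacture a two-sided inverse of $w_\EC$ by conjugating with the selective bijection: set $v:=w_\NS|_{\mu_*}\circ w^{-1}:\mu'\mapsto\tilde\mu$. The same bracketing (using $w_\EC=w|_{\mu_*}\circ w_\NS^{-1}$) shows $v\circ w_\EC=1_{\tilde\mu}$ and $w_\EC\circ v=1_{\mu'}$. The genuinely load-bearing step is then to argue that \emph{any} such section $v$ is automatically purely environmental, so that Theorem \ref{thm_reversibility}(3)$\Rightarrow$(2) applies and forces $S_\EC=0$. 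This follows from the composition-fitness identity $W_{w_\EC\circ v}=\langle W_{w_\EC}\rangle_v\,W_v$ (the Corollary to Theorem \ref{thm_pricerepresentation}): since $w_\EC$ is purely environmental, $W_{w_\EC}\equiv1$, whence $W_v=W_{w_\EC\circ v}=W_{1_{\mu'}}\equiv1$ $\mu'$-a.s., and constant fitness means $v$ is purely environmental by Theorem \ref{thm_pureprocesses}(2). I expect this to be the main obstacle, since it is where the ``environmental'' in ``environmentally reversible'' is actually earned---one must rule out the a priori possibility that $w_\EC$ is invertible only through a process carrying a nontrivial selective part.

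Part (2) and the representation formula are then short. For (2), full reversibility gives $w^{-1}\circ w=1_\mu$; comparing fitness via $W_{w^{-1}\circ w}=\langle W_{w^{-1}}\rangle_w\,W\equiv1$ forces $W>0$ $\mu$-a.s., i.e. $p_*=1$, so that $\mu_*=\mu$ and (2) collapses to (1); conversely $p_*=1$ together with $S_\EC=0$ yields full reversibility through part (1). Finally, to obtain the Price representation of $w^{-1}=w_\NS^{-1}\circ w_\EC^\dagger$, I would apply the Price Representation Theorem to $w^{-1}$ itself: its fitness is $\hat W(i')=w^{-1}_{i'}(I)=\int \tfrac{1}{W(\tilde i)}\,w^\dagger_{\EC,i'}(\d\tilde i)$, because $w_{\NS,\tilde i}^{-1}(I)=\tfrac{1}{W(\tilde i)}$, and Theorem \ref{thm_pricerepresentation} then reads off $\hat w_\NS:\mu'\mapsto\hat W\mu'$ as multiplication by $\hat W$ and $\hat w_\EC$ as the $\hat W$-normalization $\hat w_{\EC,\hat i'}(A)=\tfrac{1}{\hat W(\hat i')}w^{-1}_{\hat i'}(A)$, matching the stated formulas.
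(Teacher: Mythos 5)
Your proposal is correct and follows essentially the same route as the paper: factor $w$ via the Price representation, transport a childbearing inverse of $w$ to a two-sided inverse of $w_\EC$ by composing with the selective part, and invoke the Reversibility Theorem. The one place you go beyond the paper's own proof is the verification that the conjugated section $v = w_\NS|_{\mu_*}\circ w^{-1}$ is purely environmental (via the composed-fitness identity $W_{w_\EC\circ v}=\langle W_{w_\EC}\rangle_v\,W_v\equiv 1$ and Theorem \ref{thm_pureprocesses}); the paper simply defines $w_\EC^\dagger := w_\NS\circ w^{-1}$ and checks the two inverse identities, yet purity of this inverse is exactly what is needed to conclude $S_\EC=0$ from Theorem \ref{thm_reversibility}, so your extra step (along with your explicit treatment of part (2) and the representation formula) closes a real gap in the published argument.
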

\begin{proof}
If $w_\EC$ is environmentally reversible, then it is straightforward that $w^{-1} := (w_\EC \circ w_\NS)^{-1} = w_\NS^{-1} \circ w_\EC^\dagger$ is an inverse. 

Conversely, if $w$ is childbearing reversible, then $w$ is childbearing reversible, then define $w_\EC^\dagger := w_\NS \circ w^{-1}$. We first verify $w_\EC \circ w_\EC^\dagger = w_\EC \circ w_\NS \circ w^{-1} = w \circ w^{-1} = 1_{\mu'}$ as desired. Next we verify that $w_\EC^\dagger \circ w_\EC \circ w_\NS = w_\EC^\dagger \circ w = w_\NS \circ w^{-1} \circ w = w_\NS$. Since $w_\NS$ is childbearing reversible, we apply the inverse $w_\NS^{-1}$ on the right and thus $w_\EC^\dagger \circ w_\EC = 1_{\tilde \mu}$.
\end{proof}

\section{Environmental Equilibrium and Bounds on Dispersion and Mixing Entropies} \label{sect_environmentalequilibrium}


In this section, we present strong bounds for dispersive and mixing entropies, improving upon Corollary \ref{cor_upperboundfordispersionmixingentropy}. The bounds of that corollary are ``weak'', as they are only saturated in the extreme cases of purely dispersive and purely mixing processes. The saturation of the strong bounds here corresponds to the case of ``environmental equilibrium'', which represents an efficient flow between the parent and child populations. 




\subsection{Environmental Equilibrium}

Observe that $U_{A,B} > 0$ if and only if $M_{A,B} > 0$. Define the transmission proportion
\begin{equation}
    \tilde p_{A,B} := \frac{\tilde \mu(U_{A,B} > 0)}{N'} = \frac{\tilde \mu(M_{A,B} > 0)}{N'} = \frac{\tilde \mu(1_A > 0 \mathrm{~and~} w_i(B) > 0)}{N'} = \frac{N'_{A,B}}{N'},
\end{equation}
which describes the proportion of the full intermediate population which both starts in $A$ and ends in $B$. We have $\tilde p_{I,I'} = 1$ by definition. We also have
\begin{equation} \label{ineq_tildep_barU}
    \tilde p_{A,B} = \tilde \E[1_{A,B}] \ge \tilde \E[1_{A,B} D_{A,B}] = \bar U_{A,B}, \quad \mathrm{i.e.,} \quad \frac{\bar U_{A,B}}{\tilde p_{A,B}} \le 1.
\end{equation}


\begin{defn}[Environmental Equilibrium] \label{def_environmentalequilibrium}
Let $w$ be a finite-variance process. We say that $w$ is in \emph{environmental equilibrium} if for all $A \in \I$ and $B \in \I'$, both $D_{A,B}$ and $M_{A,B}$ are $\tilde \mu_{A,B}$-almost surely constant, that is, 
\begin{equation} \label{condition_environmentalequilibrium}
    \mbox{$D_{A,B} \in \left\{ 0, \frac{\bar U_{A,B}}{\tilde p_{A,B}} \right\}$ and $M_{A,B}(i) \in \left\{0, \frac{1}{\tilde p_{A,B}} \right\}$ for $\tilde \mu$-almost every $i$.}
\end{equation}
Note: if either the dispersion or mixing condition of \eqref{condition_environmentalequilibrium} is satisfied then both are. 

\end{defn}

The class of processes in environmental equilibrium includes the purely dispersive and purely mixing cases (Lemma \ref{lem_dispersionmixing_localequilibrium}). In general, there exist environmental-equilibrium processes which exhibit both dispersive and mixing effects (Example \ref{exa_bernoulliequilibriumprocess}). 

\begin{lem} \label{lem_dispersionmixing_localequilibrium}
A purely dispersive or purely mixing process is in environmental equilibrium.
\end{lem}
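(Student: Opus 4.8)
The plan is to split into the two cases named in the statement and, in each, reduce the claim to a single scalar identity. I will first record the observation from Definition \ref{def_environmentalequilibrium} that the dispersion and mixing conditions in \eqref{condition_environmentalequilibrium} are equivalent: since $M_{A,B} = D_{A,B}/\bar U_{A,B}$, the value $D_{A,B} = \bar U_{A,B}/\tilde p_{A,B}$ holds exactly when $M_{A,B} = 1/\tilde p_{A,B}$, and $D_{A,B} = 0$ exactly when $M_{A,B} = 0$. Hence it suffices to verify \eqref{condition_environmentalequilibrium} for whichever coefficient the hypothesis controls. To read the hypotheses in clean pointwise form I would invoke the Consistency and Efficiency Theorems (Theorems \ref{thm_consistency} and \ref{thm_efficiency}): "purely dispersive" means $M_{A,B} \in \{0,1\}$ $\tilde\mu$-a.s.\ for all $A \in \I$, $B \in \I'$, and "purely mixing" means $D_{A,B} \in \{0,1\}$ $\tilde\mu$-a.s.\ for all $A,B$.

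The key inputs are the two averaging identities $\tilde\E[D_{A,B}] = \bar U_{A,B}$ and $\tilde\E[M_{A,B}] = 1$ established earlier, together with the fact that on the support of $\tilde\mu$ one has $U > 0$, so that $U_{A,B} > 0 \iff D_{A,B} > 0 \iff M_{A,B} > 0$ and therefore $\tilde p_{A,B} = \tilde\E[1_{\{D_{A,B}>0\}}] = \tilde\E[1_{\{M_{A,B}>0\}}]$. The decisive move is the indicator trick: a non-negative coefficient valued in $\{0,1\}$ a.s.\ equals the indicator of its own positivity set, so its $\tilde\E$-average equals the $\tilde\mu$-measure of that set.

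For the purely dispersive case, $M_{A,B} \in \{0,1\}$ gives $M_{A,B} = 1_{\{M_{A,B}>0\}}$ $\tilde\mu$-a.s., whence $\tilde p_{A,B} = \tilde\E[1_{\{M_{A,B}>0\}}] = \tilde\E[M_{A,B}] = 1$; thus $1/\tilde p_{A,B} = 1$ and $M_{A,B} \in \{0,1\} = \{0, 1/\tilde p_{A,B}\}$, which is exactly \eqref{condition_environmentalequilibrium}. For the purely mixing case, $D_{A,B} \in \{0,1\}$ gives $D_{A,B} = 1_{\{D_{A,B}>0\}}$ $\tilde\mu$-a.s., whence $\bar U_{A,B} = \tilde\E[D_{A,B}] = \tilde\E[1_{\{D_{A,B}>0\}}] = \tilde p_{A,B}$, so $\bar U_{A,B}/\tilde p_{A,B} = 1$ and $D_{A,B} \in \{0, \bar U_{A,B}/\tilde p_{A,B}\}$. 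In either case the equivalence of the two conditions then delivers environmental equilibrium for every $(A,B)$.

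The computation is short, so the only real care is with the degenerate pairs $(A,B)$ for which $\bar U_{A,B} = 0$ (equivalently $\tilde p_{A,B} = 0$, equivalently $U_{A,B} = 0$ $\tilde\mu$-a.s.): there the ratios $1/\tilde p_{A,B}$ and $\bar U_{A,B}/\tilde p_{A,B}$ are undefined, but both $D_{A,B}$ and $M_{A,B}$ vanish $\tilde\mu$-a.s., so \eqref{condition_environmentalequilibrium} holds trivially through its $0$-branch. Handling this edge case explicitly, and confirming that the averaging identities are only applied on the non-degenerate support where $U > 0$ and $\bar U_{A,B} > 0$, is the one place I would be careful; otherwise the argument is a two-line expectation computation in each case.
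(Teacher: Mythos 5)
Your proof is correct and follows essentially the same route as the paper's: in the purely mixing case ($D_{A,B}\in\{0,1\}$) the identity $\tilde\E[D_{A,B}]=\bar U_{A,B}$ forces $\bar U_{A,B}=\tilde p_{A,B}$, and in the purely dispersive case ($M_{A,B}\in\{0,1\}$) the identity $\tilde\E[M_{A,B}]=1$ forces $\tilde p_{A,B}=1$, which is exactly the paper's two-case computation. Your explicit treatment of the degenerate pairs with $\bar U_{A,B}=0$ and your remark that the two branches of \eqref{condition_environmentalequilibrium} are equivalent are just slightly more careful bookkeeping than the paper provides, not a different argument.
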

\begin{proof}
Let $A \in \I$ and $B \in \I'$. If $S_\dis(A,B) = 0$, then $D_{A,B} = 0$ or $1$ $\tilde \mu$-almost surely. In that case, $\bar U_{A,B} = \tilde \E[D_{A,B}] = \tilde p_{A,B}$. Thus $M_{A,B} = 0$ or $\frac{1}{\bar U_{A,B}} = \frac{1}{\tilde p_{A,B}}$ $\tilde \mu$-almost surely.

If $S_\mix(A,B) = 0$, then $M_{A,B} = 0$ or $1$ $\tilde \mu$-almost surely. In that case, $1 = \tilde \E[M_{A,B}] = \tilde p_{A,B}$, and so $M_{A,B} = 0$ or $1 = \frac{1}{\tilde p_{A,B}}$ $\tilde \mu$-almost surely. 
\end{proof}


\begin{lem} \label{lem_vanishing_tildep_implies_vanishing_EC}
If $\tilde p_{A,B} = 0$, then $S_\dis(A,B) = S_\mix(A,B) = S_\EC(A,B) = 0$. 
\end{lem}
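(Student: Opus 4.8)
The plan is to reduce everything to showing that the averaged local relative fitness $\bar U_{A,B}$ vanishes, from which the vanishing of all three local entropies follows immediately via the decomposition and non-negativity results already established; there is no genuinely hard step here.

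First I would unwind the hypothesis. By definition $\tilde p_{A,B} = \tilde \mu(U_{A,B}>0)/N'$, so $\tilde p_{A,B} = 0$ says exactly that $U_{A,B} = 0$ holds $\tilde \mu$-almost surely. Since $\tilde \mu = W\mu$ is supported on $\{W>0\} = \{U>0\}$, the dispersion coefficient $D_{A,B} = U_{A,B}/U$ is well-defined $\tilde \mu$-a.s. and there satisfies $D_{A,B} > 0$ iff $U_{A,B} > 0$. Hence $D_{A,B} = 0$ $\tilde \mu$-a.s. Using the identity $\bar U_{A,B} = \tilde \E[D_{A,B}]$ recorded in Section \ref{sect_dispersionmixing}, I then conclude $\bar U_{A,B} = \tilde \E[D_{A,B}] = 0$.

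With $\bar U_{A,B} = 0$ in hand, the local environmental entropy vanishes directly, since $S_\EC(A,B) = -\bar U_{A,B}\log \bar U_{A,B} = 0$ under the standard convention $0\log 0 = 0$ used throughout. To finish, I would invoke the local environmental-entropy decomposition (Proposition \ref{pro_environmentalentropydecomposition}), giving $S_\EC(A,B) = S_\dis(A,B) + S_\mix(A,B)$, together with the non-negativity of the dispersion and mixing entropies (Lemma \ref{lem_lowerbounddispersionmixingentropies}). A sum of two non-negative quantities equal to zero forces each summand to be zero, so $S_\dis(A,B) = S_\mix(A,B) = 0$, completing the argument.

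As for the main obstacle: there is essentially no analytic difficulty, and the only points requiring care are the $0\log 0 = 0$ convention and the justification that $\tilde p_{A,B} = 0$ propagates to $\bar U_{A,B} = 0$ rather than merely to a statement about $D_{A,B}$ on the support of $\tilde \mu$. One could instead argue directly that $U_{A,B} = 0$ holds $\mu$-a.s., using $U_{A,B} \le U$ to handle the set $\{U=0\}$ and the mutual absolute continuity of $\tilde \mu$ and $\mu$ on $\{U>0\}$; but routing through $\bar U_{A,B} = \tilde \E[D_{A,B}]$ and then the decomposition is the cleanest path and avoids delicate measure-theoretic bookkeeping.
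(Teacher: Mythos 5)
Your proof is correct and rests on the same core observation as the paper's: $\tilde p_{A,B}=0$ forces $U_{A,B}=0$ almost surely (for $\tilde\mu$, hence also for $\mu$ since $U_{A,B}\le U$), after which all three local entropies vanish. The paper concludes directly that every integrand is zero, whereas you route $S_\dis$ and $S_\mix$ through $\bar U_{A,B}=\tilde\E[D_{A,B}]=0$ plus the decomposition of Proposition \ref{pro_environmentalentropydecomposition} and non-negativity (Lemma \ref{lem_lowerbounddispersionmixingentropies}); this is a cosmetic difference only, and your attention to the $0\log 0=0$ convention is the right point of care.
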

\begin{proof}
If $\tilde p_{A,B} = 0$, then $U_{A,B} = 0$ on a set of full $\tilde \mu$-measure, as well as $\mu$-measure.
\end{proof}

\begin{exa}[Reversible Transmitting Processes]
The converse of Lemma \ref{lem_vanishing_tildep_implies_vanishing_EC} does not hold. E.g., consider a process generated by an invertible function $f : I \to I'$ with $w = f_*$ given by the push-forward of measures, i.e., $\mu' := f_* \mu  := \mu \circ f^{-1}$. Since $f$ is invertible, $w$ is reversible by the Reversibility Theorem (Theorem \ref{thm_reversibility}) and so $S_\dis(A,B) = S_\mix(A,B) = S_\EC(A,B) = 0$. However for any set $B$ of positive $\mu'$-measure, we have $\tilde p_{f^{-1} B, B} = 1$. 
\end{exa}

\subsection{Strong Bounds on Dispersion and Mixing Entropies} \label{def_mixing_controls}

We present strong bounds on the dispersion and mixing entropies, by restricting expectations to the sets $A \cap w^{-1} B$ and using Jensen's inequality. These are saturated in environmental equilibrium. 


\begin{thm}[Strong Bounds on Dispersion and Mixing Entropies] \label{thm_strongbound_dismix}
Let $w$ be a finite-variance process, and let $(\A_*, \B_*)$ be a generating joint partition. Then:
\begin{equation} \label{ineq_generalbound_dis}
    0 \le 
    \sum_{A,B} \bar U_{A,B} \log \frac{1}{\tilde \E[D_{A,B}^2]} \le S_\dis \le \sum_{A,B} \bar U_{A,B} \log \frac{\tilde p_{A,B}}{\bar U_{A,B}} 
    \le S_\EC,
\end{equation}
and
\begin{equation} \label{ineq_generalbound_mix}
    0 \le 
    \sum_{A,B} \bar U_{A,B} \log \frac{1}{\tilde p_{A,B}} \le S_\mix \le \sum_{A,B} \bar U_{A,B} \log \frac{\tilde \E[M_{A,B}^2]}{\bar U_{A,B}} 
    \le S_\EC,
\end{equation}
where the sums are over sets $(A,B) \in \A_* \times \B_*$ from the generating joint partition. The inner inequalities are saturated when $w$ is in environmental equilibrium.

The outer upper (resp. lower) bound of \eqref{ineq_generalbound_dis} and outer lower (resp. upper) bound of \eqref{ineq_generalbound_mix} are saturated if and only if $w$ is purely dispersive (resp. mixing). 


\end{thm}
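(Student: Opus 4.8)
The plan is to establish all four bounds locally, at fixed measurable $A \subseteq I$ and $B \subseteq I'$, and only then sum over a common generating joint partition. Throughout I would work with the intermediate probability expectation $\tilde\E[\,\cdot\,] = \E[U\,\cdot\,]$, under which $\tilde\E[D_{A,B}] = \bar U_{A,B}$ and $\tilde\E[M_{A,B}] = 1$. If $\tilde p_{A,B} = 0$, then Lemma \ref{lem_vanishing_tildep_implies_vanishing_EC} makes every term vanish, so I may assume $\tilde p_{A,B} > 0$ (hence $\bar U_{A,B} > 0$) in what follows.

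Each local inequality comes from a single use of Jensen's inequality, but against one of two kinds of probability measure. For the two bounds obtained by restriction to the support $\{U_{A,B} > 0\}$, I would use the conditional expectation $\tilde\E_{A,B}[Y] := \tilde p_{A,B}^{-1}\tilde\E[1_{\{U_{A,B}>0\}}Y]$, noting $\tilde\E_{A,B}[D_{A,B}] = \bar U_{A,B}/\tilde p_{A,B}$ and $\tilde\E_{A,B}[M_{A,B}] = 1/\tilde p_{A,B}$. Applying Jensen to the concave map $x \mapsto -x\log x$ gives $S_\dis(A,B) = \tilde p_{A,B}\,\tilde\E_{A,B}[-D_{A,B}\log D_{A,B}] \le \bar U_{A,B}\log(\tilde p_{A,B}/\bar U_{A,B})$, the upper bound of \eqref{ineq_generalbound_dis}; applying it to the convex map $x \mapsto x\log x$ gives $S_\mix(A,B) = \bar U_{A,B}\tilde p_{A,B}\,\tilde\E_{A,B}[M_{A,B}\log M_{A,B}] \ge \bar U_{A,B}\log(1/\tilde p_{A,B})$, the lower bound of \eqref{ineq_generalbound_mix}. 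For the remaining two bounds I would reweight by the coefficient itself: the probability measure with $\tilde\E$-density $D_{A,B}/\bar U_{A,B}$ together with convexity of $-\log$ produces the lower bound of \eqref{ineq_generalbound_dis}, while the probability measure with density $M_{A,B}$ (total $\tilde\E$-mass $\tilde\E[M_{A,B}] = 1$) together with concavity of $\log$ produces the upper bound of \eqref{ineq_generalbound_mix}.

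The outer inequalities and the saturation analysis are then bookkeeping. The chains close because $\tilde p_{A,B} \le 1$ (from \eqref{ineq_tildep_barU}) forces the dispersion upper bound below $S_\EC(A,B) = -\bar U_{A,B}\log\bar U_{A,B}$ and the mixing lower bound above $0$, while $\tilde\E[D_{A,B}^2]\le\bar U_{A,B}$ and $\tilde\E[M_{A,B}^2]\le 1/\bar U_{A,B}$ (both from $D_{A,B}\le 1$ and $\bar U_{A,B}M_{A,B}\le 1$) control the second-moment ends. Every Jensen step is saturated exactly when the relevant coefficient is constant on its support, and since $D_{A,B}$ is constant on $\{U_{A,B}>0\}$ if and only if $M_{A,B}$ is, this is precisely environmental equilibrium (Definition \ref{def_environmentalequilibrium}); hence all inner inequalities saturate simultaneously there. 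The extreme outer saturations reduce to $\tilde p_{A,B} = 1$ (purely dispersive) and to $D_{A,B}, M_{A,B} \in \{0,1\}$ (purely mixing), matching the stated ``resp.'' assignments.

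Finally, I would pass from local to general entropies via the Generalized Sinai Theorem for dispersion and mixing (Theorem \ref{thm_sinaithm_dispersionmixing}): a joint partition generating for $S_\EC$ simultaneously generates $S_\dis$ and $S_\mix$, so evaluating at $(\A_*,\B_*)$ converts the termwise local inequalities into the stated partition sums, with $\sum_{A,B} S_\dis(A,B) = S_\dis$, $\sum_{A,B} S_\mix(A,B) = S_\mix$, and $\sum_{A,B} S_\EC(A,B) = S_\EC$. I expect the main obstacle to be exactly this coordination: the suprema defining $S_\dis$ and $S_\mix$ are a priori attained on different partitions, and only Theorem \ref{thm_sinaithm_dispersionmixing} licenses summing every local bound on one common generating partition. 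A secondary subtlety is ensuring the correct normalizing constants inside each logarithm are carried through the reweighting steps, since the second-moment ends ($\tilde\E[D_{A,B}^2]$ and $\tilde\E[M_{A,B}^2]$) and the first-moment data ($\bar U_{A,B}$, $\tilde p_{A,B}$) must be tracked consistently so that the partition sums telescope against $S_\EC(\A_*,\B_*)$.
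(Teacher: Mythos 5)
Your proposal is correct and follows essentially the same route as the paper: local Jensen bounds at each $(A,B)$ --- one against the conditional expectation on the support $\{U_{A,B}>0\}$ and one against the self-weighted measure --- then summed over a common generating joint partition supplied by Theorem \ref{thm_sinaithm_dispersionmixing}. The only cosmetic difference is that the paper obtains the two mixing bounds by applying the Strong Gibbs inequality (Theorem \ref{thm_stronggibbsNS}) to $M_{A,B}$ under $\tilde \E$ and then recovers the dispersion bounds from the identity $S_\dis(A,B) = S_\EC(A,B) - S_\mix(A,B)$, whereas you run all four Jensen arguments directly; the resulting inequalities and saturation conditions are identical.
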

\begin{proof}
Fix $A \in \I$ and $B \in \I'$. We first prove local versions of the strong bounds:
\begin{equation} \label{ineq_localbound_dis}
    0 \le 
    \bar U_{A,B} \log \frac{\bar U_{A,B}}{\tilde \E[D_{A,B}^2]} \le S_\dis(A,B) \le \bar U_{A,B} \log \frac{\tilde p_{A,B}}{\bar U_{A,B}}
    \le S_\EC(A,B),
\end{equation}
and
\begin{equation} \label{ineq_localbound_mix}
    0 \le 
    \bar U_{A,B} \log \frac{1}{\tilde p_{A,B}} \le S_\mix(A,B) \le \bar U_{A,B} \log \tilde \E[M_{A,B}^2],
    \le S_\EC(A,B).
\end{equation}
with saturation of the inner inequalities when $w$ is in local environmental equilibrium (i.e., $M_{A,B} \in \{0,\frac{1}{\tilde p_{A,B}} \}$ $\tilde \mu_{A,B}$-a.s.).

We prove the inner mixing inequalities first. Observe that we can write mixing entropy as a negative weighted selective entropy $S_\mix(A,B) = -\bar U_{A,B} \tilde \E[-M_{A,B} \log M_{A,B}]$, so we can apply the strong bounds of Theorem \ref{thm_stronggibbsNS} apply to $\E[-M_{A,B} \log M_{A,B}]$:
\begin{equation}
    -\log \tilde \E[M_{A,B}^2] \le \tilde \E[-M_{A,B} \log M_{A,B}] \le - \log \frac{1}{\tilde p_{A,B}}.
\end{equation}
Multiplying by $-\bar U_{A,B}$ and flipping the order of inequalities yields \eqref{ineq_localbound_mix}. The lower (resp. upper) bound of that result becomes the upper (resp. lower) bound of \eqref{ineq_localbound_mix}. 

Saturation holds when $M_{A,B}$ is $\tilde \mu$-a.s. constant on the set where it is positive, i.e., when $M_{A,B} \in \{0, \frac{1}{\tilde p_{A,B}}\}$. By Lemma \ref{lem_dispersionmixing_localequilibrium}, condition $S_\mix(A,B) = S_\EC(A,B)$ also corresponds to the environmental equilibrium case. 

The non-zero lower bound is trivial: $\log \frac{1}{\tilde p_{A,B}} \ge 0$. For the mixing upper bound, observe:
\begin{equation}
    \tilde \E[M_{A,B}^2] - 1 = \frac{1}{\bar U_{A,B}^2} \E\!\left[\frac{U_{A,B}^2}{U}\right] - 1 \le \frac{1}{\bar U_{A,B}^2} \E[U_{A,B}] - 1 = \frac{1}{\bar U_{A,B}} - 1,
\end{equation}
since $\tilde \E[X] = \E[UX]$ and $U_{A,B} \le U$. Consequently, $\bar U_{A,B} \log \frac{\tilde \E[M_{A,B}^2]}{\bar U_{A,B}} \le \bar U_{A,B} \log \frac{1}{\bar U_{A,B}} = S_\EC(A,B)$, which proves the outer mixing inequalities. 


The local dispersion inequalities and saturation conditions follow by the relation $S_\dis(A,B) = S_\EC(A,B) - S_\mix(A,B)$. In particular, 
\begin{eqnarray}
    S_\dis(A,B) &=& S_\EC(A,B) - S_\mix(A,B) 
    \ge -\bar U_{A,B} \log \bar U_{A,B} - \bar U_{A,B} \log \tilde \E[M_{A,B}^2] \nonumber \\
    &=& -\bar U_{A,B} \log \bar U_{A,B} - \bar U_{A,B} \log \frac{\tilde \E[D_{A,B}^2]}{\bar U_{A,B}^2} 
    = \bar U_{A,B} \log \frac{\bar U_{A,B}}{\tilde \E[D_{A,B}^2]},
\end{eqnarray}
since $\tilde \E[M_{A,B}^2] = \frac{\tilde \E[D_{A,B}^2]}{\bar U_{A,B}^2}$, proving the local dispersion bounds \eqref{ineq_localbound_dis}. The general bounds \eqref{ineq_generalbound_dis} and \eqref{ineq_generalbound_mix} follow by summing over partition sets of the generating joint partition. 

We have that $w$ is purely dispersive ($W_{A,B} = W$ $\tilde \mu$-a.s.) if and only if $\tilde p_{A,B} = \frac{1}{N'} \tilde \mu(W_{A,B} > 0) = \frac{1}{N'} \tilde \mu(W > 0) = 1$, Similarly, $w$ is purely mixing ($M_{A,B} = 1$ $\tilde \mu$-a.s.) if and only if $\tilde \E[M^2_{A,B}] = 1$. This proves the saturation conditions of the outer inequalities. 
\end{proof}

\subsection{Examples in Environmental Equilibrium and Non-Equilibrium}

We demonstrate examples of equilibrium and non-equilibrium processes. We show that the class of discrete processes is always in environmental equilibrium, covering the  extent of Price's work. For example, a process from two points to two points (a ``Bernoulli equilibrium process'') is in equilibrium, while exhibiting both dispersion and mixing effects. We present an example of a process on the real line which is  non-equilibrium.  


\begin{pro}[Discrete Processes Are In Equilibrium] \label{pro_discreteequilibrium}
Let $I$ and $I'$ be countable sets (possibly infinite). 
Let $\mu = (\mu_i)$ and $\mu' = (\mu'_{i'})$ be discrete measures on $I$ and $I'$ respectively. Let $w = (w_i(i'))$ an evolutionary process such that $w : \mu \mapsto \mu'$, i.e., satisfying the linear equation $\mu'_{i'} = \sum_i w_i(i') \mu_i$ for each $i' \in I'$. Write the population sizes $N := |\mu| = \mu(I) = \sum_i \mu_i$ and $N' := |\mu'| = \mu'(I') = \sum_{i'} \mu'_{i'}$, and the selective coefficient $\bar W = \frac{N'}{N}$. 

The Price representation theorem (Theorem \ref{thm_pricerepresentation}) ensures that there exists a diagonal matrix $w_\NS = (W(i))$ and a stochastic matrix $w_\EC = \left(\frac{w_i(i')}{W(i)}\right)$ such that $w = w_\EC w_\NS$, in the sense of matrix multiplication. Let $\tilde \mu = w_\NS \mu = W \mu$ be the fitness-scaled parent population.

For each $(i,i') \in I \times I'$, the average local relative fitness and transmission proportion are:
\begin{eqnarray}
    \bar U_{i,i'} &:=& \E\!\left[\frac{W_{i,i'}}{\bar W}\right] = \frac{1}{N'} w_i(i') \mu_i \\
    \tilde p_{i,i'} &:=& \frac{1}{N'} \tilde \mu(W_{i,i'}>0) = \frac{1}{N'} 1_{W_{i,i'}>0} W(i) \mu_i.
\end{eqnarray}
The dispersion and mixing coefficients equal:
\begin{eqnarray}
    D_{i,i'}(i) &:=& \frac{W_{i,i'}(i)}{W(i)} = \frac{w_i(i')}{W(i)} = \frac{\bar U_{i,i'}}{\tilde p_{i,i'}} \\
    M_{i,i'}(i) &:=& \frac{W_{i,i'}(i)}{\bar U_{i,i'} W(i)} = \frac{w_i(i')}{\bar U_{i,i'} W(i)} = \frac{1}{\tilde p_{i,i'}}
\end{eqnarray}
for $i'$ such that $w_i(i') > 0$ (otherwise $D_{i,i'}(i) := 0 =: M_{i,i'}(i)$ if $w_i(i') = 0$), and $D_{i,i'}(j) = 0 = M_{i,i'}(j)$ for $j \ne i$. 
Thus $w$ is in environmental equilibrium. The entropy functionals satisfy:
\begin{equation}
    \sum_{i,i'} \bar U_{i,i'} \log \frac{1}{\tilde \E[D^2_{i,i'}]} = S_\dis = \sum_{i,i'} \bar U_{i,i'} \log \frac{\tilde p_{i,i'}}{\bar U_{i,i'}}
\end{equation}
and
\begin{equation}
    \sum_{i,i'} \bar U_{i,i'} \log \frac{1}{\tilde p_{i,i'}} = S_\mix = \sum_{i,i'} \bar U_{i,i'} \log \frac{\tilde \E[M^2_{i,i'}]}{\bar U_{i,i'}}.
\end{equation}
with $S_\dis = S_\dis + S_\mix = \sum_{i,i'} (-\bar U_{i,i'} \log \bar U_{i,i'})$. 
\end{pro}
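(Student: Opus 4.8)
The plan is to reduce the entire statement to the singleton joint partition $\A_* = \{\{i\}: i\in I\}$, $\B_* = \{\{i'\}: i'\in I'\}$, which (since $I$ and $I'$ are countable) is a generating joint partition for $S_\EC$, $S_\dis$, and $S_\mix$ by the Generalized Sinai Theorems (Theorems \ref{thm_sinai} and \ref{thm_sinaithm_dispersionmixing}), up to $\tilde\mu$-null sets: for each $i$ with $W(i)>0$ there is some $i'$ with $w_i(i')>0$, so $\{i\}\cap w^{-1}\{i'\} = \{i\}$, and points with $W(i)=0$ carry no intermediate mass. The two displayed scalar formulas are then pure substitution. For $A=\{i\}$, $B=\{i'\}$ the local fitness $W_{i,i'}(j)=1_{\{i\}}(j)\,w_i(i')$ is supported on $j=i$, so $\bar U_{i,i'} = \E[U_{i,i'}] = \tfrac{1}{N\bar W}w_i(i')\mu_i = \tfrac{1}{N'}w_i(i')\mu_i$ using $N\bar W = N'$; and since $U_{i,i'}>0$ only at $i$ (and only when $w_i(i')>0$), we get $\tilde p_{i,i'} = \tfrac{1}{N'}\tilde\mu(U_{i,i'}>0) = \tfrac{1}{N'}1_{w_i(i')>0}W(i)\mu_i$.

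Next I would obtain the coefficient identities by division at the support point: for $w_i(i')>0$, $D_{i,i'}(i) = w_i(i')/W(i) = \bar U_{i,i'}/\tilde p_{i,i'}$ and $M_{i,i'}(i) = D_{i,i'}(i)/\bar U_{i,i'} = 1/\tilde p_{i,i'}$, both vanishing off $i$ and when $w_i(i')=0$. The crucial observation is structural: because each parent set $\{i\}$ is a single point, $D_{i,i'}$ and $M_{i,i'}$ are supported on $\{i\}$ and therefore take only the two values $0$ and a single positive constant. This is exactly the local environmental-equilibrium condition \eqref{condition_environmentalequilibrium} at every pair of the singleton partition, so $w$ is in environmental equilibrium. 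Feeding this into the Strong Bounds (Theorem \ref{thm_strongbound_dismix}) evaluated at the generating singleton partition, the inner inequalities saturate, which gives both displayed chains for $S_\dis$ and $S_\mix$; the final identity $S_\EC = S_\dis + S_\mix = \sum_{i,i'}(-\bar U_{i,i'}\log\bar U_{i,i'})$ then follows from the Environmental Entropy Decomposition (Theorem \ref{thm_environmentalentropydecomposition}) together with the singleton formula for $S_\EC(\{i\},\{i'\})$ (the statement's ``$S_\dis = S_\dis+S_\mix$'' being an evident typo for $S_\EC$).

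The main obstacle is interpretational rather than computational. The literal ``for all $A,B$'' reading of environmental equilibrium can fail on non-singleton sets --- e.g.\ when the ratio $w_i(B)/W(i)$ varies across $i\in A$, so that $D_{A,B}$ takes more than two positive values --- hence the proof must be anchored to the singleton partition, where the two-valued property holds automatically, and I must justify carefully that this partition is genuinely generating (only up to $\tilde\mu$-null sets, which suffices because the $W=0$ atoms contribute nothing to any entropy sum). Once that anchoring is in place, every remaining step is a routine substitution or a direct appeal to the already-established Sinai, Strong Bounds, and Decomposition theorems.
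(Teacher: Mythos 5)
Your proposal is correct and follows essentially the same route as the paper's (much terser) proof: the singleton joint partition is generating, the two-valued structure of $D_{i,i'}$ and $M_{i,i'}$ on the one-point parent sets gives environmental equilibrium, and Theorem \ref{thm_strongbound_dismix} evaluated there yields the displayed entropy identities. Your flagged concern about the literal ``for all $A \in \I$, $B \in \I'$'' quantifier in Definition \ref{def_environmentalequilibrium} is a genuine tension in the paper --- its own proof likewise only verifies the condition at singleton pairs --- and your reading of the final display's ``$S_\dis = S_\dis + S_\mix$'' as a typo for $S_\EC = S_\dis + S_\mix$ is the correct one.
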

\begin{proof}
The discrete joint partition of $I$ and $I'$ is a generating joint partition, since $\I$ is the minimal $\sigma$-algebra containing all sets of the form $\{i\} \cap w^{-1}(\{i'\})$. Formally, the discrete joint partition is the joint collection of singletons $(\{i\}, \{i'\})$. Thus it suffices to evaluate functionals at singleton pairs. 
Since $D_{i,i'}(i) = \frac{\bar U_{i,i'}}{\tilde p_{i,i'}}$ or $=0$, $w$ is in environmental equilibrium. Theorem \ref{thm_strongbound_dismix} ensures that the relations for $S_\dis$ and $S_\mix$ are satisfied. 
\end{proof}

\begin{cor}[Bernoulli Equilibrium Process] \label{exa_bernoulliequilibriumprocess}
Define $I = \{0,1\}$ and $I' = \{0,1\}$. Let $\mu$ and $\mu'$ be measures on $I$ and $I'$, respectively. Any process $w : \mu \mapsto \mu'$ is in environmental equilibrium. If $w_i(i') > 0$ for all $(i,i') \in I \times I'$, then $w$ exhibits dispersive and mixing effects. 
\end{cor}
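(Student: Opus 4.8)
The plan is to dispatch the equilibrium claim by citing the general discrete result, and then to establish each of the two effects by locating a single singleton pair at which the relevant entropy functional is strictly positive. Since $I = \{0,1\}$ and $I' = \{0,1\}$ are finite, hence countable, Proposition \ref{pro_discreteequilibrium} applies verbatim: every process $w : \mu \mapsto \mu'$ between discrete measures is in environmental equilibrium, and the discrete joint partition of singletons $(\{i\},\{i'\})$ is generating. It therefore suffices to evaluate $S_\dis$ and $S_\mix$ on this generating partition, using the explicit formulas
\begin{equation*}
    S_\dis = \sum_{i,i'} \bar U_{i,i'} \log \frac{\tilde p_{i,i'}}{\bar U_{i,i'}} \quad \mathrm{and} \quad S_\mix = \sum_{i,i'} \bar U_{i,i'} \log \frac{1}{\tilde p_{i,i'}}
\end{equation*}
recorded in Proposition \ref{pro_discreteequilibrium}, where $\bar U_{i,i'} = \frac{1}{N'} w_i(i') \mu_i$ and $\tilde p_{i,i'} = \frac{1}{N'} W(i) \mu_i$ whenever $w_i(i') > 0$, with $N' = W(0)\mu_0 + W(1)\mu_1$.

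For the dispersive effect I would observe that $\frac{\tilde p_{i,i'}}{\bar U_{i,i'}} = \frac{1}{D_{i,i'}(i)}$, where $D_{i,i'}(i) = \frac{w_i(i')}{W(i)} = \frac{w_i(i')}{w_i(0) + w_i(1)}$. Under the hypothesis $w_i(i') > 0$ for all $(i,i')$, both $w_i(0)$ and $w_i(1)$ are strictly positive, so $D_{i,i'}(i) \in (0,1)$ and thus $\log \frac{1}{D_{i,i'}(i)} > 0$. Since $\mu \ne 0$, at least one $\mu_i > 0$, making the corresponding $\bar U_{i,i'} > 0$; hence $S_\dis > 0$, that is, $w$ exhibits dispersive effects. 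Equivalently, $D_{i,i'} \notin \{0,1\}$ shows $w$ is not purely mixing, so $S_\dis > 0$ by Lemma \ref{lem_lowerbounddispersionmixingentropies}.

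For the mixing effect I would use $M_{i,i'}(i) = \frac{1}{\tilde p_{i,i'}}$ and $\tilde p_{i,i'} = \frac{W(i)\mu_i}{W(0)\mu_0 + W(1)\mu_1}$, so that $S_\mix > 0$ requires some pair with $\tilde p_{i,i'} < 1$, i.e., $W(1-i)\mu_{1-i} > 0$. This is where the \textbf{main obstacle} lies: unlike dispersion, the mixing effect genuinely needs the parent measure to charge both points. Indeed, $w_i(i') > 0$ forces $W(i) > 0$ for each $i$, but if, say, $\mu_0 = 0$, then $\tilde p_{1,i'} = 1$ and $S_\mix = 0$. I would therefore carry the standing assumption that $\mu$ has full support ($\mu_0, \mu_1 > 0$), the natural setting for a genuine two-point process.

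Granting full support, each $\tilde p_{i,i'} = \frac{W(i)\mu_i}{W(0)\mu_0 + W(1)\mu_1} < 1$ strictly, since the denominator exceeds the numerator by the positive term $W(1-i)\mu_{1-i}$; thus $\log \frac{1}{\tilde p_{i,i'}} > 0$, and with $\bar U_{i,i'} > 0$ for each pair, we conclude $S_\mix > 0$. Equivalently, $M_{i,i'}(i) > 1 \notin \{0,1\}$ shows $w$ is not purely dispersive, giving $S_\mix > 0$ again by Lemma \ref{lem_lowerbounddispersionmixingentropies}. Combining the two bounds shows that a fully supported two-point process with strictly positive kernel exhibits both dispersive and mixing effects, while remaining in environmental equilibrium by the first part.
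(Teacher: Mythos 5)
Your proposal is correct and follows the route the paper intends: the corollary is stated without its own proof precisely because the equilibrium claim is immediate from Proposition \ref{pro_discreteequilibrium}, and the dispersive/mixing effects follow by evaluating $S_\dis$ and $S_\mix$ on the generating singleton partition exactly as you do, via $D_{i,i'}(i) \in (0,1)$ and $\tilde p_{i,i'} < 1$. Your observation that the mixing claim silently requires $\mu$ to charge both points (else $\tilde p_{i,i'} = 1$ on the only charged parent and $S_\mix = 0$) is a legitimate refinement of the statement as written, and adding the full-support hypothesis is the right fix.
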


We next present an example of a process not in environmental equilibrium. By Proposition \ref{pro_discreteequilibrium}, any non-equilibrium process must be non-discrete. We describe diffusion processes on continuous sets like the circle or real line, where there are ``very small'' sets.


\begin{exa}[Diffusion Processes Are Non-Equilibrium]
Consider the two-point set $I := \{0,1\}$ with uniform mass $\mu(0) = \frac{1}{2} = \mu(1)$, and the unit interval $I' := [0,1]$ equipped with Lebesgue measure $\lambda$. 
For each $i \in I$, define the process $w_i := \lambda$, i.e., each $i$ diffuses its full mass uniformly onto the interval. The child measure is uniform measure but as a result of the mixture: $\mu'(B) := (w_* \mu)(B) = \sum_i w_i(B) \frac{1}{2} = \lambda(B)$. Note that $w$ is purely environmental since $W(i) := w_i(I') = \lambda(I') = 1 = \bar W$. 

Write the local fitness $W_{i,B}(i) := w_i(B) = \lambda(B) = U_{i,B}$ and $W_{i,B}(1-i) := 0$. The dispersion coefficient is given by $D_{i,B}(i) := \frac{W_{i,B}(i)}{W(i)} = \lambda(B)$. The average local relative fitness equals $\bar U_{i,B} = \E[W_{i,B}] = \frac{1}{2} \lambda(B)$ and the transmission proportion equals $\tilde p_{i,B} := \mu( W_{i,B} > 0) = 1$ when $\lambda(B) > 0$. Consequently, $D_{i,B} = \lambda(B) < \frac{1}{2} \lambda(B) = \frac{\bar U_{i,B}}{\tilde p_{i,B}}$. Since this holds for any measurable $B$ of positive Lebesgue measure, and since any partition must include sets of positive Lebesgue measure, the process is not in equilibrium. 


\end{exa}

\section{Selective\,Change of Environmental\,Entropy and the Third\,Law of Natural\,Selection} \label{sect_thirdlaw}

We analyze the selective change of environmental entropies. The Weak Law shows that in environmental equilibrium, the selective changes vanish, i.e., selection in equilibrium processes does not have environmental externalities. The Strong Law provides quantitative bounds, and the selective changes may fluctuate positively or negatively depending on interactions between selective and environmental niches. 


Let $w$ be a finite-entropy process, and let $(\A_*, \B_*)$ be a generating joint partition, as in Section \ref{sect_environmentalentropy}. We define the selective change of the environmental entropy functionals:
\begin{eqnarray}
    \partial_\NS S_\EC &=& \sum \cov\!\left(-U_{A,B} \log \bar U_{A,B}, U)\right) \\
    \partial_\NS S_\dis &=& \sum \cov\!\left(-U_{A,B} \log \bar D_{A,B}, U)\right) \\
    \partial_\NS S_\mix &=& \sum \cov\!\left(U_{A,B} \log \bar M_{A,B}, U)\right),
\end{eqnarray}
where the sums are over partition sets in the generating joint partition $(A,B) \in \A_* \times \B_*$. Linearity and Theorem \ref{thm_environmentalentropydecomposition} ensure the following:
\begin{equation}
    \partial_\NS S_\EC = \partial_\NS S_\dis + \partial_\NS S_\mix. 
\end{equation}

The Weak Third Law ensures that these quantities vanish in environmental equilibrium, i.e., when $D_{A,B}$ and $M_{A,B}$ are constant $\tilde \mu_{A,B}$-almost surely. The converse need not be true. 

\begin{thm}[Weak Third Law of Natural Selection] \label{thm_weakthirdlaw}
If $w$ is in environmental equilibrium, then
\begin{equation} \label{eqn_weakthirdlaw}
    \partial_\NS S_\EC = 0 = \partial_\NS S_\dis = \partial_\NS S_\mix.
\end{equation}
and
\begin{equation} \label{eqn_weakthirdlaw_env}
    S'_\EC - S_\EC = \partial_\EC\!\left( S_\EC, S'_\EC \right), \quad S'_\dis - S_\dis = \partial_\EC\!\left( S_\dis, S'_\dis \right), \quad S'_\mix - S_\mix = \partial_\EC\!\left( S_\mix, S'_\mix \right).
\end{equation}
\end{thm}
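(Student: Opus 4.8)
The plan is to prove the vanishing of all three selective changes first, and then read off the three Price-equation identities of \eqref{eqn_weakthirdlaw_env} as an immediate corollary. Since the linearity relation $\partial_\NS S_\EC = \partial_\NS S_\dis + \partial_\NS S_\mix$ has already been recorded (via Theorem \ref{thm_environmentalentropydecomposition} and linearity of $\cov(\cdot,U)$), it suffices to establish $\partial_\NS S_\dis = 0$ and $\partial_\NS S_\mix = 0$; the identity $\partial_\NS S_\EC = 0$ then follows for free. Throughout I would fix a single joint partition $(\A_*,\B_*)$ that is simultaneously generating for $S_\EC$, $S_\dis$, and $S_\mix$, whose existence is guaranteed by the generalized Sinai theorems (Theorem \ref{thm_sinai} and Theorem \ref{thm_sinaithm_dispersionmixing}). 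This reduces each global selective change to its defining sum $\sum_{A\in\A_*,B\in\B_*}$ of \emph{local} covariances against $U$.

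First I would unpack the environmental-equilibrium hypothesis (Definition \ref{def_environmentalequilibrium}) at the level of the generating partition: for each pair $(A,B)$ the coefficients $D_{A,B}$ and $M_{A,B}$ are $\tilde\mu_{A,B}$-a.s.\ constant, with $D_{A,B}\in\{0,\bar U_{A,B}/\tilde p_{A,B}\}$ and $M_{A,B}\in\{0,1/\tilde p_{A,B}\}$. The crucial structural consequence is that $U_{A,B}=\tfrac{\bar U_{A,B}}{\tilde p_{A,B}}\,U\,1_{A,B}$ on the support $1_{A,B}:=1_{\{U_{A,B}>0\}}$, i.e.\ the local relative fitness is a genuine scalar multiple of $U$ there. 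In particular the ``environmental weights'' $\log\bar D_{A,B}$ and $\log\bar M_{A,B}$ appearing in the definitions of $\partial_\NS S_\dis$ and $\partial_\NS S_\mix$ become honest constants, so they pull out of each covariance, leaving expressions of the shape $\sum_{A,B}(\text{const}_{A,B})\,\cov(U_{A,B},U)$.

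The heart of the argument, and the step I expect to be the main obstacle, is then to show that these log-weighted sums of local covariances cancel. Here I would substitute the proportionality $U_{A,B}=\bar D_{A,B}\,U\,1_{A,B}$ to rewrite $\cov(U_{A,B},U)=\bar D_{A,B}\,\E[1_{A,B}U(U-1)]$, and exploit the completeness relations $\sum_B U_{A,B}=U1_A$ and $\sum_{A,B}\bar U_{A,B}=1$ together with $\bar U_{A,B}=\bar D_{A,B}\,\tilde p_{A,B}$ to collapse the sum. The delicate point is that equilibrium constancy is an a.s.\ statement with respect to the intermediate (fitness-scaled) measure $\tilde\mu$, whereas the selective change is a covariance taken against the original measure $\mu$; the whole computation must therefore be carried out by passing between $\E[\,\cdot\,]$ and $\tilde\E[\,\cdot\,]=\E[U\,\cdot\,]$ and by tracking the supports $\{U_{A,B}>0\}$ with care. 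I would isolate this as a local lemma (vanishing of each appropriately grouped contribution) and then sum over the generating partition.

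Finally, the three equalities in \eqref{eqn_weakthirdlaw_env} are immediate once the first part is in hand. The functional Price equation for each entropy functional, namely the decomposition $\Delta(S_\bullet,S_\bullet')=\partial_\NS S_\bullet+\partial_\EC(S_\bullet,S_\bullet')$ in the spirit of Corollary \ref{cor_functionalprice} applied termwise over the generating partition, combined with $\partial_\NS S_\bullet=0$ for $\bullet\in\{\EC,\dis,\mix\}$, yields $S_\bullet'-S_\bullet=\partial_\EC(S_\bullet,S_\bullet')$ in each case. No further equilibrium input is needed for this last step.
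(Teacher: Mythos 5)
You follow the paper's own route up to its last step: reduce to a joint partition generating $S_\EC$, $S_\dis$, $S_\mix$ simultaneously, use equilibrium to replace $D_{A,B}$ and $M_{A,B}$ by their constant values, and pull the resulting logarithms out of each covariance, leaving sums of the shape $\sum_{A,B} c_{A,B}\,\cov(U_{A,B},U)$ with $c_{A,B}=-\log \bar D_{A,B}$ (resp. $\log \bar M_{A,B}$). Your handling of \eqref{eqn_weakthirdlaw_env} via the functional Price equation is exactly the paper's and is fine.

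The gap is precisely the step you flag as the main obstacle, and it cannot be filled by the means you propose. The completeness relations do not collapse the sum: $\sum_{A,B}U_{A,B}=U$ gives $\sum_{A,B}\cov(U_{A,B},U)=\var(U)$, not $0$, and the weights $c_{A,B}$ genuinely vary with $(A,B)$, so there is no algebraic cancellation. What the substitution actually leaves is
\[
\partial_\NS S_\dis \;=\; \sum_{A,B}\tilde p_{A,B}\,\bigl(-\bar D_{A,B}\log \bar D_{A,B}\bigr)\,\bigl(\varphi_{A,B}-1\bigr),
\]
which is exactly the common saturated value of the two bounds in the Strong Third Law, and it need not vanish. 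Concretely, take $I=\{1,2\}$ with $\mu(1)=\mu(2)=1$, $I'=\{1',2'\}$, and $w_i=W(i)\,\bigl(q\,\delta_{1'}+(1-q)\,\delta_{2'}\bigr)$ with $W(1)=2$, $W(2)=1$, $q\in(0,1)$. Every $D_{A,B}$ is constant on its support (equal to $q$, $1-q$, $1$, or $0$), so $w$ is in environmental equilibrium, yet at the generating singleton partition one computes $\partial_\NS S_\dis=\var(U)\,\bigl(-q\log q-(1-q)\log(1-q)\bigr)=\tfrac{1}{9}\bigl(-q\log q-(1-q)\log(1-q)\bigr)>0$. So the cancellation is not merely unproved; it fails whenever $\var(U)>0$. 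Be aware that the paper's own proof has the identical hole: it substitutes the constant value of $D_{A,B}$ into $\tilde\E_{A,B}[\,\cdot\,(U-1)]$ and then writes ``$=0$'' without accounting for the surviving factor $\tilde\E_{A,B}[U-1]=\varphi_{A,B}-1$. Your plan therefore cannot be completed as written; the conclusion \eqref{eqn_weakthirdlaw} holds only under an additional hypothesis forcing $\varphi_{A,B}=1$ on each equilibrium cell (for instance the purely environmental case $U\equiv 1$), and any honest write-up should either add that hypothesis or stop at the displayed residual formula.
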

\begin{proof}
If $w$ is in environmental equilibrium, then $D_{A,B}$ and $M_{A,B}$ are $\tilde \mu_{A,B}$-almost surely constant. We rewrite the selective changes in terms of the measure $\tilde \mu_{A,B}$:
\begin{eqnarray}
    \partial_\NS S_\dis
    &=& \sum \cov(-U_{A,B} \log D_{A,B}, U) 
    = \sum \E[(-U_{A,B} \log D_{A,B}) (U-1)] \nonumber \\
    &=& \sum \tilde p_{A,B} \tilde \E_{A,B}[(-D_{A,B} \log D_{A,B}) (U-1)] \nonumber \\
    &=& \sum \tilde p_{A,B} \tilde \E_{A,B}[(-\tilde\E_{A,B}[D_{A,B}] \log \tilde\E_{A,B}[D_{A,B}]) (U-1)] = 0,
\end{eqnarray}
and 
\begin{eqnarray}
    \partial_\NS S_\mix
    &=& \sum \cov(U_{A,B} \log M_{A,B}, U) 
    = \sum \E[(U_{A,B} \log M_{A,B}) (U-1)] \nonumber \\
    &=& \sum \tilde p_{A,B} \bar U_{A,B} \tilde \E_{A,B}[(M_{A,B} \log M_{A,B}) (U-1)] \nonumber \\
    &=& \sum \tilde p_{A,B} \bar U_{A,B} \tilde \E_{A,B}[(\tilde\E_{A,B}[M_{A,B}] \log \tilde\E_{A,B}[M_{A,B}]) (U-1)] = 0.
\end{eqnarray}
By linearity, $\partial_\NS S_\EC = \partial_\NS S_\dis + \partial_\NS S_\mix = 0$. The Price equation implies \eqref{eqn_weakthirdlaw_env}.
\end{proof}

We strengthen this result by showing that non-equilibrium processes can fluctuate, with the  fluctuation windows collapsing in environmental equilibrium. We introduce some selective-fluctuation coefficients to define the windows.


\begin{defn}[Selective Fluctuation Coefficients]
Let $A \in \I$ and $B \in \I'$. Define the local selective fluctuation coefficients:
\begin{eqnarray}
    \varphi_{A,B} &:=& \tilde \E_{A,B}[U] = \frac{1}{\tilde p_{A,B}} \E[1_{A,B} U^2] = \frac{1}{\tilde p_{A,B}} \E[(U_{A,B} + U_{A,B}^c)^2] \label{def_Harphi} \\
    \lambda_{A,B} &:=& \tilde \E_{A,B}[U D_{A,B}] = \bar U_{A,B} \tilde \E_{A,B}[U M_{A,B}] = \tilde \E_{A,B}[U_{A,B}] \label{def_lambda} \\
    &=& \frac{1}{\tilde p_{A,B}} \E[U U_{A,B}] = \frac{1}{\tilde p_{A,B}} \E[U_{A,B}(U_{A,B} + U_{A,B}^c)], \nonumber \\
    \gamma_{A,B} &:=& \tilde \E_{A,B}[U D_{A,B}^2] = \bar U_{A,B}^2 \tilde \E_{A,B}[U M_{A,B}^2] = \tilde \E_{A,B}\!\left[\frac{U_{A,B}^2}{U}\right] = \frac{1}{\tilde p_{A,B}} \E[U_{A,B}^2] 
\end{eqnarray}
\end{defn}



\begin{lem} \label{lem_gamma_lambda_varphi}
For any $A \in \I$ and $B \in \I'$:
\begin{equation} \label{ineq_gamma_lambda_varphi}
    \gamma_{A,B} \le \lambda_{A,B} \le \varphi_{A,B},
\end{equation}
with saturation when $w$ is purely dispersive from $A$ to $B$. 
\end{lem}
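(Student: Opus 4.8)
The plan is to clear the common positive prefactor and reduce both inequalities to elementary pointwise bounds on the local relative fitness. First I would note that all three coefficients carry the factor $\frac{1}{\tilde p_{A,B}}$, and that if $\tilde p_{A,B} = 0$ then $U_{A,B} = 0$ $\mu$-a.s. (as in the proof of Lemma \ref{lem_vanishing_tildep_implies_vanishing_EC}), so every quantity degenerates and the statement is vacuous; hence assume $\tilde p_{A,B} > 0$. Unwinding the definitions via $\tilde \E[X] = \E[UX]$ and $D_{A,B} = U_{A,B}/U$, the coefficients become $\gamma_{A,B} = \frac{1}{\tilde p_{A,B}} \E[U_{A,B}^2]$, $\lambda_{A,B} = \frac{1}{\tilde p_{A,B}} \E[U\,U_{A,B}]$, and $\varphi_{A,B} = \frac{1}{\tilde p_{A,B}} \E[1_{A,B} U^2]$, so that \eqref{ineq_gamma_lambda_varphi} is equivalent to $\E[U_{A,B}^2] \le \E[U\,U_{A,B}] \le \E[1_{A,B} U^2]$.

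Next I would record the two structural facts needed: the pointwise bound $0 \le U_{A,B} \le U$ (immediate from $U_{A,B}(i) = 1_A(i) w_i(B)/\bar W \le w_i(I')/\bar W = U(i)$), and the support identity $U_{A,B} = 1_{A,B}\,U_{A,B} \le 1_{A,B}\,U$, where $1_{A,B} = 1_{\{U_{A,B}>0\}}$. The first inequality then follows by multiplying $U_{A,B} \le U$ by the nonnegative factor $U_{A,B}$, giving $U_{A,B}^2 \le U\,U_{A,B}$ a.s.; the second follows from $U\,U_{A,B} = U\,(1_{A,B} U_{A,B}) \le U\,(1_{A,B} U) = 1_{A,B} U^2$ a.s. Integrating each against $\mu$ and dividing by $\tilde p_{A,B}$ yields \eqref{ineq_gamma_lambda_varphi}.

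For saturation I would introduce the nonnegative complement $U_{A,B}^c := 1_{A,B} U - U_{A,B}$ from \eqref{def_Harphi}, which shares the support $\{U_{A,B}>0\}$ with $U_{A,B}$. Abbreviating $a := U_{A,B}$ and $b := U_{A,B}^c$, the two gaps are exactly $\tilde p_{A,B}(\lambda_{A,B} - \gamma_{A,B}) = \E[ab] \ge 0$ and $\tilde p_{A,B}(\varphi_{A,B} - \lambda_{A,B}) = \E[b(a+b)] \ge 0$. Since $a > 0$ and $a + b = U > 0$ on the common support of $b$, each gap vanishes if and only if $b = 0$ $\mu$-a.s., i.e. $U_{A,B} = 1_{A,B} U$, i.e. $D_{A,B} \in \{0,1\}$ $\tilde \mu$-a.s., which is precisely the condition that $w$ be purely dispersive from $A$ to $B$ (Definition \ref{defn_dispersionmixingentropies}); in particular pure dispersiveness saturates both inequalities. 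The content is entirely elementary, so I anticipate no real obstacle; the only point demanding care is the bookkeeping of supports—verifying that $U_{A,B}^c$ and $U_{A,B}$ are supported on the same set $\{U_{A,B}>0\}$, which is what makes both saturation conditions collapse to the single equality $b=0$ and align with pure dispersiveness.
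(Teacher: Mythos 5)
Your proof is correct and follows essentially the same route as the paper: the chain \eqref{ineq_gamma_lambda_varphi} reduces to the pointwise bound $U_{A,B}\le U$ (equivalently $D_{A,B}^2\le D_{A,B}\le 1$), integrated against the weighted measure, and saturation in each inequality collapses to $D_{A,B}\in\{0,1\}$ $\tilde\mu$-a.s. Your explicit handling of the degenerate case $\tilde p_{A,B}=0$ and the gap identities via $U_{A,B}^c$ are slightly more careful bookkeeping than the paper's one-line argument, but not a different method.
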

\begin{proof}
Since $U_{A,B}(i) \le U(i)$ for all $i$, we have  $D_{A,B}(i)^2 \le D_{A,B}(i) \le 1$ which proves \eqref{ineq_gamma_lambda_varphi}.

 For saturation, observe that $\lambda_{A,B} = \varphi_{A,B}$ if and only if $D_{A,B} = 1$ $\tilde \mu_{A,B}$-a.s., i.e., the purely dispersive case. We have $\gamma_{A,B} = \lambda_{A,B}$ if and only if $D_{A,B}^2 = D_{A,B}$ $\tilde \mu_{A,B}$-a.s., which is equivalent to $D_{A,B} = 1$ (since $D_{A,B} > 0$ $\tilde \mu_{A,B}$-a.s.), i.e., the purely dispersive case. 
\end{proof}


\begin{thm}[Strong Third Law of Natural Selection] \label{thm_thirdlaw}
Let $w$ be a finite-entropy process, and let $(\A_*, \B_*)$ be a generating joint partition. Then:
\begin{equation}
    \sum \left( \tilde p_{A,B} \lambda_{A,B} \log \tfrac{\lambda_{A,B}}{\gamma_{A,B}} - \bar U_{A,B} \log \tfrac{\tilde p_{A,B}}{\bar U_{A,B}} \right) 
    \le \partial_\NS S_\dis 
    \le \sum \left( \tilde p_{A,B} \lambda_{A,B} \log \tfrac{\varphi_{A,B}}{\lambda_{A,B}} - \bar U_{A,B} \log \tfrac{\bar U_{A,B}}{\tilde \E[D_{A,B}^2]} \right),  \label{ineq_generalthirdlaw_dis}
\end{equation}
\begin{equation}
    \hspace{-.55in} \sum \left( \tilde p_{A,B} \lambda_{A,B} \log \tfrac{\lambda_{A,B}}{\varphi_{A,B} \bar U_{A,B}} - \bar U_{A,B} \log \tilde \E[M_{A,B}^2] \right) \le \partial_\NS S_\mix         
    \le \sum \left( \tilde p_{A,B} \lambda_{A,B} \log \tfrac{\gamma_{A,B}}{\lambda_{A,B} \bar U_{A,B}} - \bar U_{A,B} \log \tfrac{1}{\tilde p_{A,B}} \right), \label{ineq_generalthirdlaw_mix}
\end{equation}
\begin{equation}
    \hspace{-.7in} 
    \mbox{\scalebox{0.95}{$
    \sum \left( \tilde p_{A,B} \lambda_{A,B} \log \tfrac{\lambda_{A,B}^2}{\gamma_{A,B} \varphi_{A,B} \bar U_{A,B}} - \bar U_{A,B} \log \tfrac{\tilde p_{A,B} \tilde \E[D_{A,B}^2]}{\bar U_{A,B}^3} \right) \le \partial_\NS S_\EC \le \sum \left( \tilde p_{A,B} \lambda_{A,B} \log \tfrac{\varphi_{A,B} \gamma_{A,B}}{\lambda_{A,B}^2 \bar U_{A,B}} - \bar U_{A,B} \log \tfrac{\bar U_{A,B}}{\tilde p_{A,B} \E[D_{A,B}^2]} \right),$}} \label{ineq_generalthirdlaw_EC}
\end{equation}
where the sums are over partition sets $A \in \A_*$ and $B \in \B_*$. These inequalities are saturated when $w$ is in environmental equilibrium.

The bounds in \eqref{ineq_generalthirdlaw_dis} (resp. \eqref{ineq_generalthirdlaw_mix}, \eqref{ineq_generalthirdlaw_EC}) vanish if and only if $w$ is purely dispersive (resp. mixing, reversible). 
\end{thm}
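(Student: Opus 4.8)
The plan is to reduce everything to local estimates at each pair $(A,B)$ in a generating joint partition $(\A_*,\B_*)$, whose simultaneous existence for all three functionals is guaranteed by Theorem \ref{thm_sinaithm_dispersionmixing}, and then sum over partition sets. I will treat dispersion and mixing separately and recover the environmental window by adding the two, since the pointwise identity $-\log D_{A,B} + \log M_{A,B} = -\log \bar U_{A,B}$ gives $\partial_\NS S_\EC = \partial_\NS S_\dis + \partial_\NS S_\mix$, so \eqref{ineq_generalthirdlaw_EC} is literally the sum of \eqref{ineq_generalthirdlaw_dis} and \eqref{ineq_generalthirdlaw_mix}.

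For the dispersion window, the first step is to rewrite the selective change using $U_{A,B} = D_{A,B}\,U$ and $\tilde\E[X] = \E[UX]$, restricting to $\{U_{A,B}>0\}$:
\begin{equation*}
    \partial_\NS S_\dis = \sum_{A,B}\Big(\E[(-U_{A,B}\log D_{A,B})U] - \E[-U_{A,B}\log D_{A,B}]\Big) = \sum_{A,B}\tilde p_{A,B}\,\tilde\E_{A,B}[U(-D_{A,B}\log D_{A,B})] - S_\dis.
\end{equation*}
The middle term is then bounded by two applications of Jensen's inequality against the conditional intermediate expectation $\tilde\E_{A,B}$: weighting by $U$ (total mass $\varphi_{A,B}=\tilde\E_{A,B}[U]$) and using concavity of $-x\log x$ gives the upper estimate $\lambda_{A,B}\log(\varphi_{A,B}/\lambda_{A,B})$, while weighting by $U D_{A,B}$ (total mass $\lambda_{A,B}=\tilde\E_{A,B}[UD_{A,B}]$) and using convexity of $-\log$ gives the lower estimate $\lambda_{A,B}\log(\lambda_{A,B}/\gamma_{A,B})$, where $\gamma_{A,B}=\tilde\E_{A,B}[UD_{A,B}^2]$. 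Bounding $-S_\dis$ by the strong dispersion estimates of Theorem \ref{thm_strongbound_dismix} produces exactly the two sides of \eqref{ineq_generalthirdlaw_dis}.

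The mixing window is handled identically after writing $\partial_\NS S_\mix = \sum_{A,B}\tilde p_{A,B}\bar U_{A,B}\,\tilde\E_{A,B}[UM_{A,B}\log M_{A,B}] - S_\mix$ and recording $\tilde\E_{A,B}[UM_{A,B}] = \lambda_{A,B}/\bar U_{A,B}$ and $\tilde\E_{A,B}[UM_{A,B}^2] = \gamma_{A,B}/\bar U_{A,B}^2$: convexity of $x\log x$ under the $U$-weight gives the lower bound and concavity of $\log$ under the $UM_{A,B}$-weight gives the upper bound, with $-S_\mix$ again controlled by Theorem \ref{thm_strongbound_dismix}. Adding the dispersion and mixing inequalities termwise and simplifying the logarithms via $\tilde\E[M_{A,B}^2] = \tilde\E[D_{A,B}^2]/\bar U_{A,B}^2$ and $\bar U_{A,B} = \tilde\E[D_{A,B}]$ collapses the two windows precisely into \eqref{ineq_generalthirdlaw_EC}.

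For saturation, each Jensen step becomes an equality exactly when $D_{A,B}$ (equivalently $M_{A,B}$) is $\tilde\mu_{A,B}$-almost surely constant, i.e.\ environmental equilibrium, which is also the saturation regime of Theorem \ref{thm_strongbound_dismix}; hence all three windows collapse onto $\partial_\NS S_\dis$, $\partial_\NS S_\mix$, $\partial_\NS S_\EC$ there. For the vanishing claim, the purely dispersive case forces $D_{A,B}\in\{0,1\}$, hence $\gamma_{A,B}=\lambda_{A,B}=\varphi_{A,B}$ by Lemma \ref{lem_gamma_lambda_varphi} together with $\bar U_{A,B}=\tilde p_{A,B}=\tilde\E[D_{A,B}^2]$, so every logarithm in \eqref{ineq_generalthirdlaw_dis} is identically zero; purely mixing and reversible trivialize \eqref{ineq_generalthirdlaw_mix} and \eqref{ineq_generalthirdlaw_EC} the same way, and the converses follow from the equality conditions of the same Cauchy--Schwarz/Jensen inequalities. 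I expect the main obstacle to be bookkeeping: selecting the correct weighting measure in each of the four Jensen applications so that the emerging moments are exactly $\varphi_{A,B},\lambda_{A,B},\gamma_{A,B}$, and then matching the combined dispersion-plus-mixing logarithms to the stated environmental window.
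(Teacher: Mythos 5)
Your proposal is correct and follows essentially the same route as the paper's proof in Appendix \ref{app_proofofstrongthirdlaw}: the identical decomposition $\partial_\NS S_\dis(A,B) = \tilde \E[U(-D_{A,B}\log D_{A,B})] - S_\dis(A,B)$, the same two Jensen applications against the $U$-weighted and $UD_{A,B}$-weighted conditional measures producing $\varphi_{A,B}$, $\lambda_{A,B}$, $\gamma_{A,B}$, the same appeal to Theorem \ref{thm_strongbound_dismix} for the $-S_\dis$ and $-S_\mix$ terms, and the same termwise addition to obtain the environmental window. The only cosmetic slip is the reference to ``Cauchy--Schwarz'' where only Jensen is used.
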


We prove the Strong Third Law in Appendix \ref{app_proofofstrongthirdlaw}. We do so by splitting each selective change into two terms, then controlling with Jensen's inequality using the measure $\tilde \mu_{A,B}$.

\section{Environmental Change of Environmental Entropy} \label{sect_environmentalchangeofenvironmentalentropy}

Consider composable processes $w : \mu \mapsto \mu'$ and $w' : \mu' \mapsto \mu''$, with generating joint partitions $(\A_*,\B_*)$ and $(\B_*', \C_*)$.
Define the changes of the environmental entropy functionals:
\begin{eqnarray}
    \Delta(S_\EC, S'_\EC) &:=& \sum \left(-\bar U'_{B',C} \log \bar U'_{B',C} \right) + \sum \bar U_{A,B} \log \bar U_{A,B}  \\
    \Delta(S_\dis, S_\dis') &:=& \sum \E'\!\left[-U'_{B',C} \log D'_{B',C} \right] + \sum \E\!\left[ U_{A,B} \log D_{A,B} \right] \\
    \Delta(S_\mix, S_\mix') &:=& \sum \E'\!\left[U'_{B',C} \log M'_{B',C} \right] - \sum \E\!\left[ U_{A,B} \log M_{A,B} \right],
\end{eqnarray}
where the sums are over the joint generating partitions $(\B_*',\C_*)$ and $(\A_*,\B_*)$, respectively. 

The Price equation decomposes the changes into selective and environmental pieces:
\begin{eqnarray}
    \Delta(S_\EC, S'_\EC) &=& \partial_\NS S_\EC + \partial_\EC(S_\EC,S_\EC') \\
    \Delta(S_\dis, S'_\dis) &=& \partial_\NS S_\dis + \partial_\EC(S_\dis,S_\dis') \\
    \Delta(S_\mix, S'_\mix) &=& \partial_\NS S_\mix + \partial_\EC(S_\mix,S_\mix'),
\end{eqnarray}
where the selective changes are as in Section \ref{sect_thirdlaw}, and the environmental changes are as follows:
\begin{eqnarray}
    \partial_\EC(S_\EC,S_\EC') &:=& \tilde \E\!\left[ \sum \left\langle -U'_{B',C} \log \bar U'_{B',C}  \right\rangle_w + \sum U_{A,B} \log \bar U_{A,B} \right] \label{eqn_envchange_EC} \\
    \partial_\EC(S_\dis, S_\dis') &:=& \tilde \E\!\left[ \sum \left\langle -U'_{B',C} \log D'_{B',C}  \right\rangle_w + \sum U_{A,B} \log D_{A,B} \right]  \label{eqn_envchange_dis} \\
    \partial_\EC(S_\mix, S_\mix') &:=& \tilde \E\!\left[ \sum \left\langle U'_{B',C} \log M'_{B',C}  \right\rangle_w - \sum U_{A,B} \log M_{A,B} \right],  \label{eqn_envchange_mix} 
\end{eqnarray}
where the sums are over generating joint partitions, and $\tilde \E[X] = \E[UX]$.


\begin{pro}[Intergenerational Environmental Change]
Let $w$ be finite-entropy. The environmental change of environmental entropy can be written intergenerationally as follows:
\begin{equation} \label{eqn_intergenerationalenvironmentalchange_EC}
    \partial_\EC\!\left(S_\EC, S'_\EC\right) = - \sum_{A,B;B',C} \frac{\E[U U_{A,B}]}{\E[U^2]} \bar U'_{B',C} \log \frac{\bar U'_{B',C}}{\bar U_{A,B}},
\end{equation}
where the sum is over generating joint partition sets. 
\end{pro}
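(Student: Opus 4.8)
The plan is to reduce the definition \eqref{eqn_envchange_EC} of $\partial_\EC(S_\EC, S_\EC')$ to elementary expectations and then repackage it as a single double sum. The first point to exploit is that each averaged local relative fitness $\bar U_{A,B}$ and $\bar U'_{B',C}$ is a \emph{constant}, not a function of $i$; hence every logarithmic factor $\log\bar U'_{B',C}$ and $\log\bar U_{A,B}$ factors out of both the local-average operator $\langle\cdot\rangle_w$ and the intermediate expectation $\tilde\E[\cdot]=\E[U\,\cdot\,]$. This splits the definition into a sum indexed by the generating joint partition $(\B_*',\C_*)$ of the second process and a sum indexed by $(\A_*,\B_*)$ of the first.

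For the first sum I would apply the tower property \eqref{eqn_towerproperty} in the form $\tilde\E[\langle Y\rangle_w]=\E[U\langle Y\rangle_w]=\E'[Y]$ with $Y=U'_{B',C}$, so that $\tilde\E[\langle U'_{B',C}\rangle_w]=\E'[U'_{B',C}]=\bar U'_{B',C}$. The first sum then collapses to $-\sum_{B',C}\bar U'_{B',C}\log\bar U'_{B',C}=S_\EC'$. For the second sum I would use $\tilde\E[U_{A,B}]=\E[U U_{A,B}]$ directly, producing a term proportional to $\sum_{A,B}\E[U U_{A,B}]\log\bar U_{A,B}$.

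The final step is to fuse these two single sums into the one double sum in \eqref{eqn_intergenerationalenvironmentalchange_EC} after writing $\log(\bar U'_{B',C}/\bar U_{A,B})=\log\bar U'_{B',C}-\log\bar U_{A,B}$. This uses two completeness identities coming from the local-relative-fitness decomposition \eqref{eqn_relativefitnessdecomp}: summing $U_{A,B}$ over the partition gives $\sum_{A,B}U_{A,B}=U$, whence $\sum_{A,B}\E[U U_{A,B}]=\E[U^2]$, and likewise $\sum_{B',C}\bar U'_{B',C}=\E'[U']=1$. These identities let me recognize the weights $\E[U U_{A,B}]/\E[U^2]$ as a probability distribution over the $(A,B)$ cells, insert the ``missing'' index sum over $(A,B)$ into the $S_\EC'$ term and the ``missing'' sum over $(B',C)$ into the cross term, and thereby merge everything into $-\sum_{A,B;B',C}\tfrac{\E[U U_{A,B}]}{\E[U^2]}\bar U'_{B',C}\log\tfrac{\bar U'_{B',C}}{\bar U_{A,B}}$.

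I expect the main obstacle to be the careful bookkeeping in this last step: keeping the two distinct generating joint partitions straight (each existing by the generalized Sinai theorem, Theorem \ref{thm_sinai}), and pinning down the exact normalization by $\E[U^2]$ so that the weights genuinely sum to one. Minor technical points are that $0\log 0=0$ makes empty cells contribute nothing, and that the finite-entropy hypothesis guarantees the sums converge absolutely, so the rearrangements and the factoring of constants out of $\tilde\E$ are all justified.
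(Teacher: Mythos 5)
Your reduction of the two single sums is correct and is essentially the computation underlying the paper's proof: factoring the constants $\log\bar U'_{B',C}$ and $\log\bar U_{A,B}$ out of $\langle\cdot\rangle_w$ and $\tilde\E$, and applying the tower property $\tilde\E[\langle U'_{B',C}\rangle_w]=\E'[U'_{B',C}]=\bar U'_{B',C}$, gives
\[
\partial_\EC\!\left(S_\EC,S'_\EC\right) \;=\; S'_\EC \;+\; \sum_{A,B}\E[U\,U_{A,B}]\,\log\bar U_{A,B}.
\]
The gap is exactly at the point you flagged as the main obstacle: the fusion step does not close. The weights $\alpha_{A,B}:=\E[U U_{A,B}]/\E[U^2]$ do sum to one, so inserting the missing $(A,B)$-sum into the $S'_\EC$ term is harmless; but inserting the missing $(B',C)$-sum into the cross term via $\sum_{B',C}\bar U'_{B',C}=1$ reconstructs $\sum_{A,B}\alpha_{A,B}\log\bar U_{A,B}=\frac{1}{\E[U^2]}\sum_{A,B}\E[U U_{A,B}]\log\bar U_{A,B}$, which differs from the cross term you actually derived by the factor $\E[U^2]=1+\var(U)$. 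Since $\log\bar U_{A,B}\le 0$, the two agree only when $\var(U)=0$ (purely environmental $w$) or when every $\bar U_{A,B}\in\{0,1\}$ (i.e., $S_\EC=0$); in general your route lands on
\[
\partial_\EC\!\left(S_\EC,S'_\EC\right) = -\sum_{A,B;B',C}\frac{\E[U U_{A,B}]}{\E[U^2]}\,\bar U'_{B',C}\log\frac{\bar U'_{B',C}}{\bar U_{A,B}} \;+\;\Bigl(1-\tfrac{1}{\E[U^2]}\Bigr)\sum_{A,B}\E[U U_{A,B}]\log\bar U_{A,B},
\]
not the stated identity.

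You should know that the paper's own proof stumbles at the same spot. It chooses $\alpha_{A,B}=\tilde\E[U_{A,B}]/\tilde\E[U]$, $\beta_{B',C}=\bar U'_{B',C}$ and asserts that the residual term $\tilde\E\bigl[\langle U'_{B',C}\rangle_w/\beta_{B',C}-U_{A,B}/\alpha_{A,B}\bigr]\log\bar U_{A,B}$ vanishes; but the first piece evaluates to $1$ while the second evaluates to $\tilde\E[U]=\E[U^2]$, so the residual equals $(\E[U^2]-1)\log\bar U_{A,B}$ and vanishes only in the purely environmental case. To repair the statement one must either drop the normalization (use weight $\E[U U_{A,B}]$ and add the compensating term $-(\E[U^2]-1)S'_\EC$) or restrict to $\var(U)=0$. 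As a blind reconstruction your plan is faithful to the intended argument, but neither it nor the original establishes \eqref{eqn_intergenerationalenvironmentalchange_EC} as written.
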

\begin{proof}
Each of the environmental changes \eqref{eqn_envchange_EC}-\eqref{eqn_envchange_mix} can be written in an intergenerational form for certain observables $X_{A,B}$ and $Y_{B',C}$, and parameters $\alpha_{A,B}$ and $\beta_{B',C}$ satisfying $\sum_{A,B} \alpha_{A,B} = 1 = \sum_{B',C} \beta_{B',C}$. We compute:
\begin{eqnarray}
    &&\pm \tilde \E\!\left[ \sum_{B',C} \left\langle -U'_{B',C} \log Y_{B',C} \right\rangle_w + \sum_{A,B} U_{A,B} \log X_{A,B} \right] \nonumber \\
    &=& \pm \sum_{A,B;B',C} \alpha_{A,B} \beta_{B',C} \tilde \E\!\left[ \left\langle -\frac{U'_{B',C}}{\beta_{B',C}} \log Y_{B',C} \right\rangle_w + \frac{U_{A,B}}{\alpha_{A,B}} \log X_{A,B} \right] \nonumber \\
    &=& \pm \sum_{A,B;B',C} \alpha_{A,B} \beta_{B',C} \tilde \E\!\left[ \left\langle -\frac{U'_{B',C}}{\beta_{B',C}} \log \frac{Y_{B',C}}{X_{A,B}}  - \left( \frac{U'_{B',C}}{\beta_{B',C}} - \frac{U_{A,B}}{\alpha_{A,B}} \right) \log X_{A,B} \right\rangle_w \right]. \label{eqn_intergenerationalenvironmentalchange}
\end{eqnarray}

When we apply \eqref{eqn_intergenerationalenvironmentalchange} with $X_{A,B} = \bar U_{A,B}$, $Y_{B',C} = \bar U'_{B',C}$, $\alpha_{A,B} = \frac{\tilde \E[U_{A,B}]}{\tilde \E[U]}$, 
and $\beta_{B',C} = \bar U'_{B',C}$, then the second term vanishes, and the first term equals \eqref{eqn_intergenerationalenvironmentalchange_EC}.



\end{proof}

\begin{rem}
If the reader needs bounds on the environmental change of dispersive and mixing entropies, then apply Jensen's inequality to representation \eqref{eqn_intergenerationalenvironmentalchange}. 

\end{rem}

\section{Multi-Level Environmental Entropy} \label{sect_multilevel_enventropy}

We state the multi-level Price equation for the environmental entropy functionals, following Section \ref{sect_multilevelprice}. Let $w$, $w'$ and $w''$ be composable processes, with joint generating partitions $(\A_*,\B_*)$, $(\B_*',\C_*)$, and $(\C_*',\D_*)$, respectively. Then:

\begin{eqnarray}
    \Delta(S'_\EC, S''_\EC) &=& \cov\!\left(\E'_w\!\left[-\sum U'_{B',C} \log \bar U'_{B',C} \right], \E'_w[U'] \right) + \E\!\left[ \cov_w\!\left(-\sum U'_{B',C}  \log \bar U'_{B',C}, U'\right) \right] \nonumber \\
    && ~+~ \E\!\left[ \E'_w\!\left[\Delta_{w'}\left(-\sum U'_{B',C} \log \bar U'_{B',C}, -\sum U''_{C',D} \log \bar U''_{C',D}\right) U' \right] \right] \\
    \Delta(S'_\dis, S''_\dis) &=& \cov\!\left(\E'_w\!\left[-\sum U'_{B',C} \log D'_{B',C} \right], \E'_w[U'] \right) + \E\!\left[ \cov_w\!\left(-\sum U'_{B',C}  \log D'_{B',C}, U'\right) \right] \nonumber \\
    && ~+~ \E\!\left[ \E'_w\!\left[\Delta_{w'}\left(-\sum U'_{B',C} \log D'_{B',C}, -\sum U''_{C',D} \log D''_{C',D}\right) U' \right] \right] \\
    \Delta(S'_\mix, S''_\mix) &=& \cov\!\left(\E'_w\!\left[\sum U'_{B',C} \log M'_{B',C} \right], \E'_w[U'] \right) + \E\!\left[ \cov_w\!\left(\sum U'_{B',C}  \log M'_{B',C}, U'\right) \right] \nonumber \\
    && ~+~ \E\!\left[ \E'_w\!\left[\Delta_{w'}\left(\sum U'_{B',C} \log M'_{B',C}, \sum U''_{C',D} \log M''_{C',D}\right) U' \right] \right] 
\end{eqnarray}
with sums over joint generating partitions.

\section{Quantum Environmental Entropy} \label{sect_quantum_enventropy}

Recall the quantum formalism of Sections \ref{sect_quantum} and \ref{sect_quantum_selentropy}. Consider a quantum evolutionary process $\W : \mu \mapsto \mu'$, with quantum relative fitness operator $U := \frac{1}{\bar W} \W^\dagger(\Id')$. 

\begin{defn}
A countable quantum partition is a representation of the identity operator into countably many projection operators. Formally, let $\Pi$ and $\Pi'$ denote the spaces of projection operators in the Hilbert spaces $\H$ and $\H'$, respectively. We say $\A \subseteq \Pi$ and $\B \subseteq \Pi'$ are countable quantum partitions of $\H$ and $\H'$ if $\sum_{\pi \in \A} \pi = \Id$ and $\sum_{\pi' \in \B'} \pi' = \Id'$.
\end{defn}

For projection operators $\pi \in \Pi$ and $\pi' \in \Pi'$, define local relative fitness by $U_{\pi,\pi'} := (\W^\dagger\pi') \circ U \circ \pi$ and local density $\mu_{\pi,\pi'} := (\W^\dagger\pi') \circ \mu \circ \mu$. Average local relative fitness is given by $\bar U_{\pi,\pi'} := \E_\mu[U_{\pi,\pi'}] = \Tr(U_{\pi,\pi'} \mu)$. Define quantum partition environmental entropy:
\begin{equation}
    S_\EC(\A,\B) := \sum \Tr\!\left(- \bar U_{\pi,\pi'} \log \bar U_{\pi,\pi'} \right),
\end{equation}
where we sum over partition operators $\pi \in \A$ and $\pi' \in \B$. Define the quantum environmental entropy by taking the suprema over countable quantum partitions:
\begin{equation}
    S_\EC := \sup S_\EC(\A,\B) = \sum \Tr\!\left(- \bar U_{\pi,\pi'} \log \bar U_{\pi,\pi'} \right).
\end{equation}

\begin{conj}
We conjecture that a quantum version of Sinai's theorem holds, i.e., that $S_\EC$ be realized by a generating joint quantum partition $(\A_*,\B_*)$. This would likely satisfy a maximality relation like $X = \sum_{\pi,\pi'} \Tr\!\left(X \W^\dagger \pi' \otimes \pi \right) \W^\dagger \pi' \otimes \pi$ for any self-adjoint $X$. 
\end{conj}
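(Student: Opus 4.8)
The plan is to mirror the classical argument for Sinai's theorem (Theorem~\ref{thm_sinai}), replacing measurable partitions by quantum partitions and the conditional-expectation machinery by the trace and the spectral calculus, while confronting non-commutativity directly. First I would fix the notion of refinement: say $(\A',\B')$ refines $(\A,\B)$ when every projection of $\A$ (resp.\ $\B$) is an orthogonal sum of projections of $\A'$ (resp.\ $\B'$). The first substantive step is monotonicity of $S_\EC(\A,\B)$ under refinement, which should follow from the quantum Jensen's inequality (Lemma~\ref{lem_quantumjensen}) applied to the concave function $x \mapsto -x\log x$, together with the additivity of $\bar U_{\pi,\pi'}$ when a projection is split into an orthogonal sum. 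This replaces the grouping/concavity step in the proofs of \cite{le2017notes} and \cite{downarowicz2011entropy}.

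Next I would construct a candidate generating joint partition by diagonalizing the self-adjoint, trace-class data. Since $\mu$ and $\W^\dagger(\Id')$ are self-adjoint, I would build $(\A_*,\B_*)$ from their joint spectral projections where these commute, and obtain the general case as a limit of refinements. The target is the maximality (completeness) relation stated in the conjecture, the quantum analogue of ``$\I$ is generated by the sets $A \cap w^{-1}B$''; I would reinterpret $X = \sum_{\pi,\pi'} \Tr(X\,\W^\dagger\pi' \otimes \pi)\,\W^\dagger\pi' \otimes \pi$ as a Parseval/resolution-of-identity condition asserting that the system $\{\W^\dagger\pi'\,\pi\}$ spans the observable algebra in the Hilbert--Schmidt inner product weighted by $\mu$. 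Establishing that this relation holds exactly at the supremizing partition, and that no further refinement increases $S_\EC$, is the crux that reproduces Sinai's theorem.

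The main obstacle is precisely non-commutativity. When $\pi$ and $\W^\dagger\pi'$ fail to commute, the local fitness $U_{\pi,\pi'} = (\W^\dagger\pi')\,U\,\pi$ is not self-adjoint, so $\bar U_{\pi,\pi'} = \Tr(U_{\pi,\pi'}\mu)$ need not be real and $-\bar U_{\pi,\pi'}\log\bar U_{\pi,\pi'}$ loses the non-negativity and concavity that power the classical argument. I expect one must either symmetrize, replacing $U_{\pi,\pi'}$ by a self-adjoint form such as $\tfrac12(\W^\dagger\pi'\,U\,\pi + \pi\,U\,\W^\dagger\pi')$ to restore positivity, or restrict the supremum to partitions drawn from a common maximal abelian subalgebra, in which case the classical proof transfers almost verbatim. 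A secondary difficulty is interpreting the notation $\W^\dagger\pi' \otimes \pi$: since $\W^\dagger\pi'$ and $\pi$ both act on $\H$, the relation should be read on $\H \otimes \H$ (equivalently on the Hilbert--Schmidt space of $\H$), and pinning down this space is a prerequisite to even stating the completeness relation rigorously. For these reasons I would expect a clean theorem only after symmetrizing the local-fitness operator or passing to an abelian reduction, and I would flag the fully non-commutative case as the genuinely open part of the conjecture.
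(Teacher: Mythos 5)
This statement is stated in the paper as a conjecture with no proof, so there is nothing on the paper's side to compare against; the question is whether your sketch actually closes it, and it does not — as you yourself concede in your final sentence. Your plan correctly identifies the classical template (the paper's Appendix~\ref{app_sinai} argument) and the two real obstructions (non-self-adjointness of $U_{\pi,\pi'}=(\W^\dagger\pi')\,U\,\pi$ when projections fail to commute, and the ambiguity of the expression $\W^\dagger\pi'\otimes\pi$), but identifying obstructions is not the same as overcoming them. The two escape routes you propose each prove a \emph{different} statement than the conjecture: symmetrizing $U_{\pi,\pi'}$ changes the definition of the quantum $S_\EC$ being supremized, and restricting to partitions from a common maximal abelian subalgebra bounds only the abelian supremum, which may be strictly smaller than the supremum over all countable quantum partitions that the conjecture asserts is attained. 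Neither route, as described, yields a generating joint quantum partition for the functional as the paper defines it.

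The more serious omission is that you never address the engine of the classical existence proof. In Appendix~\ref{app_sinai} the supremum is realized by taking a refining sequence with $S_\EC(\A_n,\B_n)\uparrow S_\EC$, showing it is Cauchy in the entropy metric $d_\EC$, invoking the coarsening $d_\EC\preceq d_1$ and the completeness of the symmetric-difference pseudometric $d_1$ to extract a limiting joint partition, and checking the limit is again a countable measurable partition. Your sketch says ``obtain the general case as a limit of refinements'' without specifying a topology on the set of countable quantum partitions, a quantum analogue of $d_1$ and its completeness, or why a limit of orthogonal resolutions of the identity is again such a resolution (weak limits of projections need not be projections). Without that, even granting monotonicity under refinement via Lemma~\ref{lem_quantumjensen}, you have a bounded increasing net with no argument that its supremum is \emph{attained} at a partition — which is the entire content of Sinai's theorem. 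The conjecture remains open after your proposal, and your own closing paragraph says as much.
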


We prove strong strong bounds for the partition entropies. Define dispersion and mixing operators $D_{\pi,\pi'} := U_{\pi,\pi'} U^{-1}$ and $M_{\pi,\pi'} := \frac{1}{\bar U_{\pi,\pi'}} U_{\pi,\pi'} U^{-1}$. Define partition dispersion and mixing entropies using the spectral theorem:
\begin{eqnarray}
    S_\dis(\A,\B) &:=& \sum \E_\mu[-U_{\pi,\pi'} \log D_{\pi,\pi'}] = \Tr\!\left( (-U_{\pi,\pi'} \log D_{\pi,\pi'} ) \mu \right) \\
    S_\mix(\A,\B) &:=& \sum \E_\mu[U_{\pi,\pi'} \log M_{\pi,\pi'}] = \Tr\!\left( (U_{\pi,\pi'} \log M_{\pi,\pi'} ) \mu \right).
\end{eqnarray}
The environmental entropy decomposes as the sum of dispersion and mixing entropies:
\begin{equation}
    S_\EC(\A,\B) = S_\dis(\A,\B) + S_\mix(\A,\B).
\end{equation}

We say $\W$ is partition purely dispersive (resp. mixing) if $D_{\pi,\pi'}$ (resp. $M_{\pi,\pi'})$ is $\mu_*$-a.s. constant. We say $\W$ is in partition quantum environmental equilibrium if $D_{\pi,\pi'}$ and $M_{\pi,\pi'}$ are $\mu_{\pi,\pi'}$-a.s. constant. 

\begin{conj}
    We conjecture that vanishing quantum environmental (resp. dispersive, mixing) entropies correspond to fully (resp. left-, right-) invertible quantum processes.
\end{conj}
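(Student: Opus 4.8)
The plan is to quantize the classical proofs of the Efficiency, Consistency, and Reversibility Theorems (Theorems \ref{thm_efficiency}--\ref{thm_reversibility}), replacing the measurable child-set mapping with a projection-valued analogue built from the adjoint $\W^\dagger$. First I would reduce everything to the environmental factor: by the Quantum Price Representation Theorem, $\W = \W_\EC \circ \W_\NS$, where $\W_\NS$ is purely selective (left multiplication by $W$) and $\W_\EC$ is trace-preserving, and the dispersion, mixing, and environmental entropies are functionals of $\W_\EC$ alone. Thus the claimed one-sided inverses are to be constructed for $\W_\EC$, with the selective factor inverted separately on the childbearing subspace $\pi_* \H$ by multiplication by $W^{-1}$, exactly as in the Weak Dollo's Law (Corollary \ref{cor_dollo}).

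Next I would establish the partition-level characterizations. Using the quantum partition decomposition $S_\EC(\A,\B) = S_\dis(\A,\B) + S_\mix(\A,\B)$ together with the Quantum Jensen Inequality (Lemma \ref{lem_quantumjensen}), I would show that $S_\dis(\A,\B) = 0$ holds if and only if each dispersion operator $D_{\pi,\pi'}$ is a projection ($\mu_{\pi,\pi'}$-a.s. spectrum in $\{0,1\}$), and $S_\mix(\A,\B) = 0$ if and only if each mixing operator $M_{\pi,\pi'}$ is a projection. When the dispersion operators are projections, the family $\{\W^\dagger \pi'\}$ behaves like a deterministic assignment of child projections to parent projections, yielding a well-defined ``child-projection mapping''; I would invert this on one side and define the inverse channel so that its adjoint realizes the assignment, producing a right-inverse (for $S_\mix = 0$) or left-inverse (for $S_\dis = 0$). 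Tracking whether the inverse is governed by the left or right local-average operator $\langle \cdot \rangle^\opleft_\W$ or $\langle \cdot \rangle^\opright_\W$ is exactly what fixes the left/right bookkeeping in the statement.

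To pass from partitions to the general (supremum) entropy and to close the converse, I would use that $S_\EC = \sup_{\A,\B} S_\EC(\A,\B)$ with each summand non-negative: $S_\EC = 0$ forces $S_\EC(\A,\B) = 0$ for every partition, hence $\bar U_{\pi,\pi'} \in \{0,1\}$ for all projections, which I would leverage to build a two-sided inverse directly, bypassing any generating partition. The converse directions (existence of a one-sided inverse forces the corresponding entropy to vanish) follow from monotonicity of these entropies under composition, analogous to a data-processing inequality: composing $\W_\EC$ with its purported inverse collapses the functional to the identity value $0$.

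The chief difficulty is the absence of a proven quantum Sinai theorem, which is itself only conjectured earlier in this section; without a generating joint quantum partition the partition-level characterizations cannot be transferred to the general entropy by the classical route. For $S_\EC$ the supremum-is-zero argument sidesteps this, but for isolating $S_\dis = 0$ or $S_\mix = 0$ individually (where one summand vanishes while the other need not) one must argue directly that vanishing of the dispersion (resp. mixing) supremum forces every $D_{\pi,\pi'}$ (resp. $M_{\pi,\pi'}$) to be a projection, then reassemble a global inverse channel. Compounding this is non-commutativity: the operators $U$, $\W^\dagger \pi'$, and the projections $\pi$ need not commute, and $U^{-1}$ exists only on $\pi_* \H$, so the definitions $D_{\pi,\pi'} = U_{\pi,\pi'} U^{-1}$ and $M_{\pi,\pi'}$ require a careful choice of operator ordering (or restriction to the support) before the spectral-theorem and quantum-Jensen steps apply. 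Resolving the left/right invertibility correspondence---genuinely distinct in the non-commutative setting, and the point at which the left/right roles may interchange relative to the classical Efficiency and Consistency Theorems---is the crux that a full proof must pin down.
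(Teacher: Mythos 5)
The statement you have set out to prove is presented in the paper only as a conjecture (Section \ref{sect_quantum_enventropy}); the paper gives no proof, so your proposal cannot be measured against an existing argument and must stand on its own. As written it is an outline rather than a proof, and the items you describe as ``difficulties'' at the end are not technicalities to be cleaned up later but the actual mathematical content that is missing.

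Concretely: (i) the engine of the classical Efficiency, Consistency, and Reversibility Theorems is the child-set mapping $\chi$, whose existence and essential uniqueness (Theorem \ref{thm_childsetexistence}) rest on a Zorn's-lemma argument in the lattice of measurable sets (Lemma \ref{lem_latticechildset}, Definition \ref{def_childset}); the lattice of projections on a Hilbert space is orthomodular but not distributive, so the meet/join construction and the covering/minimality axioms do not transfer, and building your ``child-projection mapping'' is the heart of the conjecture, not a quantization step. (ii) The operators $U_{\pi,\pi'} = (\W^\dagger\pi')\circ U\circ\pi$ are in general not self-adjoint, so $\log D_{\pi,\pi'}$ is not defined by the spectral theorem without a symmetrization you do not specify, and Lemma \ref{lem_quantumjensen} applies only to self-adjoint operators. (iii) For the converse direction you invoke ``monotonicity of these entropies under composition, analogous to a data-processing inequality,'' but no such inequality is proved anywhere in the paper; classically the converse is established by explicitly exhibiting sets on which any putative inverse fails (see the final paragraphs of the proofs in Appendix \ref{app_ECRIproofs}), and you would need either to quantize that construction or to prove the data-processing property from scratch. (iv) The conjecture pairs $S_\dis = 0$ with left-invertibility and $S_\mix = 0$ with right-invertibility, whereas Theorems \ref{thm_efficiency} and \ref{thm_consistency} as stated pair $S_\dis = 0$ with a section (right inverse) and $S_\mix = 0$ with a retraction (left inverse); a complete proof must resolve this convention, which you acknowledge but leave open. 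Until these four points are settled the proposal does not constitute a proof.
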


The partition operation is a contraction, i.e., $U_{\pi,\pi'} \le U$ in the ordering of self-adjoint operators. To see this, apply complete non-negativity of $\W$ to the completeness identity: $U = U_{\pi,\pi'} + U_{\pi,\Id'-\pi'} + U_{\Id-\pi,\pi'} + U_{\Id-\pi,\Id'-\pi'} \ge U_{\pi,\pi'}$.
Write the squares $D_{\pi,\pi'}^2 := U_{\pi,\pi'} U^{-1} U_{\pi,\pi'} U^{-1}$ and $M_{\pi,\pi'}^2 := \frac{1}{\bar U_{\pi,\pi'}^2} D_{\pi,\pi'}^2 = \frac{1}{\bar U_{\pi,\pi'}^2} U_{\pi,\pi'} U^{-1} U_{\pi,\pi'} U^{-1}$. By contraction, $\tilde \E_\mu[D_{\pi,\pi}^2] \le \E_\mu[D_{\pi,\pi'}] \le 1$ and $\tilde \E_\mu[M_{\pi,\pi}^2] \le \frac{1}{\bar U_{\pi,\pi'}} \E_\mu[M_{\pi,\pi'}] \le 1$.

\begin{thm}[Strong Bounds on Quantum Partition  Dispersion and Mixing Entropies]
Let $\W$ be a finite-entropy quantum process. For any joint quantum countable partition $(\A,\B)$:
\begin{equation} \label{ineq_generalbound_dis_quantum}
    0 \le 
    \sum_{\pi,\pi'} \bar U_{\pi,\pi'} \log \frac{1}{\tilde \E_\mu[D_{\pi,\pi'}^2]} \le S_\dis(\A,\B) \le \sum_{\pi,\pi'} \bar U_{\pi,\pi'} \log \frac{\tilde p_{\pi,\pi'}}{\bar U_{\pi,\pi'}} 
    \le S_\EC(\A,\B),
\end{equation}
and
\begin{equation} \label{ineq_generalbound_mix_quantum}
    0 \le 
    \sum_{\pi,\pi'} \bar U_{\pi,\pi'} \log \frac{1}{\tilde p_{\pi,\pi'}} \le S_\mix(\A,\B) \le \sum_{\pi,\pi'} \bar U_{\pi,\pi'} \log \frac{\tilde \E_\mu[M_{\pi,\pi'}^2]}{\bar U_{\pi,\pi'}} 
    \le S_\EC(\A,\B),
\end{equation}
where the sums are over partition operators $\pi \in \A$ and $\pi' \in \B$. The inner inequalities are saturated when $w$ is in partition quantum environmental equilibrium.



The outer upper (resp. lower) bound of \eqref{ineq_generalbound_dis_quantum} and outer lower (resp. upper) bound of \eqref{ineq_generalbound_mix_quantum} are saturated if and only if $w$ is purely quantum dispersive (resp. mixing).
\end{thm}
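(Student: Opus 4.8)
The plan is to follow the proof of the classical Theorem~\ref{thm_strongbound_dismix} \emph{mutatis mutandis}, with the quantum Jensen's inequality (Lemma~\ref{lem_quantumjensen}) replacing the scalar one and the Strong Quantum Gibbs Inequality replacing Theorem~\ref{thm_stronggibbsNS}. One simplification relative to the classical case is that the statement is fixed at a given countable quantum partition $(\A,\B)$ rather than a supremum, so no quantum analogue of Sinai's theorem is needed: it suffices to prove the bounds cellwise for each pair $(\pi,\pi')$ and then sum, using additivity of the partition functionals together with the decomposition $S_\EC(\A,\B) = S_\dis(\A,\B) + S_\mix(\A,\B)$ recorded just above the statement.

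First I would treat the mixing bounds, which carry the analytic content. Writing $\tilde\E_\mu[\,\cdot\,] := \E_\mu[\,\cdot\, U] = \tfrac1N\Tr(\,\cdot\, U\mu)$ for the intermediate, fitness-weighted functional, the key observation is that the cell mixing entropy is a \emph{weighted quantum selective entropy of the mixing operator}, namely $S_\mix(\pi,\pi') = \bar U_{\pi,\pi'}\,\tilde\E_\mu[M_{\pi,\pi'}\log M_{\pi,\pi'}]$, and that $M_{\pi,\pi'}$ has unit intermediate mean $\tilde\E_\mu[M_{\pi,\pi'}] = 1$. Thus $\tilde\E_\mu[-M_{\pi,\pi'}\log M_{\pi,\pi'}]$ is exactly a quantum selective entropy in which $M_{\pi,\pi'}$ plays the role of relative fitness and the support proportion $\tilde p_{\pi,\pi'}$ plays the role of the childbearing proportion $p_*$. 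Applying the Strong Quantum Gibbs Inequality to this single self-adjoint operator gives $-\log\tilde\E_\mu[M_{\pi,\pi'}^2] \le \tilde\E_\mu[-M_{\pi,\pi'}\log M_{\pi,\pi'}] \le \log\tilde p_{\pi,\pi'}$; multiplying through by $-\bar U_{\pi,\pi'}\le 0$, which reverses the inequalities, and summing over $(\pi,\pi')$ yields the inner mixing inequalities of \eqref{ineq_generalbound_mix_quantum}. Saturation of the Gibbs step is precisely the condition that $M_{\pi,\pi'}$ be constant on its support $\mu_{\pi,\pi'}$-a.s., i.e.\ partition quantum environmental equilibrium.

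Next I would obtain the dispersion bounds \eqref{ineq_generalbound_dis_quantum} by transport through the decomposition $S_\dis(\A,\B) = S_\EC(\A,\B) - S_\mix(\A,\B)$, turning the mixing upper (resp.\ lower) bound into the dispersion lower (resp.\ upper) bound as in the classical argument, and simplifying via the identity $\tilde\E_\mu[M_{\pi,\pi'}^2] = \tilde\E_\mu[D_{\pi,\pi'}^2]/\bar U_{\pi,\pi'}^2$. The outer inequalities and their saturation conditions then follow from the contraction $U_{\pi,\pi'}\le U$ in the operator order (noted before the theorem), which delivers the second-moment bounds $\tilde\E_\mu[D_{\pi,\pi'}^2]\le\bar U_{\pi,\pi'}$ and $\tilde\E_\mu[M_{\pi,\pi'}^2]\le 1/\bar U_{\pi,\pi'}$; purely quantum dispersiveness is characterized by $\tilde p_{\pi,\pi'} = 1$ and purely quantum mixing by $\tilde\E_\mu[M_{\pi,\pi'}^2] = 1$, which pins the outer saturation.

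The hardest point — the one I would be most careful about — is the legitimacy of applying the scalar-style Gibbs/Jensen machinery to the operator $M_{\pi,\pi'}$, which is built from $U^{-1}$ and the adjoint $\W^\dagger$ and in general commutes with neither $U$ nor $\mu$ nor the projections. The saving grace is that each inequality in the chain is an application of Jensen \emph{to a single self-adjoint operator against a single positive functional}, so only the spectral calculus of $M_{\pi,\pi'}$ itself enters and the mutual noncommutativity of $U$, $\mu$, and the projections is irrelevant to each individual step. I would therefore verify carefully that (i) the weighted functional $\tilde\E_\mu[\,\cdot\,] = \tfrac1N\Tr(\,\cdot\, U\mu)$ is a genuine positive, finite, normalizable functional to which Lemma~\ref{lem_quantumjensen} applies after restriction to the support projection of $M_{\pi,\pi'}$; (ii) $M_{\pi,\pi'}$ is self-adjoint under the chosen operator ordering so that $\log M_{\pi,\pi'}$ and $M_{\pi,\pi'}^2$ are well defined by the spectral theorem; and (iii) the identities $\tilde\E_\mu[M_{\pi,\pi'}] = 1$ and $S_\mix(\pi,\pi') = \bar U_{\pi,\pi'}\,\tilde\E_\mu[M_{\pi,\pi'}\log M_{\pi,\pi'}]$ survive the noncommutative trace manipulation, which should hold once cyclicity of the trace is invoked judiciously. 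These are the quantum analogues of the scalar identities used in Theorem~\ref{thm_strongbound_dismix}, and confirming them rigorously is the crux of the argument.
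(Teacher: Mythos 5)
Your proposal matches the paper's own proof, which simply states that the argument of the classical Theorem \ref{thm_strongbound_dismix} carries over \emph{mutatis mutandis} using quantum Jensen's inequality (Lemma \ref{lem_quantumjensen}) and the fitness-weighted functional: Gibbs bounds applied cellwise to $M_{\pi,\pi'}$ for the inner mixing inequalities, the decomposition $S_\dis = S_\EC - S_\mix$ for the dispersion bounds, and the contraction $U_{\pi,\pi'} \le U$ for the outer bounds and their saturation. Your added scrutiny of the noncommutativity issues (self-adjointness of $M_{\pi,\pi'}$, positivity of $\tilde\E_\mu$, survival of the trace identities) is more careful than what the paper records, but the route is the same.
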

\begin{proof}
The proof is similar to that of Theorem \ref{thm_strongbound_dismix}, \emph{mutatis mutandis}, using quantum Jensen's inequality (Lemma \ref{lem_quantumjensen}) and the left selective transformation $\E^\opleft_\mu[X] := \E_{W\mu}$. 
\end{proof}

Define the selective changes of the entropy functionals:
\begin{eqnarray}
    \partial_\NS S_\EC(\A,\B) &:=& \cov_\mu(-U_{\pi,\pi'} \log \bar U_{\pi,\pi'}, U) \\
    \partial_\NS S_\dis(\A,\B) &:=& \cov_\mu(-U_{\pi,\pi'} \log D_{\pi,\pi'}, U) \\
    \partial_\NS S_\mix(\A,\B) &:=& \cov_\mu(U_{\pi,\pi'} \log M_{\pi,\pi'}, U).
\end{eqnarray}

\begin{pro}[Weak Partition Third Law of Quantum Selection]
If $\W$ is in quantum environmental equilibrium, then for all joint quantum countable partitions $(\A,\B)$, 
\begin{equation}
    \partial_\NS S_\EC(\A,\B) = 0 = \partial_\NS S_\dis(\A,\B) = \partial_\NS S_\mix(\A,\B).
\end{equation}
\end{pro}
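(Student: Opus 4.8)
The plan is to mirror the classical Weak Third Law (Theorem \ref{thm_weakthirdlaw}) verbatim, carrying every step through the quantum Jensen's inequality (Lemma \ref{lem_quantumjensen}) and the trace-based expectations $\E_\mu$ in place of the measure-theoretic ones. The key structural fact is that in quantum environmental equilibrium the dispersion and mixing operators $D_{\pi,\pi'}$ and $M_{\pi,\pi'}$ are $\mu_{\pi,\pi'}$-almost surely constant, so that applying the spectral calculus they reduce to scalar multiples of projections onto their support. First I would expand each selective change using the definition $\partial_\NS S_\dis(\A,\B) = \sum \cov_\mu(-U_{\pi,\pi'} \log D_{\pi,\pi'}, U)$ and rewrite the covariance as $\E_\mu[(-U_{\pi,\pi'} \log D_{\pi,\pi'})(U-1)]$, using that $U$ has unit mean by \eqref{eqn_quantumaveragefitness}.

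Next I would change to the restricted intermediate expectation $\tilde \E_{\mu,\pi,\pi'}$ supported on $\mu_{\pi,\pi'}$, writing $U_{\pi,\pi'} = \bar U_{\pi,\pi'} U M_{\pi,\pi'}$ (equivalently $U_{\pi,\pi'} = U D_{\pi,\pi'}$) to move the factor of $U$ into the weight, so that each summand becomes $\tilde p_{\pi,\pi'} \tilde\E_{\mu,\pi,\pi'}[(-D_{\pi,\pi'} \log D_{\pi,\pi'})(U-1)]$ for the dispersion term and the analogous mixing expression. This is exactly the quantum analogue of the two displayed calculations in the proof of Theorem \ref{thm_weakthirdlaw}. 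Because $D_{\pi,\pi'}$ is constant $\mu_{\pi,\pi'}$-a.s., it equals its own expectation $\tilde\E_{\mu,\pi,\pi'}[D_{\pi,\pi'}]$, and likewise for $M_{\pi,\pi'}$; substituting this constant value detaches the entropy factor from the $(U-1)$ factor, leaving $(-\tilde\E[D_{\pi,\pi'}]\log\tilde\E[D_{\pi,\pi'}])\,\tilde\E_{\mu,\pi,\pi'}[U-1]$. Then I would invoke $\tilde\E_{\mu,\pi,\pi'}[U-1] = 0$, which is the quantum restatement that the restricted relative fitness has mean one, to conclude each summand vanishes, giving $\partial_\NS S_\dis(\A,\B) = 0 = \partial_\NS S_\mix(\A,\B)$. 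Finally, linearity together with the partition decomposition $S_\EC(\A,\B) = S_\dis(\A,\B) + S_\mix(\A,\B)$ yields $\partial_\NS S_\EC(\A,\B) = 0$.

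The main obstacle I anticipate is justifying the noncommutative manipulations cleanly: unlike the classical case, $U_{\pi,\pi'}$, $D_{\pi,\pi'}$, $M_{\pi,\pi'}$, and $U$ need not commute, so writing $U_{\pi,\pi'} = U D_{\pi,\pi'}$ and factoring a ``constant'' operator out of a trace requires care about operator ordering and the precise meaning of ``$\mu_{\pi,\pi'}$-almost surely constant.'' The cleanest route is to interpret constancy on the support as $D_{\pi,\pi'} \pi_{\mathrm{supp}} = c\,\pi_{\mathrm{supp}}$ for a scalar $c$, so that $D_{\pi,\pi'}$ acts as a scalar on the relevant subspace and commutes with everything supported there; then the factoring is legitimate inside the trace. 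I expect the whole argument to go through \emph{mutatis mutandis} exactly as the authors signal for their other quantum results, since the only inputs are the quantum tower property \eqref{eqn_quantumtowerproperty}, the unit-mean identity \eqref{eqn_quantumaveragefitness}, and the equilibrium constancy hypothesis, all of which survive the passage to operators provided one works on the support projection.
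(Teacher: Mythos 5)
Your proposal takes essentially the same route as the paper: the paper's proof of this proposition is literally ``similar to Theorem \ref{thm_weakthirdlaw}, \emph{mutatis mutandis},'' and you reproduce that classical argument in operator form. Your insistence on reading ``$\mu_{\pi,\pi'}$-a.s.\ constant'' as $D_{\pi,\pi'}$ acting as a scalar on its support projection, so that it commutes with everything supported there and can legitimately be pulled out of the trace, is exactly the care that the ``mutatis mutandis'' elides, and it is the right way to handle the ordering issues you flag.

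The step I would push back on is your justification of the final vanishing: you invoke $\tilde \E_{\mu,\pi,\pi'}[U-1]=0$ as ``the quantum restatement that the restricted relative fitness has mean one.'' That identity does not hold. The unit-mean identity \eqref{eqn_quantumaveragefitness} is $\E_\mu[U]=1$ for the \emph{unweighted, unrestricted} expectation; once you pass to the intermediate expectation restricted to the support of $U_{\pi,\pi'}$, the mean of $U$ is by definition the fluctuation coefficient $\varphi_{\pi,\pi'}$ (classically \eqref{def_Harphi}), and nothing in the environmental-equilibrium hypothesis forces $\varphi_{\pi,\pi'}=1$; already in the classical discrete case, for a singleton pair $(\{i\},\{i'\})$ one has $\varphi_{i,i'}=U(i)$. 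After the constant $c$ (the common value of $D_{\pi,\pi'}$ on its support) factors out of the covariance, what remains per summand is $(-c\log c)\,\tilde p_{\pi,\pi'}(\varphi_{\pi,\pi'}-1)$, not zero. The paper's own proof of Theorem \ref{thm_weakthirdlaw} asserts ``$=0$'' at exactly this point without stating why, so you have faithfully mirrored the source argument; but as written your proof (like the paper's) needs either an additional hypothesis guaranteeing $\varphi_{\pi,\pi'}=1$ on every support, or a genuinely different argument for why the $(U-1)$ factor integrates to zero against the restricted intermediate state.
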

\begin{proof}
The proof is similar to Proposition (\ref{thm_weakthirdlaw}), \emph{mutatis mutandis}. 
\end{proof}

Write $\tilde \E_{\pi,\pi'} := \tilde \E_{\mu_{\pi,\pi'}}$. Define the quantum local selective fluctuation coefficients:
\begin{eqnarray}
    \varphi_{\pi,\pi'} &:=& \tilde \E_{\pi,\pi'}[U] \\
    \lambda_{\pi,\pi'} &:=& \tilde \E_{\pi,\pi'}[D_{\pi,\pi'} U] = \tilde \E_{\pi,\pi'}[U_{\pi,\pi'}] \\
    \gamma_{\pi,\pi'} &:=& \tilde \E_{\pi,\pi'}[D^2_{\pi,\pi'} U].
\end{eqnarray}
For any $\pi, \pi'$,
\begin{equation}
    \gamma_{\pi,\pi'} \le \lambda_{\pi,\pi'} \le \varphi_{\pi,\pi'},
\end{equation}
by a similar argument as Lemma \ref{lem_gamma_lambda_varphi} plus contractivity of $U$. 


\begin{thm}[Strong Partition Third Law of Quantum Selection] \label{thm_thirdlaw_quantum}
Let $w$ be a finite-entropy quantum process, and let $(\A, \B)$ be a joint quantum countable partition. Then:
\begin{equation}
    \sum \left( \tilde p_{\pi,\pi'} \lambda_{\pi,\pi'} \log \tfrac{\lambda_{\pi,\pi'}}{\gamma_{\pi,\pi'}} - \bar U_{\pi,\pi'} \log \tfrac{\tilde p_{\pi,\pi'}}{\bar U_{\pi,\pi'}} \right) 
    \le \partial_\NS S_\dis(\A,\B) 
    \le \sum \left( \tilde p_{\pi,\pi'} \lambda_{\pi,\pi'} \log \tfrac{\varphi_{\pi,\pi'}}{\lambda_{\pi,\pi'}} - \bar U_{\pi,\pi'} \log \tfrac{\bar U_{\pi,\pi'}}{\tilde \E[D_{\pi,\pi'}^2]} \right),  \label{ineq_generalthirdlaw_dis_quantum}
\end{equation}
\begin{equation}
    \hspace{-.55in} \sum \left( \tilde p_{\pi,\pi'} \lambda_{\pi,\pi'} \log \tfrac{\lambda_{\pi,\pi'}}{\varphi_{\pi,\pi'} \bar U_{\pi,\pi'}} - \bar U_{\pi,\pi'} \log \tilde \E[M_{\pi,\pi'}^2] \right) \le \partial_\NS S_\mix(\A,\B)         
    \le \sum \left( \tilde p_{\pi,\pi'} \lambda_{\pi,\pi'} \log \tfrac{\gamma_{\pi,\pi'}}{\lambda_{\pi,\pi'} \bar U_{\pi,\pi'}} - \bar U_{\pi,\pi'} \log \tfrac{1}{\tilde p_{\pi,\pi'}} \right), \label{ineq_generalthirdlaw_mix_quantum}
\end{equation}
\begin{equation}
    \hspace{-.7in} 
    \mbox{\scalebox{0.95}{$
    \sum \left( \tilde p_{\pi,\pi'} \lambda_{\pi,\pi'} \log \tfrac{\lambda_{\pi,\pi'}^2}{\gamma_{\pi,\pi'} \varphi_{\pi,\pi'} \bar U_{\pi,\pi'}} - \bar U_{\pi,\pi'} \log \tfrac{\tilde p_{\pi,\pi'} \tilde \E[D_{\pi,\pi'}^2]}{\bar U_{\pi,\pi'}^3} \right) \le \partial_\NS S_\EC(\A,\B) \le \sum \left( \tilde p_{\pi,\pi'} \lambda_{\pi,\pi'} \log \tfrac{\varphi_{\pi,\pi'} \gamma_{\pi,\pi'}}{\lambda_{\pi,\pi'}^2 \bar U_{\pi,\pi'}} - \bar U_{\pi,\pi'} \log \tfrac{\bar U_{\pi,\pi'}}{\tilde p_{\pi,\pi'} \E[D_{\pi,\pi'}^2]} \right),$}} \label{ineq_generalthirdlaw_EC_quantum}
\end{equation}
where the sums are over partitions $\pi \in \A_*$ and $\pi' \in \B_*$. These inequalities are saturated when $\W$ is in partition environmental equilibrium.

The bounds in \eqref{ineq_generalthirdlaw_dis_quantum} (resp. \eqref{ineq_generalthirdlaw_mix_quantum}, \eqref{ineq_generalthirdlaw_EC_quantum}) vanish if and only if $w$ is partition purely dispersive (resp. mixing, reversible). 
\end{thm}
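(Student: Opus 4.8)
The plan is to mirror the proof of the classical Strong Third Law (Theorem~\ref{thm_thirdlaw}) \emph{mutatis mutandis}, replacing integrals by traces, the fitness-scaled restricted measure $\tilde\mu_{A,B}$ by the quantum weighted density operator $\mu_{\pi,\pi'}$, and classical Jensen's inequality by the quantum Jensen's inequality (Lemma~\ref{lem_quantumjensen}). Since $\cov_\mu$ is linear, the decomposition $\partial_\NS S_\EC(\A,\B) = \partial_\NS S_\dis(\A,\B) + \partial_\NS S_\mix(\A,\B)$ reduces the environmental bound \eqref{ineq_generalthirdlaw_EC_quantum} to summing the dispersion and mixing bounds, so it suffices to establish \eqref{ineq_generalthirdlaw_dis_quantum} and \eqref{ineq_generalthirdlaw_mix_quantum} term-by-term over the joint quantum partition.

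First I would treat the dispersion bound. For a fixed pair $(\pi,\pi')$, I expand the local selective change as $\cov_\mu(-U_{\pi,\pi'}\log D_{\pi,\pi'}, U) = \E_\mu[(-U_{\pi,\pi'}\log D_{\pi,\pi'})U] - \E_\mu[-U_{\pi,\pi'}\log D_{\pi,\pi'}]$. The second summand is the negative of the local dispersion entropy, which is pinched by the quantum strong bounds of the immediately preceding theorem, supplying the $-\bar U_{\pi,\pi'}\log(\tilde p_{\pi,\pi'}/\bar U_{\pi,\pi'})$ and $-\bar U_{\pi,\pi'}\log(\bar U_{\pi,\pi'}/\tilde\E_\mu[D_{\pi,\pi'}^2])$ endpoints. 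For the first summand I rewrite $\E_\mu[U\,\cdot\,]$ against the weighted density $\mu_{\pi,\pi'}$, so that it equals $\tilde p_{\pi,\pi'}\,\tilde\E_{\pi,\pi'}[(-D_{\pi,\pi'}\log D_{\pi,\pi'})U]$, and apply quantum Jensen to $-\log$ against the probability expectation $\tilde\E_{\pi,\pi'}$; this produces the $\tilde p_{\pi,\pi'}\lambda_{\pi,\pi'}\log(\lambda_{\pi,\pi'}/\gamma_{\pi,\pi'})$ and $\tilde p_{\pi,\pi'}\lambda_{\pi,\pi'}\log(\varphi_{\pi,\pi'}/\lambda_{\pi,\pi'})$ endpoints, using the definitions of $\lambda_{\pi,\pi'}$, $\gamma_{\pi,\pi'}$, $\varphi_{\pi,\pi'}$ and the ordering $\gamma_{\pi,\pi'}\le\lambda_{\pi,\pi'}\le\varphi_{\pi,\pi'}$ established just above. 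The mixing bound \eqref{ineq_generalthirdlaw_mix_quantum} is identical with $\log M_{\pi,\pi'}$ in place of $\log D_{\pi,\pi'}$ and the mixing endpoints of the quantum strong bounds. For saturation, in partition quantum environmental equilibrium $D_{\pi,\pi'}$ and $M_{\pi,\pi'}$ are $\mu_{\pi,\pi'}$-almost-surely constant, so every application of quantum Jensen is tight and the fluctuation windows collapse; the vanishing of the outer windows under purely dispersive, mixing, or reversible hypotheses follows from the Efficiency, Consistency, and Reversibility characterizations as in the classical case.

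The main obstacle will be non-commutativity. The operators $U_{\pi,\pi'} = (\W^\dagger\pi')\circ U\circ\pi$, $D_{\pi,\pi'} = U_{\pi,\pi'}U^{-1}$, and their spectral-calculus logarithms need not commute with $U$ or $\mu$, so the trace rearrangements that are automatic classically must be justified via cyclicity of the trace together with the quantum tower property \eqref{eqn_quantumtowerproperty}. In particular I must verify that $\tilde\E_{\pi,\pi'}$ is a genuine trace-normalized probability expectation to which Lemma~\ref{lem_quantumjensen} legitimately applies, and that ``constant $\mu_{\pi,\pi'}$-almost surely'' is the correct operator-theoretic saturation condition. Once this spectral and trace bookkeeping is in place, the chain of inequalities is formally identical to the classical one.
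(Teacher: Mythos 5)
Your proposal is correct and follows essentially the same route as the paper, which proves this theorem by declaring it ``similar to Theorem \ref{thm_thirdlaw}, \emph{mutatis mutandis}'': the same decomposition of each local covariance into a weighted first moment minus the local entropy, the same use of the quantum strong bounds for the entropy term and quantum Jensen's inequality (Lemma \ref{lem_quantumjensen}) against $\tilde\E_{\pi,\pi'}$ for the other, and the same summation plus linearity to obtain the environmental bound. Your explicit attention to the trace and non-commutativity bookkeeping is a point the paper elides, but it does not change the argument.
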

\begin{proof}
The proof is similar to Theorem \ref{thm_thirdlaw}, \emph{mutatis mutandis}.
\end{proof}







\section{Conclusion} \label{sect_conclusion}



Price introduced his famous equation to describe all change in terms of fitnesses, covariances, and environmental terms. In Part 1, we derive a Price equation in measure-theoretic and quantum contexts. We prove Zeroth and First Laws of Natural Selection: selection increases and accelerates the pace of selection, minimizing selective effects only in the selective equilibrium case (life-or-death processes). Otherwise, selective effects accelerate over time.

We introduce selective entropy to further quantify selective effects, and we prove the Second Law to show how selection compounds selective effects. Processes in selective equilibrium satisfy identities instead of inequalities, and can be further studied analytically. Non-selective equilibrium processes compound selective effects exponentially beyond their equilibrium counterparts.


We introduce environmental entropy to quantify environmental effects, which decompose into dispersion and mixing pieces. The Weak Third Law shows that environmental-equilibrium processes have vanishing selective change of dispersion and mixing entropy functionals, while the Strong Third Law provides quantitative windows for these entropies to fluctuate within. 

In the quantum case, two quantum Price equations hold, as do quantum versions of the Zeroth, First, and Second Laws. A partition version of the Third Law holds, and it is an open question whether the Third Law fully extends to the general quantum case. 

We hope this article is helpful to mathematicians and scientists seeking to understand how selection and environmental change interact within evolutionary processes of interest.

\bibliographystyle{alpha}
\bibliography{priceequation}

\appendix
\part*{Appendices}

\section{Proof of Quantum Jensen's Inequality (Lemma \ref{lem_quantumjensen})} \label{app_quantumjensenproof}

\begin{proof}[Proof of Lemma \ref{lem_quantumjensen}]
By convexity, the tangent line to the real-valued $f$ at $\bar X$ is below the graph of $f$. Specifically, there exist real numbers $a := f'(\bar X)$ and $b := f(\bar X) - f'(\bar X) \bar X$ such that for all real $x$, 
\begin{equation}
a x + b \le f(x) \qquad \mathrm{and} \qquad a \bar X + b = f(\bar X).
\end{equation}

Consequently, with respect to the partial ordering of self-adjoint operators on a Hilbert space, we have
\begin{equation}
f(X) \ge aX + b\Id,
\end{equation}
that is, the operator $f(X) - aX - b\Id$ is non-negative, where $\Id$ is the identity operator.\footnote{i.e., for all $h$, $\langle h, (f(X) - aX - b\Id) h \rangle \ge 0$.}

We compute:
\begin{equation} \label{ineq_quantumjensonproof}
\E_\mu[f(X)] \ge \E_\mu[aX + b\Id] = a \E_\mu[X] + b = a \bar X + b = f(\bar X) = f(\E_\mu[X]).
\end{equation}

If $X$ is $\mu$-a.s. constant (with $X = \bar X \Id$ a.s.), then the inequality \eqref{ineq_quantumjensonproof} is saturated. If $X$ is not $\mu$-a.s. constant, then for operator values away from $\bar X$, $f(X) > aX + b\Id$, and so \eqref{ineq_quantumjensonproof} is a strict inequality.  

\end{proof}

\section{Proof of Generalized Sinai's Theorem (Theorem \ref{thm_sinai})} \label{app_sinai}



\begin{proof}[Proof of Theorem \ref{thm_sinai}] 
We prove the theorem by defining certain metrics on the space of joint partitions, show that they are complete, and use this to ensure the supremum is obtained. Define the space of countable, measurable joint partitions: 
\begin{equation}
\PP := \{ (\A, \B) : \mbox{$\A, \B$ countable measurable partitions of $I, I'$, resp.} \}.
\end{equation}

Define a pseudo-metric on $\I \times \I'$ using the symmetric difference:
\begin{equation}
d_1((A,B), (A',B')) := \mu(A \symdiff A') + \mu'(B \symdiff B').
\end{equation}
where $X \symdiff X' := (X \cup X') - (X \cap X')$.

\begin{sublem} \label{sublem_d1complete}
$d_1$ is a complete pseudo-metric on the space $\I \times \I'$.
\end{sublem}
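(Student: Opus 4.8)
The plan is to identify each coordinate of $d_1$ with the $L^1$-distance between indicator functions, and then invoke completeness of $L^1$ together with the fact that the set of indicators is $L^1$-closed. First I would verify the pseudo-metric axioms. Non-negativity and symmetry of $d_1$ are immediate from non-negativity of $\mu,\mu'$ and symmetry of the operation $\symdiff$, and $d_1((A,B),(A,B)) = 0$ since $A \symdiff A = \varnothing$. The only nontrivial axiom is the triangle inequality, which follows coordinatewise from the set containment $A \symdiff A'' \subseteq (A \symdiff A') \cup (A' \symdiff A'')$: applying $\mu$ and subadditivity gives $\mu(A \symdiff A'') \le \mu(A \symdiff A') + \mu(A' \symdiff A'')$, and similarly for the $I'$-coordinate with $\mu'$. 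Summing the two coordinates yields the triangle inequality for $d_1$. Note that $d_1$ is genuinely only a pseudo-metric, since $d_1((A,B),(A',B'))=0$ merely forces $\mu(A \symdiff A') = 0 = \mu'(B \symdiff B')$, i.e.\ agreement up to null sets.

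For completeness I would reduce to a single coordinate. Since $\mu$ is a finite measure (with $\mu(I) = N < \oo$), the map $A \mapsto 1_A$ is an isometry from $(\I, d)$ into $L^1(\mu)$, where $d(A,A') := \mu(A \symdiff A')$, because $\|1_A - 1_{A'}\|_{L^1(\mu)} = \int_I |1_A - 1_{A'}| \, \d\mu = \mu(A \symdiff A')$; the analogous statement holds for $\mu'$ on $I'$. Given a $d_1$-Cauchy sequence $\big((A_n,B_n)\big)$, each coordinate sequence $(A_n)$ and $(B_n)$ is Cauchy in its respective symmetric-difference pseudo-metric, so it suffices to show each coordinate converges to a measurable set.

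The key step is recovering a limit set from the $L^1$-limit. The sequence $(1_{A_n})$ is Cauchy in $L^1(\mu)$, hence converges to some $f \in L^1(\mu)$ by completeness of $L^1$ (Riesz--Fischer). Passing to a subsequence along which $1_{A_{n_k}} \to f$ $\mu$-almost everywhere, and using that each $1_{A_{n_k}}$ takes values in $\{0,1\}$, we conclude $f \in \{0,1\}$ $\mu$-a.e. Setting $A := \{i \in I : f(i) = 1\} \in \I$ gives $1_A = f$ $\mu$-a.e., so $\mu(A_n \symdiff A) = \|1_{A_n} - 1_A\|_{L^1(\mu)} \to 0$. Repeating the argument for $(B_n)$ produces $B \in \I'$ with $\mu'(B_n \symdiff B) \to 0$. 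Then $d_1\big((A_n,B_n),(A,B)\big) = \mu(A_n \symdiff A) + \mu'(B_n \symdiff B) \to 0$, so the Cauchy sequence converges in $(\I \times \I', d_1)$, establishing completeness.

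The main obstacle is exactly this last recovery step: showing that the $L^1$-limit of indicators is again (a.e.) an indicator, which amounts to $L^1$-closedness of $\{1_A : A \in \I\}$ and requires the passage to an a.e.-convergent subsequence. Everything else is bookkeeping; finiteness of $\mu$ and $\mu'$ is what guarantees indicators lie in $L^1$ and that the isometries are well-defined, and is the hypothesis I would flag as essential.
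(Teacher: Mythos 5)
Your proof is correct, and it is essentially the standard argument: the paper itself does not write out a proof but defers to Bogachev (Thm.\ 1.12.16) and Bell, whose completeness proofs proceed exactly by your route of identifying $A \mapsto 1_A$ as an isometry into $L^1$ and recovering an indicator from the $L^1$-limit via an a.e.-convergent subsequence. Your coordinatewise reduction and the explicit verification of the triangle inequality via $A \symdiff A'' \subseteq (A \symdiff A') \cup (A' \symdiff A'')$ are sound, and you correctly flag finiteness of $\mu,\mu'$ as the hypothesis making the embedding well-defined.
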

\begin{proof}
See \cite[Thm 1.12.16]{bogachev2007measure} or  \cite[Theorem 1]{bell2015symmetric} for proof of completeness.
\end{proof}

For any Cauchy sequence $(A_n,B_n)$, we write $A_\oo := \lim_n A_n$ and $B_\oo := \lim_n B_n$. This extends to a complete pseudo-metric on the space $\PP$. Indeed, define the partition difference metric as the minimal distance across all joint partition elements:
\begin{equation}
d_1((\A,\B), (\A',\B')) := \min\{ d_1((A,B),(A',B')) : (A,B;A',B') \in \A \times \B \times \A' \times \B' \}. 
\end{equation}

The limiting joint partitions are of the form 
\begin{equation}
    (\A_\oo,\B_\oo) := \{ (A_\oo,B_\oo) : \mbox{$(A_n,B_n) \in (\A_n,\B_n)$ is a Cauchy sequence} \}.
\end{equation}

Define the conditional partition environmental entropy between two partitions as follows:
\begin{equation} \label{eqn_defconditionalenvironmentalentropy}
    S_\EC(\A',\B'|\A,\B) := \sum_{A\in\A,B\in\B} \sum_{A'\in\A',B'\in\B'} \left( - \bar U_{A'\cap A,B'\cap B} \log \frac{\bar U_{A'\cap A,B'\cap B}}{\bar U_{A,B}} \right) \ge 0.
\end{equation}
This represents the additional environmental entropy in $(\A',\B')$ given that within $(\A,\B)$.

Observe that if $(\A',\B')$ is a refinement of $(\A,\B)$ (i.e., all joint partition elements of $(\A,\B)$ can be written as unions of those in $(\A',\B')$), then
\begin{equation} \label{eqn_conditionalenvironmentalentropyconservation}
    S_\EC(\A',\B') = S_\EC(\A,\B) + S_\EC(\A',\B'|\A,\B),
\end{equation}
meaning that environmental entropy is conserved under partition refinements.\footnote{Proof of \eqref{eqn_conditionalenvironmentalentropyconservation}. We compute: 
\begin{eqnarray*}
S_\EC(\A,\B) + S_\EC(\A',\B'|\A,\B) &=& \sum_{A,B} \sum_{A',B'} \left( -\bar U_{A'\cap A,B'\cap B} \log \bar U_{A,B} - \bar U_{A'\cap A,B'\cap B} \log \frac{\bar U_{A'\cap A,B'\cap B}}{\bar U_{A,B}} \right) \nonumber \\
&=& \sum_{A',B'} \left( - \bar U_{A',B'} \log \bar U_{A',B'} \right) = S_\EC(\A',\B').
\end{eqnarray*}}In that case, the reverse conditional entropy vanishes: $S_\EC(\A,\B|\A',\B') = 0$.


We define the \emph{environmental entropy metric} on $\PP$ between two joint partitions as the larger conditional entropy difference: 
\begin{equation} \label{eqn_defenvironmentalentropymetric}
d_\EC((\A,\B), (\A',\B')) := \max\{ S_\EC(\A',\B' | \A,\B), S_\EC(\A,\B | \A',\B') \}.
\end{equation}

If $(\A', \B')$ is a refinement of $(\A, \B)$, then
\begin{equation}
    d_\EC((\A,\B), (\A',\B')) = S_\EC(\A',\B' | \A,\B).
\end{equation}


\begin{sublem}
$d_\EC$ is a coarsening of $d_1$, i.e., convergence in $d_\EC$ implies convergence in $d_1$
\end{sublem}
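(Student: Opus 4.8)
The plan is to recognize that $d_\EC$ is, up to reindexing, the classical Rokhlin metric for Shannon entropy, and then import the standard comparison between the Rokhlin metric and the symmetric-difference metric. Concretely, the averaged local relative fitnesses define a probability measure $\nu$ on the product $I \times I'$ via $\nu(A \times B) := \bar U_{A,B}$; this is a genuine probability measure, since by the completeness identity \eqref{eqn_relativefitnessdecomp} we have $\sum_{A,B}\bar U_{A,B} = \E[U] = 1$ over any joint partition, and its marginals are $\mu'/N'$ on $I'$ and $W\mu/N'$ on $I$ by the disintegration equation. Under this identification a joint partition $(\A,\B)$ becomes the product partition $\{A\times B\}$ of $(I\times I',\nu)$, the functional $S_\EC(\A,\B)$ is exactly the Shannon entropy $H_\nu(\A\times\B)$, the conditional functional \eqref{eqn_defconditionalenvironmentalentropy} is the conditional Shannon entropy, and $d_\EC$ from \eqref{eqn_defenvironmentalentropymetric} is the Rokhlin metric. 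So the task reduces to showing that Rokhlin-metric smallness of the product partitions forces their atoms to be close in $\nu$-symmetric difference, which then transfers to $d_1$.

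The engine is a Fano/Pinsker-type inequality controlling misallocated mass by conditional entropy. First I would record the elementary bound that for any probability vector $(q_k)$ with largest entry $q_* = \max_k q_k$ one has $H(q) \ge h(q_*)$, where $h$ is the binary entropy function; this follows by factoring the non-maximal entries as $(1-q_*)$ times an auxiliary distribution and discarding that distribution's nonnegative entropy. Since $h$ is continuous, vanishes only at $0$ and $1$, and admits a modulus $1-q_* \le \psi\big(h(q_*)\big)$ with $\psi(t)\downarrow 0$ in the regime $q_*\ge \tfrac12$ that is forced once the total conditional entropy is small, I can apply this atom by atom. Writing $S_\EC(\A',\B'|\A,\B) = \sum_{(A,B)} \bar U_{A,B}\, H_\nu\big((\A'\times\B')\,\big|\,A\times B\big)$ and matching each atom $A\times B$ to the atom $A'\times B'$ carrying its maximal conditional mass yields $\sum \nu\big((A\times B)\setminus(A'\times B')\big) \le \psi\big(S_\EC(\A',\B'|\A,\B)\big)$. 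Symmetrizing over the two conditional entropies in \eqref{eqn_defenvironmentalentropymetric} bounds the total matched $\nu$-symmetric difference by a modulus of $d_\EC$.

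Finally I would transfer $\nu$-control to $d_1$ through the marginals: the $I'$-marginal being $\mu'/N'$ gives control on $\mu'(B\symdiff B')$ up to the factor $N'$, while the $I$-marginal being $W\mu/N'$ gives control on $\int_{A\symdiff A'} W\,\d\mu$. I expect the main obstacle to lie exactly here, since $d_1$ weighs the parent side by $\mu$ whereas $d_\EC$ only sees $\mu$ through the fitness-weighted $W\mu$, so $d_\EC$ is blind to differences supported on the childless set $\{W=0\}$. The resolution is that entropy, and hence the entire Sinai argument, depends only on $\nu$: partitions are compared modulo $\{W=0\}$, on $\{W>0\}$ the measures $W\mu$ and $\mu$ are mutually absolutely continuous so $d_\EC$-smallness does force $\mu$-smallness of the relevant symmetric differences there, and the childless part may be assigned to any atom without affecting $S_\EC$ or the eventual $d_1$-limit. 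A secondary bookkeeping point is that the matchings $A\times B \mapsto A'\times B'$ must be coherent across the sequence so the atom-wise estimates telescope; this is automatic in the refinement setting used downstream, where \eqref{eqn_conditionalenvironmentalentropyconservation} makes the entropies monotone and the matchings nested.
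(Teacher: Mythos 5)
Your proposal is correct in substance, and it does considerably more than the paper, whose entire proof is a citation to \cite[Proposition 9]{le2017notes} and \cite[Fact 1.7.7]{downarowicz2011entropy} ``\emph{mutatis mutandis}.'' Your identification of the probability measure $\nu(A\times B):=\bar U_{A,B}$ on $I\times I'$ (normalized correctly, since $\sum_{A,B}\bar U_{A,B}=\E[U]=1$ by \eqref{eqn_relativefitnessdecomp}), under which $S_\EC(\A,\B)$ becomes Shannon entropy of the product partition, \eqref{eqn_defconditionalenvironmentalentropy} becomes conditional entropy, and \eqref{eqn_defenvironmentalentropymetric} becomes the Rokhlin metric, is exactly the right way to make the paper's ``mutatis mutandis'' precise; the Fano/Pinsker step (binary-entropy lower bound atom by atom, then a modulus $\psi$ with $\psi(t)\downarrow 0$) is the standard engine in both cited references, so your route is the classical one rather than a new one. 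What your write-up buys, and what the paper's citation silently skips, is the one place where the classical argument does \emph{not} transfer verbatim: the $I$-marginal of $\nu$ is $W\mu/N'$, not $\mu/N$.

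That marginal discrepancy is a genuine issue with the sublemma as literally stated. If $p_*<1$, two joint partitions whose parent sides differ only inside $\{W=0\}$ satisfy $d_\EC=0$ but $d_1>0$, so $d_\EC$-convergence cannot imply $d_1$-convergence without either quotienting partitions by the childless set or replacing $\mu(A\symdiff A')$ by $(W\mu)(A\symdiff A')$ in $d_1$; your proposed resolution (the entropies, and hence the whole Sinai argument, only see $\nu$, so one works modulo $\{W=0\}$) is the correct repair, and it is harmless downstream because the generating partition is only ever used to evaluate entropy functionals. Two smaller points to nail down if you write this out in full: (i) the passage from $(W\mu)(E_n)\to 0$ to $\mu(E_n\cap\{W>0\})\to 0$ needs the uniform absolute continuity of the \emph{finite} measure $\mu|_{\{W>0\}}$ with respect to $W\mu$ (its density $1/W$ is $W\mu$-integrable precisely because $\mu(W>0)<\infty$), so state that explicitly rather than appealing to mutual absolute continuity alone; and (ii) your coherence-of-matchings remark is right but deserves one sentence, namely that in the nested-refinement setting used in the proof of Theorem \ref{thm_sinai} the matching is simply ``which coarse atom contains me,'' so \eqref{eqn_conditionalenvironmentalentropyconservation} makes the atom-wise estimates telescope automatically.
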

\begin{proof}
See \cite[Proposition 9]{le2017notes} or \cite[Fact 1.7.7]{downarowicz2011entropy} for a proof of coarseness, which applies to our setting, \emph{mutatis mutandis}. 
\end{proof}

Sinai's theorem follows as a simple consequence. Let $(\A_n, \B_n)$ be a sequence of joint partitions such that $S_\EC(\A_n,\B_n) \uparrow S_\EC$. Without loss of generality, we assume that each joint partition refines the previous one. Thus $\A_n, \B_n$ is a Cauchy sequence in $d_\EC$, since 
\begin{equation}
    d_\EC((\A_n,\B_n), (\A_{n'},\B_{n'})) = S_\EC(\A_{n'}, \B_{n'} | \A_n, \B_n) \downarrow 0
\end{equation} 
as $n,n' \to \oo$, and therefore by $d_1$-convergence, there exists a limiting joint partition $(\A_\oo, \B_\oo)$, and that this joint partition is unique up to measure zero. This proves Sinai's theorem for environmental entropy.

For the iterated Sinai's theorem, observe that the above result gives a generating joint partition for each $T$-step iterated process. We then take coarsenings over all $T$-step processes, with convergence guaranteed by a similar argument as above. 

We prove that a joint partition $(\A_*, \B_*)$ is generating if and only if $\I$ is the smallest $\sigma$-algebra containing sets $A \cap w^{-1} B$ for $A \in \A_*, B \in \B_*$. Suppose $(\A_*, \B_*)$ is generating, and let $\Gamma = \sigma(A \cap w^{-1} B)$ be the smallest $\sigma$-algebra containing sets $A \cap w^{-1} B$. Suppose $\Gamma$ is strictly smaller than $\I$, i.e., there exists $A' \in \I$ such that $A' \notin \Gamma$, i.e., $A'$ cannot be written as a countable operation of sets $A \cap w^{-1} B$. Define a new refined partition $\A_*'$ of sets of the form $A' \cap A \cap w^{-1} B$ and $(I-A') \cap A \cap w^{-1} B$. Since $\A_*'$ refines $\A_*$, the refined partition environmental entropy $S_\EC(\A_*',\B_*)$ equals the supremum $S_\EC$ and so $(\A_*',\B_*)$ is generating. Since the entropies are equal, we can write $S_\EC(A',B)$ as a combination $\sum_A S_\EC(A,B)$ for some partition sets $A$. However, since $A' \notin \Gamma$, we must have the strict relation $A' \subset \bigcup A \cap w^{-1} B$, thus $\sum_A S_\EC(A,B) > S_\EC(A',B)$, a contradiction. Thus $\Gamma = \I$.

Conversely, suppose $\Gamma = \sigma(A \cap w^{-1} B)$ for a joint partition $(\A_*,\B_*)$. If $S_\EC \ge S_\EC(\A_*,\B_*)$, then there exists a refinement $(\A_*', \B_*')$ such that $S_\EC(\A_*', \B_*') > S_\EC(\A_*,\B_*)$, so there exist sets $A' \in \A_*', B' \in \B_*'$ such that $A' \cap w^{-1} B' \notin \Gamma$, a contradiction. Thus $(\A_*,\B_*)$ is generating. 
\end{proof}

\section{Proof of Non-Positivity of Local Selective Entropy (Lemma \ref{lem_localselectiveentropynonpositive})} \label{app_localselectiveentropynonpositive}

Recall that $S_\NS(A,B) = \E[U_{A,B} \log U]$. We prove $S_\NS(A,B) \le 0$.

\begin{proof}[Proof of Lemma \ref{lem_localselectiveentropynonpositive}]
Define the local selective coefficient: $\bar W_{A,B} := \bar W_{A,B} = \E[W_{A,B}] = \frac{1}{N} \int_A w_i(B) \mu(\d i)$. Define the weighted local relative fitness by weighting the local fitness by $\bar W_{A,B}$ instead of $\bar W$:
\begin{equation}
    \hat U_{A,B} := \frac{W_{A,B}}{\bar W_{A,B}} := \frac{\bar W}{\bar W_{A,B}} \, U_{A,B}.
\end{equation}
Note that $\E[\hat U_{A,B}] = 1$ by construction. We define the renormalized local selective entropy:
\begin{equation} \label{ineq_tildeUinequality}
    \hat S_\NS(A,B) := \E[-\hat U_{A,B} \log \hat U_{A,B}] \le \E[-\hat U_{A,B}] \log \E[\hat U_{A,B}] = -1 \log 1 = 0.
\end{equation}
Inequality \eqref{ineq_tildeUinequality} is saturated when $\hat U_{A,B} = 1$ almost surely. 


We now rewrite local selective entropy in terms of the renormalized version:
\begin{eqnarray}
S_\NS(A,B) &=& \E[-U_{A,B} \log U] = \frac{\bar W_{A,B}}{\bar W} \E[-\hat U_{A,B} \log U] \nonumber \\
&=& \frac{\bar W_{A,B}}{\bar W} \E\!\left[-\hat U_{A,B} \log \left(\hat U_{A,B} + \hat U_{A^c,B} + \hat U_{A,B^c} + \hat U_{A^c,B^c}\right)\right] \nonumber \\
&=& \frac{\bar W_{A,B}}{\bar W} \left( \hat S_\NS(A,B) + \E\!\left[-\hat U_{A,B} \log \left(1 + \frac{\hat U_{A^c,B}}{\hat U_{A,B}} + \frac{U_{A,B^c}}{\hat U_{A,B}} + \frac{\hat U_{A^c,B^c}}{\hat U_{A,B}} \right)\right] \right), \qquad \qquad
\end{eqnarray}
where we decompose $U = \hat U_{A,B} + \hat U_{A^c,B} + \hat U_{A,B^c} + \hat U_{A^c,B^c}$ using \eqref{eqn_relativefitnessdecomp}. 

The first term is non-positive by \eqref{ineq_tildeUinequality}. The second term is an integral of the form $\E[-\hat U_{A,B} \log(1+Y)]$, where $Y := \frac{\hat U_{A^c,B}}{\hat U_{A,B}} + \frac{U_{A,B^c}}{\hat U_{A,B}} + \frac{\hat U_{A^c,B^c}}{\hat U_{A,B}} \ge 0$. Since $-\log(1+y) \le 0$ for $y \ge 0$, the integral is bounded above by $0$. This proves \eqref{ineq_localNSbound}. 

The first term vanishes when $\hat U_{A,B} = 1$ almost surely, and the second term vanishes when $\hat U_{A^c,B} = \hat U_{A,B^c} = \hat U_{A^c,B^c} = 0$ almost surely. 

Saturation occurs when both $\hat S_\NS(A,B) = 0$ and $\hat U_{A^c,B} = \hat U_{A,B^c} = \hat U_{A^c,B^c} = 0$. The former means that $\hat U_{A,B} = 1$ a.s., and the latter means that $\hat U_{A,B} = U$ a.s. (by Lemma \ref{lem_vanishinglocalselectiveentropy}). This is equivalent to $U = 1$ almost surely on $A \cap w^{-1} B$, i.e., $w$ is purely environmental from $A$ to $B$. 

\end{proof}

\section{Proof of Generalized Sinai's Theorem for Dispersion and Mixing Entropies (Theorem \ref{thm_sinaithm_dispersionmixing})} \label{app_sinaithm_dispersionmixing}

\begin{proof}
Suppose that $(\A_*,\B_*)$ is a generating joint partition for environmental entropy: $S_\EC = S_\EC(\A_*, \B_*)$. We first show that $(\A_*,\B_*)$ is generating for dispersion entropy. 

    
Recall the conditional environmental entropy from \eqref{eqn_defconditionalenvironmentalentropy}: 
\begin{equation} S_\EC(\A',\B'|\A,\B) := - \sum_{A,B} \sum_{A',B'} \E[U_{A',B'}] \log \frac{\E[U_{A',B'}]}{\E[U_{A,B}]} \ge 0.
\end{equation}

Define the conditional dispersion entropy:
\begin{equation}
    S_\dis(\A',\B'|\A,\B) := \sum_{A\in\A,B\in\B} \sum_{A'\in\A',B'\in\B'} \E\!\left[-U_{A',B'} \log \frac{U_{A',B'}}{U_{A,B}} \right] \ge 0.
\end{equation}

By Jensen's inequality, we have 
\begin{equation}
    S_\dis(\A',\B'|\A,\B) = \sum_{A,B} \sum_{A',B'} \E\!\left[U_{A,B}] \frac{1}{\E[U_{A,B}]} \E[U_{A,B} \frac{U_{A',B'}}{U_{A,B}} \log \frac{U_{A',B'}}{U_{A,B}} \right] \le S_\EC(\A',\B'|\A,\B).
\end{equation}

Recall the environmental entropy metric $d_\EC$ from \eqref{eqn_defenvironmentalentropymetric}: $d_\EC = d_\EC((\A,\B), (\A',\B')) := \max\{ S_\EC(\A',\B' | \A,\B), S_\EC(\A,\B | \A',\B') \}$.

Define the dispersion entropy metric, which refines the environmental metric:

\begin{eqnarray}
    d_\dis((\A,\B), (\A',\B')) &:=& \max\{ S_\dis(\A',\B' | \A,\B), S_\dis(\A,\B | \A',\B') \} \nonumber \\
    &\le& d_\EC((\A,\B), (\A',\B')).
\end{eqnarray}
Consequently, any Cauchy sequence for $d_\EC$ is also a Cauchy sequence for $d_\dis$, with the same limiting joint partition. 

We next analyze the case of mixing entropy. Observe that by the algebraic identity \eqref{eqn_partitionentropyECidentity}, we have:
\begin{equation}
    S_\mix(\A_*,\B_*) = S_\EC(\A_*,\B_*) - S_\dis(\A_*,\B_*) = S_\EC - S_\dis.
\end{equation}

Suppose that $(\A', \B')$ is any joint partition which improves upon the generating partition: $S_\mix(\A_*,\B_*) \ge S_\mix(\A',\B')$. Define the refinements $\A'_* := \A_* \wedge \A'$ and $\B'_* := \B_* \wedge \B'$. Then $(\A'_*,\B'_*)$ is again a generating partition of $S_\EC$ and $S_\dis$. Consequently, $S_\mix(\A'_*,\B'_*) = S_\EC - S_\dis = S_\mix(\A_*,\B_*)$. This shows that no partition can strictly improve upon a generating partition, and so $S_\mix = S_\mix(\A_*,\B_*)$. This proves that $(\A_*,\B_*)$ is generating for mixing entropy, which proves the forward direction.

For the reverse direction, suppose that $(\A_*,\B_*)$ is a generating joint partition for both $S_\dis$ and $S_\mix$. Consequently, $S_\EC(\A_*,\B_*) = S_\dis(\A_*,\B_*) + S_\mix(\A_*,\B_*) = S_\dis + S_\mix$. Suppose $(\A',\B')$ any joint partition which improves upon the generating partition: $S_\EC(\A',\B') \ge S_\EC(\A',\B')$. Define the refinements $\A'_* := \A_* \wedge \A'$ and $\B'_* := \B_* \wedge \B'$. Then $(\A'_*,\B'_*)$ is again a generating partition of $S_\dis$ and $S_\mix$. Consequently, $S_\EC(\A'_*,\B'_*) = S_\dis + S_\mix = S_\EC(\A_*,\B_*)$. This shows that no partition can strictly improve upon a generating partition, and so $S_\EC = S_\EC(\A_*,\B_*)$. This proves that $(\A_*,\B_*)$ is generating for environmental entropy.
\end{proof}

\section{Proofs of Efficiency, Consistency, Reversibility, and Irreversibility Theorems (Theorems \ref{thm_efficiency}-\ref{thm_irreversibility})} \label{app_ECRIproofs}

\subsection{The Child-Set Mapping}

Our principle technique is to show there exists a formal inverse $\chi : \I \to \I'$ to the parent-set mapping $w^{-1} : \I' \to \I$. We use this to define the inverse processes. 

Let $A \in \I$ and $B \in \I'$. Define the restricted fitness functions 
\begin{equation}
    W_{A,B}(i) : = 1_A(i) w_i(B) \qquad \mathrm{and} \qquad W_A(i) := W_{A,I'}(i) = 1_A W(i),
\end{equation}
and the averaged process $w_A(B) := N \E[W_{A,B}] = \int_{A} w_i(B) \mu(\d i)$.

\begin{defn}[Child-Set Mappings] \label{def_childset}
Let $w$ be an evolutionary process. We define a \emph{child-set mapping} of $w$ to be a set function $\chi : \I \to \I'$ satisfying the following two properties:
\begin{enumerate}
    \item (Nullity) $\chi(\varnothing) = \varnothing$.
    \item (Local Covering) For each $A \in \I$, $W_{A,\chi(A)}(i) = W_{A,I'}(i)$ and $W_{A, I'-\chi(A)}(i) = 0$ for $\mu$-almost every $i$. 
    \item (Local Minimality) For each measurable $A \subseteq I$ and $B \subseteq \chi(A)$ with $w_{A}(B) > 0$, $W_{A,\chi(A) - B}(i) < W_A(i)$ and $W_{A,I' - (\chi(A) - B)}(i) > 0$ for $\mu$-almost every $i$.\footnote{If $w_{A}(I') = 0$, then $W_{A,\chi(A)} = 0$ and $W_{A,I'-\chi(A)}(i) = 0$ for $\mu$-almost every $i$.}
\end{enumerate}
\end{defn}

The next theorem demonstrates that a child-set mapping always exists, and is in fact essentially unique up to $\mu'$-measure zero. Thus we refer to ``the'' child-set mapping. 

\begin{thm}[Existence and Essential Uniqueness of The Child-Set Mapping] \label{thm_childsetexistence}
Let $w$ be a non-trivial evolutionary process. The child-set mapping $\chi$ exists and is essentially unique. That is, any two strong child-set mappings $\chi, \chi'$ agree up to sets of $\mu'$ measure zero, with $\chi(A) \approx \chi'(A)$ for all $A \in \I$. 

\end{thm}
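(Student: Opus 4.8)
The plan is to realize $\chi(A)$ concretely as the positivity set of a Radon--Nikodym density, and then read off the three defining properties and essential uniqueness from elementary measure theory. The starting observation is that for each $A \in \I$ the averaged measure $w_A(B) = \int_A w_i(B)\,\mu(\d i)$ is a finite measure on $I'$ that is absolutely continuous with respect to $\mu'$: if $\mu'(B) = 0$, then the disintegration equation \eqref{eqn_disintegration} gives $\int_I w_i(B)\,\mu(\d i) = 0$, forcing $w_i(B) = 0$ for $\mu$-a.e.\ $i$ and hence $w_A(B) = 0$. I would therefore set $f_A := \tfrac{\d w_A}{\d \mu'}$ (defined $\mu'$-a.e.)\ and define
\begin{equation}
\chi(A) := \{ i' \in I' : f_A(i') > 0 \} \in \I',
\end{equation}
a measurable subset of $I'$ determined up to a $\mu'$-null set. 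Measurability of $f_A$ makes $\chi(A)$ a bona fide element of $\I'$, so $\chi$ is a well-defined set map $\I \to \I'$, and it is monotone since $A \subseteq A'$ implies $w_A \le w_{A'}$ and hence $f_A \le f_{A'}$ $\mu'$-a.e.

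Next I would verify the three axioms against this representative. Nullity is immediate, since $w_\varnothing \equiv 0$ gives $f_\varnothing = 0$ and $\chi(\varnothing) \approx \varnothing$. For local covering, the density identity $w_A(I' - \chi(A)) = \int_{\{f_A = 0\}} f_A \, \d\mu' = 0$ rewrites as $\int_A w_i(I' - \chi(A))\,\mu(\d i) = 0$; as the integrand is non-negative, $w_i(I' - \chi(A)) = 0$ for $\mu$-a.e.\ $i \in A$, which is exactly $W_{A, I'-\chi(A)}(i) = 0$ and $W_{A,\chi(A)}(i) = W_A(i)$ for $\mu$-a.e.\ $i$ (the statement being trivial off $A$). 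For local minimality, given $B \subseteq \chi(A)$ with $w_A(B) = \int_A w_i(B)\,\mu(\d i) > 0$, positivity of the integral yields a parent set $A_B \subseteq A$ of positive $\mu$-measure on which $w_i(B) > 0$; on $A_B$, covering gives $w_i(\chi(A)) = w_i(I') = W(i)$, so $w_i(\chi(A) - B) = W(i) - w_i(B) < W_A(i)$ and $w_i(I' - (\chi(A) - B)) \ge w_i(B) > 0$, establishing the strict inequalities on a positive-measure set of $i$, with the childless case $w_A(I') = 0$ handled by the footnote.

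For essential uniqueness, I would fix $A$ and show that any child-set mapping $\chi'$ agrees $\mu'$-a.e.\ with the canonical $\chi$ above. One inclusion is forced by covering: the covering axiom for $\chi'$ gives $w_A(I' - \chi'(A)) = 0$, so $f_A = 0$ $\mu'$-a.e.\ on $I' - \chi'(A)$, whence $\chi(A) = \{f_A > 0\} \subseteq \chi'(A)$ up to $\mu'$-null sets. The reverse inclusion is where minimality must do the work: supposing $\mu'\big(\chi'(A) - \chi(A)\big) > 0$, the set $B := \chi'(A) \cap \{f_A = 0\}$ has positive $\mu'$-measure yet $w_A(B) = \int_B f_A\,\d\mu' = 0$, so $w_i(B)=0$ for $\mu$-a.e.\ $i\in A$ and deleting $B$ from $\chi'(A)$ leaves covering intact; minimality of $\chi'$ is precisely the axiom that forbids such a removable $\mu'$-positive piece, giving the contradiction. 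Symmetrizing in $\chi$ and $\chi'$ then yields $\mu'\big(\chi(A) \symdiff \chi'(A)\big) = 0$, i.e.\ $\chi(A) \approx \chi'(A)$, for every $A \in \I$.

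The main obstacle is exactly this reverse inclusion. In the abstract measure-theoretic setting there is no topology and hence no genuine support, so the minimality axiom is the only lever available to exclude a child-set mapping from carrying positive-$\mu'$-measure but zero-transmission ``ghost'' sets; making this rigorous requires reconciling the $w_A$-weighted formulation of minimality (which quantifies over $B$ with $w_A(B)>0$) with the $\mu'$-essential comparison demanded by the conclusion, and carefully isolating the childbearing part of $A$ where $W>0$ from the childless part governed by the footnote. The remaining steps --- absolute continuity, the density identities, and the covering and minimality verifications --- are routine once the canonical representative $\chi(A) = \{f_A > 0\}$ is in place.
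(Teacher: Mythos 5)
Your route is genuinely different from the paper's. The paper proves existence abstractly: it shows that the weak child-set mappings (Nullity plus Covering only) form a lattice under set-wise intersection and union (Lemma \ref{lem_latticechildset}), extracts a minimal element along descending chains by intersecting a countable cofinal subsequence, and invokes Zorn's lemma; a child-set mapping is then taken to be such a minimal element. You instead exhibit the minimal element explicitly as $\chi(A) = \{f_A > 0\}$ with $f_A = \d w_A/\d\mu'$, which exists because $w_A \ll \mu'$ follows at once from the disintegration equation \eqref{eqn_disintegration}. This buys two things: it eliminates the appeal to Zorn's lemma entirely, and it makes minimality transparent, since any covering set $\chi'(A)$ must satisfy $\int_{I'-\chi'(A)} f_A\,\d\mu' = 0$ and hence contain $\{f_A>0\}$ up to $\mu'$-null sets --- so your $\chi$ is exactly the essential infimum of the paper's lattice. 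Your verifications of Nullity, absolute continuity, and Covering are correct. (Your Minimality check establishes the strict inequalities only on a positive-measure subset of $A$ rather than for $\mu$-a.e.\ $i$ as Definition \ref{def_childset} literally demands; but that literal demand fails for \emph{every} candidate whenever two parents in $A$ have disjoint child supports, so this is a defect of the definition rather than of your argument, and the paper's own proof never verifies Minimality at all.)

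The uniqueness step is a genuine gap, and you have correctly diagnosed it without closing it. Local Minimality quantifies only over $B \subseteq \chi'(A)$ with $w_A(B) > 0$, so it is silent about precisely the set you need to exclude, namely $B = \chi'(A) \cap \{f_A = 0\}$, on which $w_A(B)=0$ by construction. The sentence ``minimality of $\chi'$ is precisely the axiom that forbids such a removable $\mu'$-positive piece'' is therefore not true of the definition as written: nothing in the three axioms prevents appending to $\chi(A)$ a $\mu'$-positive set receiving no mass from $A$. To be fair, the paper's own uniqueness argument shares the defect --- its identity $\mu'(\chi(A)\symdiff\chi'(A)) = \int_A w_i(\chi(A)\symdiff\chi'(A))\,\mu(\d i)$ is not the disintegration equation (which integrates over all of $I$), and even granted, it only shows the symmetric difference is $w_A$-null rather than $\mu'$-null. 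The clean repair is to restate Minimality in terms of $\mu'$: require $w_A(B) > 0$ for every $B \subseteq \chi(A)$ with $\mu'(B) > 0$ (equivalently, $f_A > 0$ $\mu'$-a.e.\ on $\chi(A)$). Under that formulation your canonical representative is characterized exactly, and essential uniqueness becomes the two-inclusion argument you sketched, now with both inclusions valid.
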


We say that a set-mapping is a ``weak child-set mapping'' if it satisfies the Nullity and Covering properties of Definition \ref{def_childset}, but not Minimality. To prove the result, we first show that the set of weak child-set mappings forms a mathematical lattice. Child-set mappings are the minimal elements of this lattice. To prove Theorem \ref{thm_childsetexistence} we show these minimal elements exist and are unique up to $\mu'$-measure zero.


\begin{lem}[Lattice of Weak Child-Set Mappings] \label{lem_latticechildset}
The set of weak child-set mappings is a non-empty lattice, i.e., closed under set-wise intersections and unions, and compatible with the partial order of set inclusion.\footnote{i.e., for weak child-set mappings $\chi$ and $\chi'$, the intersection and union mappings defined by $(\chi \wedge \chi')(A) := \chi(A) \cap \chi'(A)$ and $(\chi \cup \chi')(A) := \chi(A) \vee \chi'(A)$ are weak child-set mappings, and they are lattice compatible with the partial order defined by $\chi' \prec \chi$ if $\chi'(A) \subseteq \chi(A)$ for all $A$.} A child-set mapping is a minimal element of the lattice. 
\end{lem}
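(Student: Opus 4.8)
The plan is to verify each lattice axiom directly from the measure-theoretic Covering property, exploiting the fact that for fixed $A$ the condition is monotone and subadditive in the child set. First I would record the equivalent form of Covering: since $W_{A,B}(i) = 1_A(i) w_i(B)$ and $W_{A,I'} = W_A$, the Covering property for $\chi$ at $A$ is equivalent to $w_i(I' - \chi(A)) = 0$ for $\mu$-almost every $i$. Non-emptiness is then immediate: the maximal mapping $\chi_{\max}(A) := I'$ for $A \ne \varnothing$ and $\chi_{\max}(\varnothing) := \varnothing$ satisfies Nullity, and Covering holds trivially because $W_{A, I' - I'} = W_{A,\varnothing} = 0$ identically.

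Next I would prove closure. Let $\chi, \chi'$ be weak child-set mappings. For the meet $(\chi \wedge \chi')(A) := \chi(A) \cap \chi'(A)$, I would use finite subadditivity of the measure $w_i$: because $I' - (\chi(A) \cap \chi'(A)) = (I' - \chi(A)) \cup (I' - \chi'(A))$, we have $w_i\big(I' - (\chi \wedge \chi')(A)\big) \le w_i(I' - \chi(A)) + w_i(I' - \chi'(A))$, and both summands vanish for $\mu$-a.e.\ $i$ by Covering of $\chi$ and $\chi'$ respectively, the exceptional set being the union of two $\mu$-null sets, hence $\mu$-null. Thus $W_{A, I' - (\chi\wedge\chi')(A)} = 0$ a.e., and subtracting from $W_{A,I'}$ gives $W_{A,(\chi\wedge\chi')(A)} = W_{A,I'}$ a.e. For the join $(\chi \cup \chi')(A) := \chi(A) \cup \chi'(A)$, Covering follows from monotonicity, since $I' - (\chi(A) \cup \chi'(A)) \subseteq I' - \chi(A)$ forces $w_i$ of the former to be dominated by $w_i(I' - \chi(A)) = 0$. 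Nullity is preserved under both operations as $\varnothing \cap \varnothing = \varnothing \cup \varnothing = \varnothing$. Compatibility with the partial order $\chi' \prec \chi$ (meaning $\chi'(A) \subseteq \chi(A)$ for all $A$) is then inherited pointwise from the powerset lattice of $\I'$: for each $A$, $\chi(A)\cap\chi'(A)$ is the greatest lower bound and $\chi(A)\cup\chi'(A)$ the least upper bound, so the meet/join laws hold setwise in $A$.

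Finally, to show that a child-set mapping $\chi$ (one that additionally satisfies Local Minimality) is a minimal element, I would take an arbitrary weak child-set mapping $\chi' \prec \chi$, fix $A \in \I$, and set $B := \chi(A) - \chi'(A) \subseteq \chi(A)$. Covering of $\chi'$ gives $W_{A,\chi(A)-B} = W_{A,\chi'(A)} = W_A$ for $\mu$-a.e.\ $i$. If $w_A(B) > 0$, then $\{i \in A : w_i(B) > 0\}$ has positive $\mu$-measure, and on it $W_A(i) \ge w_i(B) > 0$; there Minimality asserts the strict inequality $W_{A,\chi(A)-B}(i) < W_A(i)$ a.e., contradicting the equality just derived on a common full-measure set. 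Hence $w_A(B) = 0$ for every $A$, so no weak child-set mapping can strictly decrease $\chi$ on a set carrying positive local mass, which is exactly minimality modulo local-null equivalence. The main obstacle I anticipate is bookkeeping the $A$-dependent almost-everywhere exceptional sets and pinning down the precise equivalence (agreement up to $w_A$-null, and ultimately $\mu'$-null) under which ``minimal element'' is meant; this is where care is needed to make the contradiction in the last step rigorous and to align it with the essential-uniqueness statement of Theorem \ref{thm_childsetexistence}.
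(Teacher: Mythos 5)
Your treatment of the lattice axioms and non-emptiness is correct and follows essentially the same route as the paper: the meet is handled by observing that the complement of an intersection is a union of two $w_i$-null sets (the paper phrases this as ``the intersection of full-measure sets has full measure''), the join by monotonicity, and non-emptiness by the maximal covering $\chi(A) := I'$; your explicit carve-out $\chi_{\max}(\varnothing) := \varnothing$ for Nullity is a small tidiness improvement. Where you genuinely go beyond the paper is the last sentence of the lemma. The paper's proof of Lemma \ref{lem_latticechildset} never addresses the claim that a child-set mapping is a minimal element of the lattice, and the proof of Theorem \ref{thm_childsetexistence} only argues the converse direction (that Zorn-minimal elements are child-set mappings). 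Your argument --- take $\chi' \prec \chi$, set $B := \chi(A) - \chi'(A)$, note $\chi(A) - B = \chi'(A)$ so Covering of $\chi'$ forces $W_{A,\chi(A)-B} = W_A$ a.e., and contradict Local Minimality unless $w_A(B) = 0$ --- is sound and fills that gap. The one caveat you correctly flag is that this only yields minimality up to $w_A$-null (equivalently $\mu'$-null) sets, so ``minimal element'' must be read in the lattice of equivalence classes modulo null sets; this is consistent with the essential-uniqueness formulation of Theorem \ref{thm_childsetexistence}, and a strictly set-theoretic minimality claim would be false (one could enlarge $\chi(A)$ by a $w_A$-null set without violating any axiom).
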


\begin{proof}[Proof of Lemma \ref{lem_latticechildset}]
Let $\Lambda$ denote the set of weak child-set mappings. Consider two child-set mappings $\chi, \chi' \in \Lambda$. 
Define the meet $(\chi \wedge \chi')(A) := \chi(A) \cap \chi'(A)$ and join $(\chi \vee \chi')(A) := \chi(A) \cup \chi'(A)$, and define the partial ordering $\chi' \prec \chi$ if $\chi'(A) \subseteq \chi(A)$ for all $A \in \I$. Clearly, the intersection and union satisfy the nullity property of Definition \ref{def_childset}.

By the Local Covering property, note that for each $i \in A$, $\chi(A)$ and $\chi'(A)$ each have full $w_i$ measure. The intersection and union of full measure sets is again full measure, proving the Local Covering property for the meet and join, hence they are child-set mappings.

The partial ordering is lattice-compatible with the meet and join owing to the lattice compatibility of set-wise intersections and unions relative to set inclusion. Thus $\Lambda$ is a lattice.

Weak child-set mappings always exist, e.g., the maximal covering $\chi(A) := I'$ for $A$, which is a weak child-set mapping even for trivial processes. 
\end{proof}

\begin{proof}[Proof of Theorem \ref{thm_childsetexistence}]
We first show that $\Lambda$ has a minimal element using a Zorn's lemma argument. Such a minimal element is a child-set mapping, and we show that any two such mappings are equal up to sets of measure zero. 

Consider a decreasing chain of child-set mappings $C = (\chi^t)$, for a totally ordered index set $T$. We show that $C$ has a minimal element. Let $(t_n)$ be a countable set of index elements such that $t_n \uparrow \oo$.\footnote{i.e., for any $t_* \in T$, there exists $n_*$ such that $t_n \ge t_*$ for $n \ge n_*$.} Define the child-set mapping $\chi^\oo(A) := \left( \bigwedge_n \chi^{t_n} \right)(A) = \bigcap_n \chi^{t_n}(A)$. Since the $\sigma$-algebra $\I$ is closed under countable intersections, $\chi^\oo$ is well-defined and a minimal element of $C$. By Zorn's lemma, $\Lambda$ has a global minimizer, hence child-set mappings exist. 

To see essential uniqueness, suppose $\chi$ and $\chi'$ are child-set mappings. Then $\mu'(\chi(A) \Delta \chi'(A)) = \int_A w_i(\chi(A) \Delta \chi'(A)) \mu(\d i) = 0$, proving the result. 
\end{proof}

\subsection{Proof of Efficiency, Consistency, and Reversibility Theorems} \label{sect_ECRItheorems}

We show that a purely mixing environmental process can always be inverted on the right, before the process has executed (i.e., there exists $w'$ such that $w_\EC \circ w' = 1_{\mu'}$). Essentially, we define the process by taking each child's unit of population, mapping it back arbitrarily to the parents, then mapping forward through the environmental mapping. Such a right-inverse process is not unique owing to the arbitrary selection of the parent mapping, which is then canceled out in the mixing from $w_\EC$. 

We state a simple local-to-global principle for purely mixing processes. Then we show that the child-set mapping $\chi$ for a purely mixing process is always left-invertible. This allows us to build a right-inverse process $w'$ for which $(w')^{-1} = \chi$. 



Note that for any expectation $\E$ and any non-negative random variable $X \ge 0$, we have
\begin{equation} \label{eqn_vanishingentropyfunctional}
    \mbox{$\E[-X \log X] = 0$ if and only if $X(i) \in \{0,1\}$ for $\mu$-almost every $i$,}
\end{equation}
because the real-valued function $x \mapsto -x \log x$ vanishes if and only if $x = 0,1$. 

\begin{lem}[Purely Mixing Local-to-Global Principle] \label{lem_localtoglobal_mix}
Let $w$ be a finite-entropy process. The following are equivalent:
\begin{enumerate}
    \item \label{lem_localtoglobal_mix_global} $w$ is purely mixing and $S_\dis = 0$ (i.e., for all $A \in \I$ and $B \in \I'$, $D_{A,B} \in \{0,1\}$ $\tilde \mu$-a.s. and $S_\dis(A,B) = 0$). 
    \item \label{lem_localtoglobal_mix_local} For each $A \in \I$ and $B \in \I'$, $w$ is locally purely mixing from $A$ to $B$ and $S_\dis(A,B) = 0$ (i.e., $D_{A,B} \in \{0,1\}$ $\tilde \mu$-a.s.). 
\end{enumerate}
\end{lem}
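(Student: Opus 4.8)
The plan is to reduce both the global and the local conditions to a single pointwise statement—namely that $D_{A,B}(i) \in \{0,1\}$ for $\tilde\mu$-almost every $i$, for every measurable $A \subseteq I$ and $B \subseteq I'$—and then to observe that the definition of $S_\dis$ as a supremum of sums of non-negative local terms makes the passage between ``local'' and ``global'' automatic. The first step is to invoke Lemma \ref{lem_lowerbounddispersionmixingentropies}, which states that for any fixed pair $(A,B)$ the local dispersion entropy $S_\dis(A,B)$ vanishes if and only if $D_{A,B} \in \{0,1\}$ $\tilde\mu$-a.s. (the locally purely mixing case). Thus condition (\ref{lem_localtoglobal_mix_local}) is precisely the assertion that $S_\dis(A,B) = 0$ for every measurable $A$ and $B$.

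For the direction (\ref{lem_localtoglobal_mix_local}) $\implies$ (\ref{lem_localtoglobal_mix_global}), I would argue that if $S_\dis(A,B) = 0$ for all measurable $A,B$, then for any countable measurable partitions $\A$ and $\B$ the partition dispersion entropy is a sum of zeros, $S_\dis(\A,\B) = \sum_{A\in\A, B\in\B} S_\dis(A,B) = 0$, and hence $S_\dis = \sup_{\A,\B} S_\dis(\A,\B) = 0$; the pointwise condition $D_{A,B} \in \{0,1\}$ holding at every scale is exactly general pure mixing. For the converse (\ref{lem_localtoglobal_mix_global}) $\implies$ (\ref{lem_localtoglobal_mix_local}), I would fix an arbitrary pair $(A_0,B_0)$ and test against the two-set partitions $\A = \{A_0, I \setminus A_0\}$ and $\B = \{B_0, I' \setminus B_0\}$. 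Because every local term is non-negative by Lemma \ref{lem_lowerbounddispersionmixingentropies}, this yields the chain $0 = S_\dis \ge S_\dis(\A,\B) \ge S_\dis(A_0,B_0) \ge 0$, forcing $S_\dis(A_0,B_0) = 0$ and hence $D_{A_0,B_0} \in \{0,1\}$ $\tilde\mu$-a.s. Since $(A_0,B_0)$ was arbitrary, condition (\ref{lem_localtoglobal_mix_local}) follows.

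The finite-entropy hypothesis is used only to guarantee that every functional in sight is well-defined and finite, so that the supremum and the term-by-term additivity of $S_\dis(\A,\B)$ are legitimate. The main point requiring care—rather than a genuine obstacle—is reconciling the paper's terminology: I must confirm that the ``locally purely mixing'' and ``generally purely mixing'' labels attached to conditions (\ref{lem_localtoglobal_mix_local}) and (\ref{lem_localtoglobal_mix_global}) are the $D_{A,B} \in \{0,1\}$ characterizations supplied by Lemma \ref{lem_lowerbounddispersionmixingentropies}, so that the entire equivalence collapses to the vanishing of dispersion entropy at the local versus global scale. Once that identification is made, the argument reduces to the elementary principle that a supremum of non-negative quantities is zero if and only if each summand is zero, together with the fact that an arbitrary measurable pair $(A,B)$ can always be embedded into a two-element joint partition.
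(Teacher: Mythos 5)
Your proposal is correct and follows essentially the same route as the paper: both arguments rest on the fact that $S_\dis$ is a supremum of sums of non-negative local terms (so the global quantity vanishes iff every local term does), combined with the characterization that $S_\dis(A,B)=0$ iff $D_{A,B}\in\{0,1\}$ $\tilde\mu$-a.s. Your explicit embedding of an arbitrary pair $(A_0,B_0)$ into the two-set joint partition merely spells out the step the paper leaves implicit when it asserts that vanishing of the supremum forces all local entropies to vanish.
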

\begin{proof}
The proof is trivial since $S_\dis = \sup \sum S_\dis(A,B) = \sup \sum \tilde \E[-D_{A,B} \log D_{A,B}]$. If the global entropy vanishes, then all local entropies vanish, and so $D_{A,B} = 0$ or $1$ by \eqref{eqn_vanishingentropyfunctional}. If all local entropies vanish, then their sum and hence the supremum vanish. 
\end{proof}

\begin{pro} \label{pro_puremixing_chileftinverse}
Let $w$ be a finite-entropy process. Then $w$ is purely mixing if and only if the child-set mapping $\chi$ is essentially a left-inverse of $w^{-1}$ (i.e., $(\chi \circ w^{-1})(B) \approx B$ up to $\mu'$-measure zero for any $B \in \I'$). 
\end{pro}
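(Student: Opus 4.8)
The plan is to route both implications through the pointwise description of purely mixing processes and the defining Covering and Minimality properties of Definition~\ref{def_childset}. First I would reformulate the hypothesis: by the Purely Mixing Local-to-Global Principle (Lemma~\ref{lem_localtoglobal_mix}), $w$ is purely mixing if and only if $D_{A,B}(i)=1_A(i)\,w_i(B)/w_i(I')\in\{0,1\}$ for $\tilde\mu$-a.e.\ $i$ and all $A\in\I$, $B\in\I'$; equivalently, for $\tilde\mu$-a.e.\ $i$ the measure $w_i$ is zero--one valued, so each childbearing parent commits \emph{all} of its children to $B$ or \emph{none}, for every child set $B$. I would fix the notation $A_B:=w^{-1}B=\{i:w_i(B)>0\}$ for the parent set of $B$, and use the two facts $\mu'(\cdot)=\int_I w_i(\cdot)\,\mu(\d i)$ and $w_{A_B}(\cdot):=\int_{A_B}w_i(\cdot)\,\mu(\d i)\le\mu'(\cdot)$.

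I would prove the reverse implication by contraposition, as it is the shorter half. If $w$ is not purely mixing, Lemma~\ref{lem_localtoglobal_mix} produces a child set $B$ and a set of parents of positive $\tilde\mu$-measure on which $0<w_i(B)<w_i(I')$, i.e.\ parents that genuinely split their offspring across $B$ and $B^c$. Each such splitter lies in $A_B$ (since $w_i(B)>0$) yet sends positive mass $w_i(B^c)>0$ into $B^c$. The Local Covering property applied to $A_B$ forces those $B^c$-children into $\chi(A_B)$, whence $w_{A_B}(\chi(A_B)\setminus B)>0$ and therefore $\mu'(\chi(A_B)\setminus B)>0$. Thus $(\chi\circ w^{-1})(B)=\chi(A_B)\not\approx B$, so the left-inverse identity fails.

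For the forward implication I would show $\mu'\big((\chi\circ w^{-1})(B)\symdiff B\big)=0$ by controlling each half of the symmetric difference. Pure mixing gives $w_i(B)=w_i(I')$, equivalently $w_i(B^c)=0$, for every $i\in A_B$, so $w_{A_B}$ is concentrated on $B$. The inclusion $\mu'(B\setminus\chi(A_B))=0$ is then immediate: every child in $B$ descends from an $A_B$-parent (as $w_i(B)=0$ off $A_B$) and Covering places it in $\chi(A_B)$. For the opposite inclusion $\mu'(\chi(A_B)\setminus B)=0$, I would note $w_{A_B}(\chi(A_B)\setminus B)\le w_{A_B}(B^c)=0$ and then exploit essential uniqueness (Theorem~\ref{thm_childsetexistence}): replacing the value $\chi(A_B)$ by $\chi(A_B)\cap B$ still satisfies Nullity, Covering, and Minimality at $A_B$ (the discarded piece is $w_{A_B}$-null, so Minimality is untouched), hence defines a child-set mapping agreeing with $\chi$ up to $\mu'$-null sets, which forces $\mu'(\chi(A_B)\setminus B)=0$.

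I expect this last step, the upper inclusion, to be the main obstacle. Covering and Minimality pin down $\chi(A_B)$ only up to $w_{A_B}$-null modifications, whereas the conclusion is phrased up to $\mu'$-null sets, and $w_{A_B}\le\mu'$ may be strict away from the offspring of $A_B$. The resolution is to avoid arguing ``$w_{A_B}$-null implies $\mu'$-null'' directly and instead build the explicit competitor mapping sending $A\mapsto\chi(A)$ for $A\ne A_B$ and $A_B\mapsto\chi(A_B)\cap B$, verify it meets Definition~\ref{def_childset}, and invoke the $\mu'$-essential uniqueness of Theorem~\ref{thm_childsetexistence} to transfer negligibility. A secondary point I would handle carefully is the exact reading of $w^{-1}$: since for purely mixing $w$ the sets $\{w_i(B)>0\}$ and $\{w_i(B)=w_i(I')\}$ coincide up to $\tilde\mu$-null sets, the argument is insensitive to that convention, but I would fix the chosen convention at the outset so that the splitter argument in the reverse direction remains unambiguous.
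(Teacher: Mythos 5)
Your proposal is correct, and for the substantive (forward) implication it takes a genuinely different route from the paper's --- one that is more careful at exactly the point where the paper's own argument is thinnest. The converse directions essentially coincide: both locate a positive-$\tilde\mu$-measure set of parents splitting mass between $B$ and $B^c$, and use Local Covering at $w^{-1}B$ to push the $B^c$-mass into $\chi(w^{-1}B)$. For the forward direction, however, the paper localizes to subsets $B'\subseteq B$, explicitly disclaims any use of Minimality, and concludes $B''\approx B'$ from $W_{A,B'\symdiff B''}=0$ a.s.\ --- that is, from the vanishing of the mass contributed by parents in $A$ alone, which is precisely the $w_{A}$-null-versus-$\mu'$-null gap you flag. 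Your decomposition of the symmetric difference into two one-sided inclusions, with $\mu'(B\setminus\chi(w^{-1}B))=0$ following from Covering alone and $\mu'(\chi(w^{-1}B)\setminus B)=0$ obtained via the competitor mapping $A_B\mapsto\chi(A_B)\cap B$ together with essential uniqueness (Theorem \ref{thm_childsetexistence}), is the right fix: Covering by itself cannot suffice, since the maximal mapping $\chi(A)\equiv I'$ satisfies Nullity and Covering yet is never a left inverse for a nontrivial $w$, so Minimality (or lattice minimality from Lemma \ref{lem_latticechildset}) must enter somewhere. Your verification that the competitor inherits Covering (because $w_i(B^c)=0$ for $\mu$-a.e.\ $i\in A_B$ under pure mixing) and Minimality (because the discarded set carries no $W_{A_B,\cdot}$-mass pointwise a.e.) supplies what the paper omits. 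One caveat: the paper's proof of essential uniqueness itself only integrates over $A$, so it too establishes agreement only up to $w_A$-null sets; you can sidestep this entirely by invoking the lattice-minimality of $\chi$ directly, which forces the competitor to equal $\chi$ as a set mapping and yields $\chi(A_B)\subseteq B$ exactly.
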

\begin{proof}
Let $\chi$ denote the child-set mapping.\footnote{We do not need the minimality property of child-set mappings for this proposition, only the covering property.} 
Consider measurable $A \in \I$ and $B \in \I'$, and suppose that $S_\dis(A,B) = 0$. Consider 
$B' \in \I'_B$. 
To prove left-invertibility from $A$ to $B$, we must show that $B'' := \chi(w^{-1}_{A,B} B') \approx B'$. By the covering property of the child-set mapping $\chi$, we have 
\begin{equation}
    \mbox{$W_{w^{-1} B', B''} = W_{w^{-1} B', B}$ 
    a.s.}
\end{equation}
Since both $W_{w^{-1} B',B'}$ and $W_{w^{-1} B', B}$ are positive, by Lemma \ref{lem_localtoglobal_mix}, we have
\begin{equation}
    \mbox{$W_{w^{-1} B', B'} = W = W_{w^{-1} B', B}$ 
    a.s.}
\end{equation} 
Consequently, $W_{A-w^{-1}B', B} = 0$. It follows that
\begin{equation}
    \mbox{$W_{A,B' \symdiff B''} = 
    W_{w^{-1} B', B' \symdiff B''} = 0$ a.s.}
\end{equation}
Thus $B'' \approx B'$, proving that $\chi$ is a left inverse of $w_\EC$ from $A$ to $B$. By combining over arbitrary partitions, we have that $\chi$ is a general left inverse. 

Conversely, suppose that $S_\dis(A,B) > 0$, so there exist $A'$ and $B'$ such that $0 < W_{A',B'} < W_{A',I'}$ and $0 < W_{A',B-B'} < W_{A',I'}$ on a set of positive measure. Thus $\chi(w^{-1}_{A,B} B') \cap (B-B') \ne \varnothing$, proving that $\chi$ is not a left inverse.
\end{proof}


\begin{proof}[Proof of Efficiency Theorem (Theorem \ref{thm_efficiency})]
Suppose that $w$ is purely mixing. We show that there exists a right inverse, i.e., a process $w' : \mu' \mapsto \tilde \mu$ such that for all $B$,
\begin{equation} \label{eqn_efficiency_proof1}
    \mbox{$(w_\EC \circ w')_{i'}(B) = 1$ for $i' \in B$.}
\end{equation}
By the purely mixing hypothesis ($W_{A,B} = W$ a.s.), \eqref{eqn_efficiency_proof1} is equivalent to:
\begin{eqnarray}
    1 &=& (w_\EC \circ w')_{i'}(B) = \int_I w_{\EC,\tilde i}(B) \, w'_{i'}(\d \tilde i) = \int_{w^{-1} B} \frac{W_{w^{-1} B, B}(\tilde i)}{W(\tilde i)} w'_{i'}(\d \tilde i) \nonumber \\
    &=&  w'_{i'}(w^{-1} B) \mathrm{~for~} i' \in B. \label{eqn_efficiency_proof2}
\end{eqnarray}

We construct a family of such processes by taking partition refinements. For each partition $\B$, consider the topological space $\W_\B$ of processes satisfying the condition \eqref{eqn_efficiency_proof2} for all $B \in \B$, equipped with the topology of almost-sure weak convergence of measures.\footnote{That is, a net of processes $(w'_t)$ converges to $w'$ if on a set $B$ of full $\mu'$ measure, we have $w'_{t,i'} -> w'_{\EC,i'}$ for each $i' \in B$.} The space $\W'_\B$ is closed, convex, and complete. Each process $w' \in \W'_\B$ corresponds to a measure $w'_B$ on each parent set $w^{-1} B$, with the process satisfying $w'_{i'} \equiv w'_B$ for all $i' \in B$.

Let $\B'$ be a refinement of $\B$, i.e., each set $B \in \B$ is a disjoint union of sets in $\B'$. We show $\W'_{\B'} \subseteq \W'_\B$. Consider a refined process $w' \in \W'_{\B'}$. For each $i'$, let $B_{i'}$ (resp. $B'_{i'}$) be the partition set of $\B$ (resp. $\B'$) containing $i'$. We define a coarse version $\hat w' \in \W'_\B$ by setting $\hat w'_{i'}(A) := w'_{i'}(A \cap w^{-1} B_{i'})$ for all $A$. 

Finally, let $\B^t$ be a net of partition refinements, and consider the infinitary intersection $\W_\oo := \bigcap_t \W_{\B^t}$. As the intersection of non-empty, closed, convex, complete spaces, $\W_\oo$ is non-empty. 

To see that each $w' \in \W'_\oo$ satisfies condition \eqref{eqn_efficiency_proof2} for arbitrary $B$, let $\epsilon > 0$ and consider a refinement $\B^t$ such that we can approximate $B$ by sets $\{B'\} \subseteq \B^t$ satisfying $|w'_{i'}(w^{-1} B) - w'_{i'}(\bigcup w^{-1} B')| < \epsilon$ for $\mu'$-almost all $i \in B$. Then $|w'_{i'}(w^{-1} B) - 1| \le \epsilon + |w'_{i'}(\bigcup w^{-1} B') - 1| = \epsilon$ since $w'$ must give full measure at $i'$ to the parent set containing $i'$. Since $\epsilon$ is arbitrary, \eqref{eqn_efficiency_proof2} must hold for $B$, proving that there exists a retraction.

If $S_\dis = 0$ and $S_\mix > 0$, then there exist distinct $A, A'$ such that $\chi(A) = \chi(A') = B$, so the space $\W'_\oo$ includes at least two retraction, one which sends full measure from $B$ to $A$, and the other to $A'$.

Suppose $S_\dis > 0$, so $w$ is not purely mixing. Then there exist $A \in \I$ and disjoint $B, B' \in \I'$ of positive measure such that $\chi(A) = B \cup B'$ and $w^{-1} B = A = w^{-1} B'$. Then for any environmental process $w'$, 
\begin{equation}
    1 = (w_\EC \circ w')_{i'}(I') \ge (w_\EC \circ w')_{i'}(\chi(A)) = (w_\EC \circ w')_{i'}(w^{-1} B) + (w_\EC \circ w')_{i'}(w^{-1} B'),
\end{equation}
so at least one of the terms on the right side must be $<1$, thus $w'$ is not a retraction.
\end{proof}


We next show that a purely dispersive environmental process can always be inverted on after the process has executed (i.e., there exists $w'$ such that $w' \circ w_\EC = 1_{\tilde \mu}$). In this case, we define the process by taking each child's unit of population, and map it back to the unique parent from which it came. Such a left-inverse process is unique, since in a purely dispersive process, every parent is unique. 

We state a simple local-to-global principle for purely dispersive processes, and we use this to show that the child-set mapping $\chi$ for a purely dispersive process is always right-invertible. This allows us to build the left-inverse process $w'$.

\begin{lem}[Purely Dispersive Local-to-Global Principle] \label{lem_localtoglobal_dis}
Let $w$ be a finite-entropy process. The following are equivalent:
\begin{enumerate}
    \item \label{lem_localtoglobal_disp_global} $w$ is purely dispersive and $S_\mix = 0$ (i.e., for all $A \in \I$ and $B \in \I'$, $M_{A,B} \in \{0,1\}$ $\tilde \mu$-a.s. and $S_\mix(A,B) = 0$). 
    \item \label{lem_localtoglobal_disp_local} For each $A \in \I$ and $B \in \I'$, $w$ is locally purely dispersive from $A$ to $B$ and $S_\mix(A,B) = 0$ (i.e., $M_{A,B} \in \{0,1\}$ $\tilde \mu$-a.s.). 
\end{enumerate}
\end{lem}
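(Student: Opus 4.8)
The statement to prove is Lemma \ref{lem_localtoglobal_dis}, the Purely Dispersive Local-to-Global Principle, which asserts the equivalence of a global purely dispersive condition ($S_\mix=0$ with $M_{A,B}\in\{0,1\}$ $\tilde\mu$-a.s. for all $A,B$) and the corresponding local condition holding for every pair $(A,B)$.

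The plan is to mirror exactly the proof of Lemma \ref{lem_localtoglobal_mix}, the Purely Mixing Local-to-Global Principle, which appears immediately before it in the excerpt. The essential structural fact is that the general mixing entropy is a supremum of sums of local mixing entropies: $S_\mix = \sup_{\A,\B} \sum_{A,B} S_\mix(A,B) = \sup_{\A,\B} \sum_{A,B} \bar U_{A,B}\,\tilde\E[M_{A,B}\log M_{A,B}]$, where each summand $S_\mix(A,B)\ge 0$ by Lemma \ref{lem_lowerbounddispersionmixingentropies}. Because every term in the sum is non-negative, the supremum over partitions vanishes if and only if every local term vanishes, and this gives the equivalence directly.

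Concretely, first I would invoke the non-negativity of local mixing entropy ($S_\mix(A,B)\ge 0$) together with the defining supremum formula. For the direction (\ref{lem_localtoglobal_disp_global})$\Rightarrow$(\ref{lem_localtoglobal_disp_local}): if the global entropy $S_\mix=0$, then since each local $S_\mix(A,B)\ge 0$ and any single pair $(A,B)$ can be embedded in a measurable partition (e.g.\ $\A=\{A,A^c\}$, $\B=\{B,B^c\}$), the corresponding partition sum is bounded above by the supremum $S_\mix=0$, forcing $S_\mix(A,B)=0$; then by the elementary fact \eqref{eqn_vanishingentropyfunctional} that $\tilde\E[-X\log X]=0$ iff $X\in\{0,1\}$ $\tilde\mu$-a.s., applied to $X=M_{A,B}$ (noting the sign convention makes $S_\mix(A,B)$ a weighted $-M\log M$ expectation via $S_\mix(A,B)=\bar U_{A,B}\tilde\E[M_{A,B}\log M_{A,B}]$ and that $M_{A,B}\in[0,1/\bar U_{A,B}]$), we conclude $M_{A,B}\in\{0,1\}$ $\tilde\mu$-a.s. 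For the converse (\ref{lem_localtoglobal_disp_local})$\Rightarrow$(\ref{lem_localtoglobal_disp_global}): if every local mixing entropy vanishes, then every partition sum $\sum_{A,B}S_\mix(A,B)=0$, hence the supremum $S_\mix=0$, and the local condition $M_{A,B}\in\{0,1\}$ holds for all pairs by hypothesis.

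I do not expect a genuine obstacle here, since the argument is purely formal and parallels Lemma \ref{lem_localtoglobal_mix} verbatim \emph{mutatis mutandis}. The one point requiring mild care is the sign bookkeeping: mixing entropy is defined with a $+M\log M$ (equivalently a negative weighted selective entropy) rather than the $-D\log D$ of dispersion entropy, so when applying the vanishing criterion \eqref{eqn_vanishingentropyfunctional} I must phrase it for the convex functional $x\log x$ and confirm that $M_{A,B}\log M_{A,B}$ integrates to zero against the positive weight $\bar U_{A,B}\tilde\mu$ exactly when $M_{A,B}\in\{0,1\}$ a.s. This is immediate from convexity and the saturation analysis already recorded in Lemma \ref{lem_lowerbounddispersionmixingentropies}, so the proof reduces to citing that lemma and the supremum structure.

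\begin{proof}
The proof is similar to that of Lemma \ref{lem_localtoglobal_mix}, \emph{mutatis mutandis}. Recall that $S_\mix = \sup_{\A,\B} \sum_{A,B} S_\mix(A,B)$, where each local mixing entropy $S_\mix(A,B) = \bar U_{A,B} \tilde \E[M_{A,B} \log M_{A,B}] \ge 0$ is non-negative by Lemma \ref{lem_lowerbounddispersionmixingentropies}.

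Suppose (\ref{lem_localtoglobal_disp_global}) holds, so $S_\mix = 0$. For any fixed $A \in \I$ and $B \in \I'$, consider the partitions $\A = \{A, A^c\}$ and $\B = \{B, B^c\}$. Then $0 \le S_\mix(A,B) \le \sum_{A',B'} S_\mix(A',B') \le S_\mix = 0$, so $S_\mix(A,B) = 0$. By the vanishing criterion \eqref{eqn_vanishingentropyfunctional} applied to the convex functional $x \log x$ and the weighted expectation, $S_\mix(A,B) = 0$ forces $M_{A,B}(i) \in \{0,1\}$ for $\tilde \mu$-almost every $i$, which is (\ref{lem_localtoglobal_disp_local}).

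Conversely, suppose (\ref{lem_localtoglobal_disp_local}) holds, so $S_\mix(A,B) = 0$ for all $A \in \I$ and $B \in \I'$. Then for any countable measurable partitions $\A$ and $\B$, the partition sum $\sum_{A,B} S_\mix(A,B) = 0$. Taking the supremum over all partitions yields $S_\mix = 0$, and the local condition $M_{A,B} \in \{0,1\}$ $\tilde \mu$-a.s. holds for all pairs by hypothesis. This proves (\ref{lem_localtoglobal_disp_global}).
\end{proof}
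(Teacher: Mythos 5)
Your proof is correct and follows essentially the same route as the paper's: both exploit the supremum-of-nonnegative-local-terms structure of $S_\mix$ so that the global entropy vanishes iff every local term does, and then invoke the vanishing criterion \eqref{eqn_vanishingentropyfunctional} (equivalently the saturation case of Lemma \ref{lem_lowerbounddispersionmixingentropies}) to translate $S_\mix(A,B)=0$ into $M_{A,B}\in\{0,1\}$ $\tilde\mu$-a.s. Your added care in embedding a single pair $(A,B)$ into the partition $\{A,A^c\}\times\{B,B^c\}$ and in tracking the sign convention for $M\log M$ is a harmless elaboration of the same argument.
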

\begin{proof}
The proof is trivial since $S_\mix = \sup \sum S_\mix(A,B) = \sup \sum \bar U_{A,B} \tilde \E[M_{A,B} \log M_{A,B}]$. If the global entropy vanishes, then all local entropies vanish, and so $M_{A,B} = 0$ or $1$ by \eqref{eqn_vanishingentropyfunctional}. If all local entropies vanish, then their sum and hence the supremum vanish. 
\end{proof}

\begin{pro} \label{pro_puredispersive_chirightinverse}
Let $w$ be a finite-entropy process. Then $w$ is purely dispersive if and only if the child-set mapping $\chi$ is a right-inverse of $w^{-1}$ (i.e., $(w^{-1} \circ \chi)(A) \approx A$ up to $\tilde \mu$-measure zero for any $A \in \I_+$).
\end{pro}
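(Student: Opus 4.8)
The plan is to prove this statement as the exact dual of Proposition \ref{pro_puremixing_chileftinverse}, interchanging the roles of parents and children: where the mixing argument concentrated the \emph{forward} fitness $W_{w^{-1}B',\cdot}$ onto a child subset, here I will concentrate the incoming population onto a parent subset, invoking the Purely Dispersive Local-to-Global Principle (Lemma \ref{lem_localtoglobal_dis}) in place of Lemma \ref{lem_localtoglobal_mix}. The structural picture is that a purely dispersive (clonal) process never sends two distinct parent regions to a common child region, so the parent-set map $w^{-1}$ is essentially injective on childbearing sets and is inverted on the left by the child-set mapping $\chi$; equivalently $w^{-1}\circ\chi = \mathrm{id}$ on $\I_+$. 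Throughout, ``$\approx$'' denotes equality up to $\tilde\mu$-measure zero, and I will freely discard the childless locus $\{W=0\}$, which carries no $\tilde\mu$-mass.

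For the forward direction, I would assume $w$ is purely dispersive, so by Lemma \ref{lem_localtoglobal_dis} the mixing coefficient satisfies $M_{A,B}\in\{0,1\}$ $\tilde\mu$-a.s.\ for the relevant pairs. Fix $A'\in\I_+$, put $B:=\chi(A')$ and $A'':=w^{-1}(B)=(w^{-1}\circ\chi)(A')$. The covering property (Definition \ref{def_childset}, part (2)) gives $w_i(\chi(A'))=W(i)$ for $\mu$-a.e.\ $i\in A'$, so $A'\subseteq A''$ up to $\tilde\mu$-null sets. For the reverse inclusion I would suppose $A_0:=A''\setminus A'$ has positive $\tilde\mu$-measure and derive a contradiction: every parent in $A_0$ sends positive mass into $B=\chi(A')$, so $D_{A_0,B}>0$ $\tilde\mu$-a.s.\ on $A_0$, and local pure dispersiveness forces $M_{A_0,B}=1$, i.e.\ $D_{A_0,B}=\bar U_{A_0,B}$ is constant on $A_0$. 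Combining this with the minimality property (Definition \ref{def_childset}, part (3)) — which says every positive-measure piece of $\chi(A')$ already receives children from $A'$ — I would run the same chain of almost-sure fitness identities as in Proposition \ref{pro_puremixing_chileftinverse} to conclude $W_{A_0,B}=0$ $\tilde\mu$-a.s., contradicting $A_0\subseteq w^{-1}(B)$. Hence $A''\approx A'$.

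For the converse I would argue the contrapositive: if $w$ is not purely dispersive then $S_\mix(A,B)>0$ for some $A\in\I$, $B\in\I'$ by the local-to-global principle, so $M_{A,B}\notin\{0,1\}$ on a set of positive $\tilde\mu$-measure. This produces disjoint parent sets $A_1,A_2$ of positive measure and a common child set $B_1\subseteq\chi(A_1)$ that also receives children from $A_2$. Taking $A':=A_1$, the child-set mapping still satisfies $B_1\subseteq\chi(A')$ by covering, while $A_2\subseteq w^{-1}(B_1)\subseteq w^{-1}(\chi(A'))$; since $A_2\cap A'=\varnothing$ has positive measure, $w^{-1}(\chi(A'))\not\approx A'$, so $\chi$ fails to be a right-inverse of $w^{-1}$.

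The main obstacle will be the reverse inclusion in the forward direction: pinning down $A''\approx A'$ requires using minimality and local pure dispersiveness \emph{simultaneously} to exclude spurious parents, and the normalization $\tilde\E[M_{A,B}]=1$ must be applied only over the transmitting locus $\{M_{A,B}>0\}$ rather than the full intermediate population. I expect this bookkeeping — matching each appearance of the covering relation $W_{A,\chi(A)}=W_A$ against the mixing constraint $D_{A_0,B}\equiv\bar U_{A_0,B}$, exactly as the almost-sure fitness equalities were chained in Proposition \ref{pro_puremixing_chileftinverse} — to be the delicate step, with the remainder following from the established dual of that argument together with the essential uniqueness of $\chi$ (Theorem \ref{thm_childsetexistence}).
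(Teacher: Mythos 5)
Your overall architecture (covering gives $A'\subseteq w^{-1}(\chi(A'))$; kill the excess $A_0:=w^{-1}(\chi(A'))\setminus A'$ by contradiction; contrapositive for the converse) is sound, and your converse matches the paper's. But the forward direction has a genuine gap exactly where you flag the "delicate step": from $M_{A_0,B}=1$ you only get $D_{A_0,B}=\bar U_{A_0,B}$, a \emph{positive constant}, on $A_0$ --- which is not by itself contradictory, so you still owe an argument that forces $W_{A_0,B}=0$. Your plan to "run the same chain of almost-sure fitness identities as in Proposition \ref{pro_puremixing_chileftinverse}" does not transfer: that chain lives in the purely \emph{mixing} case, where Lemma \ref{lem_localtoglobal_mix} gives $D_{A,B}\in\{0,1\}$, so ``positive'' upgrades to ``equal to $W$''. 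In the purely dispersive case Lemma \ref{lem_localtoglobal_dis} only pins $D_{A,B}$ to the constant $\bar U_{A,B}$, never to $1$, and the identities collapse to tautologies. Your appeal to minimality of $\chi$ also does no work here (and the paper's footnote notes minimality is not needed for this proposition): minimality trims redundant \emph{child} sets, it does not exclude spurious \emph{parents}.

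What actually closes the argument is a second set against which the dispersive constancy can be tested. The paper introduces the complementary child set $B':=\chi(A^c)$, writes $W_{I,B\cap B'}=\bar U_{I,B\cap B'}\,W$ wherever positive, decomposes $W=W_A+W_{A^c}$ using the covering identities on $A$ and on $A^c$, and observes that disjointness of $A$ and $A^c$ forces $\bar U_{I,B\cap B'}=0$, i.e.\ $\chi(A)\cap\chi(A^c)\approx\varnothing$; then no $i\in A^c$ can charge $\chi(A)$, so $w^{-1}(\chi(A))\approx A$. Alternatively, your own setup can be completed without $\chi(A^c)$: apply pure dispersiveness to the pair $(A'\cup A_0,\,B)$. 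On $A'$ the covering property gives $D_{A'\cup A_0,B}=1$, while on $A_0$ it equals $\bar U_{A_0,B}\le \tilde\mu(A_0)/N'<1$ (since $A'$ has positive $\tilde\mu$-mass and is disjoint from $A_0$); thus $D_{A'\cup A_0,B}$ takes two distinct positive values on sets of positive $\tilde\mu$-measure, so $M_{A'\cup A_0,B}\notin\{0,1\}$ there, contradicting Lemma \ref{lem_localtoglobal_dis}. Either route is fine, but one of them must be written down; as it stands the contradiction is asserted, not derived.
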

\begin{proof}
Let $\chi$ denote a child-set covering mapping.\footnote{We do not need the minimality property of child-set mappings for this proof.} For a given $A \in \I$, define its child set $B := \chi(A)$. Define the complement of the parent set $A^c := I-A$ and the complementary child set: $B' := \chi(A^c)$.\footnote{It is possible for the child set and the complementary child set to overlap, i.e., $B \cap B' \ne \varnothing$, namely, for children who have parents in both $A$ and $A^c$.}

Suppose that $S_\mix = 0$. We will show that consistency implies $B \cap B' \approx \varnothing$ up to $\mu$-measure zero, which implies $(w^{-1} \circ \chi)(A) = A$, i.e., that $\chi$ is a right inverse to $w^{-1}$, so $w$ is consistent.

By the covering property of child-set mappings, we have:
\begin{equation}
    \mbox{$W_{A,B} = W_{A,I'}$ and $W_{A^c,B'} = W_{A^c,I'} = W - W_{A,I'}$ a.s.}
\end{equation}
By Lemma \ref{lem_localtoglobal_dis}, we have
\begin{equation}
    \mbox{$W_{A,B} = W_{A,I'} = \bar U_{A,I'} \, W_{A}$ and $W_{A^c,B'} = W_{A^c,I'} = \bar U_{A^c,I'} \, W_{A^c}$ a.s.,}
\end{equation}
and
\begin{equation}
    \mbox{$W_{I,B \cap B'} = \bar U_{I,B \cap B'} \, W$ a.s.}
\end{equation}

Consequently,
\begin{eqnarray}
W_{I,B \cap B'}
&=& \bar U_{I,B \cap B'} W 
= \bar U_{I,B \cap B'} \left( W_{A} + W_{A^c} \right) \nonumber \\
&=& \bar U_{I,B \cap B'} \left( \frac{W_{A,B}}{\bar U_{A,I'}} + \frac{W_{A^c,B'}}{\bar U_{A^c,I'}} \right) \nonumber \\
&=& \frac{\bar U_{I,B \cap B'}}{\bar U_{A,I'}} W_{A,B} + \frac{\bar U_{I,B \cap B'}}{\bar U_{A^c,I'}} W_{A^c,B'}.
\end{eqnarray}

Now, the sets $A$ and $A^c$ are mutually exclusive, so only one term can be positive. If $W_{A,B} > 0$, then $W_{A^c,B'} = 0$, and so $\bar U_{I,B \cap B'} = 0$. Similarly, if $W_{A^c,B'} > 0$, then $\bar U_{I,B \cap B'} = 0$. Thus $W_{I,B \cap B'} = 0$ a.s., and so $B \cap B' \approx \varnothing$. 

It follows that $(w^{-1} \circ \chi)(A) \approx A$ up to measure zero, so $\chi$ is a right inverse to $w^{-1}$, and so $w$ is locally consistent from $A$ to $B$. This proves the forward direction.

Suppose $S_\mix > 0$, so there exist $A$ and $B$ such that $0 < W_{A,B} < \bar U_{A,B} \, W$ and $0 < W_{A,I'-B} < \bar U_{A,I'} \, W_{A,I'}$ on a set of positive measure. Thus $w^{-1}(\chi(B)) \cap (I'-B) \ne \varnothing$, proving that $\chi$ is not efficient. This completes the proof.
\end{proof}

\begin{proof}[Proof of Consistency Theorem (Theorem \ref{thm_consistency})]
Suppose that $w$ is purely dispersive. We show that there exists a left inverse, i.e., a process $w' : \mu' \mapsto \tilde \mu$ such that for all $B$,
\begin{equation} \label{eqn_consistency_proof1}
    \mbox{$(w' \circ w_\EC)_{\tilde i}(A) = 1$ for $\tilde \mu$-a.s. $\tilde A \in A$.}
\end{equation}
We construct the process $w'$ by sending each child to its unique parent:
\begin{equation}
    w'_{i'}(A) := \begin{cases} 1, & i' \in \chi(A), \\ 0 & i' \notin \chi(A). \end{cases}
\end{equation}
This is well-defined by the purely dispersive hypothesis. Indeed, if $A$ and $A'$ are disjoint, then $\chi(A)$ and $\chi'(A)$ are disjoint, since $w^{-1}(\chi(A) \cap \chi(A')) = w^{-1}(\chi(A)) \cap w^{-1}(\chi(A')) = A \cap A' = \varnothing$, where $\chi$ is a right-inverse by pure dispersivity. 

The process $w'$ is a left inverse, since for each $\tilde i \in A$,
\begin{equation}
(w' \circ w_\EC)_{\tilde i}(A)
= \int w'_{i'}(A) w_{\EC,\tilde i}(\d i') 
= w_{\EC,\tilde i}(\chi(A))
= \frac{W_{A, \chi(A)}(\tilde i)}{W(\tilde i)}
= 1,
\end{equation}
since w is purely dispersive and so $W_{A, \chi(A)}(\tilde i) = W(\tilde i)$ almost surely. Thus $w'$ is a left-inverse. 

We show that $w'$ is essentially unique. Suppose $w''$ is another left inverse. Consider arbitrary $A$, and let $B := \{ i' : w''_{i'}(A) - 1 \ne 0 \}$. Let $A' := w^{-1} B$. If $B$ has non-negative measure, then
\begin{equation}
0 = (w' \circ w_\EC)_{\tilde i}(A') - 1 
= \int_B (w'_{i'}(A) - 1) w_{\EC,\tilde i}(\d i') 
\ne 0, 
\end{equation}
a contradiction, so $w'$ is essentially unique.

Now suppose that $w$ is not purely dispersive. Then there exist disjoint $A,A' \in \I$ and $B \in \I'$ of positive measure such that $B \subseteq \chi(A) \cap \chi(A')$ and $w^{-1} B = A \cup A'$. Then for any purely environmental process $w'$, 
\begin{equation}
    1 = (w' \circ w_\EC)_{\tilde i}(I) \ge (w' \circ w_\EC)_{\tilde i}(A \cup A') = (w' \circ w_\EC)_{\tilde i}(A) + (w' \circ w_\EC)_{\tilde i}(A') = 2 (w' \circ w_\EC)_{\tilde i}(w^{-1} B),
\end{equation}
hence $(w' \circ w_\EC)_{\tilde i}(w^{-1} B) < 1$, and so $w'$ is not a section.
\end{proof}


\begin{proof}[Proof of Reversibility Theorem (Theorem \ref{thm_reversibility})]
We now prove the Reversibility Theorem. Suppose $w$ is purely mixing and purely dispersive. Since $w$ is purely dispersive, there exists a unique retraction (left-inverse process) $w'$ from the Consistency Theorem, defined by $w'_{i'}(A) = 1$ for all $i' \in \chi(A)$. Since $w$ is purely mixing, Proposition \ref{pro_puremixing_chileftinverse} implies that $\chi$ is a left inverse to $w^{-1}$. Consequently, for any $B$, we have $w'_{i'}(w^{-1} B) = 1$ if $i' \in \chi(w^{-1}(B)) \approx B$. Thus by \eqref{eqn_efficiency_proof2}, $w'$ is a section (right-inverse process). This proves that $w'$ is the unique inverse to $w_\EC$.

Now, suppose that $w'$ is an inverse process to $w_\EC$. In particular, $w'$ is the unique retraction for the Consistency Theorem, hence $w$ is purely dispersive, and $w'$ is a section for the Efficiency Theorem, hence $w$ is purely dispersive. This proves the Reversibility Theorem. 

The Irreversibility Theorem follows from the law of the excluded middle, as the contrapositive of the Reversibility Theorem.
\end{proof}


\section{Proof of Strong Third Law of Natural Selection (Theorem \ref{thm_thirdlaw})} \label{app_proofofstrongthirdlaw}

\begin{proof}[Proof of Strong Third Law (Theorem \ref{thm_thirdlaw})]
We first prove local versions of the result: 
\begin{equation} \label{ineq_localthirdlaw_dis}
    \tilde p_{A,B} \lambda_{A,B} \log \tfrac{\lambda_{A,B}}{\gamma_{A,B}} - \bar U_{A,B} \log \tfrac{\tilde p_{A,B}}{\bar U_{A,B}} \le \partial_\NS S_\dis(A,B) \le \tilde p_{A,B} \lambda_{A,B} \log \tfrac{\varphi_{A,B}}{\lambda_{A,B}} - \bar U_{A,B} \log \tfrac{\bar U_{A,B}}{\tilde \E[D_{A,B}^2]},
\end{equation}
\begin{equation} \label{ineq_localthirdlaw_mix}
    \hspace{-.35in} \tilde p_{A,B} \lambda_{A,B} \log \tfrac{\lambda_{A,B}}{\varphi_{A,B} \bar U_{A,B}} - \bar U_{A,B} \log \tilde \E[M_{A,B}^2] \le \partial_\NS S_\mix(A,B) \le \tilde p_{A,B} \lambda_{A,B} \log \tfrac{\gamma_{A,B}}{\lambda_{A,B} \bar U_{A,B}} - \bar U_{A,B} \log \tfrac{1}{\tilde p_{A,B}},
\end{equation}
and
\begin{equation} \label{ineq_localthirdlaw_EC}
    \hspace{-.75in} 
    \tilde p_{A,B} \lambda_{A,B} \log \tfrac{\lambda_{A,B}^2}{\gamma_{A,B} \varphi_{A,B} \bar U_{A,B}} - \bar U_{A,B} \log \tfrac{\tilde p_{A,B} \tilde \E[D_{A,B}^2]}{\bar U_{A,B}^3} \le \partial_\NS S_\EC(A,B) \le \tilde p_{A,B} \lambda_{A,B} \log \tfrac{\gamma_{A,B} \varphi_{A,B}}{\lambda_{A,B}^2 \bar U_{A,B}} - \bar U_{A,B} \log \tfrac{\bar U_{A,B}}{\tilde p_{A,B} \E[D_{A,B}^2]},
\end{equation}
with saturation of all inequalities when $w$ is in local environmental equilibrium from $A$ to $B$. The partition versions follow by summing over partition sets, and the general versions follow by evaluating at a generating joint partition. 

Proof of \eqref{ineq_localthirdlaw_dis}) We decompose the dispersion entropy change as $\partial_\NS S_\dis(A,B) = \tilde \E[U(-D_{A,B} \log D_{A,B})] - S_\dis(A,B)$. Flipping the bounds of the dispersion-entropy estimates \eqref{ineq_localbound_dis} yields the following for the second term: 
\begin{equation} \label{ineq_localthirdlaw_dis_proof_negSdis}
    -\bar U_{A,B} \log \frac{\tilde p_{A,B}}{\bar U_{A,B}} \le -S_\dis(A,B) \le -\bar U_{A,B} \log \frac{\bar U_{A,B}}{\tilde \E[D_{A,B}^2]},
\end{equation}
with saturation in environmental equilibrium from $A$ to $B$.

Using Jensen's inequality, we compute the upper bound of the first term:
\begin{eqnarray}
    \tilde \E[U(-D_{A,B} \log D_{A,B})] &=& \tilde p_{A,B} \varphi_{A,B} \frac{1}{\varphi_{A,B}} \tilde \E_{A,B}[U(-D_{A,B} \log D_{A,B})] \nonumber \\
    &\le& -\tilde p_{A,B} \lambda_{A,B} \log \frac{\lambda_{A,B}}{\varphi_{A,B}} = \tilde p_{A,B} \lambda_{A,B} \log \frac{\varphi_{A,B}}{\lambda_{A,B}} 
    \label{ineq_localthirdlaw_dis_proof_upper}
\end{eqnarray}
since $\tilde \E_{A,B}[U] = \varphi_{A,B}$ and $\tilde \E_{A,B}[U D_{A,B}] = \lambda_{A,B}$. Saturation occurs when $D_{A,B}$ is constant $U \tilde \mu$-almost surely, which is equivalent to being constant $\tilde \mu$-almost surely, i.e., the environmental-equilibrium case. Combining \eqref{ineq_localthirdlaw_dis_proof_upper} and \eqref{ineq_localthirdlaw_dis_proof_negSdis} yields the upper bound of \eqref{ineq_localthirdlaw_dis}. 

We compute the lower bound of the first term:
\begin{eqnarray}
    \tilde \E[U(-D_{A,B} \log D_{A,B})] &=& \tilde p_{A,B} \lambda_{A,B} \frac{1}{\lambda_{A,B}} \tilde \E_{A,B}[U D_{A,B} (-\log D_{A,B})] \nonumber \\
    &\ge& - \tilde p_{A,B} \lambda_{A,B} \log \frac{\gamma_{A,B}}{\lambda_{A,B}},
    \label{ineq_localthirdlaw_dis_proof_lower}
\end{eqnarray}
since $\tilde \E_{A,B}[U D_{A,B}] = \lambda_{A,B}$ and $\tilde \E_{A,B}[U D_{A,B}^2] = \gamma_{A,B}$. Saturation occurs when $D_{A,B}$ is constant $U D_{A,B} \tilde \mu$-almost surely, which is equivalent to being constant $\tilde \mu$-almost surely, i.e., the environmental-equilibrium case. Combining \eqref{ineq_localthirdlaw_dis_proof_lower} and \eqref{ineq_localthirdlaw_dis_proof_negSdis} yields the lower bound of \eqref{ineq_localthirdlaw_dis}. 

Proof of \eqref{ineq_localthirdlaw_mix}) We decompose the mixing entropy change as $\partial_\NS S_\mix(A,B) = \bar U_{A,B} \tilde \E[U(M_{A,B} \log M_{A,B})] - S_\mix$. Flipping the bounds of the mixing-entropy estimates \eqref{ineq_localbound_mix} yields the following for the second term: 
\begin{equation} \label{ineq_localthirdlaw_mix_proof_negSmix}
    -\bar U_{A,B} \log \tilde \E[M_{A,B}^2] \le -S_\mix(A,B) \le -\bar U_{A,B} \log \frac{1}{\tilde p_{A,B}},
\end{equation}
with saturation in environmental equilibrium from $A$ to $B$.

Using Jensen's inequality, we compute the upper bound of the first mixing term:
\begin{eqnarray}
    \bar U_{A,B} \tilde \E[U M_{A,B} (\log M_{A,B})] &=& \bar U_{A,B} \tilde p_{A,B} \frac{\lambda_{A,B}}{\bar U_{A,B}} \frac{\bar U_{A,B}}{\lambda_{A,B}} \tilde \E_{A,B}[U M_{A,B} (\log M_{A,B})] \nonumber \\
    &\le& 
    \tilde p_{A,B} \lambda_{A,B} \log \frac{\bar U_{A,B} \gamma_{A,B}}{\lambda_{A,B} \bar U_{A,B}^2} 
    = \tilde p_{A,B} \lambda_{A,B} \log \frac{\gamma_{A,B}}{\lambda_{A,B} \bar U_{A,B}},
    \label{ineq_localthirdlaw_mix_proof_upper}
\end{eqnarray}
since $\tilde \E_{A,B}[U M_{A,B}] = \frac{\lambda_{A,B}}{\bar U_{A,B}}$ and $\E_{A,B}[U M_{A,B}^2] = \frac{\gamma_{A,B}}{\bar U_{A,B}^2}$. Saturation occurs when $D_{A,B}$ is constant $U \tilde \mu$-almost surely, which is equivalent to being constant $\tilde \mu$-almost surely, i.e., the environmental-equilibrium case. Combining \eqref{ineq_localthirdlaw_mix_proof_upper} and \eqref{ineq_localthirdlaw_mix_proof_negSmix} yields the upper bound of \eqref{ineq_localthirdlaw_dis}. We compute the lower bound of the first mixing term:
\begin{eqnarray}
    \bar U_{A,B} \tilde \E[U(M_{A,B} \log M_{A,B})] &=& \bar U_{A,B} \tilde p_{A,B} \varphi_{A,B} \frac{1}{\varphi_{A,B}} \tilde \E_{A,B}[U(M_{A,B} \log M_{A,B})] \nonumber \\
    &\ge& \bar U_{A,B} \tilde p_{A,B} \varphi_{A,B}  \frac{\lambda_{A,B}}{\varphi_{A,B} \bar U_{A,B}} \log \frac{\lambda_{A,B}}{\varphi_{A,B} \bar U_{A,B}} \nonumber \\
    &=& \tilde p_{A,B} \lambda_{A,B} \log \frac{\lambda_{A,B}}{\varphi_{A,B} \bar U_{A,B}},
    \label{ineq_localthirdlaw_mix_proof_lower}
\end{eqnarray}
since $\tilde \E_{A,B}[U] = \varphi_{A,B}$ and $\tilde \E_{A,B}[U M_{A,B}] = \frac{\lambda_{A,B}}{\bar U_{A,B}}$. Saturation occurs when $D_{A,B}$ is constant $U D_{A,B} \tilde \mu$-almost surely, which is equivalent to being constant $\tilde \mu$-almost surely, i.e., the environmental-equilibrium case. Combining \eqref{ineq_localthirdlaw_mix_proof_lower} and \eqref{ineq_localthirdlaw_mix_proof_negSmix} yields the lower bound of \eqref{ineq_localthirdlaw_dis}. 

Proof of \eqref{ineq_localthirdlaw_EC}) The environmental inequality \eqref{ineq_localthirdlaw_EC} follows by summing inequalities \eqref{ineq_localthirdlaw_dis} and \eqref{ineq_localthirdlaw_mix}, and using the identity $\tilde \E[M_{A,B}^2] = \frac{\tilde \E[D_{A,B}^2]}{\bar U_{A,B}^2}$.
\end{proof}

\end{document}